\newcommand{\rk}{\operatorname{rk}}
\newcommand{\qoplus}{(\mkern-4mu+\mkern-4mu)}%trop grand, mais \scriptstyle trop petit...
\newcommand{\qtimes}{(\mkern-4mu\times\mkern-4mu)}
\newcommand{\bigast}{\ast} %...
\newcommand{\cF}{\mathcal{F}}
\newcommand{\up}{\underline{\smash{p}}}
\newcommand{\C}{\mathbb{C}}
\newcommand{\F}{\mathbb{F}}
\newcommand{\K}{\mathbb{K}}
\newcommand{\T}{\mathbb{T}}
\newcommand{\Z}{\mathbb{Z}}
\newcommand{\SL}{\operatorname{SL}}
\newcommand{\PSL}{\operatorname{PSL}}
\newcommand{\ad}{\operatorname{ad}}
\newcommand{\im}{\operatorname{im}}
\newcommand{\Tr}{\operatorname{Tr}}
\newcommand{\End}{\operatorname{End}}
\newtheorem{fact}{Fact}
\newtheorem*{fact*}{Fact}
\newtheorem{lemma}{Lemma}
\newtheorem{proposition}{Proposition}
\newtheorem*{definition*}{Definition}
\newtheorem*{theorem*}{Theorem}
\newtheorem*{question*}{Question}
\theoremstyle{definition}
\newtheorem*{remark*}{Remark}
\newtheorem*{remarks*}{Remarks}
\newtheorem*{notation*}{Notation}
\newtheorem{step}{-- Step}[proposition]
\newtheorem{notationinproof}[step]{-- Notation}
\newenvironment{proofclaim}{\begin{proof}[Proof of Step \arabic{step}]\renewcommand{\qedsymbol}{$\Diamond$}}{\end{proof}}
\def\@fnsymbol#1{\ensuremath{\ifcase#1\or \or *\or \dagger\or \ddagger\or
   \mathsection\or \mathparagraph\or \|\or **\or \dagger\dagger
   \or \ddagger\ddagger \else\@ctrerr\fi}}
\title{Involutive automorphisms of $N_\circ^\circ$-groups of finite Morley rank\footnote{Keywords: groups of finite Morley rank; Cherlin-Zilber conjecture; N-groups --- MSC 2010: 20F11}}
\author{Adrien Deloro\footnote{Institut de Mathématiques de Jussieu --- Paris Rive Gauche; {adrien.deloro@imj-prg.fr}}\ \ and \'Eric Jaligot\footnote{1972-2013}}
\date{}
\begin{document}

\maketitle

\begin{quote}
\itshape
\setlength\leftskip{7cm}
Ma Virgilio n'avea lasciati scemi\\
di sé, Virgilio dolcissimo patre,\\
Virgilio a cui per mia salute die' mi.
\end{quote}
\medskip

\begin{quote}
\textbf{Abstract.} We classify a large class of small groups of finite Morley rank: $N_\circ^\circ$-groups which are the infinite analogues of Thompson's $N$-groups. More precisely, we constrain the $2$-structure of groups of finite Morley rank containing a definable, normal, non-soluble, $N_\circ^\circ$-subgroup.
\end{quote}

{\footnotesize
\tableofcontents
}

\section{Introduction}

This is the final item in the series \cite{DJGroups, DJSmall, DJLie}, a collaboration interrupted by the demise of Jaligot. The present article has a sad story but at least it has the merit to exist: it was started in 2007 with hope and then never completed, started again in 2013 as a brave last sally
%brave/gallant/stalwart/death-or-glory; sally/thrust/battle/foray
and then lost, and then started over again by the first endorser alone.

So for the last time let us deal with $N_\circ^\circ$-groups of finite Morley rank.
And although we have just used some phrases that our prospective reader may not know we hope our work to be of interest to the experts in finite group theory as many ideas and methods will seem familiar to them. Efforts were made in their direction and that of self-containedness.

\subsection{The Context}

\paragraph{Groups of finite Morley rank.}
Let us first say a few words of groups of finite Morley rank. We shall remain deliberately vague as we only hope to catch the reader's attention (possibly through provocation).
Should we succeed we can suggest three books. The first monograph dealing with groups of finite Morley rank, among other groups, was \cite{PGroupes}, translated as \cite{PStable}. An excellent and thorough reference textbook is \cite{BNGroups} which has no pictures but many exercises instead. The more recent \cite{ABCSimple} quickly focuses on the specific topic of the classification of the infinite simple groups of finite Morley rank of so-called even or mixed type, a technical assumption. For the moment let us be quite unspecific.

Morley rank is a notion invented by model theorists for the purposes of pure mathematical logic, and turned out to be an abstract form of the Zariski dimension in algebraic geometry.
It was then natural to investigate the relations between groups of finite Morley rank and algebraic groups.

More precisely (we shall keep this facultative paragraph short and direct the brave to \cite{PGroupes}), the rank introduced by Morley for his categoricity theorem was quickly understood to be a central notion in mathematical logic, enabling a more algebraic treatment of model-theoretic phenomena, and hopefully allowing closer interactions with classical mathematics. This was confirmed when Zilber's ``ladder'' analysis of uncountably categorical theories revealed towers of atomic pieces bound to each other by some definable groups, similar to differential Galois groups in (Kolchin-)Picard-Vessiot theory, and therefore of utmost relevance even as abstract groups. It is expected that understanding the structure of such groups would shed further light on the nature of uncountably categorical theories, which would please model theorists, and other mathematicians as well.

But because of their very nature, groups of finite Morley rank cannot be studied with the techniques of algebraic geometry, and only elementary (in both the naive and model-theoretic senses of the term) methods apply, which results in massive technological smugglering from finite group theory to model theory.

To make a long story short: some abstract groups arose in one part of mathematics; it would be good to classify the simple ones; logicians need finite group theorists.

\paragraph{Groups with a dimension.}
And now for the sake of the introduction we shall suggest a completely different, anachronistic, and self-contained motivation.

The classification of the simple Lie groups, the classification of the simple algebraic groups, and the classification of the finite simple groups are facets of a single truth: in certain categories, simple groups are matrix groups in the classical sense. The case of the finite simple groups reminds us that we are at the level of an erroneous truth, but still there must be something common to Lie groups, algebraic groups, and finite groups beyond the mere group structure that forces them to fall into the same class.

In a sense, groups of finite Morley rank describe this phenomenon; Morley rank is a form of common structural layer, or methodological least common denominator to the Lie-theoretic, algebraic geometric, and finite group-theoretic worlds.
Our groups are equipped with a dimension function on subsets enabling the most basic computations; the expert in finite group theory will be delighted to read that matching involutions against cosets, for instance, is possible. On the other hand, no analysis, no geometry, and no number theory are available. But the existence of a rudimentary dimension function is a common though thin structural layer extending the pure group structure.

It remains to say which sets are subject to having a dimension. These sets are called the definable sets; the definable class is the model-theoretic analogue of the constructible class in algebraic geometry. In a group $G$ with no extra structure, one would consider the collection of subsets of the various $G^n$ obtained by allowing group equations, (finite) boolean combinations, projections, and then by also allowing quotients by equivalence relations of the same form. This setting is a little too tight in general and model theorists enlarge the basic case of group equations by allowing other primary relations, that is, by working in an abstract structure extending the group structure.

A group of finite Morley rank is such an extended group structure with an integer-valued dimension function on its definable sets. As for the properties of the dimension function itself, they are so natural they do not need to be described.

Although we have given no definition we hope to have motivated the Cherlin-Zilber conjecture, which surmises that infinite simple groups of finite Morley rank are groups of points of algebraic groups. The conjecture goes back to the seventies.% Let us apparently change subject.
% \medskip

\paragraph{Relations with finite group theory.}
A consequence of the classification of the simple, periodic, linear groups \cite{TClassification} is the locally finite version of the Cherlin-Zilber conjecture: infinite simple \emph{locally finite} groups of finite Morley rank are algebraic. \cite{TClassification} heavily relying on the classification of the finite simple groups means that conventional group theory can help elucidate problems in model theory.

A proof of the classification of the simple, periodic, linear groups in odd characteristic without using the classification of the finite simple groups but some of its methods, such as component analysis and signaliser functors, is in \cite{BClassification}. Similar techniques carried to the model-theoretic context provide the \emph{locally finite} version of the Cherlin-Zilber conjecture under an assumption standing for characteristic oddness \cite{BSimplelocally}, still without using the classification of the finite simple groups. Let us now forget about local finiteness.
All this suggests to ask whether conversely to the above, model theory may shed light on conventional group theory, and wether finite group theorists can learn something from logicians.
% Can one prove a decent fragment of the Cherlin-Zilber conjecture without using the classification of the finite simple groups?

\cite{ABCSimple} gives a positive answer by proving the Cherlin-Zilber conjecture in even or mixed type, viz. when there is an infinite subgroup of exponent $2$, thus obtaining an ideal sketch of a decent fragment of the classification of the finite simple groups. Apart from this case one should not expect the conjecture to be proved in full generality. There is no evidence for a model-theoretic analogue of the Feit-Thompson ``odd order'' Theorem. Simple groups of finite Morley rank with no involutions cause major technical difficulties since most methods in the area heavily rely on $2$-local analysis. %, the cornerstone of the classification of the finite simple groups.
Actually the experts do not regard the existence of the most dramatic (potential) counterexamples to the conjecture called bad groups as entirely unlikely. But after all, not all finite simple groups are groups of Lie type, so refuting the Cherlin-Zilber conjecture would certainly not show that it is not interesting.

The present work deals with a certain class of \emph{small} groups of finite Morley rank: $N_\circ^\circ$-groups, defined in \S\ref{S:facts} by a condition borrowed from the classification of the finite simple groups. The former were called $*$-locally$_\circ^\circ$ soluble groups in \cite{DJGroups, DJSmall, DJLie}; we now change terminology to conform more closely to the standards of finite group theory.
% \medskip

\paragraph{Two notions of smallness.}
So let us push the analogy with finite group theory further. The classical $N$- property was introduced in \cite{TNonsolvable} where the full classification of the finite, non-soluble $N$-groups was given, and then proved in a series of subsequent papers: an $N$-group is a group $G$ all of whose so-called local subgroups are soluble, which in the finite case amounts to requiring that $N_G(A)$ be soluble for every abelian subgroup $1 \neq A \leq G$. The decorations in $N_\circ^\circ$ indicate that we shall focus on connected components, making our condition less restrictive than proper $N$-ness. According to the Cherlin-Zilber conjecture, every connected, non-soluble $N_\circ^\circ$-group should be isomorphic to $\PSL_2(\K)$ or $\SL_2(\K)$ with $\K$ an algebraically closed field. We cannot prove this, and our results will look partial when compared to \cite{TNonsolvable}.
% The decorations are here to remind the reader that this has nothing to do with local solubility in the classical sense, but with the solubility of local subgroups in the sense of Thompson's $N$-groups \cite{TNonsolvable}.

% Before we move on and as a digression, \cite{GLNonsolvable} could extend Thompson's classification of non-soluble, finite $N$-groups to the case of non-soluble, finite groups where only $2$-local subgroups are supposed to be soluble (i.e., when $A$ above must in addition be a $2$-group). Such an extension seems absolutely impossible in our setting as will become obvious during the proof, simply because we must take too many normalisers; we close the digression.

Another, more restrictive notion of smallness in \cite{TNonsolvable} was minimal simplicity: a minimal simple group is a simple group all of whose proper subgroups are soluble. The full classification of the finite, minimal simple groups is given in \cite{TNonsolvable} as a corollary to that of the finite $N$-groups.
The finite Morley rank analogue is named minimal \emph{connected} simplicity and defined naturally in \S\ref{S:facts}.
According again to the Cherlin-Zilber conjecture, every minimal connected simple group should be isomorphic to $\PSL_2(\K)$ with $\K$ an algebraically closed field; even under the assumption that the group contains involutions, this is an open question.

Minimal connected simple groups of finite Morley rank have already been studied at length as recalled in \S\S \ref{s:result} and \ref{s:history}. These groups obviously are $N_\circ^\circ$-groups but it is not clear whether one should hope for a converse statement.
%As a matter of fact one could expect many more configurations \cite[\S3.3]{DJGroups}.
So transferring the partial, current knowledge from the minimal connected simple to the $N_\circ^\circ$ setting was a non-trivial task, undertaken in \cite{DJGroups, DJSmall, DJLie}.
% Paraphrasing \cite{TNonsolvable}, the results of this paper grew from a \emph{failed} attempt to classify the minimal connected simple groups.

This extension will hopefully fit into a revised strategy for the classification of simple groups of finite Morley rank with involutions. The last written account of a master plan was in Burdges' thesis \cite[Appendix A]{BSimple} and would need to be updated because of major advances in the general structural theory of groups of finite Morley rank, notably through results on torsion briefly touched upon in \S\ref{s:Sylow}. But interestingly enough the theory of $N_\circ^\circ$-groups has already been used and will be used again in another topic: permutation groups of finite Morley rank \cite{DActions, BDActions}.

The present work completes the transition from the minimal connected simple to the $N_\circ^\circ$ setting, and does more. We cannot provide a full classification of $N_\circ^\circ$-groups, but we delineate major cases and give strong restrictions on their groups of automorphisms.
%delimit the insertion of possible pathological groups in context.% -- just in case the Cherlin-Zilber conjecture would be false, but not that false.
%This is very much in the spirit of earlier work on groups of finite Morley rank which we review in \S\ref{s:history}.

\subsection{The Result and its Proof}\label{s:result}

The ideal goal would have been to show that $\PSL_2(\K)$ and $\SL_2(\K)$ are the only non-soluble $N_\circ^\circ$-groups of finite Morley rank. Under the assumption that there is an infinite elementary abelian $2$-subgroup, this is a straightforward corollary or subcase of \cite{ABCSimple} (see \cite[Theorem 4]{DJSmall}). In general the question is delicate and one should only hope to identify $\PSL_2(\K)$ and $\SL_2(\K)$ among such groups. This we do, and more, by giving restrictive information on the structure of potential counter-examples. In particular we show that such counter-examples would admit no infinite dihedral groups of automorphisms, which is likely to be of use in a prospective inductive setting.

As a matter of fact, the focus on outer involutive automorphisms, as opposed to inner involutions, became so prominent over the years (see \S\ref{s:history}) that we could take involutions out of the configurations, viz. our extra assumptions are not on the structure of the ``inner'' Sylow $2$-subgroup of the $N_\circ^\circ$-group under consideration but on the structure of that of an acting group; incidently, the inner $2$-structure is fairly well understood. Taking involutions out is a pleasant advance, but makes results slightly more complex to state.

Our Theorem below thus reads as follows: if a connected, non-soluble, $N_\circ^\circ$-group $G$ is a definable subgroup of some larger group of finite Morley rank (possibly equal to $G$) with a few assumptions on the action of outer involutions on $G$, then $G$ is either algebraic or one of four mutually exclusive configurations with common features; in any case the structure of the outer Sylow $2$-subgroup is well understood too. The existence of the four said configurations is a presumably difficult open question. But we do not need involutions inside $G$ to run the argument, and we are confident this will allow some form of induction.

The notations used below are all explained in \S\ref{S:facts}; a hasty expert curious about the proof will find informal remarks on methods at the end of the current subsection and a few words on its structure at the beginning of \S\ref{S:before}.

\begin{theorem*}
Let $\hat{G}$ be a connected, $U_2^\perp$ group of finite Morley rank and $G \trianglelefteq \hat{G}$ be a definable, connected, non-soluble, $N_\circ^\circ$-subgroup.

Then the Sylow $2$-subgroup of $G$ has one of the following structures: isomorphic to that of $\PSL_2(\C)$, isomorphic to that of $\SL_2(\C)$, a $2$-torus of Prüfer $2$-rank at most $2$.

Suppose in addition that for all involutions $\iota \in I(\hat{G})$, the group $C_G^\circ(\iota)$ is soluble.

Then $m_2(\hat{G}) \leq 2$, one of $G$ or $\hat{G}/G$ is $2^\perp$, and involutions are conjugate in $\hat{G}$. Moreover one of the following cases occurs:
\begin{description}
\item[$\bullet$ PSL$_2$:]
$G \simeq \PSL_2(\K)$ in characteristic not $2$; $\hat{G}/G$ is $2^\perp$;
\item[$\bullet$ CiBo$_\emptyset$:]
$G$ is $2^\perp$; $m_2(\hat{G}) \leq 1$; for $\iota \in I(\hat{G})$, $C_G(\iota) = C_G^\circ(\iota)$ is a self-normalising Borel subgroup of $G$;
\item[$\bullet$ CiBo$_1$:]
$m_2(G) = m_2(\hat{G}) = 1$; $\hat{G}/G$ is $2^\perp$; for $i \in I(\hat{G}) = I(G)$, $C_G(i) = C_G^\circ(i)$ is a self-normalising Borel subgroup of $G$;
\item[$\bullet$ CiBo$_2$:]
$\Pr_2(G) = 1$ and $m_2(G) = m_2(\hat{G}) = 2$; $\hat{G}/G$ is $2^\perp$; for $i \in I(\hat{G}) = I(G)$, $C_G^\circ(i)$ is an abelian Borel subgroup of $G$ inverted by any involution in $C_G(i)\setminus\{i\}$ and satisfies $\rk G = 3 \rk C_G^\circ(i)$;
\item[$\bullet$ CiBo$_3$:]
$\Pr_2(G) = m_2(G) = m_2(\hat{G}) = 2$; $\hat{G}/G$ is $2^\perp$; for $i \in I(\hat{G}) = I(G)$, $C_G(i) = C_G^\circ(i)$ is a self-normalising Borel subgroup of $G$; if $i \neq j$ are two involutions of $G$ then $C_G(i) \neq C_G(j)$.
\end{description}
\end{theorem*}

There is at present no hope to kill any of the non-algebraic configurations of type \textbf{CiBo} (``Centralisers of Involutions are Borel subgroups''; unlike the cardinal of the same name, these configurations are far from innocent). Three of these configurations were first and very precisely described in \cite{CJTame} under much stronger assumptions of both a group-theoretic and a model-theoretic nature, and the goal of \cite{BCJMinimal, DGroupes, DGroupes1, DGroupes2} merely was to carry the same analysis with no model-theoretic restrictions. Despite progress in technology, nothing new could be added on the \textbf{CiBo} configurations since their appearance in \cite{CJTame}.
So it is likely these potential monstrosities will linger for a while; one may even imagine that they ultimately might be proved consistent.

Beyond porting the description of non-algebraic configurations from the minimal connected simple setting \cite{DGroupes} %(see \S\ref{S:facts} for the definition) 
to the broader $N_\circ^\circ$ context, our theorem gives strong limitations on how these potential counterexamples would embed into bigger groups. This line of thought goes back to Delahan and Nesin proving that so-called simple bad groups have no involutive automorphisms (\cite{DNSharply}; \cite[Proposition 13.4]{BNGroups}).
The question of involutive automorphisms of minimal connected simple groups has already been addressed in \cite{BCDAutomorphisms} and \cite{FAutomorphisms}; we insist that a significant part of our results was not previously known in the minimal connected setting.
This is the reason why we believe that our theorem, however partial and technical it may look, will prove relevant to the classification project.

The present result therefore replaces a number of earlier (pre)publications: \cite{BCJMinimal, DGroupes, DGroupes1, DGroupes2, DJ4alpha, BCDAutomorphisms}, the contents of which are described in \S\ref{s:history} hereafter.
(We cannot dismiss Frécon's analysis \cite[Theorem 3.1]{FAutomorphisms} as it heavily uses the solubility of centralisers of $p$-elements, a property which might fail in the $N_\circ^\circ$ case.)
% 
% \begin{fact*}[{\cite[Theorem 3.1]{FAutomorphisms}}]
% Let $G$ be a minimal connected simple group with a nontrivial Weyl group. Then every connected definable automorphism group of $G$ is inner.
% \end{fact*}
% 
% We cannot extend the latter analysis since \cite{FAutomorphisms} heavily uses the solubility of centralisers of $p$-elements, a property we do not have in the $N_\circ^\circ$ case.
\medskip

And now we wish to say a few words about the proof. One cannot adapt \cite{TNonsolvable} and subsequent papers. The expert in finite group theory will appreciate here how little structure there is on a group of finite Morley rank. A finite analogue of \textbf{CiBo}$_1$, for instance, has a cyclic Sylow $2$-subgroup; for a variety of classical reasons it has a normal $2$-complement; if an $N$-group, it is soluble.
We would be delighted to see quick arguments removing finite analogues of \textbf{CiBo}$_2$ and \textbf{CiBo}$_3$. % (but have not thought about this).
In any case, however elementary they may seem, such methods are not available in our context.
Character theory, remarkably absent from \cite{TNonsolvable}, cannot be used either. Even Sylow theory (see \S\S\ref{s:semisimplicity} and \ref{s:Sylow}) is rudimentary. From finite group theory there remains of course $2$-local analysis, but we are dealing with small cases where one cannot apply the standard machinery, otherwise well acclimatised to the finite Morley rank setting.

The main group-theoretic method is then matching involutions against cosets, in the spirit of Bender as quoted in the beginning of \S\ref{s:genericity}. At times our arguments in this line are rather classical and Proposition~\ref{p:algebraicity} for instance may have a known counterpart in finite group theory, at times we suspect they are unorthodoxly convoluted like in Proposition~\ref{p:maximality}.
But this is our main method mostly because we lack a better option.
We also use a variant of local analysis \cite{BUniqueness} developed by Burdges for groups of finite Morley rank (\S\S\ref{s:unipotence} and \ref{s:Borel}). This will not surprise the expert.

As for model-theoretic methods, we see two main lines. First, we tend to focus on generic elements of groups, with the effect of smoothing phenomena. The general theory of genericity in model-theoretic contexts owes much to Cherlin and Poizat so one could refer the reader to \cite{PGroupes}, but thanks to the rank function it is a rather obvious notion here. In the same vein we often resort to connectedness arguments which from the point of view of algebraic group theory will always be straightforward. Typical of connectedness methods is Zilber's Indecomposibility Theorem \cite[Theorem 5.26]{BNGroups}.
The use of fields is the second essential feature; although Zilber's Field Theorem \cite[Theorem 9.1]{BNGroups} nominally appears only in the proofs of Propositions \ref{p:algebraicity} and \ref{p:Yanartas}, it underlies our knowledge of soluble groups, in particular the unipotence theory of \S\ref{s:unipotence} which is fundamental for the whole analysis.

The structure of the proof is briefly described in \S\ref{S:before}.

% What next then? The author would be delighted to share some thoughts on the possibility to classify simple groups of finite Morley rank no simple subquotients of which have a connected Sylow $2$-subgroup. But at the moment we are looking towards the past.

\subsection{Version History}\label{s:history}

The current subsection will be of little interest to a reader not familiar with the community of groups of finite Morley rank; we include it mostly because the present article marks the voyage's end.

The project of classifying $N_\circ^\circ$-groups with involutions started as early as 2007 under the suggestion of Borovik and yet is only the last chapter of an older story: the identification of $\PSL_2(\K)$ among small groups of odd type.
\begin{itemize}
\item
We could go back to Cherlin's seminal article on groups of small Morley rank \cite{CGroups} which identified $\PSL_2(\K)$, considered bad groups, and formulated the algebraicity conjecture. Other important results on $\PSL_2(\K)$ in the finite Morley rank context were found by Hrushovski \cite{HAlmost}, Nesin et Ali(i) \cite{NSharply, BDNCIT, DNZassenhaus}.
But we shall not go this far.
\item
Jaligot was the first to do something specifically in so-called odd type \cite{JFT}, adapting computations from \cite{BDNCIT} (we say a bit more in \S\ref{s:genericity} and \ref{s:algebraicity}).
%JTame
\item
Another preprint by Jaligot \cite{JTame}, then at Rutgers University, deals with \emph{tame} minimal connected simple groups of Prüfer rank $1$. (Tameness is a model-theoretic assumption on fields arising in a group, already used for instance in \cite{DNZassenhaus}.) In this context, either the group is isomorphic to $\PSL_2(\K)$, or centralisers$^\circ$ of involutions are Borel subgroups.

Quite interestingly the tameness assumption, viz. ``no bad fields'', appears there in small capitals and bold font each time it is used; it seems clear that Jaligot already thought about removing it.
\item
Jaligot's time at Rutgers resulted in a monumental article with Cherlin \cite{CJTame} where tame minimal connected simple groups were thoroughly studied and potential non-algebraic configurations carefully described. The very structure of our Theorem reflects the result of \cite{CJTame}.
\item
A collaboration between Burdges, Cherlin, and Jaligot \cite{BCJMinimal} was significant progress towards removing tameness: minimal connected simple groups have Prüfer rank at most $2$.
\item
Using major advances by Burdges (described in \S\S\ref{s:unipotence} and \ref{s:Borel}), the author could entirely remove the tameness assumption from \cite{CJTame} and reach essentially the same conclusions. This was the subject of his dissertation under the supervision of Jaligot (\cite{DGroupes}, published as \cite{DGroupes1, DGroupes2}).
\item
A few months before the completion of the author's PhD, the present project of classifying $N_\circ^\circ$-groups of finite Morley rank was suggested by Borovik, a task the author and Jaligot undertook with great enthusiasm and which over the years resulted in the series \cite{DJGroups, DJSmall, DJLie}.

A 2008 preprint \cite{DJ4alpha} was close to fully porting \cite{DGroupes} to the $N_\circ^\circ$ context. Involutions remained confined inside the group. (This amounts to supposing $\hat{G} = G$ in the Theorem.)
\item
While a post-doc at Rutgers University the author in an unpublished joint work with Burdges and Cherlin \cite{BCDAutomorphisms} went back to the minimal connected simple case but with outer involutory automorphisms. (This amounts to supposing $G$ minimal connected simple and $2^\perp$ in the Theorem.)
\item
Delays and shifts in interests postponed both \cite{DJ4alpha} and \cite{BCDAutomorphisms}. In the Spring of 2013 the author tried to convince Jaligot that time had come to redo \cite{DJ4alpha} in full generality, that is with outer involutions. The present Theorem was an ideal statement we vaguely dreamt of but we never discussed nor even mentioned to each other anything beyond as it looked distant enough. In March and April of that year we were trying to fix earlier proofs with all possible repair patches, and mixed success.

The author recalls how Jaligot would transcribe those meetings in a small red ``Rutgers'' notebook when visiting Paris. He
% only recovered a file of little relevance \cite{DJ4beta} but none of
did not recover 
these notes after Jaligot's untimely death.
\end{itemize}

And this is how a project started with great enthusiasm was completed in grief and sorrow; yet completed.
% The 2007 Turkish journey has ended as Baudelaire's Voyage: yet has ended.
The author feels he is now repaying his debt for the care he received as a student, for an auspicious dissertation topic, and for all the friendly confidence his advisor trusted him with.

In short I hope that the present work is the kind of monument \'Eric's shadow begs for.
I dare print that the article is much better than last envisioned in the Spring of 2013. Offended reader, understand --- that \emph{there} precisely lies my tribute to him.

% The red book may now rest wherever it is; it contained as the author remembers Propositions \ref{p:2structure}, \ref{p:genericity} and \ref{p:Yanartas} in their current forms, an inelegant version of Proposition~\ref{p:DevilsLadder}, rough sketches of Propositions \ref{p:algebraicity} and \ref{p:strongembedding} and some notes regarding Proposition~\ref{p:maximality}. None of the lemmas, and no sign of a theorem.
% It is a pity that Jaligot could not see the deep simplifications of arguments and the resulting unexpected architecture described in \S\ref{S:before}.

%Reviewing the stages of this mathematical \emph{voyage} had a striking effect: 
% \cite{DJGroups} was tedious; \cite{DJSmall} was shallow; \cite{DJLie} was useless; will the author be honest enough to judge the present paper? It is a rewriting of \cite{DGroupes}. Beyond the technicality, this enemy of mathematics, there is little novelty. This sounds like a pointless \textsc{XIX}$^{\rm th}$ century piano paraphrase.

% The article highly reminds us of Godowsky's Badinage (one of the \'Etudes sur les \'Etudes de Chopin): a combination of distinct pre-existing material. It also reminds us of one of his versions of op.25n.1: like a four-hand piece, but only one pianist.

Such a reconstruction would never have been even imaginable without the hospitality of the Mathematics Institute of NYU Shanghai during the Fall of 2013. The good climate and supportive staff made it happen. At various later stages the comments of Gregory Cherlin proved unvaluable, as always. Last but not least and despite the author's lack of taste for mixing genres, Lola's immense patience is most thankfully acknowledged.
% \medskip
% 
% A later addendum: Gregory Cherlin's remarks have as always been remarkably helpful.

\section{Prerequisites and Facts}\label{S:facts}

We have tried to make the article as self-contained as possible, an uneasy task since the theory of groups of finite Morley rank combines a variety of methods. Reading the prior articles in the series \cite{DJGroups, DJSmall, DJLie} is not necessary to understand this one.
In the introduction we already mentioned three general references \cite{PGroupes, BNGroups, ABCSimple}.
Yet we highly recommend the preliminaries of a recent research article, \cite[\S 2]{ABFWeyl}; the reader may wish to first look there before picking a book from the shelves.

We denote by $d(X)$ the definable hull of $X$, i.e. the smallest definable group containing $X$. If $H$ is a definable group, we denote by $H^\circ$ its connected component. If $H$ fails to be definable we then set $H^\circ = H\cap d^\circ(H)$. These constructions behave as expected.

One more word on general terminology: the author supports linguistic minorities.

\begin{definition*}[{\cite[Definition 3.1(4)]{DJGroups}}]
A group $G$ of finite Morley rank is an $N_\circ^\circ$-group if $N_G^\circ(A)$ is soluble for every nontrivial, definable, abelian, connected subgroup $A \leq G$.
\end{definition*}

\begin{remarks*}\
\begin{itemize}
\item
The property was named $*$-local$_\circ^\circ$ solubility  in \cite{DJGroups, DJSmall, DJLie}; the $*$- prefix was a mere warning to the eye in order to distinguish from local conditions in the usual sense, the lower $_\circ$ was supposed to stand for the connectedness assumption on $A$, and the upper $^\circ$ symbolised the conclusion only being on the connected component $N_G^\circ(A)$.

We prefered to adapt Thompson's $N$- terminology from \cite{TNonsolvable} by simply adding connectedness symbols.
\item
We do require full $N_\circ^\circ$-ness in our proofs and can apparently not restrict to a certain class of local subgroups. \cite{GLNonsolvable} for instance extends Thompson's classification of the non-soluble, finite $N$-groups to the non-soluble, finite groups where only $2$-local subgroups are supposed to be soluble (i.e., when $A$ above must in addition be a $2$-group).

Such a generalisation looks impossible in our setting as will become obvious during the proof, simply because we must take too many normalisers.
\item
Many results in the present work will be stated for $N_\circ^\circ$-groups of finite Morley rank. With our definition this is redundant but as other contexts, model-theoretic in particular, give rise to a notion of a connected component, this also is safer.
\end{itemize}
\end{remarks*}

\begin{remark*}[and Definition]
An extreme case of an $N_\circ^\circ$-group $G$ is when all definable, connected, proper subgroups of $G$ are soluble; $G$ is then said to be \emph{minimal connected simple}. As opposed to past work (see \S\S\ref{s:result} and \ref{s:history}) the present article does not rely on minimal connected simplicity.

Like we said in the introduction, there is no hope to prove that $N_\circ^\circ$-groups are close to being minimal connected simple. One could expect many more configurations \cite[\S3.3]{DJGroups}.
\end{remark*}

As one imagines, involutions will play a major role. We denote by $I(G)$ the set of involutions in $G$; $i, j, k, \ell$ will stand for some of them. We also use $\iota, \kappa, \lambda$ for involutions of the bigger, ambient group $\hat{G}$.
% We are not very happy with this notation.
When a group has no involutions, we call it $2^\perp$.
We shall refer to the following as ``commutation principles''.

\begin{fact}\label{f:commutation}
Suppose that there exists some involutive automorphism $\iota$ of a semidirect product $H \rtimes K$, where $K$ is $2$-divisible, and that $\iota$ centralises or inverts $H$, and inverts $K$. Then $[H, K] = 1$.
\end{fact}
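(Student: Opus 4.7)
The plan is to pick arbitrary $h \in H$ and $k \in K$ and to evaluate $\iota(khk^{-1})$ in two different ways. Since $H \trianglelefteq H \rtimes K$, the conjugate $khk^{-1}$ lies in $H$, so the hypothesis on $\iota$ restricted to $H$ applies to it as well as to $h$.

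Applied as an automorphism and using $\iota(k) = k^{-1}$, one gets $\iota(khk^{-1}) = k^{-1} \iota(h) k$. Applied directly on $H$, one gets $\iota(h) = h^\varepsilon$ and $\iota(khk^{-1}) = (khk^{-1})^\varepsilon = k h^\varepsilon k^{-1}$ with the \emph{same} $\varepsilon \in \{+1, -1\}$ in both places (either $+1$ if $\iota$ centralises $H$, or $-1$ if $\iota$ inverts $H$). Comparing yields
\[
k^{-1} h^\varepsilon k = k h^\varepsilon k^{-1},
\]
equivalently $k^2$ commutes with $h^\varepsilon$, hence with $h$. Now the assumption that $K$ is $2$-divisible kicks in: every $k \in K$ is of the form $k_0^2$ for some $k_0 \in K$, so $k$ itself commutes with $h$. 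As $h$ and $k$ were arbitrary, $[H, K] = 1$.

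I anticipate no real obstacle here; the proof is essentially a one-line computation. The only point to watch is sign consistency: one must note that $khk^{-1}$ lies in $H$ so that the hypothesis ``$\iota$ acts on $H$ as $x \mapsto x^\varepsilon$'' can be applied simultaneously to $h$ and to $khk^{-1}$ with the same exponent. The rest is just the cancellation of a square, which is precisely what $2$-divisibility of $K$ provides.
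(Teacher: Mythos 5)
Your proof is correct, and since the paper states this commutation principle as a bare \emph{Fact} with no proof, your computation of $\iota(khk^{-1})$ in two ways --- via the automorphism property and via the uniform action on the normal subgroup $H$ --- followed by cancelling the square using $2$-divisibility of $K$, is exactly the standard folklore argument the authors have in mind. Nothing to add.
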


\subsection{Semisimplicity}\label{s:semisimplicity}

% The principle below may be the only general statement one can make about torsion in groups of finite Morley rank;

In what follows, $p$ stands for a prime number.

\begin{fact}[{torsion lifting, \cite[Exercise 11 p.98]{BNGroups}}]
Let $G$ be a group of finite Morley rank, $H \trianglelefteq G$ be a normal, definable subgroup and $x \in G$ be such that $xH$ is a $p$-element in $G/H$. Then $d(x) \cap xH$ contains a $p$-element of $G$.
\end{fact}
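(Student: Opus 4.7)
Set $D := d(x)$; since $\langle x \rangle$ is abelian, $D$ is an abelian definable subgroup of $G$. I would first identify $DH/H$ with $d(xH)$ inside $G/H$: the former, being a definable subgroup of $G/H$ containing $xH$, contains $d(xH)$; conversely, the $G$-preimage of $d(xH)$ is a definable subgroup of $G$ containing both $x$ and $H$, hence contains $DH$. Thus $D/(D \cap H) \simeq d(xH)$ is a definable $p$-group.

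The statement then reduces to the following purely abelian claim: in a definable abelian group $D$ of finite Morley rank, if $B \leq D$ is a definable subgroup with $D/B$ a $p$-group, then every coset of $B$ in $D$ meets the set of $p$-elements of $D$. Applying this with $B := D \cap H$ to the coset $x(D \cap H) \subseteq d(x) \cap xH$ would produce the required element.

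To prove the abelian claim I would invoke the standard structure of definable abelian groups of finite Morley rank. The connected component $D^\circ$ splits as $D_d \oplus D_b$ with $D_d$ divisible and $D_b$ of bounded exponent, the index $[D : D^\circ]$ is finite, and the set $T_p$ of $p$-elements of $D$ forms a definable subgroup. The assumption that $D/B$ is a $p$-group forces $T_p B = D$: any $p'$-torsion element of $D$ projects to a $p'$-torsion element of a $p$-group, hence lies in $B$; the Prüfer $p$-part of $D_d$ covers the maximal divisible $p$-subgroup of $D/B$ by divisibility; and the $p$-primary part of $D_b$ takes care of the remaining bounded-exponent contribution. Choosing $t \in T_p$ with $tB = xB$ then yields a $p$-element of $D$ sitting in $x(D \cap H) \subseteq d(x) \cap xH$.

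The main obstacle is the abelian lifting step $T_p B = D$. It is not deep, but it depends squarely on the Macintyre-style structure theorem for definable abelian groups of finite Morley rank; once granted, the identification $DH/H = d(xH)$ and the extraction of the $p$-element from the coset of $x$ are merely formal bookkeeping.
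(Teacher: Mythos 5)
The paper does not prove this statement: it is quoted verbatim as a Fact from the literature (Exercise 11, p.~98 of Borovik--Nesin), so there is no in-house proof to compare against. Your argument is essentially the standard solution to that exercise and it is correct: $d(x)$ is abelian, the identification $d(x)H/H = d(xH)$ is exactly as you say, and since $xH$ has finite order the group $\langle xH\rangle$ is finite hence definable, so $d(xH)=\langle xH\rangle$ is a \emph{finite} cyclic $p$-group. This last observation simplifies your abelian lifting step considerably: with $B = d(x)\cap H$ of finite index in $D=d(x)$, Macintyre's decomposition $D = E\oplus C$ ($E$ definable divisible, $C$ of bounded exponent) gives $E\le B$ outright, because a divisible subgroup of a finite group is trivial; writing $x = ec$ and $c = c_p c_{p'}$, the $p'$-part dies in the $p$-group $D/B$, so $c_p \in xB \subseteq d(x)\cap xH$ is the required $p$-element. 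Two small corrections. First, Macintyre's theorem decomposes the whole abelian group $D$, not merely $D^\circ$, so the detour through $[D:D^\circ]$ is unnecessary (and, as written, leaves the cosets of $D^\circ$ unaccounted for). Second, the set $T_p$ of $p$-elements of $D$ is a subgroup but is \emph{not} definable in general --- the Prüfer $p$-subgroup of $\C^\times$ is countably infinite, hence not definable --- so the equation $T_pB = D$ should be read as a plain group-theoretic identity, not as a statement about definable subgroups; fortunately your argument only ever needs one $p$-element in one coset, so nothing is lost.
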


Apart from the above principle, most of our knowledge of torsion relies either on the assumption that $p = 2$, on some solubility assumption, or on a $U_p^\perp$-ness assumption explained below.
% The latter was put forth in \cite{BCSemisimple}, which shed considerable light on our knowledge of groups of finite Morley rank with non-trivial divisible torsion subgroups.

\begin{itemize}
\item
To emphasize the case where $p=2$, recall that in groups of finite Morley rank maximal $2$-subgroups, also known as Sylow $2$-subgroups, are conjugate (\cite[Theorem 10.11]{BNGroups}, originating in \cite{BInvolutions}). As a matter of fact, their structure is known \cite[Corollary 6.22]{BNGroups}. If $S$ is a Sylow $2$-subgroup then $S^\circ = T \ast U_2$ where $T$ is a $2$-torus and $U_2$ a $2$-unipotent group. Let us explain the terminology:
\begin{itemize}
\item
$T$ is a sum of finitely many copies of the Prüfer $2$-group, $T \simeq \Z_{2^\infty}^d$, and $d$ is called the Prüfer $2$-rank of $T$, which we denote by $\Pr_2(T) = d$. By conjugacy, $\Pr_2(G) = \Pr_2(T)$ is well-defined. Interestingly enough, $N_G^\circ(T) = C_G^\circ(T)$ \cite[Theorem 6.16, ``rigidity of tori'']{BNGroups}; the latter actually holds for any prime.
\item
$U_2$ in turn has bounded exponent. We shall mostly deal with groups having no infinite such subgroups, and we call them $U_2^\perp$ \emph{groups}.
\end{itemize}
The $2$-rank $m_2(G)$ is the maximal rank (in the finite group-theoretic sense) of an elementary abelian $2$-subgroup of $G$; again this is well-defined by conjugacy. A $U_2^\perp$ assumption implies finiteness of $m_2(G)$; one always has $\Pr_2(G) \leq m_2(G)$; see \cite{DpRank} for some reverse inequality.
\item
Actually the same holds for any prime $p$ provided the ambient group of finite Morley rank is soluble (\cite[Theorem 6.19 and Corollary 6.20]{BNGroups}, originating in \cite{BPTores}). In case the ambient group is also connected, then Sylow $p$-subgroups are connected \cite[Theorem 9.29]{BNGroups}.
We call this fact the structure of torsion in definable, connected, soluble groups.
\item
A group of finite Morley rank is said to be $U_p^\perp$ (also: of $p^\perp$-type) if it contains no infinite, elementary abelian $p$-group.
A word on Sylow $p$-subgroups of $U_p^\perp$ groups is said in \S\ref{s:Sylow}.
\end{itemize}

For the moment we give another example of how we often rely either on some specific assumption on involutions, or on solubility.

\begin{fact}[{bigeneration, \cite[special case of Theorem 2.1]{BCGeneration}}]\label{f:bigeneration}
Let $\hat{G}$ be a $U_p^\perp$ group of finite Morley rank. Suppose that $\hat{G}$ contains a non-trivial, definable, connected, normal subgroup $G \trianglelefteq \hat{G}$ and some elementary abelian $p$-group of $p$-rank $2$, say $\hat{V} \leq \hat{G}$.
If $G$ is soluble, or if $p = 2$ and $G$ has no involutions, then $G = \langle  C_G^\circ(v): v \in \hat{V}\setminus\{1\}\rangle$.
\end{fact}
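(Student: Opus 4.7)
Set $H := \langle C_G^\circ(v) : v \in \hat{V}\setminus\{1\}\rangle$. By Zilber's Indecomposability Theorem \cite[Theorem 5.26]{BNGroups}, $H$ is a definable, connected subgroup of $G$; and since $\hat{V}$ is abelian, each $C_G^\circ(v)$ is normalised by $\hat{V}$, hence so is $H$. The goal is $H = G$, which I would prove by induction on $\rk(G)$.

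For the inductive step I would locate a non-trivial, proper, $\hat{V}$-invariant, definable, connected, $G$-normal subgroup $N \leq G$. Under the solubility hypothesis one takes the connected component of the last non-trivial term of the derived series (automatically characteristic and hence $\hat{V}$-invariant, and proper as soon as $G$ is non-abelian); in the $p=2$, $G$ is $2^\perp$ case, the coprimality of the $\hat{V}$-action on $G$ delivers such an $N$ by minimality. Applying the induction hypothesis to $G/N$ --- which remains connected, definable in $\hat{G}/N$, $U_p^\perp$, and either soluble or $2^\perp$ (the latter by torsion-lifting) --- one obtains that $G/N$ is generated by the images of the $C_G^\circ(v)$; the standard coprime-action centraliser-lifting $C_{G/N}^\circ(v) = C_G^\circ(v)\,N/N$ then yields $G = H\,N$. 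It remains to show $N \leq H$ for $N$ abelian, connected, $\hat{V}$-invariant.

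For the abelian case, I would use the coprime identity
\[ N = \sum_{W \leq \hat{V},\ |W| = p} C_N^\circ(W). \]
Decomposing $N$ modulo its finite $p$-torsion (allowed by $U_p^\perp$-ness) into $\hat{V}$-isotypic components via Maschke's Theorem, every such component is fixed by some proper subgroup of $\hat{V}$, because $\hat{V} \simeq (\Z/p\Z)^2$ admits no faithful one-dimensional character into $p$-th roots of unity. Since $C_N^\circ(W) \leq C_N^\circ(v) \leq H$ for any $v \in W \setminus \{1\}$, this gives $N \leq H$ and closes the induction.

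The main obstacle is the centraliser-lifting step in the quotient: showing $C_{G/N}^\circ(v) = C_G^\circ(v)\,N/N$ genuinely requires either the coprimality supplied by the $2^\perp$ hypothesis on $G$ (in the $p=2$ case), or the $p$-divisibility provided by $U_p^\perp$-ness combined with solubility, as residual $p$-torsion in $N$ can otherwise obstruct the lift and block the induction.
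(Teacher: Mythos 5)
First, note that the paper offers no proof of this statement: it is imported as a black box from \cite[Theorem 2.1]{BCGeneration}, so there is nothing internal to compare you against. Your skeleton --- induct on rank through an abelian, $\hat V$-invariant, connected normal section $N$, lift centralisers modulo $N$, and finish in the abelian case --- is indeed the classical route, and the \emph{soluble} branch is essentially sound: the last non-trivial term of the derived series supplies $N$, Lemma~\ref{l:Upiperpactions:centralisers} gives $C^\circ_{G/N}(v)=C^\circ_G(v)N/N$, and the abelian case is better done by the trace identity $p\cdot a=\sum_{W}\Tr_W(a)-\Tr_{\hat V}(a)$ (sum over the $p+1$ subgroups $W$ of order $p$) together with $p$-divisibility of a connected $U_p^\perp$ abelian group, rather than by Maschke: the $p$-torsion of such a group is a $p$-torus, not a finite group, and multiplication by $|W|=p$ is not invertible on it, so ``isotypic components'' need justification there. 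You would also have to treat separately the case $\hat V\cap N\neq 1$, where the image of $\hat V$ in $\hat G/N$ may no longer have $p$-rank $2$ and the inductive hypothesis does not apply as stated.

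The genuine gap is the case $p=2$ with $G$ involution-free but \emph{non-soluble}, which is exactly how the Fact is used in Step~\ref{p:dihedral:st:distinct} of Proposition~\ref{p:dihedral} and Step~\ref{t:st:Prüfer} of the Theorem. Your sentence ``the coprimality of the $\hat V$-action on $G$ delivers such an $N$ by minimality'' is not an argument: a connected $2^\perp$ group of finite Morley rank need not possess any proper non-trivial definable connected normal subgroup, since there is no analogue of the Feit--Thompson theorem in this category (the introduction insists on this point, and simple degenerate-type groups are not known not to exist). On such a group your induction has no first step, and the subgroup $H=\langle C^\circ_G(v)\rangle$, being merely $\hat V$-invariant and not normal, cannot be dispatched by simplicity either. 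This is precisely the hard half of \cite{BCGeneration}, where the argument is not a soluble-style induction but a separate analysis built on the decompositions $G=C_G(v)\cdot\{G,v\}$ for the three involutions, genericity computations and Zilber generation. As written, your proposal establishes only the soluble half of the Fact.
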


We finish with a property of repeated use.

\begin{fact}[Steinberg's torsion theorem, \cite{DSteinberg}]\label{f:Steinberg}
Let $G$ be a connected, $U_p^\perp$ group of finite Morley rank and $\zeta \in G$ be a $p$-element such that $\zeta^{p^n} \in Z(G)$. Then $C_G(\zeta)/C_G^\circ(\zeta)$ has exponent dividing $p^n$.
\end{fact}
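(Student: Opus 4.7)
I would argue by induction on $n$, with the case $n = 0$ trivial ($\zeta \in Z(G)$ forces $C_G(\zeta) = G$, connected by hypothesis). The essential content is concentrated in the case $n = 1$; the rest of the proof is an inductive reduction that combines torsion lifting with the structure of $U_p^\perp$ groups.

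\emph{Reduction to $n = 1$.} Granting the theorem for $n - 1$, apply it to the $p$-element $\zeta^p$, which satisfies $(\zeta^p)^{p^{n-1}} = \zeta^{p^n} \in Z(G)$: this gives that $C_G(\zeta^p)/C_G^\circ(\zeta^p)$ has exponent dividing $p^{n-1}$, so for any $g \in C_G(\zeta) \subseteq C_G(\zeta^p)$ one has $g^{p^{n-1}} \in H := C_G^\circ(\zeta^p)$. The subgroup $H$ is connected, $U_p^\perp$, of finite Morley rank, and $\zeta^p \in Z(H)$ by construction, so within $H$ we are in the $n = 1$ situation. Applying the $n = 1$ case inside $H$ to $g^{p^{n-1}} \in C_H(\zeta) = H \cap C_G(\zeta)$ and noting that any connected subgroup of $C_G(\zeta)$ lies in $C_G^\circ(\zeta)$, one concludes $g^{p^n} \in C_G^\circ(\zeta)$.

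\emph{The case $n = 1$.} Here $\zeta^p \in Z(G)$ and the task is to show that $g \in C_G(\zeta)$ forces $g^p \in C_G^\circ(\zeta)$; equivalently, the finite group $Q := C_G(\zeta)/C_G^\circ(\zeta)$ has exponent dividing $p$. After quotienting by the finite central subgroup $\langle \zeta^p \rangle$, which preserves both connectedness and $U_p^\perp$-ness, one may assume $\zeta$ itself has order $p$. I would then split a representative $g$ into its $p$- and $p'$-parts using torsion lifting, which lets us realise both parts by genuine torsion elements of $C_G(\zeta)$. The $U_p^\perp$ hypothesis keeps the $p$-part under control. The $p'$-part is the delicate half: it should be eliminated by a rank-theoretic argument on the conjugacy class $\zeta^G \cong G/C_G(\zeta)$, viewed as sitting underneath the $|Q|$-fold cover $G/C_G^\circ(\zeta) \to G/C_G(\zeta)$, exploiting the rigidity of $p$-torsion in connected $U_p^\perp$ groups to show that a $p'$-element of $Q$ would lift to an actual $p'$-element of $C_G(\zeta)$ whose definable hull commutes with $\zeta$ in a way incompatible with the given coset structure.

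\emph{Main obstacle.} The case $n = 1$ is the entire content of the theorem; the induction is routine once it is in place. Steinberg's classical proof in the algebraic group setting uses the smoothness of the orbit map $G \to \zeta^G$, an algebraic-geometric input that has no analogue in finite Morley rank. Replacing it by rank computations on conjugacy classes and by definable-hull manipulations on $p'$-torsion inside $C_G(\zeta)$ is genuinely delicate, and I expect this to be where the real work of \cite{DSteinberg} lies.
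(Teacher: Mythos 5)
First, a point of order: the paper does not prove this statement at all --- it is imported verbatim as a Fact from the two-page note \cite{DSteinberg}, so there is no in-paper argument to measure yours against. The only guidance the paper gives is the remark immediately following the Fact, namely that ``the argument essentially relies on the connectedness of centralisers of \emph{inner} tori obtained by Alt{\i}nel and Burdges''. Your outer layer is fine: the induction on $n$ and the reduction to $n=1$ inside $H = C_G^\circ(\zeta^p)$ is correct (one should add that $\zeta \in H$ and $\zeta^p \in Z(H)$ both need the torality principle --- a $p$-element lies in a maximal $p$-torus of its connected centraliser --- but this is a small and repairable point), and so is the observation that $C_H^\circ(\zeta) \leq C_G^\circ(\zeta)$.

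The genuine gap is that the case $n=1$, which you correctly identify as the entire content, is not proved; the sketch you give does not contain an argument for either half of it. ``The $U_p^\perp$ hypothesis keeps the $p$-part under control'' is not a proof that a $p$-element of $C_G(\zeta)\setminus C_G^\circ(\zeta)$ has order at most $p$ modulo $C_G^\circ(\zeta)$, and the proposed elimination of the $p'$-part by ``a rank-theoretic argument on the conjugacy class $\zeta^G$'' cannot work as stated: both $G/C_G(\zeta)$ and $G/C_G^\circ(\zeta)$ have degree $1$ (each is a transitive orbit of the connected group $G$), so there is no rank or degree obstruction to the finite covering $G/C_G^\circ(\zeta)\to G/C_G(\zeta)$ being nontrivial --- this is exactly why the smoothness-of-the-orbit-map argument has no naive analogue, and replacing it requires a specific $p$-local input rather than generic rank computations. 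That input is the one the paper names: by torality, $\zeta$ lies in a maximal $p$-torus $T \leq C_G^\circ(\zeta)$; a Frattini argument (conjugacy of maximal $p$-tori in the $U_p^\perp$ group $C_G^\circ(\zeta)$) writes any $x \in C_G(\zeta)$ as an element of $N_G(T)\cap C_G(\zeta)$ modulo $C_G^\circ(\zeta)$; and Alt{\i}nel--Burdges' theorem that $C_G(T)$ is \emph{connected} forces $C_G(T) \leq C_G^\circ(\zeta)$, so that the whole quotient $C_G(\zeta)/C_G^\circ(\zeta)$ is controlled by the finite group $N_G(T)/C_G(T)$ acting on the $p$-torus $T$ and fixing $\zeta$. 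Your plan never invokes connectedness of $C_G(T)$, torality, or any Frattini argument, and without them I do not see how either the exponent bound or the absence of $p'$-torsion in the quotient would follow. If you want to complete the proof rather than cite \cite{DSteinberg}, this is the machinery to bring in.
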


As the argument essentially relies on the connectedness of centralisers of \emph{inner} tori obtained by Alt\i nel and Burdges \cite[Theorem 1]{ABAnalogies}, one should not expect anything similar for outer automorphisms of order $p$, not even for outer toral automorphisms.

\subsection{Sylow Theory}\label{s:Sylow}

By definition, a \emph{Sylow $p$-subgroup} of some group of finite Morley rank is a maximal, \emph{soluble} $p$-subgroup.
It turns out that for a $p$-subgroup of a group of finite Morley rank, solubility is equivalent to local solubility (in the usual sense of finitely generated subgroups being soluble) \cite[Theorem 6.19]{BNGroups}, so every soluble $p$-subgroup is contained in some Sylow $p$-subgroup all right.
But the solubility requirement is not for free: even if a group of finite Morley rank $G$ is assumed to be $U_p^\perp$, it is not known whether every $p$-subgroup of $G$ is soluble; as a matter of fact it is apparently not known whether $G$ can embed a Tarski monster.
In short, a Sylow $p$-subgroup is not necessarily a maximal $p$-subgroup, even in the $U_p^\perp$ case. We now focus on Sylow $p$-subgroups.

As suggested above, Sylow $p$-subgroups of a $U_p^\perp$ group of finite Morley rank are toral-by-finite \cite[Corollary 6.20]{BNGroups}. There is more.

\begin{fact}[{\cite[Theorem 4]{BCSemisimple}}]\label{f:Sylowconjugate}
Let $G$ be a $U_p^\perp$ group of finite Morley rank. Then Sylow $p$-subgroups of $G$ are conjugate.
\end{fact}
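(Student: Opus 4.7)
The plan splits into two stages: establishing conjugacy of maximal $p$-tori, then upgrading this to conjugacy of full Sylow $p$-subgroups via torsion lifting. For the first stage, recall from the toral-by-finite structure of Sylow $p$-subgroups in a $U_p^\perp$ group that the connected component of any Sylow $p$-subgroup is a $p$-torus, so ``maximal $p$-torus'' is a well-defined notion. The rigidity of tori gives $N_G^\circ(T) = C_G^\circ(T)$ for every $p$-torus $T$. Conjugacy of maximal $p$-tori is then established by a generic covering argument in the spirit of Cherlin's treatment of decent tori: the union $\bigcup_{g \in G} g T g^{-1}$ is generic in a predictable definable subset of $G$, and a dimension count on pairs of conjugates with infinite intersection, combined with rigidity and the maximality of the tori in play, produces some $g \in G$ with $g T_1 g^{-1} = T_2$.

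For the second stage, let $S_1, S_2$ be Sylow $p$-subgroups of $G$. Each $S_i^\circ$ is a maximal $p$-torus, and by Stage 1 I may conjugate so that $S_1^\circ = S_2^\circ =: T$. Both $S_1$ and $S_2$ then sit inside $H := N_G(T)$, and their images $\bar S_i = S_i / T$ in $\bar H := H / T$ are finite $p$-groups. They are moreover maximal such: any $p$-element $\bar x$ of $\bar H$ lifts, by torsion lifting, to a $p$-element of $H$, and then together with $T$ one obtains a $p$-subgroup of $G$ extending $S_i$, so maximality of $S_i$ forces $\bar x \in \bar S_i$. It then suffices to conjugate the finite $p$-groups $\bar S_1, \bar S_2$ inside $\bar H$. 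Using the rigidity identity $H^\circ / T = C_G^\circ(T)/T$, the subgroup $\bar H^\circ$ centralises $T$, so one may push $\bar S_1$ and $\bar S_2$ into the finite quotient $\bar H / \bar H^\circ$ where ordinary Sylow theory applies; a Frattini-style argument together with one last appeal to torsion lifting then lifts the conjugation back to an element of $G$ carrying $S_1$ onto $S_2$.

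The main obstacle is clearly Stage 1: proving conjugacy of maximal $p$-tori in an arbitrary $U_p^\perp$ group of finite Morley rank. This genuinely requires a generic covering argument for tori of infinite exponent, in the style of Cherlin's conjugacy of maximal decent tori, and is where the real work lies. Everything else --- toral-by-finiteness, rigidity of tori, and torsion lifting --- is already available in the preliminaries, and once Stage 1 is in hand Stage 2 is essentially bookkeeping around these principles.
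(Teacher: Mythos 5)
First, a point of order: the paper does not prove this statement. It is imported verbatim as a Fact from \cite[Theorem 4]{BCSemisimple}, so there is no internal proof to compare yours against. That said, your two-stage architecture --- conjugacy of maximal $p$-tori via a Cherlin-style generic covering argument, then a reduction to finite Sylow theory in the normaliser of a common maximal $p$-torus --- is indeed the architecture of the actual proof in that reference, and you are right that Stage~1 (resting on conjugacy of maximal decent tori) carries most of the weight.

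The genuine gap is that Stage~2 is not ``bookkeeping''. A minor issue first: $T$ is not definable, so $H/T$ is not a legitimate quotient in this category; one should work instead with $N_G(T)/C_G^\circ(T)$, which is finite by rigidity of tori. More seriously, pushing $\bar S_1$ and $\bar S_2$ into the finite quotient $\bar H/\bar H^\circ$ (and, later, recovering $S_1^h = S_2$ from $S_1^h \leq S_2\cdot C_G^\circ(T)$ after lifting the conjugating element) requires knowing that $S_i \cap C_G^\circ(T) = T$, i.e.\ that every $p$-element of $C_G^\circ(T)$ already lies in the central maximal $p$-torus $T$. That is precisely the torality principle (\cite[Corollary 3.1]{BCSemisimple}, quoted just after the present Fact in \S\ref{s:Sylow}), and it does not follow from torsion lifting: torsion lifting produces a $p$-element in a prescribed coset, but says nothing about whether a $p$-element centralising a maximal $p$-torus must lie in it. In \cite{BCSemisimple} that torality statement is a theorem of comparable depth, established alongside conjugacy by induction and genericity arguments, not downstream of it. So your outline is faithful to the known proof, but its second stage conceals a second theorem rather than routine verifications.
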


\begin{remarks*}
Let $\hat{G}$ be a $U_p^\perp$ group of finite Morley rank and $G \trianglelefteq \hat{G}$ be a definable, normal subgroup.
\begin{itemize}
\item
The Sylow $p$-subgroups of $G$ are exactly the traces of the Sylow $p$-subgroups of $\hat{G}$.

A Sylow $p$-subgroup of $G$ is obviously the trace of some Sylow $p$-subgroup of $\hat{G}$.
The converse is immediate by conjugacy of the Sylow $p$-subgroups in the $U_p^\perp$ group $\hat{G}$.
\item
The Sylow $p$-subgroups of $\hat{G}/G$ are exactly the images of the Sylow $p$-subgroups of $\hat{G}$. The following argument was suggested by Gregory Cherlin.

Let $\varphi$ be the projection modulo $G$.
Suppose that $\hat{S}$ is a Sylow $p$-subgroup of $\hat{G}$ but $\varphi(\hat{S})$ is not a Sylow $p$-subgroup of $\hat{G}/G$. Then by the normaliser condition \cite[Corollary 6.20]{BNGroups} there is a $p$-element $\alpha \in N_{\hat{G}/G}(\varphi(\hat{S}))\setminus\varphi(\hat{S})$, which we lift to a $p$-element $a \in \hat{G}$. Observe that $\alpha \in N_{\hat{G}/G}(\varphi(\hat{S}^\circ))$, so $\varphi([a, \hat{S}^\circ G]) = [\alpha, \varphi(\hat{S}^\circ)] \leq \varphi(\hat{S}^\circ G)$ and $a \in N_{\hat{G}}(\hat{S}^\circ G)$.

Now $N = N_{\hat{G}}(\hat{S}^\circ G)$ is definable since it is the inverse image of $N_{\hat{G}/G}(\varphi(\hat{S}^\circ))$ which is definable as the normaliser of a $p$-torus by the rigidity of tori. In particular, $N$ conjugates its Sylow $p$-subgroups, and a Frattini argument yields $N \leq \hat{S}^\circ G \cdot N_{\hat{G}}(\hat{S}) \leq G N_{\hat{G}}(\hat{S})$. Write $a = g n$ with $g \in G$ and $n \in N_{\hat{G}}(\hat{S})$; $n$ is a $p$-element modulo $G$, so lifting torsion there is a $p$-element $m \in d(n) \cap nG$. Then $m \in N_{\hat{G}}(\hat{S})$ and therefore $m \in \hat{S}$. Hence $a = gn \in nG = mG \subseteq \hat{S}G$ and $\alpha = \varphi(a) \in \varphi(\hat{S})$, a contradiction.

As a consequence the image of any Sylow $p$-subgroup of $\hat{G}$ is a Sylow $p$-subgroup of $\hat{G}/G$. The converse is now immediate, conjugating in $\hat{G}/G$.
%Details:
%Let $\Sigma \leq \hat{G}/G$ be a Sylow $p$-subgroup of $\hat{G}/G$ and let $\hat{S}\leq \hat{G}$ be a Sylow $p$-subgroup of $\hat{G}$. Then by conjugacy there is $\gamma \in \hat{G}/G$ with $\varphi(\hat{S})^\gamma = \Sigma$. We lift $\gamma$ to $g \in G$ and find $\varphi(\hat{S}^g) = \Sigma$.
\item
Without the $U_p^\perp$ assumption this remains quite obscure. The reader will find in \cite{PWSous-groupes, PWLiftez} a model-theoretic discussion.
\end{itemize}
\end{remarks*}

We shall refer to the many consequences of the following fact as ``torality principles''.

\begin{fact}[{\cite[Corollary 3.1]{BCSemisimple}}]
Let $\up$ be a set of primes.
Let $G$ be a connected group of finite Morley rank with a $\up$-element $x$ such that $C(x)$ is $U_{\up}^\perp$. Then $x$ belongs to any maximal $\up$-torus of $C(x)$.
\end{fact}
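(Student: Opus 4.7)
My plan has three stages: a conjugacy reduction to producing a \emph{single} maximal $\up$-torus of $C(x)$ containing $x$; the construction of a candidate torus from Sylow theory; and the core torality step, pushing $x$ into the connected component of that Sylow subgroup. I describe each in turn.

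\textbf{Stage 1 (conjugacy reduction).} First I observe that every $\up$-torus is divisible and hence connected, so every maximal $\up$-torus of $C(x)$ lies inside $C(x)^\circ$; the maximal $\up$-tori of $C(x)$ thus coincide with those of $C(x)^\circ$. By Sylow-type conjugacy of $\up$-tori in the $U_\up^\perp$ group $C(x)^\circ$ (prime by prime, in the spirit of Fact~\ref{f:Sylowconjugate}), any two of them are conjugate by some $g \in C(x)^\circ$. Such $g$ commutes with $x$, so the conjugation fixes $x$. Hence ``$x$ lies in one'' is equivalent to ``$x$ lies in every'' maximal $\up$-torus of $C(x)$, and it suffices to exhibit a single one containing $x$.

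\textbf{Stage 2 (candidate torus).} Next I embed $\langle x\rangle$ in a Sylow $\up$-subgroup $S$ of $C(x)$. The $U_\up^\perp$ hypothesis makes $S^\circ$ a $\up$-torus, maximal in $C(x)$. Since $x$ is central in $C(x)$, it centralises $S^\circ$, so $H := S^\circ\langle x\rangle$ is an abelian definable $\up$-subgroup of $C(x)$; as $S^\circ$ is divisible and $H/S^\circ$ is finite, one checks $H^\circ = S^\circ$. The theorem thus reduces to proving $x \in S^\circ$.

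\textbf{Stage 3 (core step, and main obstacle).} To rule out $x \notin S^\circ$, I proceed one prime at a time: if the image of $x$ in $H/S^\circ$ has nontrivial order $p^j$ for some $p \in \up$, divisibility of the $p$-part of $S^\circ$ yields $t \in S^\circ$ with $t^{p^j} = x^{p^j}$, so that $y := xt^{-1}$ is a $p$-element of order exactly $p^j$ sitting outside $S^\circ$ but centralising it. The clinching contradiction should come from combining the Alt\i nel--Burdges connectedness of centralisers of inner $\up$-tori (so $C_G(S^\circ)$ is a connected definable group containing both $S^\circ$ and $y$) with Steinberg's torsion theorem (Fact~\ref{f:Steinberg}) applied to $y$ in a suitable such centraliser, forcing $y$ into $S^\circ$. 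This last push is the hard part: the first two stages are routine applications of the $U_\up^\perp$ Sylow theory recalled in \S\S\ref{s:semisimplicity}--\ref{s:Sylow}, whereas the torality of $y$ rests delicately on the interaction between connectedness of centralisers of inner tori and Steinberg-type rigidity.
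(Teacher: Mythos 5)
This statement is quoted in the paper as a black-box citation of \cite[Corollary 3.1]{BCSemisimple}; the paper gives no proof of it, so there is nothing internal to compare against, and your proposal has to stand on its own as a proof of a known external theorem. It does not: Stage~3, which you correctly identify as the entire content of the result, is a gap rather than an argument.

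Concretely, three things go wrong at the core step. First, the mechanism is circular: Steinberg's torsion theorem (Fact~\ref{f:Steinberg}) is itself proved, in the reference the paper cites for it, \emph{using} the torality principle you are trying to establish (together with the Alt\i nel--Burdges connectedness of centralisers of tori); the paper's own remark following Fact~\ref{f:Steinberg} points at exactly this dependence. Second, even granting Steinberg, its conclusion is that $C_K(y)/C_K^\circ(y)$ has exponent dividing $p^j$ for a suitable connected $U_p^\perp$ group $K$ with $y^{p^j}\in Z(K)$; that is a statement about the component group of a centraliser, not about $y$ lying in a torus, and no route from one to the other is indicated. Third, there is no suitable $K$ available: $C_G(S^\circ)$ is connected (Alt\i nel--Burdges) but is not known to be $U_p^\perp$, since it need not sit inside $C(x)$, while $C^\circ(x)$ is $U_{\up}^\perp$ but is not known to contain $x$ --- and ``$x\in C^\circ(x)$'' is essentially equivalent to the torality being proved. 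The genuine proof in \cite{BCSemisimple} requires the degenerate-type theorem of \cite{BBCInvolutions} for $p=2$ and genericity arguments for maximal decent tori, none of which appears here. (Stages~1 and~2 are also not quite free: conjugacy of maximal $\up$-tori for a \emph{set} of primes does not follow ``prime by prime'' from Fact~\ref{f:Sylowconjugate} without an iterated argument in successive centralisers, and Fact~\ref{f:Sylowconjugate} comes from the same source and the same circle of ideas; but these are repairable, whereas Stage~3 is not.)
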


% \cite{BCSemisimple} has shed considerable light on the structure of groups of finite Morley rank which contain divisible torsion.
And now for some unrelated remarks involving some notions from \cite{CGood}. A decent torus is a definable, divisible, abelian subgroup which equals the definable hull of its torsion subgroup. Goodness is the hereditary version of decency: a good torus is a definable, connected subgroup all definable, connected subgroups of which are decent tori.

\begin{remarks*}\
\begin{itemize}
\item
Let $\hat{G}$ be a connected, $U_p^\perp$ group of finite Morley rank and $G \trianglelefteq \hat{G}$ be a definable, connected subgroup. If $\hat{T} \leq \hat{G}$ is a maximal $p$-torus of $\hat{G}$, then $T = \hat{T}\cap G$ is a maximal $p$-torus of $G$.

Let $\hat{S} \geq \hat{T}$ be a Sylow $p$-subgroup of $\hat{G}$. Then $S = \hat{S} \cap G$ is a Sylow $p$-subgroup of $G$. So $T = G \cap \hat{T} \leq G \cap \hat{S}^\circ \leq C_{S}(S^\circ) = S^\circ$ by torality principles. Hence $T \leq S^\circ \leq \hat{S}^\circ \cap G = \hat{T} \cap G = T$.
\item
This is not true for an arbitrary $p$-torus $\hat{\tau}\leq \hat{G}$: take two copies $T_1, T_2$ of $\Z_{2^\infty}$ with respective involutions $i$ and $j$; now let $\hat{G} = (T_1\times T_2)/\langle  ij\rangle$ and $G$ be the image of $T_1$. Then the intersection of (the image of) $T_2$ with $G$ is $\langle  \overline{i}\rangle$.
\item
This is not true if $\hat{G}$ is not $U_p^\perp$. Take for instance two Prüfer $p$-groups $T \simeq T' \simeq \Z_{p^\infty}$, an infinite elementary abelian $p$-group $A$, and a central product $K = T' \ast A$ with $T' \cap A = \langle  a\rangle \neq \{1\}$. Set $G = T \times A$ and $\hat{G} = T \times K$. One will find $\hat{T} = T \times T'$, but $\hat{T} \cap G = T \times \langle  a\rangle$ is not connected.
\item
Similarly, if $\hat{\Theta}$ is a good torus of $\hat{G}$ then $(\hat{\Theta}\cap G)^\circ$ is one of $G$, but connectedness of $\Theta = \hat{\Theta}\cap G$ is not granted even when $\hat{\Theta}$ is maximal;
of course connectedness holds if $G$ is $U_p^\perp$ for every prime number $p$.
%: let $t \in \Theta\setminus \Theta^\circ$. One may suppose $t^p \in \Theta^\circ$ for some prime $p$. Let $\Theta_p^\circ$ be the $p$-torus in $\Theta^\circ$, therefore a maximal $p$-torus of $G$. Since $G$ is $U_p^\perp$, torality principles and $t \in C_G(\Theta_p^\circ)$ imply $t \in \Theta_p^\circ \leq \Theta^\circ$. Therefore $\Theta = \Theta^\circ$.
\item
As for maximal decent tori, their connected intersections with subgroups need not be decent tori; in the language of the next subsection, $(0, 0)$-groups need not be homogeneous.
%take $\hat{\Theta} = \hat{G}$ to be one such with a non-trivial Frattini subgroup and let $H = \Phi^\circ(\hat{\Theta}) = (H \cap \hat{\Theta})^\circ$, by no means a decent torus.
\end{itemize}
All this begs for a notion of reductivity which is not our present goal.
\end{remarks*}

\subsection{Unipotence}\label{s:unipotence}

Developing a suitable theory of unipotence in the context of abstract groups of finite Morley rank took some time. One needs to describe a geometric phenomenon in group-theoretic terms. The positive characteristic notion may look straightforward to the hasty reader: when $p$ is a prime number, a $p$-\emph{unipotent subgroup} is a definable, connected, nilpotent $p$-group of bounded exponent. Yet the definition is naive only in appearance. First, nilpotence is perhaps not for free, as indicated in \S\ref{s:Sylow}. Second, Baudisch has constructed a non-abelian $p$-unipotent group not interpreting a field \cite{BNew}: as a consequence, the Baudisch group does not belong to algebraic geometry (for more on field interpretation, see \cite{GHStable}). Despite these technical complications, the notion of unipotence in positive characteristic remains rather intuitive.

Matters are considerably worse in characteristic zero as there is no intrinsic way to distinguish, say, some torsion-free subgroup of $\C^\times$ from the additive group of some other field. Unpublished work by Altseimer and Berkman dated 1998 on so-called ``pseudounipotent'' and ``quasiunipotent'' subgroups, two notions which we shall not define, therefore required tameness assumptions on fields arising in the structure (see \S\ref{s:history}).

Burdges found a satisfactory unipotence theory; the point (and also the difficulty) is that one has a multiplicity of notions in characteristic zero. We do not wish to describe his construction.
For a complete exposition of Burdges' unipotence theory, see Burdges' PhD \cite[Chapter 2]{BSimple}, its first formally published expositions \cite{BSignalizer, BSylow}, or the first article in the present series \cite{DJGroups}.

A \emph{unipotence parameter} is a pair of the form $(p, \infty)$ where $p$ is a prime, or $(0, d)$ where $d$ is a non-negative integer. The case $(0, 0)$ describes decent tori. We shall denote unipotence parameters by $\rho, \sigma, \tau$. For every parameter $\rho$, there is a notion of a $\rho$-group, and of the $\rho$-generated subgroup $U_\rho(G)$ of a group $G$. Bear in mind that by definition, a $\rho$-group is always definable, connected, and nilpotent; the latter need not hold of the $\rho$-generated subgroup even if the ambient group is soluble.

\begin{notation*}
We order unipotence parameters as follows:
\[(2, \infty) \succ (3, \infty) \succ \dots \succ (p, \infty) \succ \dots \succ (0, \rk(G)) \succ \dots \succ (0, 0)\]
\end{notation*}

\begin{notation*}\
\begin{itemize}
\item
For any group of finite Morley rank $H$, $\rho_H$ will denote the greatest unipotence parameter it admits, i.e. with $U_{\rho_H}(H) \neq 1$; we simply call it \emph{the} parameter of $H$.

(Any infinite group of finite Morley rank admits a parameter, possibly $(0, 0)$: see \cite[Theorem 2.19]{BSimple}, \cite[Theorem 2.15]{BSignalizer}, or \cite[Lemma 2.6]{DJGroups}.)

Be careful that the parameter of a group equal to its $\rho$-generated subgroup can be greater than $\rho$: take a decent torus which is not good and $\rho = (0, 0)$.
(More generally a definable, connected, soluble group $H$ has parameter $(0, 0)$ iff a good torus, but $H = U_{(0, 0)}(H)$ iff $H$ is generated by its decent tori.)
\item
For $\iota$ a definable involutive automorphism of some group of finite Morley rank, let $\rho_\iota = \rho_{C^\circ(\iota)}$.
\end{itemize}
\end{notation*}

With these notations at hand let us review a few classical properties. The reader should be familiar with the following before venturing further.

\begin{fact}\label{f:unipotence}\
\begin{enumerate}[(i)]
\item
If $N$ is a connected, nilpotent group of finite Morley rank then $N = \bigast_{\rho} U_\rho(N)$ (central product) where $\rho$ ranges over all unipotent parameters (Burdges' decomposition of nilpotent groups: \cite[Theorem~2.31]{BSimple}, \cite[Corollary 3.6]{BSylow}, \cite[Fact 2.3]{DJGroups});
\item
if $H$ is a connected, soluble group of finite Morley rank, one has $U_{\rho_H}(H) \leq F^\circ(H)$ (\cite[Theorem 2.21]{BSimple}, \cite[Theorem 2.16]{BSignalizer}, \cite[Fact 2.8]{DJGroups}); incidently, the connected component of the Fitting subgroup $F^\circ(H)$ is defined and studied in \cite[\S7.2]{BNGroups}; one has $H' \leq F^\circ (H)$ \cite[Corollary 9.9]{BNGroups};
\item\label{f:unipotence:UZFneq1}
if $H$ is as above then $U_{\rho_H}(Z(F^\circ(H))) \neq 1$ (\cite[Lemma 2.26]{BSimple}, \cite[Lemma 2.3]{BSylow});
\item
if a $\sigma$-group $V_\sigma$ normalises a $\rho$-group $V_\rho$ with $\rho \preccurlyeq \sigma$ then $V_\rho V_\sigma$ is nilpotent (\cite[Lemma 4.10]{BSimple}, \cite[Proposition 4.1]{BSylow}, \cite[Fact 2.7]{DJGroups});
\item\label{f:unipotence:pushandpull}
the image and preimage of a $\rho$-group under a definable homomorphism are $\rho$-groups (push-forward and pull-back: \cite[Lemma 2.12]{BSimple}, \cite[Lemma 2.11]{BSignalizer});
\item\label{f:unipotence:rhocommutator}
if $G$ is a soluble group of finite Morley rank, $S \subseteq G$ is any subset, and $H \trianglelefteq G$ is a $\rho$-subgroup, then $[H, S]$ is a $\rho$-group (\cite[Lemma 2.32]{BSimple}, \cite[Corollary 3.7]{BSylow});
\item
generalising the latter Frécon obtained a remarkable homogeneity result we shall \emph{not} use:
\begin{quote}
if $G$ is a connected group of finite Morley rank acting definably on a $\rho$-group then $[G, H]$ is a homogeneous $\rho$-group, i.e. all its definable, connected subgroups are $\rho$-groups (\cite[Theorem 4.11]{FAround}, \cite[Fact 2.1]{DJGroups}).
\end{quote}
{\rm The last phenomenon was deemed essential in all earlier versions of the present work, but to our great surprise one actually does not need it.
Frécon has developed in \cite{FAround} even subtler notions of unipotence with respect to isomorphism types instead of unipotence parameters.%, which we shall not use.
}
\end{enumerate}
\end{fact}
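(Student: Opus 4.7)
The statement compiles seven results of Burdges' unipotence theory, with Frécon's (vii) flagged as unused, so my ``proof'' cannot be a new argument: each item has an established reference. What I can offer is the conceptual scaffolding and an identification of the one item that really carries the weight.

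The unifying principle is that the order $\succ$ on unipotence parameters is designed precisely so that commuting a $\rho$-group with anything of strictly greater parameter yields a $\rho$-group, while mixing $\rho$- and $\sigma$-groups with $\rho \preccurlyeq \sigma$ produces a nilpotent structure. Taking this as a working axiom, I would first clear (v), which reduces to a definitional check on definability, connectedness, and nilpotence of images and connected preimages. Item (iv), the nilpotence of $V_\rho V_\sigma$ when $\rho \preccurlyeq \sigma$, would come next by iterated commutators: $[V_\rho, V_\sigma, \dots, V_\sigma]$ stays a $\rho$-group by (v), the descending chain must terminate on parameter grounds, and Zilber's indecomposibility theorem then yields nilpotence of the product.

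With (iv) in hand, item (i) is essentially standard: in a connected nilpotent $N$, distinct $U_\rho(N)$ and $U_\sigma(N)$ centralise one another (apply (iv) twice, switching the roles of normaliser and normalised) and together they generate $N$ by a parameter-exhaustion argument, giving the central-product decomposition. Item (ii), the push-down of $U_{\rho_H}(H)$ into $F^\circ(H)$, is the heart of the soluble theory; I would induct on derived length, using $H' \leq F^\circ(H)$ as the base and exploiting that a $\rho_H$-subgroup cannot drop its parameter below $\rho_H$ after passing to the abelian quotient $H/F^\circ(H)$. Item (iii) then falls out by applying (i) to $F^\circ(H)$ and using that a non-trivial nilpotent $\rho_H$-group has non-trivial $\rho_H$-centre; item (vi) is a Zilber-indecomposibility exercise on top of (iv) and (v), writing $[H,S]$ as a sum of definable connected pieces each generated under a single conjugation.

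The true obstacle, and the reason Burdges' theory took so long to crystallise, lies in the characteristic-zero content of (iv): there is no bounded-exponent criterion to lean on, and the nilpotence of $V_\rho V_\sigma$ hinges on the subtle parameter arithmetic isolated in \cite[Chapter~2]{BSimple}. That is where I would simply defer to the literature rather than attempt to reconstruct the argument. As for (vii), it is Frécon's homogeneity theorem and the authors explicitly say it is not used, so I would leave it untouched.
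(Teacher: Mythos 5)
The paper offers no proof of this Fact at all: it is a compilation of quoted results, each justified purely by the citations to Burdges' thesis, \cite{BSignalizer}, \cite{BSylow}, \cite{FAround} and \cite{DJGroups}, so your decision to defer the substantive content to those same references is exactly the paper's approach, and your ordering of the items ((v) and (iv) as primitives, then (i)--(iii) and (vi) on top) matches how the theory is actually built. One caution on the connective tissue you added: the map $h \mapsto [h,s]$ is not a group homomorphism in general, so (v) does not directly yield that iterated commutators are $\rho$-groups, and a definable connected subgroup of a $\rho$-group need not be a $\rho$-group (that failure is precisely why (vi) and Frécon's homogeneity (vii) are non-trivial theorems rather than remarks), so the sketches of (iv) and (vi) should not be mistaken for proofs --- but since you explicitly defer those to the literature, this is a presentational point rather than a gap.
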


By definition, a Sylow $\rho$-subgroup is a maximal $\rho$-subgroup. Recall from Burdges' decidedly inspiring thesis (\cite[\S4.3]{BSimple}, oddly published only in \cite[\S3.2]{FJConjugacy}) that if $\pi$ denotes a \emph{set} of unipotence parameters, then a Carter $\pi$-subgroup of some ambient group $G$ is a definable, connected, nilpotent subgroup $L$ which is $U_\pi$-self-normalising, i.e. with $U_\pi(N_G^\circ(L)) = L$ (the $\pi$-generated subgroup is defined naturally and always definable and connected). Carter subgroups, i.e. definable, connected, nilpotent, almost-self-normalising subgroups are examples of the latter where $\pi$ is the set of all unipotence parameters. All this is very well-understood in a soluble context \cite{WNilpotent,FSous}.

\subsection{Borel Subgroups and Intersections}\label{s:Borel}

\begin{definition*}
A Borel subgroup of a group of finite Morley rank is a definable, connected, soluble subgroup which is maximal as such.
\end{definition*}

We shall refer to the following as ``uniqueness principles''.

\begin{fact}[{\cite[from Corollary 4.3]{DJGroups}}]\label{f:uniqueness}
Let $G$ be an $N_\circ^\circ$-group of finite Morley rank and $B$ be a Borel subgroup of $G$.
Let $U \leq B$ be a $\rho_B$-subgroup of $B$ with $\rho_{C_G^\circ(U)}\preccurlyeq \rho_B$. Then $U_{\rho_B}(B)$ is the only Sylow $\rho_B$-subgroup of $G$ containing $U$. Furthermore $B$ is the only Borel subgroup of $G$ with parameter $\rho_B$ containing $U$.
\end{fact}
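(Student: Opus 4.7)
Set $\rho := \rho_B$ and $V := U_\rho(B)$. Every $\rho$-subgroup of $B$ lies in $V$, so $U \leq V$; and $V \leq F^\circ(B)$ by Fact~\ref{f:unipotence}(ii), making $V$ itself a nilpotent $\rho$-group. The preliminary remark, invoked throughout, is this: for every nontrivial definable connected nilpotent subgroup $N \leq G$, one has $N_G^\circ(N) \leq N_G^\circ(Z^\circ(N))$, and the latter is soluble by the $N_\circ^\circ$-hypothesis. Applied to $V$, one gets $N_G^\circ(V)$ soluble; since $B \leq N_G^\circ(V)$ (as $V$ is characteristic in $B$), Borel maximality forces $N_G^\circ(V) = B$.

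\medskip

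My first task is to show that any Sylow $\rho$-subgroup $V'$ of $G$ containing $U$ is of the form $U_\rho(B')$ for a Borel $B'$ of parameter \emph{exactly} $\rho$. The preliminary remark puts $V'$ into some Borel $B'$, with $\rho_{B'} \succcurlyeq \rho$. Assume $\rho_{B'} \succ \rho$ and put $W := U_{\rho_{B'}}(B')$, which is nontrivial and contained in $F^\circ(B')$ by Fact~\ref{f:unipotence}(ii). A Burdges-style interchange drawing on Fact~\ref{f:unipotence}(iv), (i), and (vi) --- using that $W$ is characteristic, hence normal, in $B'$, and locating the commutator $[W,V']$ inside $W$ --- yields that $V'W$ is nilpotent and a central product of its unipotent parts, forcing $[V', W] = 1$. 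But then $W \leq C_G^\circ(V') \leq C_G^\circ(U)$ is a nontrivial $\rho_{B'}$-subgroup of $C_G^\circ(U)$, contradicting $\rho_{C_G^\circ(U)} \preccurlyeq \rho$. Hence $\rho_{B'} = \rho$, after which $U_\rho(B') \leq F^\circ(B')$ is a $\rho$-group and $V' = U_\rho(B')$ by the maximality of $V'$.

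\medskip

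The heart of the proof is Borel uniqueness. Suppose $B^* \neq B$ is another Borel of parameter $\rho$ containing $U$; I would run a closest-pair contradiction on $\rk(B \cap B^*)^\circ$. Set $X := (B \cap B^*)^\circ$: a definable, connected, soluble group of parameter $\rho$ (any $\sigma$-subgroup of $X$ is a $\sigma$-subgroup of $B$, so $\sigma \preccurlyeq \rho_B$), whence $U^\dagger := U_\rho(X) \leq F^\circ(X)$ is a nilpotent $\rho$-group containing $U$, and $\rho_{C_G^\circ(U^\dagger)} \preccurlyeq \rho_{C_G^\circ(U)} \preccurlyeq \rho$ since $U^\dagger \supseteq U$. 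The preliminary remark applied to $U^\dagger$ places $N_G^\circ(U^\dagger) \supseteq X$ inside a Borel $B''$, which by the first task has parameter $\rho$. Comparing $B''$ with $B$ and $B^*$ --- using the maximal choice of $X$ together with the characteristic inclusion $N_G^\circ(V) = B$ for our $V$ --- one extracts a pair of distinct parameter-$\rho$ Borels through $U$ whose intersection is strictly richer than $X$, contradicting maximality.

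\medskip

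Existence of $V$ as a Sylow $\rho$-subgroup is then the wrap-up: extend $V$ by Zorn to a maximal $\rho$-subgroup $V''$; the first task gives $V'' = U_\rho(B'')$ for a Borel $B''$ of parameter $\rho$ containing $U$; Borel uniqueness forces $B'' = B$, hence $V'' = V$. The main obstacle is the Borel-uniqueness step: propagating the centraliser hypothesis into $U^\dagger$ is automatic from $U^\dagger \supseteq U$, but extracting the strictly richer pair is delicate, since subgroups of $\rho$-groups are not automatically $\rho$-groups in characteristic zero, so unipotence parameters must be tracked carefully along the way, with Burdges' normaliser condition in nilpotent $\rho$-groups as the essential lever.
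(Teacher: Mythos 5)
You should first be aware that the paper offers no proof of this statement: it is imported verbatim from \cite[Corollary 4.3]{DJGroups}, where it rests on a uniqueness theorem whose proof is a substantial induction built on Burdges' unipotence theory. So your attempt has to stand entirely on its own, and it has a concrete gap in the ``first task''. To force $[V',W]=1$ you invoke Fact~\ref{f:unipotence}(iv), but that criterion requires the group with the \emph{larger} unipotence parameter to be the one doing the normalising. In your configuration the roles are reversed: $W=U_{\rho_{B'}}(B')$ has parameter $\rho_{B'}\succ\rho$ and is the \emph{normalised} (indeed normal) subgroup, while the $\rho$-group $V'$ merely sits inside $B'$ and acts on $W$; nothing makes $W$ normalise $V'$. ``Less unipotent normalises more unipotent'' is exactly the shape of a non-nilpotent Borel such as $\K_+\rtimes\K^\times$, so no nilpotence of $V'W$ --- hence no centralisation --- follows. (Nor can you fall back on the central product decomposition of $F^\circ(B')$: Fact~\ref{f:unipotence}(ii) only places the \emph{top}-parameter radical in the Fitting subgroup, so $V'$ need not lie in $F^\circ(B')$ at all.) Without $[V',W]=1$ the contradiction with $\rho_{C_G^\circ(U)}\preccurlyeq\rho$ evaporates. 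The conclusion you were after should itself worry you: the statement is carefully phrased to leave open that $U$ lies in Borel subgroups of \emph{other} parameters --- that is precisely the maximal-pair phenomenon of Facts~\ref{f:Bender} and~\ref{f:Bender2} --- so an easy proof that every Borel over $V'$ has parameter exactly $\rho$ was never on the table.

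The second problem is the decisive sentence of your Borel-uniqueness step: ``one extracts a pair of distinct parameter-$\rho$ Borels through $U$ whose intersection is strictly richer than $X$.'' That extraction \emph{is} the theorem. Nothing in your setup produces a normaliser of $U^\dagger$ inside $B$ or $B^*$ strictly larger than $X$ (the normaliser condition is available in nilpotent groups, and $X$ need not be nilpotent), and nothing rules out $X$ being non-abelian with $U_\rho(X)$ self-normalising on both sides; taming such intersections is exactly why Burdges' Bender method exists. Your overall architecture --- Sylow uniqueness first, then a closest-pair argument for Borel subgroups, with the hypothesis $\rho_{C_G^\circ(U)}\preccurlyeq\rho_B$ propagated to overgroups of $U$ --- is the right skeleton, and your preliminary remark and the computation $\rho_X=\rho$ are fine; but as written the two load-bearing steps are asserted rather than proved.
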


\begin{remarks*}\
\begin{itemize}
\item
Because of our ordering on unipotence parameters and our definition of $\rho_B$, the result does hold when $\rho_B = (0, 0)$, i.e. for $B$ a good torus (cf. \cite[Remark (3) after Theorem 4.1]{DJGroups}).

It would actually suffice to preorder parameters by $(0, k+1) \succ (0, k)$, and $(p, \infty) \succ (0, 0)$ for any prime number $p$.
\item
In particular, if $G \trianglelefteq \hat{G}$ where $\hat{G}$ is another (not necessarily $N_\circ^\circ$-) group of finite Morley rank, then $N_{\hat{G}}(U) \leq N_{\hat{G}}(B)$.
\item
If $1 < U \trianglelefteq B$ is a non-trivial, normal $\rho_B$-subgroup of $B$ the result applies; we shall often use this with $U = U_{\rho_B}(Z(F^\circ(B)))$ (see Fact \ref{f:unipotence} \ref{f:unipotence:UZFneq1}).
\end{itemize}
\end{remarks*}

For reference we list below the facts from Burdges' monumental rewriting \cite[\S 9]{BSimple}, \cite{BBender} of Bender's Method \cite{BUniqueness} that we shall use.
The method was devised to study intersections of Borel subgroups; it is quite technical. It will play an important role throughout the proof of our main Maximality Proposition~\ref{p:maximality}. As a matter of fact it does not appear elsewhere in the present article apart from Step~\ref{p:algebraicity:st:parametercontrol} of Proposition~\ref{p:algebraicity}.

It must be noted that the Bender method does \emph{not} finish any job; it merely helps treat non-abelian cases on the same footing as the abelian case. This will be clear during Step~\ref{p:maximality:st:Jkappa} of Proposition~\ref{p:maximality}. So the reader who feels lost here must keep in mind the following:
\begin{itemize}
\item
non-abelian intersections of Borel subgroups complicate the details but do not alter in the least the skeleton of the proof of Proposition~\ref{p:maximality};
\item
the utter technicality is, in Burdges' own words \cite{BSimple}, ``motivated by desperation'';
\item
such non-abelian intersections are not supposed to exist in the first place.
\end{itemize}

% On a historical note, let us quote a personal communication from Cherlin.
% \begin{quote}
% By the way, all of this Bender method stuff brings me back to a ``jam session'' in my house with \'Eric and Jeff -- at some point I commented that what we were doing was not composition but jazz, and he liked that.
% That is the place I recollect these intersections coming out more into the open, and probably some feeling that things needed to be made systematic relative to measures of unipotence, without so much reference to a specific configuration.
% The other thing about that is that I commented to Jeff later that it could be a long while before that part of his thesis actually was useful, but he should include it anyway. Of course, we turned around and started using it a few months later. But it wasn’t clear at first that one could really arrive at a definite result that way. We were rather expecting some configuration late in the classification project to come up with lots of detailed information, maybe some years later.
% \end{quote}

Since Burdges' original work was in the context of minimal connected simple groups we need to quote \cite{DJGroups} which merely reproduced Burdges' work in the $N_\circ^\circ$ case.

\begin{fact}[{\cite[4.46(2)]{DJGroups}}]\label{f:nilpab}
Let $G$ be an $N_\circ^\circ$-group of finite Morley rank. Then any nilpotent, definable, connected subgroup of $G$ contained in two distinct Borel subgroups is abelian.
\end{fact}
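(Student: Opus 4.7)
The approach is to argue by contradiction. Suppose $N$ is nilpotent, connected, definable and contained in two distinct Borel subgroups $B_1 \neq B_2$, but non-abelian. Since $N$ is nilpotent and connected, Burdges' decomposition (Fact~\ref{f:unipotence}\,(i)) writes it as a central product $N = \bigast_\rho U_\rho(N)$ of its $\rho$-components. A central product of abelian groups being abelian, non-abelianness of $N$ forces at least one factor $U_{\rho_0}(N)$ to be itself non-abelian; I would fix $\rho_0$ maximal with this property, noting that $[U_{\rho_0}(N), U_{\rho_0}(N)] \neq 1$ while all mixed commutators $[U_{\rho}(N), U_{\sigma}(N)]$ vanish for $\rho \neq \sigma$.

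Next I would study the interaction between $\rho_0$ and the parameters $\rho_{B_1}, \rho_{B_2}$. Since $U_{\rho_0}(N)$ is a $\rho_0$-subgroup of both $B_1$ and $B_2$, one has $\rho_0 \preccurlyeq \rho_{B_i}$ for $i = 1, 2$. The model subcase to treat is the equality $\rho_{B_1} = \rho_{B_2} = \rho_0$: here I would invoke Fact~\ref{f:unipotence}\,(\ref{f:unipotence:UZFneq1}) to locate a non-trivial normal $\rho_0$-subgroup $U$ inside $U_{\rho_0}(N)$ (for instance, $U = U_{\rho_0}(Z(F^\circ(N)))$, working inside the nilpotent $N$), then verify the centraliser hypothesis $\rho_{C_G^\circ(U)} \preccurlyeq \rho_0$ using that in an $N_\circ^\circ$-group the connected centraliser of $U$ is soluble and therefore its parameter is constrained by the one of a Borel containing it. Applying the uniqueness principle (Fact~\ref{f:uniqueness}) then says that both $B_1$ and $B_2$ are \emph{the} unique Borel of parameter $\rho_0$ containing $U$, forcing $B_1 = B_2$ and the desired contradiction.

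The step I expect to be the main obstacle is the remaining case, where at least one $\rho_{B_i}$ strictly dominates $\rho_0$. Here the parameter of the ambient Borel need not coincide with the witness parameter inside $N$, and one must promote the non-abelianness of $U_{\rho_0}(N)$ to a witness at parameter $\rho_{B_i}$. The natural tools are push-forward/pull-back (Fact~\ref{f:unipotence}\,(\ref{f:unipotence:pushandpull})) and the commutator-closure property (Fact~\ref{f:unipotence}\,(\ref{f:unipotence:rhocommutator})), which should let one carve out a non-trivial $\rho_{B_i}$-subgroup normalised by $N$ and sitting in both $B_1$ and $B_2$, reducing to the previous subcase. The delicate bookkeeping with the three parameters $\rho_0, \rho_{B_1}, \rho_{B_2}$ and the verification of the centraliser hypothesis at each step is where the real technical work lies, but the skeleton of the argument remains: non-abelianness produces a unipotent witness, the uniqueness principle assigns that witness to a single Borel of its own parameter, and both $B_1, B_2$ must be that one.
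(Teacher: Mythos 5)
A preliminary remark: the paper does not prove Fact~\ref{f:nilpab} at all --- it is imported as a black box from \cite[4.46(2)]{DJGroups} --- so your proposal can only be measured against the argument in that source, not against anything in the present text. The central gap in your sketch sits exactly where you declare victory in the ``model subcase''. Fact~\ref{f:uniqueness} carries the hypothesis $\rho_{C_G^\circ(U)} \preccurlyeq \rho_B$, and your justification --- that $C_G^\circ(U)$ is soluble ``and therefore its parameter is constrained by the one of a Borel containing it'' --- does not deliver it: the Borel subgroups containing $C_G^\circ(U)$ need not be $B_1$ or $B_2$ and may have parameter strictly larger than $\rho_0$, in which case $\rho_{C_G^\circ(U)} \succ \rho_0$ and the uniqueness principle simply does not apply at parameter $\rho_0$. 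The cheap way to secure the hypothesis is to take $U$ \emph{normal in the Borel subgroup} (see the remark following Fact~\ref{f:uniqueness}, which forces $C_G^\circ(U) \leq N_G^\circ(U) = B$); your witness $U = U_{\rho_0}(Z(F^\circ(N))) = U_{\rho_0}(Z(N))$ is normal only in $N$. This is not a removable technicality: the possible failure of $\rho_{C_G^\circ(U)} \preccurlyeq \rho_B$ is precisely the phenomenon the Bender method (Facts~\ref{f:Bender} and~\ref{f:Bender2}) exists to handle. A smaller problem with the same witness: Fact~\ref{f:unipotence}~\ref{f:unipotence:UZFneq1} guarantees $U_{\rho_H}(Z(F^\circ(H))) \neq 1$ only for the \emph{maximal} parameter $\rho_H$ of $H$, so $U_{\rho_0}(Z(N))$ may well be trivial for your chosen $\rho_0$.

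The ``remaining case'' $\rho_0 \prec \rho_{B_i}$ is not delicate bookkeeping; it is where the content of the lemma lives, and the tools you name cannot do the job. Push-forward (Fact~\ref{f:unipotence}~\ref{f:unipotence:pushandpull}) and the commutator property (Fact~\ref{f:unipotence}~\ref{f:unipotence:rhocommutator}) produce subgroups at the \emph{same} parameter as the input; nothing promotes a $\rho_0$-witness to a $\rho_{B_i}$-witness, and there is no reason for a nontrivial $\rho_{B_i}$-subgroup normalised by $N$ to lie in both Borel subgroups. Worse, when $\rho_{B_1} \neq \rho_{B_2}$ there is no common parameter at which Fact~\ref{f:uniqueness} could be applied to both; and Fact~\ref{f:Bender} records that a non-abelian maximal intersection \emph{forces} $\rho_{B_1} \neq \rho_{B_2}$, so this is exactly the configuration a proof must defeat. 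A workable route starts instead from the observation that if $N' \neq 1$ then $N_G^\circ(N')$ is a definable, connected, soluble group (by $N_\circ^\circ$-ness applied to $Z^\circ(N')$) into which the normaliser condition pushes both $N_{F^\circ(B_1)}^\circ(N')$ and $N_{F^\circ(B_2)}^\circ(N')$ strictly above $N'$, and runs the maximality and uniqueness analysis on that normaliser rather than on a central piece of $N$; I would encourage you to rebuild the argument along those lines or to consult \cite{DJGroups} directly.
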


Yet past the nilpotent case it is not always possible to prove abelianity of intersections of Borel subgroups. The purpose of the Bender method is then to extract as much information as possible from non-abelian intersections. Unfortunately ``as much as possible'' means much more than reasonable. This is the analysis of so-called \emph{maximal pairs} \cite[Definition 4.12]{DJGroups}, a terminology we shall avoid.

\begin{fact}[{from \cite[4.50]{DJGroups}}]\label{f:Bender}
Let $G$ be an $N_\circ^\circ$-group of finite Morley rank. Let $B \neq C$ be two distinct Borel subgroups of $G$. Suppose that $H = (B \cap C)^\circ$ is non-abelian.

Then the following are equivalent:
\begin{description}
[font=\normalfont]
\item[{\cite[4.50(1)]{DJGroups}}]
$B$ and $C$ are the only Borel subgroups of $G$ containing $H$;
\item[{\cite[4.50(2)]{DJGroups}}]
$H$ is maximal among connected components of intersections of distinct Borel subgroups;
\item[{\cite[4.50(3)]{DJGroups}}]
$H$ is maximal among intersections of the form $(B \cap D)^\circ$ where $D \neq B$ is another Borel subgroup;
\item[{\cite[4.50(6)]{DJGroups}}]
$\rho_{B} \neq \rho_{C}$.
\end{description}
\end{fact}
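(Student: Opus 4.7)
My plan is to split the four-way equivalence into a combinatorial block $(1) \Leftrightarrow (2) \Leftrightarrow (3)$ on the various flavours of maximality, and an analytic block linking those to the unipotence parameter condition $(4)$. The combinatorial part is soft and uses only the uniqueness principle (Fact~\ref{f:uniqueness}), whereas the analytic part --- in particular the direction $(4) \Rightarrow (3)$ --- is the substantive content and the main obstacle. The easy steps are $(2) \Rightarrow (3)$ (trivial, as $(B \cap D)^\circ$ is a particular instance of an intersection of two distinct Borels) and $(1) \Rightarrow (2)$: any strictly larger $(B' \cap C')^\circ \supsetneq H$ with $B' \neq C'$ forces $\{B', C'\} = \{B, C\}$ by $(1)$, hence equality, a contradiction. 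For $(3) \Rightarrow (1)$, I would first verify the symmetry of $(3)$ in $B$ and $C$ by working with a characteristic subgroup of $H$ such as $U_{\rho_H}(Z(F^\circ(H)))$ (non-trivial by Fact~\ref{f:unipotence} \ref{f:unipotence:UZFneq1}) that carries the same uniqueness information in $B$ and in $C$; then a third Borel $E \notin \{B, C\}$ containing $H$ is ruled out by applying maximality on both sides to obtain $(B \cap E)^\circ = H = (C \cap E)^\circ$, from which the triple intersection produces a contradiction.

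For $(1) \Rightarrow (4)$, I would argue by contradiction assuming $\rho_B = \rho_C = \rho$. Since $H$ is non-abelian and sits in two distinct Borels, Fact~\ref{f:nilpab} forces $H$ to be non-nilpotent. Combining this non-nilpotence with the $\rho$-information shared by $B$ and $C$, I would extract via Fact~\ref{f:unipotence} a non-trivial normal $\rho$-subgroup $U \trianglelefteq H$, contained in both $B$ and $C$, for which $\rho_{C_G^\circ(U)} \preccurlyeq \rho$; the uniqueness principle (Fact~\ref{f:uniqueness}) then forces $B = C$, a contradiction. Here non-abelianity is crucial precisely because nilpotent $H$ would leave us with no leverage against the hypothesis $\rho_B = \rho_C$, as the abelian case is genuinely different (cf.\ Fact~\ref{f:nilpab}).

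The main obstacle is $(4) \Rightarrow (3)$: assuming $\rho_B \succ \rho_C$, one must rule out a strictly larger intersection $H \subsetneq H' = (B \cap D)^\circ$ with $D \neq B$ another Borel. I would track the unipotence parameters $\rho_B$, $\rho_C$, $\rho_D$, $\rho_{H'}$ and repeatedly descend into Fitting subgroups and $U_\rho$-layers via Facts~\ref{f:unipotence} and~\ref{f:uniqueness}, eventually locating $D$ as $B$ or $C$ after a delicate case analysis that splits on the relative position of $\rho_D$ with respect to $\rho_B$ and $\rho_C$. The non-abelian case is the obstinate one, as one may then have to bookkeep $\rho$-layers of intersections of intersections before the uniqueness principle bites; this is precisely the technical content of Burdges' Bender method in \cite[\S 9]{BSimple} ported to the $N_\circ^\circ$-setting in \cite[\S 4]{DJGroups}, which is why the statement is cited here rather than reproved.
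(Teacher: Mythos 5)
A preliminary remark: the paper offers no proof of this statement at all --- it is imported verbatim as a Fact from \cite[4.50]{DJGroups}, itself a port of Burdges' Bender method, so there is no internal argument to compare yours against; the only fair comparison is between your sketch and the black box it tries to open. On its own terms, your proposal correctly isolates the two genuinely formal implications, namely $(1)\Rightarrow(2)$ and $(2)\Rightarrow(3)$, and correctly locates the centre of gravity. But the remaining directions all contain real gaps, and the claim that the block $(1)\Leftrightarrow(2)\Leftrightarrow(3)$ is ``soft'' does not hold up. For $(3)\Rightarrow(1)$ you never exhibit the advertised contradiction: knowing $(B\cap E)^\circ=H=(C\cap E)^\circ$ for a third Borel subgroup $E$ is not absurd on its face --- three distinct Borel subgroups may perfectly well pairwise intersect in the same connected subgroup unless one already has the unipotence analysis in hand (in \cite{DJGroups} this direction is recovered only after $\rho_B\neq\rho_C$ is known, via statements like Fact~\ref{f:Bender2}'s 4.52(8), which forces any Borel subgroup containing $U_{\rho_{H'}}(H)$ into the light one).

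The step from $(1)$ to the parameter condition $\rho_B\neq\rho_C$ is also broken as written: you propose to extract a non-trivial normal $\rho$-subgroup $U\trianglelefteq H$ with $\rho=\rho_B=\rho_C$, but nothing guarantees $U_\rho(H)\neq 1$; in the genuine maximal-pair configuration the parameter carried by $H'$ is the \emph{least} parameter occurring in $F(B_h)$ (Fact~\ref{f:Bender2}), typically not $\rho_B$. The workable soft route instead passes through $N_G^\circ(H')$, which is soluble (it normalises $Z^\circ(H')$, and $H'\neq 1$ is exactly where non-abelianity enters) and contains both $U_{\rho_B}(Z(F^\circ(B)))$ and $U_{\rho_C}(Z(F^\circ(C)))$; embedding it in a Borel subgroup $M$ and applying the uniqueness principles to these two normal unipotent subgroups forces $M=B=C$ when $\rho_B=\rho_C$, or produces a third Borel subgroup containing $H$ otherwise. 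Finally, for the hardest direction --- from $\rho_B\neq\rho_C$ back to maximality --- you offer only a programme (``track the parameters \dots\ after a delicate case analysis''), and you say yourself that this is the content of the Bender method and the reason the statement is cited rather than reproved. That self-diagnosis is accurate: as a proof, the proposal does not close, and the substantive equivalences must still be taken on the authority of \cite[4.50--4.52]{DJGroups}.
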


In the following, subscripts $\ell$ and $h$ stand for \emph{light} and \emph{heavy}, respectively.

\begin{fact}[{from \cite[4.52]{DJGroups}}]\label{f:Bender2}
Let $G, B_\ell, B_h, H$ be as in the assumptions \emph{and conclusions} of Fact \ref{f:Bender}.
For brevity let $\rho' = \rho_{H'}$, $\rho_\ell =  \rho_{B_\ell}$, $\rho_h = \rho_{B_h}$; suppose $\rho_\ell\prec \rho_h$.

Then the following hold:
\begin{description}[font=\normalfont]
\item[{\cite[4.52(2)]{DJGroups}}]
any Carter subgroup of $H$ is a Carter subgroup of $B_h$;
\item[{\cite[4.38, 4.51(3) and 4.52(3)]{DJGroups}}]
$U_{\rho'}(F(B_h)) = (F(B_h)\cap F(B_\ell))^\circ$ is $\rho'$-homogeneous; $\rho'$ is the least unipotence parameter in $F(B_h)$;
\item[{\cite[4.52(6)]{DJGroups}}]
$U_{\rho'}(H) \leq F^\circ(B_\ell)$ and $N_G^\circ(U_{\rho'}(H)) \leq B_\ell$;
\item[{\cite[4.52(7)]{DJGroups}}]
$U_\sigma(F(B_\ell)) \leq Z(H)$ for $\sigma \neq \rho'$;
\item[{\cite[4.52(8)]{DJGroups}}]
any Sylow $\rho'$-subgroup of $G$ containing $U_{\rho'}(H)$ is contained in $B_\ell$.
\end{description}
\end{fact}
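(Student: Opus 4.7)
The plan is to leverage the dichotomy of Fact~\ref{f:Bender} (exhaustive pair of Borels $B_\ell, B_h$, maximality of $H$ among connected intersections, and $\rho_\ell \prec \rho_h$), together with Burdges' unipotence calculus (Fact~\ref{f:unipotence}) and the Uniqueness Principle (Fact~\ref{f:uniqueness}). Since $H$ is non-abelian, $H' \neq 1$ and $\rho' = \rho_{H'}$ is a well-defined unipotence parameter. The strategic heart is to locate $\rho'$ sharply: first establish it is the least parameter occurring in $F(B_h)$, then use this to confine every $U_{\rho'}$-piece into $B_\ell$ via uniqueness. The containment $H' \leq F^\circ(H)$ given by Fact~\ref{f:unipotence}(ii) sets up the core of this bookkeeping, and the parameter bound $\rho' \preccurlyeq \rho_\ell$ drops out of $H' \leq B_\ell$ and the definition of $\rho_\ell$ as the maximal parameter of $B_\ell$.

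For the minimality of $\rho'$ in $F(B_h)$, I would rule out a smaller $\sigma \prec \rho'$ by combining Fact~\ref{f:unipotence}(iv)(vi): a nontrivial $U_\sigma(F(B_h))$ normalized by $H$ would yield a nilpotent definable connected subgroup straddling $B_h$ and some other Borel, strictly enlarging $H$ and contradicting maximality. For the Carter statement~(a), take $Q$ Carter in $H$; then $Q$ is nilpotent in both Borels, hence abelian by Fact~\ref{f:nilpab}. An element $x \in N_{B_h}^\circ(Q) \setminus Q$ would allow one to extend $H$ inside $B_h \cap B'$ for an appropriate Borel $B'$, contradicting maximality once more; so $Q$ is almost-self-normalising in $B_h$, i.e.\ a Carter subgroup of $B_h$. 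For the confinement statements $U_{\rho'}(H) \leq F^\circ(B_\ell)$, $N_G^\circ(U_{\rho'}(H)) \leq B_\ell$, and the Sylow $\rho'$ conclusion, I would apply Fact~\ref{f:uniqueness} to $B_\ell$: the side condition $\rho_{C_G^\circ(U_{\rho'}(H))} \preccurlyeq \rho_{B_\ell}$ holds because any strictly larger parameter centralising $U_{\rho'}(H)$ would import a third Borel containing $H$, contradicting the dichotomy. Uniqueness then identifies $B_\ell$ as the unique Borel of parameter $\rho'$ housing $U_{\rho'}(H)$, giving all three confinements at once.

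The equality $U_{\rho'}(F(B_h)) = (F(B_h) \cap F(B_\ell))^\circ$ and its $\rho'$-homogeneity come from combining these confinements with the minimality of $\rho'$ in $F(B_h)$ and the parameter bound $\preccurlyeq \rho_\ell$ inside $F(B_\ell)$: the central-product decomposition of Fact~\ref{f:unipotence}(i) collapses to a single unipotence component. For $U_\sigma(F(B_\ell)) \leq Z(H)$ with $\sigma \neq \rho'$, Fact~\ref{f:unipotence}(\ref{f:unipotence:rhocommutator}) gives $[U_\sigma(F(B_\ell)), H]$ as a $\sigma$-subgroup lying inside $F(B_\ell) \cap F(B_h) = U_{\rho'}(F(B_h))$, which is $\rho'$-homogeneous; with $\sigma \neq \rho'$ the commutator is forced trivial, so $U_\sigma(F(B_\ell))$ centralises $H$. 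The principal obstacle throughout is the initial pinpointing of $\rho'$ as the minimal parameter of $F(B_h)$: once that is in hand, everything downstream runs mechanically on Fact~\ref{f:uniqueness}, but the calibration itself demands a careful interplay between maximality of $H$, the parameter ordering $\rho_\ell \prec \rho_h$, and the nilpotent structure of the various fitting subgroups, including an honest check that no exotic parameter escapes the two Borels of the dichotomy.
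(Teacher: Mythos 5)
First, a point of order: the paper does not prove this statement at all. It is imported verbatim from \cite[4.52]{DJGroups}, which in turn reproduces Burdges' analysis of maximal pairs; the present article explicitly treats the Bender method as a black box whose ``utter technicality is \ldots motivated by desperation''. So your proposal is competing not with an argument in this paper but with a long induction occupying a substantial part of \cite[\S 4]{DJGroups} and of Burdges' thesis. Measured against that, your sketch names the right toolkit (Facts \ref{f:unipotence}, \ref{f:nilpab}, \ref{f:uniqueness}, maximality of $H$) but asserts rather than derives every pivotal step, and at least two of them fail as written.

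Concretely: (a) you propose to get $U_{\rho'}(H)\leq F^\circ(B_\ell)$, $N_G^\circ(U_{\rho'}(H))\leq B_\ell$ and the Sylow statement ``all at once'' by applying Fact~\ref{f:uniqueness} to $B_\ell$ with $U=U_{\rho'}(H)$. But Fact~\ref{f:uniqueness} requires $U$ to be a $\rho_{B_\ell}$-subgroup, i.e.\ $\rho'=\rho_\ell$, which is neither among the hypotheses nor established (from $H'\leq B_\ell$ you only get $\rho'\preccurlyeq\rho_\ell$, and the conclusion \cite[4.52(8)]{DJGroups} concerns Sylow $\rho'$-subgroups, not Sylow $\rho_\ell$-subgroups). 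Your verification of the side condition $\rho_{C_G^\circ(U)}\preccurlyeq\rho_{B_\ell}$ --- that a larger parameter centralising $U_{\rho'}(H)$ ``would import a third Borel containing $H$'' --- has no mechanism, since $C_G^\circ(U_{\rho'}(H))$ need not contain $H$: the subgroup $U_{\rho'}(H)$ is not central in $H$ (it contains the non-trivial $\rho'$-group $H'$). (b) The calibration you yourself identify as the crux, namely that $\rho'$ is the least parameter of $F(B_h)$, is ``proved'' by claiming that a smaller $U_\sigma(F(B_h))$ normalised by $H$ would straddle a second Borel and enlarge $H$; but $U_\sigma(F(B_h))$ is normal in $B_h$ and hence normalised by $H$ automatically for every $\sigma$, and no second Borel or enlargement of $H$ materialises. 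The Carter claim has the same defect: the choice of the ``appropriate Borel $B'$'' into which $N_{B_h}^\circ(Q)$ is supposed to push $H$ is left entirely unexplained. The missing content in each case is precisely the delicate comparison of $F(B_\ell)$ and $F(B_h)$ that makes \cite[4.38--4.52]{DJGroups} as long as it is; it cannot be recovered from this outline.
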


And we finish with an addendum.

\begin{lemma}\label{l:Bender:addendum}
Let $\hat{G}$ be a connected group of finite Morley rank and $G \trianglelefteq \hat{G}$ be a definable, connected, non-soluble, $N_\circ^\circ$-subgroup. Let $B_1\neq B_2$ be two distinct Borel subgroups of $G$ such that $H = (B_1\cap B_2)^\circ$ is maximal among connected components of intersections of distinct Borel subgroups \emph{and non-abelian}.
Let $Q \leq H$ be a Carter subgroup of $H$. Then:
\begin{itemize}
\item
$N_{\hat{G}}(H) = N_{\hat{G}}(B_1) \cap N_{\hat{G}}(B_2)$;
\item
$N_{\hat{G}}(Q) \leq N_{\hat{G}}(B_1) \cup N_{\hat{G}}(B_2)$.
\end{itemize}
\end{lemma}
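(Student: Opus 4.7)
The proof has two distinct parts, and the second is notably more delicate than the first.

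\emph{First equality.} The inclusion $N_{\hat{G}}(B_1) \cap N_{\hat{G}}(B_2) \subseteq N_{\hat{G}}(H)$ is immediate since $H = (B_1 \cap B_2)^\circ$ is defined from $B_1$ and $B_2$. For the converse, take $n \in N_{\hat{G}}(H)$. Conjugation by $n$ is an automorphism of $\hat{G}$ which preserves $G$ (using $G \trianglelefteq \hat{G}$) and hence permutes the Borel subgroups of $G$. By Fact \ref{f:Bender} (applied to the non-abelian, maximal intersection $H$), the only Borel subgroups of $G$ containing $H$ are $B_1$ and $B_2$; since each $B_i^n$ is a Borel subgroup of $G$ containing $H^n = H$, we have $\{B_1^n, B_2^n\} \subseteq \{B_1, B_2\}$. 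The last equivalence in Fact \ref{f:Bender} gives $\rho_{B_1} \neq \rho_{B_2}$; this unipotence parameter being preserved under conjugation rules out a swap, so $B_i^n = B_i$ for each $i$.

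\emph{Second inclusion.} Let $n \in N_{\hat{G}}(Q)$. By the first bullet of Fact \ref{f:Bender2}, $Q$ is a Carter subgroup of $B_h$; applying the $n$-conjugation automorphism, $Q = Q^n$ is also a Carter subgroup of $B_h^n$, which is another Borel subgroup of $G$. The first, easy branch is $B_h^n = B_h$: then $n \in N_{\hat{G}}(B_h) \subseteq N_{\hat{G}}(B_1) \cup N_{\hat{G}}(B_2)$ and we are done. Otherwise $Q$ is a nilpotent, definable, connected subgroup lying in two distinct Borel subgroups, and Fact \ref{f:nilpab} forces $Q$ to be abelian. The remaining task is to show that, in this abelian branch, we necessarily have $B_\ell^n = B_\ell$.

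\emph{The abelian branch} is where the real work lies and where I expect the main obstacle. The strategy is to transfer the normalisation of $Q$ to the normalisation of an appropriate characteristic piece of $H$ attached to the lighter Borel, and then apply a uniqueness principle. A first reduction observes that the conjugate $H^n = (B_1^n \cap B_2^n)^\circ$ is again a maximal non-abelian intersection containing $Q$; if $H^n = H$ then $n \in N_{\hat{G}}(H)$ and the first assertion gives the stronger conclusion $n \in N_{\hat{G}}(B_1) \cap N_{\hat{G}}(B_2)$. Otherwise one must show that $n$ normalises the subgroup $U_{\rho'}(H)$ (or some canonical piece of it definable from $Q$ and $H$). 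Once this is achieved, Fact \ref{f:Bender2} provides the two key properties $U_{\rho'}(H) \leq F^\circ(B_\ell)$ and $N_G^\circ(U_{\rho'}(H)) \leq B_\ell$, and, combined with the uniqueness principle (Fact \ref{f:uniqueness}) applied at the parameter $\rho'$ inside $B_\ell$, one concludes that $B_\ell$ is the unique Borel subgroup of $G$ with the relevant containment property, so $B_\ell^n = B_\ell$ and $n \in N_{\hat{G}}(B_\ell)$. The delicate point is precisely to argue that the Carter subgroup $Q$ controls $U_{\rho'}(H)$ tightly enough that $n$-invariance of $Q$ passes to $n$-invariance of $U_{\rho'}(H)$; this is where the detailed Bender-method bookkeeping of Fact \ref{f:Bender2} will be needed.
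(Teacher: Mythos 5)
Your first item is correct and is essentially the paper's argument: $B_1$ and $B_2$ are the only Borel subgroups containing $H$ by Fact~\ref{f:Bender}, any $n \in N_{\hat{G}}(H)$ permutes them, and the distinct unipotence parameters rule out a swap.

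The second item, however, contains a genuine gap, and you flag it yourself: after reducing to the branch where $B_h^n \neq B_h$ (so that $Q$ is abelian by Fact~\ref{f:nilpab}), you only describe a strategy --- ``transfer the normalisation of $Q$ to the normalisation of $U_{\rho'}(H)$'' --- and explicitly leave open the ``delicate point'' of why $n$-invariance of $Q$ should yield $n$-invariance of $U_{\rho'}(H)$. That step is not a bookkeeping detail: there is no reason for $n$ to normalise $H$ in this branch, so $U_{\rho'}(H)$ is not a priori $n$-invariant, and your route stalls exactly where the work begins. The paper avoids this by working with a subgroup that is characteristic in $Q$ rather than in $H$: set $Q_{\rho'} = U_{\rho'}(Q)$, so that automatically $N_{\hat{G}}(Q) \leq N_{\hat{G}}(Q_{\rho'}) \leq N_{\hat{G}}(N_G^\circ(Q_{\rho'}))$, and then invoke the trichotomy of \cite[4.51]{DJGroups} describing $N_G^\circ(Q_{\rho'})$ in a maximal non-abelian intersection: either $N_{\hat{G}}(N_G^\circ(Q_{\rho'})) \leq N_{\hat{G}}(H)$ (which by the first item gives $N_{\hat{G}}(B_1) \cap N_{\hat{G}}(B_2)$), or $B_1$ (respectively $B_2$) is the \emph{unique} Borel subgroup containing $N_G^\circ(Q_{\rho'})$, whence $N_{\hat{G}}(Q) \leq N_{\hat{G}}(B_1)$ (respectively $N_{\hat{G}}(B_2)$). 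This is a single uniform argument with no case split on the individual element $n$, and it is precisely the ingredient your sketch is missing.
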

\begin{proof}
By \cite[4.50 (1), (2) and (6)]{DJGroups}, $B_1$ and $B_2$ are the only Borel subgroups of $G$ containing $H$, and they have distinct unipotence parameters. This proves the first item. Let $\rho'$ be the parameter of $H'$ and $Q_{\rho'} = U_{\rho'}(Q)$. Then $N_{\hat{G}}(Q) \leq N_{\hat{G}}(Q_{\rho'}) \leq N_{\hat{G}}(N_G^\circ(Q_{\rho'}))$ and three cases can occur, following \cite[4.51]{DJGroups}.
\begin{itemize}
\item
In case (4a), $N_{\hat{G}}(Q) \leq N_{\hat{G}}(H) = N_{\hat{G}}(B_1) \cap N_{\hat{G}}(B_2)$; we are done.
\item
In case (4b), $B_1$ is the only Borel subgroup of $G$ containing $N_G^\circ(Q_{\rho'})$, so $N_{\hat{G}}(Q) \leq N_{\hat{G}}(B_1)$.
\item
Case (4c) is similar to case (4b) and yields $N_{\hat{G}}(Q) \leq N_{\hat{G}}(B_2)$.
\qedhere
\end{itemize}
\end{proof}

\section{Requisites (General Lemmas)}

Our theorem requires extending some well-known facts, so let us revisit a few classics.
All lemmas below go beyond the $N_\circ^\circ$ setting.% None of them was in \cite{DJ4beta}.

\subsection{Normalisation Principles}

The results in the present subsection are folklore; it turns out that none was formally published. They originate either in \cite[Chapitre 2]{DGroupes} or in \cite{BFrattini}. We shall use them with no reference, merely invoking ``normalisation principles''.

\begin{lemma}[{cf. \cite[Lemmes 2.1.1 and 2.1.2]{DGroupes} and \cite[\S3.4]{DGroupes1}}]\label{l:snakes:p}
Let $\hat{G}$ be a group of finite Morley rank, $G \leq \hat{G}$ be a definable subgroup, $P\leq G$ be a Sylow $p$-subgroup of $G$, and $\hat{S} \leq N_{\hat{G}}(G)$ be a soluble $p$-subgroup normalising $G$. If $p \neq 2$ suppose that $\hat{G}$ is $U_p^\perp$. Then some $G$-conjugate of $\hat{S}$ normalises $P$.
\end{lemma}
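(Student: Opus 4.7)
The plan is a Frattini-type argument. First I would pass to the definable subgroup $\hat{N} = N_{\hat{G}}(G)$, which contains $\hat{S}$ by hypothesis and in which $G$ is normal. As a definable subgroup of $\hat{G}$, $\hat{N}$ inherits finite Morley rank and, when $p \neq 2$, the $U_p^\perp$ property; all Sylow theory available for $\hat{G}$ therefore transports unproblematically to $\hat{N}$.

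Second, I would extend $\hat{S}$, a soluble $p$-subgroup, by Zorn's lemma to a Sylow $p$-subgroup $\hat{S}'$ of $\hat{N}$. The heart of the argument is then to show that $\hat{S}' \cap G$ is a Sylow $p$-subgroup of $G$. To see this, let $Q$ be any Sylow $p$-subgroup of $G$ containing the soluble $p$-group $\hat{S}'\cap G$, and extend $Q$ to a Sylow $p$-subgroup $\hat{T}$ of $\hat{N}$; then $\hat{T}\cap G$ is a soluble $p$-subgroup of $G$ containing $Q$, whence $Q = \hat{T}\cap G$ by maximality of $Q$. By conjugacy of Sylow $p$-subgroups in $\hat{N}$ --- which for $p = 2$ is \cite[Theorem 10.11]{BNGroups} and for $p \neq 2$ is Fact \ref{f:Sylowconjugate} under the $U_p^\perp$ hypothesis --- one may write $\hat{T} = \hat{n}\hat{S}'\hat{n}^{-1}$ with $\hat{n} \in \hat{N}$. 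Since $\hat{n}$ normalises $G$, intersecting with $G$ yields $Q = \hat{n}(\hat{S}'\cap G)\hat{n}^{-1}$, so $\hat{S}'\cap G$ is $\hat{N}$-conjugate to the Sylow $Q$ and thus itself a Sylow $p$-subgroup of $G$.

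Third, invoking Sylow conjugacy inside $G$, pick $g \in G$ with $gPg^{-1} = \hat{S}'\cap G$. Since $G \trianglelefteq \hat{N}$ and $\hat{S}' \leq \hat{N}$, the subgroup $\hat{S}'$ normalises the intersection $\hat{S}'\cap G = gPg^{-1}$. Conjugating by $g^{-1}\in G$ then gives that $g^{-1}\hat{S}'g$ normalises $P$, and a fortiori so does its subgroup $g^{-1}\hat{S}g$, which is a $G$-conjugate of $\hat{S}$; this is the desired conclusion.

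The main obstacle is the trace claim in the second paragraph. For $p \neq 2$ this is just the remark following Fact \ref{f:Sylowconjugate} in \S\ref{s:Sylow} applied inside $\hat{N}$, but for $p = 2$ the $U_2^\perp$ hypothesis is unavailable and one has to re-run the trace argument directly, relying on the unrestricted conjugacy theorem for Sylow $2$-subgroups. Beyond this, everything reduces to routine bookkeeping with definability, normality of $G$ in $\hat{N}$, and Sylow conjugacy, and no further subtlety is expected.
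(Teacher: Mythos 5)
Your proof is correct and follows essentially the same route as the paper's: reduce to a setting where $G$ is definable and normal (you work in $N_{\hat{G}}(G)$, the paper in $G\cdot d(\hat{S})$, a cosmetic difference), extend $\hat{S}$ to a Sylow $p$-subgroup of the ambient group, observe that its trace on $G$ is a Sylow $p$-subgroup of $G$, and then conjugate \emph{inside} $G$ to reach $P$. In particular you correctly avoid the trap the paper's remark warns about, namely conjugating in the ambient group rather than in $G$.
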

\begin{proof}
Since $G$ is definable, $d(\hat{S}) \leq N_{\hat{G}}(G)$, so we may assume $\hat{G} = G \cdot d(\hat{S})$ and $G \trianglelefteq \hat{G}$. We may assume that $\hat{S}$ is a Sylow $p$-subgroup of $\hat{G}$. Recall that $S = \hat{S} \cap G$ is then a Sylow $p$-subgroup of $G$ (see for instance \S\ref{s:Sylow}). Since $G$ is definable and $U_p^\perp$ if $p \neq 2$, it conjugates its Sylow $p$-subgroups; there is $g \in G$ with $P = S^g$. Hence $\hat{S}^g$ normalises $\hat{S}^g \cap G = S^g = P$.
\end{proof}

\begin{remarks*}
The argument is slightly subtler than it looks.
\begin{itemize}
\item
The original version \cite[Lemmes 2.1.1 and 2.1.2]{DGroupes} made the unnecessary assumption that $\hat{S}$, there denoted $K$, be definable. Its proof used only conjugacy in $\hat{G}$; but when $K^{\hat{g}} \leq N_{\hat{G}}(P)$ for some $\hat{g} \in \hat{G}$, why should $K^{\hat{g}}$ be a $G$-conjugate of $K$? \cite{DGroupes} then used definability of $K$ to continue: we may assume $\hat{G} = G \cdot K \leq G \cdot N_{\hat{G}}(K)$, so $K^{\hat{g}}$ is actually a $G$-conjugate of $K$. Alas it is false in general that $d(\hat{S}) \leq N_{\hat{G}}(\hat{S})$ (consider the Sylow $2$-subgroup of $\PSL_2(\C)$). So without definability of $\hat{S}$ one is forced to use conjugacy \emph{inside} $G$ like we do here.
\item
In particular, if $G$ is not supposed to be definable (and one then needs to assume $G \trianglelefteq \hat{G}$ to save the beginning of the proof), the statement is not clear at all since an arbitrary subgroup of a $U_p^\perp$ group of finite Morley rank need not conjugate its Sylow $p$-subgroups, take $\PSL_2(\Z[\sqrt{3}]) \leq \PSL_2(\C)$ for instance.
%The reader will briefly compute that $\begin{pmatrix} 0 & 1 \\ - 1 & 0\end{pmatrix}$ generates a maximal elementary abelian $2$-subgroup of $\PSL_2(\Z[\sqrt{3}])$ and that $\begin{pmatrix} \sqrt{3} & 2 \\ - 2 & - \sqrt{3}\end{pmatrix}$ is an involution not $\PSL_2(\Z[\sqrt{3}])$-conjugate to the former.
But for a \emph{normal} subgroup, we do not know. This could even depend on the Cherlin-Zilber conjecture.
\end{itemize}
\end{remarks*}

Recall in the following that if $\pi$ consists of a single parameter $\rho$, then a Carter $\pi$-subgroup is exactly a Sylow $\rho$-subgroup.

\begin{lemma}[{\cite[Corollaires 2.1.5 and 2.1.6]{DGroupes}}]\label{l:snakes:soluble:Carter}
Let $\hat{G}$ be a group of finite Morley rank, $H \leq \hat{G}$ be a \emph{soluble}, definable subgroup, $\pi$ be a set of unipotence parameters, $L\leq H$ be a Carter $\pi$-subgroup of $H$, and $\hat{S} \leq N_{\hat{G}}(H)$ be a soluble $p$-subgroup normalising $H$. Suppose that $H$ is $U_p^\perp$. Then some $H$-conjugate of $\hat{S}$ normalises $L$.
\end{lemma}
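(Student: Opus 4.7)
I would follow the template of Lemma~\ref{l:snakes:p} closely, substituting \emph{Carter $\pi$-subgroups of $H$} for \emph{Sylow $p$-subgroups of $G$} and \emph{Wagner/Frécon conjugacy of Carter $\pi$-subgroups in soluble groups of finite Morley rank} (see \cite{FSous, FJConjugacy}) for \emph{Sylow $p$-conjugacy in $G$}.

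First, since $H$ is definable one has $d(\hat S)\leq N_{\hat G}(H)$, so I may replace $\hat G$ by $H\cdot d(\hat S)$ and assume $H\trianglelefteq\hat G$; the ambient group $\hat G$ thereby becomes soluble, as a product of two soluble groups with $H$ normal. The $H$-conjugacy of Carter $\pi$-subgroups in the soluble group $H$, combined with the fact that $\hat G$ permutes them, then yields via the Frattini argument
\[
\hat G \;=\; H\cdot N_{\hat G}(L).
\]

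The decisive step is to exhibit inside $N_{\hat G}(L)$ a full Sylow $p$-subgroup of $\hat G$. Since $H$ is $U_p^\perp$, its Sylow $p$-subgroups are toral-by-finite; torality principles together with the structure of Carter $\pi$-subgroups should allow one, after $H$-conjugation, to place a Sylow $p$-subgroup of $H$ inside $N_H(L)$. Combined with the Frattini-induced isomorphism $\hat G/H\cong N_{\hat G}(L)/N_H(L)$ and soluble Sylow theory (see \S\ref{s:semisimplicity}), this shows that a Sylow $p$-subgroup of the soluble group $N_{\hat G}(L)$ is in fact Sylow in $\hat G$.

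Given such a Sylow $p$-subgroup $\hat T\leq N_{\hat G}(L)$ of $\hat G$, Sylow conjugacy in the soluble group $\hat G$ produces some $x\in\hat G$ with $\hat S^x\leq\hat T\leq N_{\hat G}(L)$. Writing $x=hn$ with $h\in H$ and $n\in N_{\hat G}(L)$ via the Frattini decomposition, the computation
\[
\hat S^h \;=\; n\,\hat S^x\, n^{-1} \;\leq\; n\,N_{\hat G}(L)\,n^{-1}\;=\;N_{\hat G}(L)
\]
will finish the proof. The hard part is precisely the exhibition of a Sylow $p$-subgroup of $\hat G$ inside $N_{\hat G}(L)$; everything else is routine Frattini--Sylow bookkeeping modelled on Lemma~\ref{l:snakes:p}.
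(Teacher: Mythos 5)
Your skeleton (pass to $\hat G=H\cdot d(\hat S)$, Frattini, locate a Sylow $p$-subgroup of $\hat G$ inside $N_{\hat G}(L)$, then Sylow-conjugate $\hat S$ into it and rewrite the conjugator as $hn$) is the right shape and matches the paper's treatment of the special case where $L$ is a full Carter subgroup. But the proposal has two genuine gaps, and you have in effect deferred the entire proof to the step you label ``decisive''. First, your Frattini argument $\hat G=H\cdot N_{\hat G}(L)$ needs conjugacy of Carter $\pi$-subgroups of $H$ under $H$ for an \emph{arbitrary} set $\pi$ of parameters. This is standard for full Carter subgroups, but the paper does not invoke it for general $\pi$; instead it reduces the $\pi$-case to the Carter case via the sandwich $U_\pi(Q)\leq L\leq U_\pi(Q)\cdot U_\pi(H')$ of \cite[Corollary 5.9]{FJConjugacy} and the generalised centraliser $E_H(U_\pi(Q))$ of \cite{FSous}, with an induction on rank (if $E<H$ induct, if $E=H$ then $L=U_\pi(F^\circ(H))$ is characteristic). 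You would need to either establish that conjugacy or adopt a similar reduction.

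Second, the assertion that ``torality principles together with the structure of Carter $\pi$-subgroups'' place a Sylow $p$-subgroup of $H$ inside $N_H(L)$ is not an argument, and it is precisely here that the difficulty sits. Even for $L=Q$ a full Carter subgroup the paper must split into two cases: when $H$ is $p^\perp$ one compares Sylow $p$-subgroups of $N_{\hat G}(Q)$ and of $\hat G$ modulo $H$ and uses $p^\perp$-ness of $H$ to conclude they coincide; when $H$ has $p$-torsion this comparison fails, and one must first apply Lemma~\ref{l:snakes:p} to make $\hat S$ normalise a Sylow $p$-torus $P$ of $H$, arrange $P\leq Q$ so that $Q$ centralises $d(P)$, pass to the $p^\perp$ quotient $H/d(P)$, and lift the Carter subgroup back through the quotient via \cite[Corollaire 5.20]{FSous}. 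Your proposal addresses neither the case distinction nor the quotient-and-lift mechanism; moreover, for a general Carter $\pi$-subgroup $L$ (which, unlike a Carter subgroup, need contain no maximal $p$-torus of $H$) it is not even clear a priori that $N_H(L)$ contains a Sylow $p$-subgroup of $H$, so the ``Frattini--Sylow bookkeeping'' cannot be run on $L$ directly. The honest acknowledgement that the hard part is unproved does not close the gap: as written the proof establishes only the reduction to a statement that is essentially equivalent to the lemma itself.
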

\begin{proof}
We first deal with the case where $L = Q$ is a Carter subgroup of $H$; the last paragraph will handle the general case.
We may suppose that $H$ is connected; we may suppose that $\hat{G} = H \cdot d(\hat{S})$ is soluble and that $H \trianglelefteq \hat{G}$; we may suppose that $\hat{S}$ is a Sylow $p$-subgroup of $\hat{G}$.
Since $H$ is soluble it conjugates its Carter subgroups, so $\hat{G} = H \cdot N_{\hat{G}}(Q)$.

First assume that $H$ is $p^\perp$. Let $\hat{R} \leq N_{\hat{G}}(Q)$ be a Sylow $p$-subgroup of $N_{\hat{G}}(Q)$ and $\hat{R}_2 \leq \hat{G}$ be a Sylow $p$-subgroup of $\hat{G}$ containing $\hat{R}$. Now $\hat{R}H/H$ is a Sylow $p$-subgroup of $N_{\hat{G}}(Q)H/H = \hat{G}/H$ and so is $\hat{R}_2 H/H$; therefore $\hat{R}H = \hat{R}_2 H$. But $H$ is $p^\perp$, hence $\hat{R} = \hat{R}_2$ is a Sylow $p$-subgroup of $\hat{G}$, and it normalises $Q$.

If we no longer assume that $H$ is $p^\perp$, then since $H$ is $U_p^\perp$ the structure of torsion in definable, connected, soluble groups implies that Sylow $p$-subgroups of $H$ are tori. By Lemma \ref{l:snakes:p}, $\hat{S}$ normalises a Sylow $p$-subgroup $P$ of $H$, so it normalises $d(P)$ as well. Up to conjugacy in $H$, $Q$ contains $P$ and therefore centralises $P$ and $d(P)$ as well. So we may work in $N_{\hat{G}}(d(P))$ and factor out $d(P)$, which reduces to the first case.
Then $\hat{S}$ normalises some Carter subgroup $\overline{C}$ of $H/d(P)$, and normalises its preimage $\varphi^{-1}(\overline{C}) \leq H$ which is of the form $\overline{C} = C d(P)/d(P)$ for some Carter subgroup $C$ of $H$ \cite[Corollaire 5.20]{FSous}. Hence $\hat{S}$ normalises $C$ modulo $d(P) \leq C$, that is, $\hat{S}$ normalises $C$.

The reader has observed that for the moment, $\hat{S}$ normalises some Carter subgroup of $H$. But by conjugacy of such groups in $H$, there is an $H$-conjugate of $\hat{S}$ normalising $Q$.

We now go back to the general case of a Carter $\pi$-subgroup $L$ of $H$ (see \S\ref{s:unipotence} for the definition). By \cite[Corollary 5.9]{FJConjugacy} there is a Carter subgroup $Q$ of $H$ with $U_\pi(Q) \leq L \leq U_\pi(Q)\cdot U_\pi(H')$; by what we just proved and up to conjugating over $H$ we may suppose that $Q$ is $\hat{S}$-invariant. So we consider the generalised centraliser $E = E_H(U_\pi(Q))$ \cite[Définition 5.15]{FSous}, a definable, connected, and $\hat{S}$-invariant subgroup of $H$ satisfying $U_\pi(Q) \leq F^\circ(E)$ \cite[Corollaire 5.17]{FSous}; by construction of $E$ and nilpotence, $\langle  L, Q\rangle \leq E$. If $E < H$ then noting that $L$ is a Carter $\pi$-subgroup of $E$ we apply induction. So we may suppose $E = H$. But in this case $U_\pi(Q) \leq F^\circ(H)$ so actually $L \leq U_\pi(F^\circ(H))$ and equality holds as the former is a Carter $\pi$-subgroup of $H$. It is therefore $\hat{S}$-invariant.
\end{proof}

The following Lemma is entirely due to Burdges who cleverly adapted the Frécon-Jaligot construction of Carter subgroups \cite{FJExistence}. We reproduce it here with Burdges' kind permission. The lemma is not used anywhere in the present article but included for possible future reference.

\begin{lemma}[\cite{BFrattini}]
Let $\hat{G}$ be a $U_2^\perp$ group of finite Morley rank, $G \leq \hat{G}$ be a definable subgroup, and $\hat{S} \leq N_{\hat{G}}(G)$ be a $2$-subgroup. Then $G$ has an $\hat{S}$-invariant Carter subgroup.
\end{lemma}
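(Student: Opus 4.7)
The plan is to adapt Frécon–Jaligot's existence proof of Carter subgroups to the $\hat{S}$-equivariant setting, proceeding by induction on $\rk(G)$. Throughout, the essential equivariant input is Lemma~\ref{l:snakes:soluble:Carter}, which ensures that whenever a soluble subgroup appears in the construction, an $\hat{S}$-invariant Carter subgroup of it can be extracted.

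First, one reduces to the case where $G$ is definable and connected, since $G^\circ$ is $\hat{S}$-invariant and any Carter subgroup of $G^\circ$ is automatically one of $G$ (as $[G:G^\circ]$ is finite). If $G$ is soluble, Lemma~\ref{l:snakes:soluble:Carter} applied with $\pi$ the set of all unipotence parameters yields the conclusion directly, a Carter $\pi$-subgroup in that context being just an ordinary Carter subgroup.

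For non-soluble $G$, pick $L \leq G$ maximal among $\hat{S}$-invariant, definable, connected, nilpotent subgroups (a valid maximum by boundedness of rank; the family is non-empty since $\{1\}$ belongs to it). Observe that $F^\circ(N_G^\circ(L))$ is $\hat{S}$-invariant (characteristic in the $\hat{S}$-invariant group $N_G^\circ(L)$), definable, connected, nilpotent, and contains $L$; maximality therefore forces $L = F^\circ(N_G^\circ(L))$. Set $M := N_G^\circ(L)$; the claim is that $M = L$, making $L$ a Carter subgroup. Suppose not: then $\overline{M} := M/L$ is a non-trivial, definable, connected, $\hat{S}$-invariant group on which the induction hypothesis may be brought to bear inside $\hat{G}/L$ (which remains $U_2^\perp$), yielding an $\hat{S}$-invariant Carter subgroup $\overline{C}$ of $\overline{M}$, whose preimage $C \leq M$ is $\hat{S}$-invariant and properly contains $L$.

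The main obstacle is to extract from $C$ a \emph{nilpotent}, $\hat{S}$-invariant, definable, connected subgroup strictly larger than $L$: the group $C$ itself need not be nilpotent, since the extension $L \trianglelefteq C$ with both $L$ and $C/L$ nilpotent only yields solubility of $C$, not nilpotence. The expected remedy --- and where Burdges' cleverness is decisive --- is to intersect $C$ with the centraliser $C_M^\circ(T)$ of an $\hat{S}$-invariant maximal decent subtorus $T \leq Z^\circ(L)$, whose $\hat{S}$-invariant existence is itself produced by Lemma~\ref{l:snakes:soluble:Carter} applied to the soluble group $Z^\circ(L)$. Rigidity of tori in the $U_2^\perp$ ambient $\hat{G}$, combined with the identity $L = F^\circ(M)$, then forces this centraliser-intersection to be nilpotent and to strictly contain $L$, contradicting the maximality of $L$ and thereby completing the induction.
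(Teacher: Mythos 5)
Your skeleton never consumes the hypothesis that $\hat{S}$ is a $2$-group, and that is not a cosmetic omission: the statement is open for $p\neq 2$, so any correct proof must visibly use an involution somewhere. Concretely, the gap surfaces at the very first move of the non-soluble case. You take $L$ maximal among $\hat{S}$-invariant, definable, connected, nilpotent subgroups and note the family contains $\{1\}$; but you never show it contains anything else. If $L=1$ then $M=N_G^\circ(L)=G$ and $\overline{M}=M/L$ has the same rank as $G$, so the induction hypothesis cannot be invoked and the argument stalls. Producing a non-trivial $\hat{S}$-invariant, definable, connected, abelian subgroup is exactly where the paper spends the $2$-hypothesis: taking $\iota\in Z(\hat{S})$ (after reducing to $C_{\hat{S}}(G)=1$), either $C_G^\circ(\iota)=1$ and $G$ is abelian, or $1<C_G^\circ(\iota)<G$ is a proper $\hat{S}$-invariant subgroup to which induction applies. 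You need this step, stated and proved, before any maximal invariant subgroup can be assumed non-trivial.

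The second gap is the contradiction mechanism itself. Maximality among \emph{nilpotent} invariant subgroups is the wrong maximality: a nilpotent group normalising a nilpotent group need not generate a nilpotent group, so nothing forces $L\leq Q$ for the Carter subgroups $Q$ appearing upstairs, and your closing sentence --- that rigidity of tori forces $C\cap C_M^\circ(T)$ to be nilpotent and to strictly contain $L$ --- is an assertion with no argument behind it; I do not see how to supply one. The working engine is unipotence-theoretic: let $\rho$ be the \emph{minimal} unipotence parameter admitting a non-trivial $\hat{S}$-invariant $\rho$-subgroup and $P$ a maximal such. Then for any $\hat{S}$-invariant nilpotent $Q$ normalising $P$, Burdges' decomposition makes each $U_\sigma(Q)$ $\hat{S}$-invariant, hence $\sigma\succcurlyeq\rho$ by minimality, and the nilpotence criterion of Fact~\ref{f:unipotence} gives $PQ$ nilpotent, whence $P\leq Q$. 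With this in hand one does not argue by contradiction at all: if $N_G^\circ(P)<G$ one inducts on the normaliser and checks $P=U_\rho(Q)$ is characteristic in the resulting $Q$, so $Q$ is a Carter subgroup of $G$; if $P\trianglelefteq G$ one passes to $G/P$, takes the (soluble) preimage $H$ of an invariant Carter subgroup, and applies Lemma~\ref{l:snakes:soluble:Carter} to $H$ --- which is also the correct, and only, place where a decent dose of the soluble theory enters.
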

\begin{proof}
We may assume that every definable, $\hat{S}$-invariant subquotient of $G$ of smaller rank has an $\hat{S}$-invariant Carter subgroup; we may assume that $C_{\hat{S}}(G) = 1$; we may assume that $G$ is connected.

We first find an infinite, definable, abelian, $\hat{S}$-invariant subgroup. Let $\iota \in Z(\hat{S})$ be a central involution; then $C_G^\circ(\iota) < G$. If $C_G^\circ(\iota) = 1$ then $G$ is abelian and there is nothing to prove. So we may suppose that $C_G^\circ(\iota)$ is infinite and find some $\hat{S}$-invariant Carter subgroup of $C_G^\circ(\iota)$ by induction; it contains an infinite, definable, abelian, $\hat{S}$-invariant subgroup.

Let $\rho$ be the minimal unipotence parameter such that there exists a non-trivial $\hat{S}$-invariant $\rho$-subgroup of $G$ (possibly $\rho=(0, 0)$); this makes sense since there exists an infinite, definable, abelian, $\hat{S}$-invariant subgroup. Let $P \leq G$ be a maximal $\hat{S}$-invariant $\rho$-subgroup; $P \neq 1$. Let $N = N_G^\circ(P)$.

If $N < G$ then induction applies: $N$ has an $\hat{S}$-invariant Carter subgroup $Q$. So far $PQ$ is soluble; moreover for any parameter $\sigma$, $U_\sigma(Q)$ is $\hat{S}$-invariant as well. So by definition of $\rho$ and \cite[Fact 2.7]{DJGroups}, $PQ$ is actually nilpotent, hence $PQ = Q$, $P \leq Q$, and $P \leq U_\rho(Q)$. By maximality of $P$, $P = U_\rho(Q)$ is characteristic in $Q$ so $N_G^\circ(Q) \leq N_N^\circ(Q) = Q$ and $Q$ is a Carter subgroup of $G$.

Now suppose that $N = G$, that is, $P$ is normal in $G$. By induction, $\overline{G} = G/P$ has an $\hat{S}$-invariant Carter subgroup $\overline{C}$. Let $H$ be the preimage of $\overline{C}$ in $G$; $H$ is soluble. By Lemma \ref{l:snakes:soluble:Carter}, $H$ has an $\hat{S}$-invariant Carter subgroup $Q$. Here again $PQ$ is soluble and even nilpotent, so $P \leq Q$. Since $H$ is soluble, $Q/P = PQ/P$ is a Carter subgroup of $H/P = \overline{C}$ \cite[Corollaire 5.20]{FSous}, so $Q/P = \overline{C}$ and $Q = H$. Finally $N_G^\circ(Q)/P \leq N_{\overline{G}}^\circ(\overline{C}) = \overline{C} = Q/P$, so $N_G^\circ(Q) = Q$ and $Q$ is a Carter subgroup of $G$.
\end{proof}

\begin{remarks*}\
\begin{itemize}
\item
Burdges left the highly necessary assumption that $\hat{G}$ be $U_2^\perp$ implicit from the title of his prepublication and the original statement must therefore be taken with care: the Sylow $2$-subgroup of $(\overline{\F}_2)_+ \rtimes (\overline{\F}_2)^\times$ certainly does not normalise any Carter subgroup.
\item
The assumption that $p = 2$ is used only to find an infinite, definable, abelian $\hat{S}$-invariant subgroup. It is not known whether all connected groups of finite Morley rank having a definable automorphism of order $p \neq 2$ with finitely may fixed points are soluble, a classical property of algebraic groups though.
\end{itemize}
\end{remarks*}

%En $U_p^\perp$, tout $p$-groupe fini normalise-t-il un tore décent maximal ?
%Si $\alpha$ est un automorphisme d'ordre $p$ du $p$-tore $S$, alors $d([S, \alpha]) = [d(S), \alpha]$.

\subsection{Involutive Automorphisms}

The need for the present subsection is the following.
\cite[Section 5]{DJSmall} collected various well-known facts in order to provide a decomposition for a connected, soluble group of odd type under an \emph{inner} involutive automorphism. But in the present article we shall consider the case of \emph{outer} automorphisms, more precisely the action of abstract $2$-tori on our groups. So the basic discussion of \cite{DJSmall} must take place in a broader setting; this is what we do here.

\begin{notation*}
If $\alpha$ is an involutive automorphism of some group $G$, we let $G^+ = C_G(\alpha) = \{g \in G: g^\alpha = g\}$ and $G^- = \{g \in G : g^\alpha = g^{-1}\}$. We also let $\{G, \alpha\} = \{[g, \alpha]: g \in G\}$ (in context there is no risk of confusion with the usual notation for unordered pairs).
\end{notation*}

If $G$ and $\alpha$ are definable, so are $G^+$, $G^-$, and $\{G, \alpha\}$; in general only the first need be a group. $\{G, \alpha\}$ is nevertheless stable under inversion, since $[g^\alpha, \alpha] = [g, \alpha]^{-1}$.
Observe that $\{G, \alpha\} \subseteq G^-$ but equality may fail to hold: for instance if $\alpha$ centralises $G$ and $G$ contains an involution $i$, then $i \in G^+ \cap G^-$ but $i \notin \{G, \alpha\} = \{1\}$.
Notice further that $G = G^+\cdot G^-$ iff $\{G, \alpha\} \subseteq (G^-)^{\wedge 2}$ and $G = G^+\cdot \{G, \alpha\}$ iff $\{G, \alpha\} \subseteq \{G, \alpha\}^{\wedge 2}$, where $X^{\wedge 2}$ denotes the set of squares of $X$.
Finally remark that $\deg \{G, \alpha\} = \deg \alpha^G \alpha = \deg \alpha^G \leq \deg G$.

\begin{lemma}[{cf. \cite[Theorem 19]{DJSmall}}]\label{l:involutiveaction:central2torus}
Let $G$ be a group of finite Morley rank with Sylow $2$-subgroup a (possibly trivial) \emph{central} $2$-torus $S$, and $\alpha$ be a definable involutive automorphism of $G$. Then $G = G^+ \cdot \{G, \alpha\}$ where the fibers of the associated product map are in bijection with $I(\{G, \alpha\})\cup\{1\} = \Omega_2([S, \alpha])$. Furthermore one has $G = (G^+)^\circ \cdot \{G, \alpha\}$ whenever $G$ is connected.
\end{lemma}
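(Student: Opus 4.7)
The approach hinges on the ``Lang map'' $\phi : G \to G$ defined by $\phi(g) = [g, \alpha] = g^{-1} g^\alpha$. By definition its image is $\{G, \alpha\}$, and a direct check shows that $\phi(a) = \phi(b)$ if and only if $b a^{-1} \in G^+$, so the fibres of $\phi$ are the left cosets of $G^+$; in particular $\phi$ induces a definable bijection $G/G^+ \simeq \{G, \alpha\}$ and $\rk \{G, \alpha\} = \rk G - \rk G^+$. Centrality of $S$ reduces the cocycle identity $\phi(ab) = \phi(a)^b \phi(b)$ to $\phi(gs) = \phi(g) \phi(s)$ whenever $s \in S$; hence $\phi|_S : S \to S$ is a group homomorphism with image $[S, \alpha]$ and kernel $S \cap G^+$, and left multiplication by $[S, \alpha]$ preserves $\{G, \alpha\}$. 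Finally, $\{G, \alpha\} \subseteq G^-$, so $\phi(k) = k^{-2}$ for $k \in \{G, \alpha\}$.

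The heart of the proof is the analysis of the multiplication map $\mu : G^+ \times \{G, \alpha\} \to G$, $(g_+, k) \mapsto g_+ k$. A direct computation identifies the fibre above $h$ with $\{k \in \{G, \alpha\} : k^2 = \phi(h)^{-1}\}$. I would next show that non-empty fibres have constant cardinality $|\Omega_2([S, \alpha])|$. On the one hand, $\Omega_2([S, \alpha])$ acts freely by left multiplication on such a fibre: each $t = \phi(s)$ satisfies $tk = \phi(sg) \in \{G, \alpha\}$ (by centrality) with $(tk)^2 = k^2$ since $t$ is a central involution. On the other hand, two solutions $k_1, k_2$ give $t = k_1 k_2^{-1} \in G^+$ with $k_1^2 = k_2^2$, whence $k_2 t k_2^{-1} = t^{-1}$ and $[k_1, k_2] = t^2$; a short analysis using the centrality of $S$ together with the fact that all involutions of $G$ lie in the unique Sylow $S$ forces $t \in \Omega_2([S, \alpha])$. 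Combined with the rank count $\rk(G^+ \times \{G, \alpha\}) = \rk G$ and the 2-divisibility of the connected abelian group $[S, \alpha]$ (used to extract the required square roots), this yields surjectivity of $\mu$ and therefore $G = G^+ \cdot \{G, \alpha\}$.

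The identification $\{1\} \cup I(\{G, \alpha\}) = \Omega_2([S, \alpha])$ is produced along the way: the inclusion $\supseteq$ is immediate, and for an involution $k$ of $\{G, \alpha\}$ one has $k \in S$ (being an involution of $G$, hence central), $k$ is $\alpha$-fixed, and $k = \phi(g)$ for some $g$; centrality of $S$ lets one replace $g$ by some $s \in S$ with $\phi(s) = k$, so $k \in [S, \alpha]$. For the connected refinement, $\{G, \alpha\} = \phi(G)$ is the image of the connected definable set $G$ under $\phi$, and the finite quotient $G^+ / (G^+)^\circ$ is absorbed into $\{G, \alpha\}$ via the freeness of the $\Omega_2([S, \alpha])$-action, giving $G = (G^+)^\circ \cdot \{G, \alpha\}$.

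The main obstacle is the converse direction of the fibre-size count, namely showing that $t = k_1 k_2^{-1} \in \Omega_2([S, \alpha])$ rather than merely $\Omega_2(S^+)$. While the membership $t \in G^+$ and the identities $k_2 t k_2^{-1} = t^{-1}$, $[k_1, k_2] = t^2$ follow from elementary manipulations, confining $t$ to the 2-torus $S$ and then to the image $\phi(S) = [S, \alpha]$ requires exploiting both the central-Sylow hypothesis and the cocycle identity $\phi(gs) = \phi(g)\phi(s)$ to transfer the problem back to the abelian group $S$.
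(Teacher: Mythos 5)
Your skeleton (the Lang map $\phi(g)=[g,\alpha]$, the identification of the fibre of the product map over $h$ with $\{k\in\{G,\alpha\}:k^{2}=\phi(h)^{-1}\}$, and the free action of $\Omega_2([S,\alpha])$ on non-empty fibres) is essentially the paper's, and your fibre computation is correct. But there is a genuine gap at the surjectivity step. The set $G^{+}\cdot\{G,\alpha\}$ is not a subgroup, so the rank count $\rk(G^{+}\times\{G,\alpha\})=\rk G$ together with finiteness of fibres only shows that the image of $\mu$ is \emph{generic} in $G$; a generic union of cosets of $G^{+}$ can perfectly well miss whole cosets, even when $G$ is connected. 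By your own fibre identification, surjectivity is equivalent to every element of $\{G,\alpha\}$ being the square of an element of $\{G,\alpha\}$, i.e.\ to $2$-divisibility of the \emph{set} $\{G,\alpha\}$ --- and this is where the real work lies. The $2$-divisibility of the torus $[S,\alpha]$ does not supply these square roots, because $\phi(h)^{-1}$ is a general element of $\{G,\alpha\}$, not an element of $S$. The paper extracts the root by decomposing the definable hull $d([g,\alpha])$ as $\Delta\oplus\langle\zeta\rangle$ with $\Delta$ $2$-divisible and $\alpha$-inverted and $\zeta$ a $2$-element, taking a fourth root in $\Delta$, and then absorbing the leftover $\zeta$ using the equality $S\cap\{G,\alpha\}=[S,\alpha]$.

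A second, smaller gap: that inclusion $S\cap\{G,\alpha\}\subseteq[S,\alpha]$ is exactly what you assert when you say ``centrality of $S$ lets one replace $g$ by some $s\in S$ with $\phi(s)=k$''. It is not a formal consequence of centrality: the paper proves it by writing $g=h^{2}$ (using $2$-divisibility of $G$) and invoking the fact that an element inverted by another element of such a group has order at most $2$. Your reduction of the fibre-size count to showing $t=k_1k_2^{-1}\in\Omega_2([S,\alpha])$ is correct, and the route you indicate ($k_2$ inverts $t$, hence $t^{2}=1$, hence $t\in S$, then push into $[S,\alpha]$) is the paper's, but it terminates in the same unproven inclusion. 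Once these two points are filled in, the connected refinement follows as in the paper by showing that the left $G^{+}$-translates of $(G^{+})^{\circ}\cdot\{G,\alpha\}$ are pairwise disjoint or equal and of full rank, so that at most $\deg G$ of them cover $G$.
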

\begin{proof}
The proof follows that of \cite[Theorem 19]{DJSmall} closely and for some parts a minor adjustement would suffice. We prefer to give a complete proof and discard \cite{DJSmall}. Bear in mind that if $a^b = a^{-1}$ for two elements of our present group $G$, then $a$ has order at most $2$ (this is \cite[Lemma 20]{DJSmall}, an easy consequence of torsion lifting). Also remember from \cite[Lemma 18]{DJSmall} that $G$ is $2$-divisible: essentially because $2$-torsion is divisible and central.

\begin{step}\label{l:involutiveaction:central2torus:st:torus}
$S\cap \{G, \alpha\} = [S, \alpha]$.
\end{step}
\begin{proofclaim}
This is the argument from \cite[Theorem 19, Step 1]{DJSmall} with one more remark. One inclusion is trivial. Now let $\zeta \in S\cap \{G, \alpha\}$, and write $\zeta = [g, \alpha]$.
Since $G$ is $2$-divisible we let $h \in H$ satisfy $h^2 = g$. Let $n = 2^k$ be the order of $\zeta$. Then $[h^2, \alpha] = [h, \alpha]^h [h, \alpha] = \zeta \in Z(G)$ so $[h, \alpha]$ and $[h, \alpha]^h$ commute. Hence $1 = \zeta^n = [h, \alpha]^n [h, \alpha]^{n h}$. It follows that $h$ inverts $[h, \alpha]^n$ which must have order at most $2$: so $\xi = [h, \alpha]^{-1}$ is a $2$-element inverted by $\alpha$, and since it is central it commutes with $h$. Finally $[\xi, \alpha] = \xi^{-2} = [h, \alpha]^2 = [h^2, \alpha] = \zeta$.
\end{proofclaim}

It follows that $I(\{G, \alpha\}) \cup\{1\} = \Omega_2([S, \alpha])$, the group generated by involutions of $[S, \alpha]$.

\begin{step}\label{l:involutiveaction:central2torus:st:decomposition}
$\{G, \alpha\}$ is $2$-divisible and $G = G^+ \cdot \{G, \alpha\}$.
\end{step}
\begin{proofclaim}
Here again this is the argument from \cite[Theorem 19, Step 2]{DJSmall}; $2$-divisibility of $\{G, \alpha\}$ was announced but not explicitly proved.

Let $x = [g, \alpha] \in \{G, \alpha\}$. Like in \cite[Theorem 19, Step 2]{DJSmall}, write the definable hull of $x$ as $d(x) = \delta \oplus \langle  \gamma\rangle$ where $\delta$ is connected and $\gamma$ has finite order; rewrite $\gamma = \varepsilon \zeta$ where $\varepsilon$ has odd order and $\zeta$ is a $2$-element; let $\Delta = \delta \oplus \langle  \varepsilon\rangle$, so that $d(x) = \Delta \oplus \langle  \zeta\rangle$ where $\Delta$ is $2$-divisible and inverted by $\alpha$.
Now let $y \in \Delta$ satisfy $y^4 = x\zeta^{-1}$. Then $[gy^2, \alpha] = [g, \alpha]^{y^2} [y^2, \alpha] = x y^{-4} = \zeta \in S\cap \{G, \alpha\} = [S, \alpha]$ by Step~\ref{l:involutiveaction:central2torus:st:torus}, so there is $\xi \in S$ with $[\xi^2, \alpha] = \zeta$. Now $[y^{-1}\xi, \alpha] = [y^{-1}, \alpha]^\xi [\xi, \alpha] = y^2 [\xi, \alpha]$ squares to $y^4 [\xi, \alpha]^2 = x \zeta^{-1} [\xi^2, \alpha] = x$. The set $\{G, \alpha\}$ is therefore $2$-divisible; as observed this implies $G = G^+ \cdot \{G, \alpha\}$.
\end{proofclaim}

\begin{step}\label{l:involutiveaction:central2torus:st:fibers}
Fibers in Step~\ref{l:involutiveaction:central2torus:st:decomposition} are in bijection with $\Omega_2([S, \alpha])$.
\end{step}
\begin{proofclaim}
Let $k = [s, \alpha]$ have order at most $2$, where $s \in S$. Fix any decomposition $\gamma = a \cdot [g, \alpha]$ with $a \in G^+$ and $g \in G$. Since $\alpha$ inverts (hence centralises) $k$, one has $k a \in G^+$. Moreover $[sg, \alpha] = [s, \alpha]^g [g, \alpha] = k^g [g, \alpha] = k [g, \alpha] \in \{G, \alpha\}$. So $a [g, \alpha]= (k a)\cdot (k[g, \alpha])$ is yet another decomposition for $\gamma$.

Conversely work as in \cite[Theorem 19, Step 3]{DJSmall}: suppose that $a x = b y$ are two decompositions, with $a, b \in G^+$ and $x = [g, \alpha], y = [h, \alpha] \in \{G, \alpha\}$. Then $(a^{-1} b)^y = (xy^{-1})^y = y^{-1}x = (yx^{-1})^\alpha = (b^{-1}a)^\alpha = b^{-1}a = (a^{-1}b)^{-1}$ so $a^{-1}b$ has order at most $2$, say $k = a^{-1}b$. More precisely, $k = xy^{-1} = [g, \alpha][h, \alpha]^{-1} = [g, \alpha] h^{-\alpha} h$ is central, so $k = h [g, \alpha] h^{-\alpha} = [gh^{-1}, \alpha] \in \{G, \alpha\}$; it follows from Step~\ref{l:involutiveaction:central2torus:st:torus} that $k \in \Omega_2([S, \alpha])$.
\end{proofclaim}

\begin{step}\label{l:involutiveaction:central2torus:st:translates}
Left $G^+$-translates of the set $(G^+)^\circ \cdot \{G, \alpha\}$ are disjoint or equal.
\end{step}
\begin{proofclaim}
Like in \cite[Theorem 19, Step 4]{DJSmall}: suppose that for $a, b \in G^+$, the sets $a(G^+)^\circ\cdot \{G, \alpha\}$ and $b(G^+)^\circ \cdot \{G, \alpha\}$ meet, in say $ag_+ [g, \alpha] = b h_+ [h, \alpha]$ with natural notations. By the proof of Step~\ref{l:involutiveaction:central2torus:st:fibers}, $k = (ag_+)^{-1}(bh_+)$ is in $\Omega_2([S, \alpha])$, therefore central in $G$ and inverted (hence centralised) by $\alpha$. So $k = (bh_+)(ag_+)^{-1} = (ag_+)(bh_+)^{-1}$. Hence for any $b\gamma_+[\gamma, \alpha] \in b(G^+)^\circ\cdot \{G, \alpha\}$ one finds:
\[b\gamma_+[\gamma, \alpha] = k^2 b\gamma_+ [\gamma, \alpha] = a(g_+ h_+^{-1}\gamma_+)([\gamma, \alpha] k)\]
Since $k \in \Omega_2([S, \alpha])$, there is $s \in S$ with $k = [s, \alpha]$. So $[\gamma s, \alpha] = [\gamma, \alpha]^s [s, \alpha] = [\gamma, \alpha] k \in \{G, \alpha\}$: hence $b\gamma_+[\gamma, \alpha] \in a (G^+)^\circ \cdot \{G, \alpha\}$. This shows $b(G^+)^\circ \{G, \alpha\} \subseteq a (G^+)^\circ \{G, \alpha\}$ and the converse inclusion holds too.
\end{proofclaim}

\begin{step}
At most $\deg G$ left $G^+$-translates of $(G^+)^\circ\cdot \{G, \alpha\}$ cover $G$. In particular, if $G$ is connected, then $G = (G^+)^\circ\cdot \{G, \alpha\}$.
\end{step}
\begin{proofclaim}
We consider such left translates. They all have rank $\rk G$ by Step~\ref{l:involutiveaction:central2torus:st:fibers}. As they are disjoint or equal by Step~\ref{l:involutiveaction:central2torus:st:translates}, at most $\deg G$ of them suffice to cover $G$.
\end{proofclaim}

This completes the proof of Lemma \ref{l:involutiveaction:central2torus}.
\end{proof}

\begin{remarks*}\
\begin{itemize}
\item
Notice the flaw in \cite[Theorem 19, Step 5]{DJSmall}, where ``at most'' is erroneously replaced by ``exactly''. The reason is that the degree of $\alpha^G$ need not be $1$ in general, all one knows is $\deg \alpha^G \leq \deg G$. For instance, let $\alpha$ invert $\Z/3\Z$. Then $\deg G = 3$ but $(G^+)^\circ \cdot G^- = G$.
\item
If $G$ is a connected group of finite Morley rank of odd type whose Sylow $2$-subgroup $S$ is central, then $S$ is a $2$-torus as $S = C_S(S^\circ) = S^\circ$ by torality principles.
\item
The Lemma fails if $S$ is not $2$-divisible, even at the abelian level: let $\alpha$ invert $\Z/4\Z$.
\end{itemize}
\end{remarks*}
%Peut-on le faire pour $N$ nilpotent non forcément connexe de type impair ? Non : prendre l'inversion de $\Z/4\Z$ : on a $N^+ = \{N , \alpha\} = \Z/2\Z$, mais pas plus.

As a consequence we deduce another useful decomposition which will be used repeatedly.

\begin{lemma}[{cf. \cite[Lemma 24]{DJSmall}}]\label{l:involutiveaction:soluble}
Let $H$ be a $U_2^\perp$, connected, soluble group of finite Morley rank, and $\alpha$ be a definable involutive automorphism of $H$. Suppose that $\{H, \alpha\} \subseteq F^\circ(H)$.
%TODO et si $\alpha$ n'inverse pas de $2$-tore ?
Then $H = (H^+)^\circ\cdot \{H, \alpha\}$ with finite fibers.
\end{lemma}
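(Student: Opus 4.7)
The plan is to adapt the proof of Lemma~\ref{l:involutiveaction:central2torus} verbatim, with the Sylow $2$-torus $T$ of $F^\circ(H)$ playing the role of the central Sylow $2$-torus $S$ of the ambient group. Although $H$ itself need not have central Sylow $2$-torus, $T$ is central in $H$: indeed $F^\circ(H)$ is connected nilpotent $U_2^\perp$, so has a unique Sylow $2$-subgroup by the structure of torsion in connected soluble groups and uniqueness of Sylow subgroups in connected nilpotent groups, namely a $2$-torus $T$, characteristic in $F^\circ(H)$ hence normal in $H$; rigidity of tori applied to the connected group $H$ then yields $N_H^\circ(T) = C_H^\circ(T) = H$, so $T \leq Z(H)$. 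This is the central observation the whole argument rests on.

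The heart of the proof, as in Lemma~\ref{l:involutiveaction:central2torus}, is to show $\{H, \alpha\}$ is $2$-divisible, which forces $H = H^+\cdot \{H, \alpha\}$ by the remark preceding that lemma. For $x = [h, \alpha] \in \{H, \alpha\}$, the hypothesis $\{H, \alpha\} \subseteq F^\circ(H)$ confines $d(x)$ inside $F^\circ(H)$; the decomposition $d(x) = \Delta\oplus\langle\zeta_2\rangle$ with $\Delta$ $2$-divisible and $\zeta_2$ a $2$-element, the square root $y \in \Delta$ with $y^4 = x\zeta_2^{-1}$, the identity $[hy^2, \alpha] = \zeta_2$, and the final $[y^{-1}\xi, \alpha]^2 = x$ trick of Step~\ref{l:involutiveaction:central2torus:st:decomposition} then carry over unchanged, provided one knows that the $2$-element $\zeta_2$ --- which lands in $F^\circ(H)$, hence in $T$ --- admits a suitable $\xi \in T$ with $[\xi^2, \alpha] = \zeta_2$. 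This is picked in $T^- = \{t \in T : t^\alpha = t^{-1}\}$, a $2$-divisible abelian group, so that $\xi^{-4} = \zeta_2$; centrality of $T$ in $H$ is then what makes $y^4$ and $[\xi, \alpha]^2 = [\xi^2, \alpha]$ commute and the final identity hold.

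Once $H = H^+ \cdot \{H, \alpha\}$ is secured, the upgrade to $(H^+)^\circ$ and the finite fiber count follow from Steps~\ref{l:involutiveaction:central2torus:st:fibers}--\ref{l:involutiveaction:central2torus:st:translates} of Lemma~\ref{l:involutiveaction:central2torus} unchanged, again with $T \leq Z(H)$ in the role of $S \leq Z(G)$: fibers are in bijection with the finite set $\Omega_2([T, \alpha])$, left $H^+$-translates of $(H^+)^\circ \cdot \{H, \alpha\}$ are disjoint or equal and all of full rank, and connectedness of $H$ forces the identity translate alone to cover $H$. The main obstacle is therefore not technical but identificatory: recognising that the hypothesis $\{H, \alpha\} \subseteq F^\circ(H)$ is precisely what confines the $2$-elements $\zeta_2$ --- and later the order-two elements $k = a^{-1}b$ arising in the fiber analysis --- to the central torus $T$, making the verbatim transplantation of Lemma~\ref{l:involutiveaction:central2torus} go through.
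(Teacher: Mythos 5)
Your observation that the Sylow $2$-subgroup $T$ of $F^\circ(H)$ is a central $2$-torus of $H$ is correct and is a nice starting point, but the transplantation of Lemma~\ref{l:involutiveaction:central2torus} is not ``verbatim'': it breaks down exactly at the point you gloss over. In Step~\ref{l:involutiveaction:central2torus:st:decomposition} of Lemma~\ref{l:involutiveaction:central2torus}, the existence of $\xi$ with $[\xi^2,\alpha]=\zeta$ rests on Step~\ref{l:involutiveaction:central2torus:st:torus}, namely $S\cap\{G,\alpha\}=[S,\alpha]$, whose proof begins by writing $g=h^2$ --- it uses $2$-divisibility of the ambient group. A connected nilpotent $U_2^\perp$ group is $2$-divisible because its $2$-torsion is divisible \emph{and central}, but a connected soluble one need not be: in $\K_+\rtimes\K^\times$ with the weight-$2$ action, the element $(a,-1)$ with $a\neq 0$ has no square root. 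So you have no analogue of Step~\ref{l:involutiveaction:central2torus:st:torus} telling you that your $2$-element $\zeta_2\in T\cap\{H,\alpha\}$ lies in $[T,\alpha]$. Your substitute claim --- that $T^-=\{t\in T: t^\alpha=t^{-1}\}$ is $2$-divisible, so $\zeta_2$ is a fourth power there --- is false in general: if $\alpha$ centralises $T$ then $T^-=\Omega_1(T)$ is a finite elementary abelian $2$-group, and $(T^-)^{\wedge 4}=1$ while $\zeta_2$ could a priori be a nontrivial involution. Note also that your route would prove $\{H,\alpha\}$ itself $2$-divisible, which the paper carefully does \emph{not} claim for Lemma~\ref{l:involutiveaction:soluble} (the remark following it only gets $\{H,\alpha\}\subseteq\Omega_2(F^\circ(H))\cdot\{F^\circ(H),\alpha\}$, i.e.\ divisibility up to an involution); for the same reason your claim that the fibers are in bijection with $\Omega_2([T,\alpha])$ overshoots --- the paper explicitly declines that precision.

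The fix is to avoid running the argument on $H$ at all. By normalisation principles $H$ has an $\alpha$-invariant Carter subgroup $Q$, and $H=Q\cdot F^\circ(H)$; both factors are definable, connected, nilpotent and $U_2^\perp$, so Lemma~\ref{l:involutiveaction:central2torus} applies legitimately to each. Then $Q=(Q^+)^\circ\cdot\{Q,\alpha\}\subseteq (H^+)^\circ\cdot F^\circ(H)$ (here the hypothesis $\{H,\alpha\}\subseteq F^\circ(H)$ absorbs $\{Q,\alpha\}$ into $F^\circ(H)$), and $F^\circ(H)=(F^\circ(H)^+)^\circ\cdot\{F^\circ(H),\alpha\}$, whence $H\subseteq (H^+)^\circ\cdot\{H,\alpha\}$. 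Finiteness of fibers then follows from your own computation ($b_1^2=b_2^2$ with $b_1,b_2\in F^\circ(H)$ forces $b_1^{-1}b_2$ of order at most $2$), which does not need the problematic step.
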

\begin{proof}
By normalisation principles, $H$ admits an $\alpha$-invariant Carter subgroup $Q$; by the theory of Carter subgroups of soluble groups, $H = Q\cdot F^\circ(H)$ \cite[Corollaire 5.20]{FSous}. Now both $Q$ and $F^\circ(H)$ are definable, connected, nilpotent, and $U_2^\perp$: so Lemma \ref{l:involutiveaction:central2torus} applies to them.
Hence $Q = (Q^+)^\circ \cdot \{Q, \alpha\} \subseteq (H^+)^\circ \cdot F^\circ(H)$, and:
\[H = Q \cdot F^\circ(H) \subseteq (H^+)^\circ \cdot F^\circ(H) \subseteq (H^+)^\circ \cdot (F^\circ(H)^+)^\circ \cdot \{F^\circ(H), \alpha\} \subseteq (H^+)^\circ\cdot \{H, \alpha\}\]

The fibers are finite: this works as in \cite[Lemma 24]{DJSmall} since if $c_1 b_1 = c_2 b_2$ with $c_i \in H^+, b_i \in \{H, \alpha\}$, then $c_2^{-1} c_1 = b_2 b_1^{-1} \in H^+$ so $b_2 b_1^{-1} = b_2^{-1} b_1$ and $b_1^2 = b_2^2$, but by assumption $b_i \in \{H, \alpha\} \subseteq F^\circ(H)$ so $b_1$ and $b_2$ differ by an element of $\Omega_2(F^\circ(H))$ (in case of hyperbolic doubt read the next remark). Unlike in Lemma \ref{l:involutiveaction:central2torus} we cannot be too precise about the cardinality of the fiber.
%as the Fitting$^\circ$ subgroup and the Carter subgroup may meet on some central involution.
\end{proof}

\begin{remarks*}\
\begin{itemize}
\item
We can show $\{H, \alpha\} \subseteq \Omega_2(F^\circ(H)) \cdot \{F^\circ(H), \alpha\}$. For let $h \in H$; then $[h, \alpha] \in \{H, \alpha\} \subseteq F^\circ(H)$. Applying Lemma \ref{l:involutiveaction:central2torus} in $F^\circ(H)$, we write $[h, \alpha] = f_+ [f, \alpha]$ with $f_+ \in F^\circ(H)^+$ and $f \in F^\circ(H)$. Taking the commutator with $\alpha$ we find $[h, \alpha]^2 = [f, \alpha]^2$. But in $F^\circ(H)$, the equation $x^2 = y^2$ results in $x^{-1} \cdot x^{-1}y \cdot x = y^{-1}x = (x^{-1}y)^{-1}$ and by the first observation in the proof of Lemma \ref{l:involutiveaction:central2torus}, $x^{-1}y$ has order at most $2$. Hence $[h, \alpha] = k [f, \alpha]$ for some $k \in \Omega_2(F^\circ(H))$.
\item
Without the crucial assumption that $\{H, \alpha\} \subseteq F^\circ(H)$ one still has $H = \{H, \alpha\} \cdot (H^+)^\circ \cdot \{H, \alpha\}$ and therefore $H = H^- \cdot H^+ \cdot H^-$, but one can hardly say more.

Consider two copies $A_1 = \{a_1: a \in \C\}$, $A_2 = \{a_2: a \in \C\}$ of $\C_+$ and let $Q = \{t: t\in \C^\times\} \simeq \C^\times$ act on $A_1$ by $a_1^t= (t^2 a)_1$ and on $A_2$ by $a_2^t = (t^{-2}a)_2$. Form the group $H = (A_1\oplus A_2)\rtimes Q$. Let $\alpha$ be the definable, involutive automorphism of $H$ given by:
\[(a_1 b_2 t)^\alpha =  b_1 a_2 t^{-1}\]
that is, ``$\alpha$ swaps the $\pm 2$ weight spaces while inverting the torus''.
The reader may check that $\alpha$ is an automorphism of $H$, and perform the following computations:
\begin{itemize}
\item
$[a_1 b_2t, \alpha] = (t^2 b-t^2 a)_1(t^{-2}a - t^{-2}b)_2 t^{-2}$ (so $\{H, \alpha\} \not\subseteq F^\circ(H)$);
\item
$H^+ = \{a_1 a_2 \cdot \pm 1: a \in \C_+\}$ (incidently $(H^+)^\circ \leq F^\circ(H)$);
\item
$H^- = \{a_1(-t^2 a)_2 t: a\in \C_+, t\in \C^\times\}$ (incidently $H^- = \{H, \alpha\}$);
\item
$H^+ \cdot H^- = \{(a+b)_1 (a-t^2 b)_2\cdot \pm t: a, b \in \C_+, t \in \C^\times\}$ does \emph{not} contain $0_1 a_2 \cdot i$ for $a \neq 0$ (here $i$ is a complex root of $-1$).
\end{itemize}
%Introducing some $2$-torsion in $Q$ is essential as if $H$ is $2^\perp$, the decomposition is trivial!
\item
Rewriting \cite[Theorem 19]{DJSmall} is necessary for the argument; one cannot simply use the idea of Lemma \ref{l:involutiveaction:soluble} together with the original decomposition.

Let $Q = \C^\times$ act on $A = \C_+$ by $a^t = (t^2\cdot a)$ and form $H = A \rtimes Q$. Consider $\alpha$ the involutive automorphism doing $(at)^\alpha = (-a) t$ ($\alpha$ inverts the Fitting subgroup while centralising the Carter subgroup). The reader will check that $H^+ = Q$, $H^- = A \cdot \pm 1$, $\{H, \alpha\} = A$, and of course $H = H^+ \cdot H^-$.

Running the argument in Lemma \ref{l:involutiveaction:soluble} using the (naive) $G = G^+\cdot G^-$ decomposition of \cite[Theorem 19]{DJSmall}, one finds $Q = (Q^+)^\circ \cdot Q^-$, but $Q^- \simeq \Z/2\Z$ is not in $F^\circ(H)$. One could then wish to apply the decomposition to $F(H)$ instead, but the Sylow $2$-subgroup of the latter is not a $2$-torus.

Extending \cite[Theorem 19]{DJSmall} into Lemma \ref{l:involutiveaction:central2torus} was therefore needed for Lemma \ref{l:involutiveaction:soluble}.
\end{itemize}
\end{remarks*}

\subsection{$U_p^\perp$ Actions and Centralisers}

The need for the present subsection is Lemma \ref{l:Upiperpactions:conclusion} below but we shall digress a bit for completeness and future reference.
Let $\up$ denote a set of prime numbers. The class of $U_{\up}^\perp$ groups %, not to be mistaken with $U_{\up^\perp}$ groups,
is defined naturally.
\cite[\S I.9.5]{ABCSimple} deals with two dual settings:
\begin{itemize}
\item
soluble, $\up^\perp$ groups acting on definable, connected, soluble, $U_{\up}$ groups;
\item
$\up$-groups acting on definable, connected, soluble, $\up^\perp$ groups.
\end{itemize}
We slightly refine the analysis.

\begin{notation*}\label{n:qoplus}
If $A$ and $B$ are two subgroups of some ambient abelian group, we write $A \qoplus B$ to denote the quasi-direct sum, i.e. in order to mean that $A \cap B$ is finite.
\end{notation*}

\begin{lemma}\label{l:Upiperpactions:abelian}
In a universe of finite Morley rank, let $A$ be a definable, abelian group and $R$ be a group acting on $A$ by definable automorphisms. Let $A_0 \leq A$ be a definable, $R$-invariant subgroup. Suppose one of the following:
\begin{enumerate}[(i)]
\item\label{l:Upiperpactions:abelian:finitepgrouponpperp}
$A$ is a $\up^\perp$ group and $R$ is a finite, soluble $\up$-group;
\item\label{l:Upiperpactions:abelian:infinitepgrouponconnectedpperp}
$A$ is a connected, $\up^\perp$ group, $A_0$ is connected, and $R$ is a soluble $\up$-group;
\item\label{l:Upiperpactions:abelian:infinitepgrouponUpperp}
$A$ is a connected, $U_{\up}^\perp$ group, $A_0$ is connected, and $R$ is a soluble $\up$-group;
\item\label{l:Upiperpactions:abelian:definablepperponUp}
$A$ is a $U_{\up}$-group and $R$ is a definable, soluble, $\up^\perp$ group;
\item\label{l:Upiperpactions:abelian:pperponconnectedUp}
$A$ is a connected $U_{\up}$-group and $R \leq S$ where $S$ is a definable, soluble, $\up^\perp$ group acting on $A$.
\end{enumerate}
%v1, un peu longue
% Then in cases \ref{l:Upiperpactions:abelian:finitepgrouponpperp}, \ref{l:Upiperpactions:abelian:infinitepgrouponconnectedpperp}, \ref{l:Upiperpactions:abelian:definablepperponUp} and \ref{l:Upiperpactions:abelian:pperponconnectedUp}:
% $A = [A, R] \oplus C_A(R)$, $[A, R]\cap A_0 = [A_0, R]$, $C_A(R)$ covers $C_{A/A_0}(R)$, and $C_R(A) = C_R(A_0, A/A_0)$.
% In case \ref{l:Upiperpactions:abelian:infinitepgrouponconnectedpperp}, then in addition $C_A(R)$, $C_{A_0}(R)$, and $C_{A/A_0}(R)$ are connected.
% 
% In case \ref{l:Upiperpactions:abelian:infinitepgrouponUpperp}, one only has: $A = [A, R] \qoplus C^\circ_A(R)$, $([A, R]\cap A_0)^\circ = [A_0, R]$, $C^\circ_A(R)$ covers $C^\circ_{A/A_0}(R)$, and $C_R(A) = C_R(A_0, A/A_0)$.
%
%v2, compacte et correcte
Then $C_R(A) = C_R(A_0, A/A_0)$. In cases \ref{l:Upiperpactions:abelian:finitepgrouponpperp}, \ref{l:Upiperpactions:abelian:infinitepgrouponconnectedpperp}, \ref{l:Upiperpactions:abelian:definablepperponUp} and \ref{l:Upiperpactions:abelian:pperponconnectedUp}:
$A = [A, R] \oplus C_A(R)$, $[A, R]\cap A_0 = [A_0, R]$, and $C_A(R)$ covers $C_{A/A_0}(R)$.
In case \ref{l:Upiperpactions:abelian:infinitepgrouponUpperp}, then the properties hold provided connected components are added (where not redundant), and $\oplus$ is replaced by $\qoplus$.
In case \ref{l:Upiperpactions:abelian:infinitepgrouponconnectedpperp}, then $C_A(R)$ and $C_{A/A_0}(R)$ are connected.
%
%v3, incorrecte
% Then $A = [A, R] \qoplus C^\circ_A(R)$, $([A, R]\cap A_0)^\circ = [A_0, R]$, $C^\circ_A(R)$ covers $C^\circ_{A/A_0}(R)$, and $C_R(A) = C_R(A_0, A/A_0)$. In cases \ref{l:Upiperpactions:abelian:finitepgrouponpperp}, \ref{l:Upiperpactions:abelian:infinitepgrouponconnectedpperp}, \ref{l:Upiperpactions:abelian:definablepperponUp} and \ref{l:Upiperpactions:abelian:pperponconnectedUp} one may remove connected components and replace $\qoplus$ by $\oplus$; in case \ref{l:Upiperpactions:abelian:infinitepgrouponconnectedpperp}, $C_A(R)$, $C_{A_0}(R)$, and $C_{A/A_0}(R)$ are connected.
\end{lemma}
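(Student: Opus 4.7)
The lemma is a suite of "coprime action" statements, so I would unify the five cases under a Maschke-style averaging adapted to the finite Morley rank context.

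First I would dispatch the centraliser identity $C_R(A) = C_R(A_0, A/A_0)$. When $r \in R$ centralises both $A_0$ and $A/A_0$, the commutator map $\delta_r : A \to A_0$, $a \mapsto [a, r]$, is a group homomorphism because $A$ is abelian; and since $r$ centralises its own image, an easy induction gives $[a, r^n] = n \cdot \delta_r(a)$ (additive notation in $A_0$). In cases \ref{l:Upiperpactions:abelian:finitepgrouponpperp}--\ref{l:Upiperpactions:abelian:infinitepgrouponUpperp}, any $r$ may be assumed a $\up$-element (by torsion lifting when $R$ is infinite), whence $\delta_r(a)$ is a $\up$-torsion element of the $\up^\perp$ group $A_0$, forcing $\delta_r = 0$. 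Cases \ref{l:Upiperpactions:abelian:definablepperponUp}--\ref{l:Upiperpactions:abelian:pperponconnectedUp} are dual: $r$ is a $\up^\perp$ element and $A_0$ has bounded $\up$-exponent, so the same iteration makes $\delta_r(a) = 0$ in $A_0$.

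For the main decomposition $A = [A, R] \oplus C_A(R)$, I would reduce to $R$ cyclic of prime order by induction on the structure of $R$ (finite $R$ in cases \ref{l:Upiperpactions:abelian:finitepgrouponpperp}, \ref{l:Upiperpactions:abelian:definablepperponUp}; reduction to finite $\up$-subgroups via torsion lifting in cases \ref{l:Upiperpactions:abelian:infinitepgrouponconnectedpperp}--\ref{l:Upiperpactions:abelian:infinitepgrouponUpperp}; reduction via the definable soluble structure of $S$ in case \ref{l:Upiperpactions:abelian:pperponconnectedUp}). For $R = \langle r \rangle$ of prime order $p$, I form the norm endomorphism $\nu = 1 + r + \cdots + r^{p-1}$ of $A$; the polynomial identity $(r-1)\nu = r^p - 1 = 0$ gives $\nu(A) \subseteq C_A(r)$, and $\nu|_{C_A(r)}$ is multiplication by $p$. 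The key input is divisibility: connected abelian groups of finite Morley rank are divisible, and combined with the $\up^\perp$ (resp. $U_\up$ plus $\up^\perp$ order) hypothesis, multiplication by $p$ is invertible on $C_A(r)$. Then $\pi = p^{-1} \nu$ is an idempotent projection with image $C_A(r)$, kernel containing $[A, r]$, and since $a - \pi(a) = p^{-1} \sum_{i}(a - a^{r^i})$ lies in $[A, r]$, the decomposition follows.

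The three derived statements now fall out by naturality. Applying the decomposition to the $R$-invariant subgroup $A_0$ (which inherits the ambient hypothesis) yields $A_0 = [A_0, R] \oplus C_{A_0}(R)$; comparing with $A = [A, R] \oplus C_A(R)$ via the inclusion $A_0 \hookrightarrow A$ gives $[A, R] \cap A_0 = [A_0, R]$. Applied to the quotient $A/A_0$, the decomposition combined with surjectivity of $A \twoheadrightarrow A/A_0$ on each summand yields that $C_A(R)$ covers $C_{A/A_0}(R)$. Connectedness in case \ref{l:Upiperpactions:abelian:infinitepgrouponconnectedpperp} follows since $[A, R]$ is a continuous definable image of a connected group (hence connected) and $C_A(R)$ is its definable direct complement in the connected group $A$.

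The main obstacle I anticipate is case \ref{l:Upiperpactions:abelian:infinitepgrouponUpperp}, where $U_\up^\perp$ is strictly weaker than $\up^\perp$: finite $\up$-torsion in $A$ can disturb the divisibility needed to invert $\nu$, and the averaging projection acquires a finite defect, which is precisely why the conclusion is the \emph{quasi}-direct sum $\qoplus$ rather than $\oplus$. Here one must carefully separate the finite $\up$-part (controlled by the structure of torsion in definable connected soluble groups, \S\ref{s:semisimplicity}) from the divisible part, track the finite intersection through the projection argument, and explain why connected components are required in the statement. Case \ref{l:Upiperpactions:abelian:pperponconnectedUp} adds the technicality that $R$ need not itself be definable; this I would handle by first decomposing under the definable $S$-action (case \ref{l:Upiperpactions:abelian:definablepperponUp}) and then restricting to $R$ using $[A, R] \leq [A, S]$, $C_A(S) \leq C_A(R)$, and the connectedness of $A$ to promote the $S$-decomposition to an $R$-decomposition.
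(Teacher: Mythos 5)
Your core averaging argument is essentially the paper's proof of case \ref{l:Upiperpactions:abelian:finitepgrouponpperp}: the paper works with $\ad_r$ and the un-normalised trace $\Tr_r$, observing that $\ker \Tr_r \cap \ker \ad_r$ consists of elements of order dividing $|r|$ and is therefore trivial in a $\up^\perp$ group; your idempotent $p^{-1}\nu$ is the same computation packaged differently, and the statements about $A_0$ fall out the same way from the kernel/image description of the trace. (One slip: connected abelian groups of finite Morley rank need not be divisible — they are divisible-plus-bounded-exponent — but bijectivity of multiplication by $p \in \up$ on an abelian $\up^\perp$ group of finite Morley rank still holds, so this part is cosmetic.)

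The genuine gap is in cases \ref{l:Upiperpactions:abelian:definablepperponUp} and \ref{l:Upiperpactions:abelian:pperponconnectedUp}. You assert that $R$ is finite in case \ref{l:Upiperpactions:abelian:definablepperponUp}; it is not. A definable, soluble, $\up^\perp$ group is typically infinite and need not be locally finite — it may be torsion-free (a torus or a unipotent group in characteristic zero) — so there is no norm map to average with, and your direct proof of $C_R(A) = C_R(A_0, A/A_0)$ via ``$r$ is a $\up^\perp$ element, so iterating $[a, r^n] = n\,\delta_r(a)$ kills $\delta_r$'' collapses for $r$ of infinite order. The missing idea is structural: form $H = A \rtimes R$, note $A \leq F(H)$, and use Burdges' central decomposition of $F^\circ(H)$ together with unipotence theory to show that $U_q(R)$ for $q \notin \up$ and $U_{(0,k)}(R)$ for $k > 0$ all centralise $A$; hence $R^\circ$ acts through a good torus, the action factors through a locally finite group, and only then can one pass to finite subgroups and invoke case \ref{l:Upiperpactions:abelian:finitepgrouponpperp} for the complementary set of primes (this is \cite[Facts 1.15 and 1.16]{CDSmall}). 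A second, smaller gap is case \ref{l:Upiperpactions:abelian:infinitepgrouponUpperp}, which you flag but do not carry out, and which is not a cosmetic variant of your argument: multiplication by $p$ is no longer injective on a $U_{\up}^\perp$ group, so the idempotent $p^{-1}\nu$ is not even well defined. The repair is to abandon the normalised projection and argue with the raw trace, replacing ``trivial'' by ``finite'' throughout ($\ker \Tr_R \cap C_A(R)$ is now only finite, whence $\qoplus$) and taking connected components; likewise your centraliser identity for that case needs $\delta_r(A)$ to be recognised as a definable \emph{connected} bounded-exponent $\up$-subgroup of the $U_{\up}^\perp$ group $A_0$, hence finite hence trivial — not as $\up$-torsion in a $\up^\perp$ group, which $A_0$ is not assumed to be there.
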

\begin{proof}\
\begin{enumerate}[(i)]
\item
This is an extension of \cite[Corollary I.9.14]{ABCSimple} taking $A_0$ into account.

We prove that $A = [A, R] + C_A (R)$ by induction on the order of $R$. By solubility, there exist a proper subgroup $S \triangleleft R$ and an element $r \in R$ with $R = \langle  S, r\rangle$. By induction, $A = [A, S] + C_A(S)$. But $r$ normalises $C = C_A(S)$ which is a definable, $\up^\perp$-group. Consider the definable homomorphisms $\ad_r: C \to C$ and $\Tr_r: C\to C$ respectively given by:
\[\ad_r(a) = [a, r], \quad \Tr_r(a) = \sum_{r^i \in \langle  r\rangle} a^{r^i}\]
Since $\ad_r \circ \Tr_r = \Tr_r \circ \ad_r = 0$, one has $\im \ad_r \leq \ker \Tr_r$ and $\im \Tr_r \leq \ker \ad_r$. But since $\ker \Tr_r \cap \ker \ad_r$ consists of elements of order dividing $|r|$, it is trivial by assumption.
%It follows that $\rk C \geq \rk \ker \Tr_r + \rk \ker \ad_r \geq \rk \im \ad_r + \rk \ker \ad_r = \rk C$. Hence $\rk \im \ad_r = \rk \ker \Tr_r$.
% and $[C, r] = \im \ad_r = \ker \Tr_r$, which implies $C = \ker \Tr_r + \ker \ad_r = \im \ad_r + \ker \ad_r = [C, r] + C_C^\circ(r)$. This shows that $A = [A, S] + C_A(S) = [A, R] + [C, r] + C_C(r) = [A, R] + C_A(R)$.
In particular $\im \ad_r \cap \ker \ad_r = 0$ so $C = \im \ad_r + \ker \ad_r = [C, r] + C_C(r) \leq [A, R] + C_A(R)$.
%Principe : si $\varphi : A \to A$ vérifie $\ker \varphi\cap \im \varphi = 0$, alors $A = \ker \varphi + \im \varphi$.
%Conséquence de $\deg A \leq \deg (A/B) \cdot \deg A$. En effet il suit qu'en général $\deg A \leq \deg \im \varphi \cdot \deg \ker \varphi$. Ici vu l'hypothèse cela force $A = \im \varphi \oplus \ker\varphi$.

Let us show that $[A, R] \cap C_A(R)$ is trivial. Consider the definable homomorphism $\Tr_R: A\to A$ given by:
\[\Tr_R(a) = \sum_{r \in R} a^r\]
Since $\Tr_R$ vanishes on any subgroup of the form $[A, r]$, it vanishes on $[A, R]$; notice that it coincides with multiplication by $|R|$ on $C_A(R)$. It follows that $[A, R] \cap C_A(R)$ consists of elements of order dividing $|R|$, so by assumption it is trivial.

We shall say a bit more: $\ker \Tr_R = [A, R]$ and $\im \Tr_R = C_A(R)$.
% First, $\im \Tr_R \cap \ker \Tr_R \leq C_{\ker \Tr_R}(R) = 0$, so as above $C = \im \Tr_R + \ker \Tr_R$.
Indeed $A = [A, R] + C_A(R)$ and $[A, R] \leq \ker \Tr_R$, so $\ker \Tr_R \leq [A, R] + C_{\ker \Tr_R}(R)$. But $C_{\ker\Tr_R}(R)$ consists of elements of order dividing $|R|$, therefore it is trivial. It follows that $\ker \Tr_R = [A, R]$.
% Similarly, $\im \Tr_R \leq C_A(R)$.
Again $\im \Tr_R \cap \ker \Tr_R \leq C_{\ker \Tr_R}(R) = 0$, so as above $A = \im \Tr_R + \ker \Tr_R$, proving $C_A(R) \leq \im \Tr_R + C_{\ker \Tr_R}(R) = \im \Tr_R$.

We turn our attention to the definable, $R$-invariant subgroup $A_0 \leq A$. One sees that:
\[[A, R] \cap A_0 = \ker \Tr_R \cap A_0 = \ker (\Tr_R)_{|A_0} = [A_0, R]\]
and letting $\varphi$ stand for projection modulo $A_0$:
\[\varphi(C_A(R)) = \varphi \circ \Tr_R(A) = \Tr_R\circ  \varphi(A) = \Tr_R(A/A_0) = C_{A/A_0} (R)\]
Finally let $S = C_R(A_0, A/A_0)$. We apply our results to the action of $S$ on $A$ and find $A \leq [A, S] + C_A(S) \leq C_A(S)$ so $S = C_R(A)$.
\item
We reduce to case \ref{l:Upiperpactions:abelian:finitepgrouponpperp} with the following claim.

In a universe of finite Morley rank, if $G$ is a definable, connected group and $R$ is a locally finite group acting on $G$, then there is a finite subgroup $R_0 \leq R$ with $C_G(R_0) = C_G(R)$ and $[G, R_0] = [G, R]$.

The first is by the descending chain condition on centralisers: there is a finite subset $X \subseteq R$ with $C_G(X) = C_G(R)$. Now by connectedness of $G$ and Zilber's indecomposibility theorem, $[G, r]$ is definable and connected for any $r \in R$. By the ascending chain condition on definable, connected subgroups, there is a finite subset $Y \subseteq R$ such that $[G, Y] = [G, R]$. Take $R_0 = \langle   X \cup Y\rangle$, a finite subgroup of $R$.

So taking both actions on $A$ and on $A_0$ into account we may suppose $R$ to be finite; apply case \ref{l:Upiperpactions:abelian:finitepgrouponpperp} and see that $A = [A, R] \oplus C_A(R)$ implies connectedness of the latter.
\item
Here again we may suppose $R$ to be finite. Now read the proof of case \ref{l:Upiperpactions:abelian:finitepgrouponpperp} again, replacing ``trivial'' by ``finite'' and adding connected components where necessary.
\item
This is esentially \cite[Facts 1.15 and 1.16]{CDSmall}; also see \cite[Corollary I.9.11]{ABCSimple}.

Let $H = A \rtimes R$, a definable, soluble group with $A \leq F(H)$. Then for $q \notin \up$, $U_q(R) \leq F(H) \leq C_H(A)$ and likewise, $U_{(0, k)}(R) \leq C_H(A)$ for $k > 0$. So $R^\circ$ acts as a good torus which we may replace with a finite, normal subgroup of $R$; then we may suppose that $R$ itself is finite.

Considering the complement of $\up$ in the set of primes, we may apply case \ref{l:Upiperpactions:abelian:finitepgrouponpperp}.
\item
We reduce to case \ref{l:Upiperpactions:abelian:definablepperponUp} with the following claim.

In a universe of finite Morley rank, if $G$ is a definable, connected group and $S$ is a definable group acting on $G$, then any subgroup $R\leq S$ satisfies $C_G(R) = C_G(d(R))$ and $[G, R] = [G, d(R)]$.

The first is by definability of centralisers. The second is as in \cite[Lemma 1.14]{CDSmall}: let $X = \{s \in d(R): [G, s] \leq [G, R]\}$.
Since $[G, R]$ is definable by connectedness of $G$ and Zilber's indecomposibility theorem, so is its normaliser in $d(R)$. Hence $d(R)$ normalises $[G, R]$; the definable set $X$ is actually a subgroup of $d(R)$. So $d(R) \leq d(X)$ and $[G, d(R)] = [G, R]$.
\qedhere
\end{enumerate}
\end{proof}

\begin{remark*}
The Lemma does not hold for $U_{\up}^\perp$, non-connected $A$ since it fails at the finite level: let $R = \Z/2\Z$ act by inversion on $A = \Z/4\Z$; one has $C_A(R) = 2A = [A, R]$.
\end{remark*}

After obtaining the following Lemma the author realised it was already proved by Burdges and Cherlin using a different argument.

\begin{lemma}[{cf. \cite[Proposition I.9.12]{ABCSimple}; also \cite[Lemma 2.5]{BCGeneration}}]\label{l:Upiperpactions:centralisers}
In a universe of finite Morley rank, let $G$ be a definable group, $R$ be a soluble $\up$-group acting on $G$ by definable automorphisms, and let $H \trianglelefteq G$ be a definable, connected, soluble, $U_{\up}^\perp$, $R$-invariant subgroup. Then $C_{G/H}^\circ(R) = C_G^\circ(R)H/H$.
\end{lemma}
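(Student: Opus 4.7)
The inclusion $C_G^\circ(R)H/H \subseteq C_{G/H}^\circ(R)$ is immediate: the image of the definable, connected subgroup $C_G^\circ(R)$ under the projection $G \to G/H$ lies inside $C_{G/H}(R)$, and is itself definable and connected.

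For the reverse inclusion, we induct on the derived length $n$ of $H$, the base case $n = 0$ being vacuous. In the inductive step let $A = H^{(n-1)}$, the last nontrivial term of the derived series of $H$: then $A$ is an abelian, definable, connected, $R$-invariant, $U_{\up}^\perp$ subgroup, characteristic in $H$ and hence normal in $G$. Applying the inductive hypothesis to the triple $(G/A, H/A, R)$, in which $H/A$ has derived length $n-1$, gives $C_{G/H}^\circ(R) = (C_{G/A}^\circ(R) \cdot H)/H$. So it suffices to treat the case $H$ abelian.

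Assume $H$ is abelian and let $P \leq G$ denote the preimage of $C_{G/H}^\circ(R)$; then $P$ is definable, $R$-invariant, contains $H$, is connected (as an extension of the connected groups $P/H$ and $H$), and satisfies $[P, R] \leq H$. We aim to show that the natural projection $\phi : C_P^\circ(R) \to P/H$ is surjective. Replacing $R$ by a finite subgroup $R_0 \leq R$ with identical centralisers and commutator subgroups in $G$, $H$, and $G/H$, as in the proof of Lemma~\ref{l:Upiperpactions:abelian}(ii), we may assume $R$ is finite of order $N$. From Lemma~\ref{l:Upiperpactions:abelian}(iii) applied to the abelian, connected, $U_{\up}^\perp$ group $H$ we get $H = [H, R] \qoplus C_H^\circ(R)$, so $\rk[H, R] = \rk H - \rk C_H(R)$.

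Consider the definable map $\gamma \colon P \to H^{R}$, $g \mapsto ([g, r])_{r \in R}$, whose kernel is $C_P(R)$; for $h \in H$ it recovers the coboundary $r \mapsto h - h^r$, so $\gamma(H) = B^1(R, H)$ is a definable subgroup of rank $\rk[H, R]$. The crucial step is to show $\gamma(P) = B^1(R, H)$, i.e.\ that every $1$-cocycle $\beta_g : r \mapsto [g, r]$ for $g \in P$ is a coboundary. This is a Hilbert~90--style averaging: using divisibility of $H$ together with finiteness of the $N$-torsion subgroup $H[N]$ (finite by $U_{\up}^\perp$-ness), one sets $h$ to be an $N$th root of $\sum_{r \in R} \beta_g(r)$ and checks via the cocycle identity $\beta_g(rs) = \beta_g(s) + \beta_g(r)^s$ that $\beta_g(r) = h - h^r$ modulo $H[N]$; the residual finite obstruction is absorbed by connectedness of $P$. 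This is the main obstacle of the proof. Granted it, rank-nullity gives $\rk C_P^\circ(R) = \rk P - \rk[H, R]$, so $\rk\phi(C_P^\circ(R)) = \rk C_P^\circ(R) - \rk C_H(R) = \rk P - \rk H = \rk(P/H)$. Being a definable connected subgroup of the connected group $P/H = C_{G/H}^\circ(R)$ of matching rank, $\phi(C_P^\circ(R))$ must equal $C_{G/H}^\circ(R)$, completing the proof.
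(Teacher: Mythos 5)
Your proof is correct, and its core step goes by a genuinely different route from the paper's. Both arguments make the same preliminary reductions (the easy inclusion, passing to a finite $R_0$ by chain conditions and local finiteness, the connected preimage $P$, and an induction along the derived series of $H$ --- you peel off the last term $H^{(n-1)}$ where the paper peels off $H'$; both versions quietly use the standard fact that a definable quotient of a definable, connected, soluble, $U_{\up}^\perp$ group is again $U_{\up}^\perp$). The divergence is in the abelian case. The paper reduces to cyclic $R=\langle r\rangle$ via the norm map $\Tr_r$, applies Zilber's indecomposability theorem to place $[P,r]$ inside $\ker^\circ(\Tr_r)_{|H}=[H,r]$, and then runs an induction on the solubility class of $R$, descending into $C_G^\circ(S)$ for $S\triangleleft R$. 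You instead average over all of the finite group $R$ at once: the cocycle $\beta_g(r)=[g,r]\in Z^1(R,H)$ satisfies $N\beta_g(s)=\sigma-\sigma^s$ with $\sigma=\sum_r\beta_g(r)$ and $N=|R|$, and $N$-divisibility of $H$ (which holds since $N$ is a $\up$-number and $H$ is connected $U_{\up}^\perp$) together with finiteness of $H[N]$ shows $\beta_g$ is a coboundary up to the finite set $Z^1(R,H[N])$. Your stated conclusion $\gamma(P)=B^1(R,H)$ is slightly stronger than what this gives, but all your rank computation needs is $\rk\gamma(P)=\rk[H,R]$, which does follow since $\gamma(P)$ sits in finitely many translates of $B^1(R,H)$ and the fibres of $\gamma$ are cosets of $C_P(R)$. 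What each approach buys: the paper's induction on the solubility class of $R$ is where solubility is used essentially, whereas your averaging needs only finiteness of $R$ (solubility entering only through local finiteness in the reduction to $R_0$), so your argument is marginally more general and avoids the nested induction at the cost of the explicit rank argument absorbing the finite $N$-torsion obstruction.
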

\begin{proof}
As in Lemma \ref{l:Upiperpactions:abelian}, using chain conditions and local finiteness, we may assume that $R$ is finite. Let $L = \varphi^{-1}(C_{G/H}^\circ(R))$, where $\varphi$ denotes projection modulo $H$. Since $\varphi$ is surjective, $\varphi(L) = C_{G/H}^\circ(R)$ which is connected and a finite extension of $\varphi(L^\circ)$: so $\varphi(L) = \varphi(L^\circ)$ and $L = L^\circ H = L^\circ$ by connectedness of $H$. Hence $L$ itself is connected.
We now proceed by induction on the solubility class of $H$.

First suppose that $H$ is abelian; we proceed by induction on the solubility class of $R$. \begin{itemize}
\item
First suppose that $R = \langle  r\rangle$. Be careful that the definable map $\Tr_r: G\to G$ given by:
\[\Tr_r (g) = \prod_{i = 0}^{|r|-1}g^{r^i}\]
is \emph{not} a group homomorphism, but $(\Tr_r)_{|H}$ is one.

Since $[L, r] \leq H \cap \Tr_r^{-1}(0) = \ker (\Tr_r)_{|H}$, one has by connectedness and Zilber's indecomposibility theorem $[L, r] \leq \ker^\circ (\Tr_r)_{|H} = [H, r]$ by the proof of Lemma \ref{l:Upiperpactions:abelian}. Bear in mind that $H$ is abelian; it follows that $L \leq H C_G(r)$, so by connectedness $L \leq H C_G^\circ(r)$, as desired.
\item
Now suppose $R = \langle  S, r\rangle$ with $S\triangleleft R$. By induction, $L \leq H C_G^\circ(S)$ and since $H \leq L$, one has $L \leq H C_L^\circ(S)$. Let $G_S = C_G^\circ(S)$ and $H_S = C_H^\circ(S)$; also let $\varphi_S$ be the projection $G_S \to G_S/H_S$, and $L_S = \varphi_S^{-1}(C_{G_S/H_S}^\circ(r))$.

By the cyclic case, $L_S \leq H_S C_{G_S}^\circ(r) \leq H C_G^\circ(R)$. But $[C_L^\circ(S), r] \leq H \cap C_G^\circ(S)$ so by connectedness $[C_L^\circ(S), r] \leq C_H^\circ(S) = H_S$. It follows that $C_L^\circ(S) \leq L_S \leq H C_G^\circ(R)$ and $L \leq H C_L^\circ(S) \leq H C_G^\circ(R)$.
\end{itemize}
We now let $K = H'$, which is a definable, connected, $R$-invariant subgroup normal in $G$. Let $\varphi_K: G\to G/K$ and $\psi: G/K \to G/H$ be the standard projections, so that $\varphi = \psi \varphi_K$. By induction, $\varphi_K(C_G^\circ(R)) = C_{\varphi_K(G)}^\circ(R)$. But $\varphi_K(H) \trianglelefteq \varphi_K(G)$ and $\varphi_K(H)$ is abelian, so by the abelian case we just covered, $\psi(C_{\varphi_K(G)}^\circ(R)) = C_{\psi\varphi_K(G)}^\circ(R)$. Therefore:
\[\varphi(C_G^\circ(R)) = \psi(\varphi_K(C_G^\circ(R)) = \psi(C_{\varphi_K(G)}^\circ(R)) = C_{\psi\varphi_K(G)}^\circ(R) = C_{\varphi(G)}^\circ(R)\qedhere\]
\end{proof}

The following inductive consequence will not be used in the present work.

\begin{lemma}[{cf. \cite[Proposition I.9.13]{ABCSimple}}]
In a universe of finite Morley rank, let $H$ be a definable, connected, soluble, $U_{\up}^\perp$ group and $R$ be a soluble $\up$-group acting on $H$ by definable automorphisms.
Then $H = [H, R] C_H^\circ(R)$.
\end{lemma}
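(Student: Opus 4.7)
The plan is to derive this immediately from Lemma \ref{l:Upiperpactions:centralisers} by the standard trick of quotienting out $[H, R]$ so that $R$ acts trivially on the quotient. Explicitly, I would set $K = [H, R]$ and verify that $K$ meets the hypotheses on the normal subgroup in Lemma \ref{l:Upiperpactions:centralisers}. Then applying that lemma with $G := H$ and with $K$ playing the role of the normal subgroup yields
\[
C_{H/K}^\circ(R) = C_H^\circ(R) K / K,
\]
while by construction $R$ acts trivially on $H/K$, so the left-hand side equals $H/K$. This immediately gives $H = C_H^\circ(R) \cdot [H, R]$, which is what we want.

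The only non-formal step is checking that $K = [H, R]$ satisfies the hypotheses of Lemma \ref{l:Upiperpactions:centralisers}: namely that it is definable, connected, soluble, $U_{\up}^\perp$, $R$-invariant, and normal in $H$. Solubility and the $U_{\up}^\perp$ property are inherited from $H$; normality in $H$ and $R$-invariance follow at once because $[H, R]$ is normal in the semidirect product $H \rtimes R$. For definability and connectedness, I would reduce $R$ to a finite subgroup first, using the chain-condition/local-finiteness trick already employed in the proofs of Lemmas \ref{l:Upiperpactions:abelian} and \ref{l:Upiperpactions:centralisers}: pick a finite $R_0 \leq R$ with $[H, R_0] = [H, R]$ (which exists by ACC on definable connected subgroups applied to the increasing union of the $[H, F]$ over finite $F \subseteq R$, each of which is definable connected by Zilber's indecomposibility theorem applied to the maps $h \mapsto [h, r]$). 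Then $K$ is a finite product of definable connected subgroups, so is itself definable and connected.

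The only conceivable obstacle is making sure the reduction to finite $R$ is legitimate in this setting, but this has already been used twice above in the same context, so it should go through verbatim. After that, the proof is a two-line quotient argument and there is no further induction on the solubility class of $H$ to carry out — Lemma \ref{l:Upiperpactions:centralisers} already handles an arbitrary solubility class, which is precisely why the present statement is a direct consequence rather than a parallel induction.
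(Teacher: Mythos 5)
Your proof is correct. The paper in fact states this lemma without any proof, merely calling it an ``inductive consequence'' of Lemma~\ref{l:Upiperpactions:centralisers} and noting it is not used; your derivation --- set $K=[H,R]$, observe it is definable and connected by Zilber's indecomposibility theorem (after reducing to a finite $R_0\leq R$ by the chain conditions, exactly as in the proofs of Lemmas~\ref{l:Upiperpactions:abelian} and~\ref{l:Upiperpactions:centralisers}), plainly normal, soluble, $U_{\up}^\perp$ and $R$-invariant, then apply Lemma~\ref{l:Upiperpactions:centralisers} to $K\trianglelefteq H$ and use that $R$ acts trivially on the connected quotient $H/K$ --- is precisely the intended two-line argument, with all the hypotheses that actually need checking checked.
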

% Natural question : fibers???%TODO

Now let $\rho$ denote a unipotence parameter. We wish to generalise \cite[Lemma 3.6]{BSignalizer} relaxing the $\up^\perp$ assumption to $U_{\up}^\perp$. This will considerably simplify some arguments; in particular we shall no longer care whether Burdges' unipotent radicals of Borel subgroups contain involutions or not when taking centralisers. This will spare us the contortions of \cite[Lemmes 5.2.33, 5.2.39, 5.3.20, 5.3.23]{DGroupes}.

\begin{lemma}[{cf. \cite[Lemma 3.6]{BSignalizer}}]\label{l:Upiperpactions:conclusion}
In a universe of finite Morley rank, let $U$ be a definable, $U_{\up}^\perp$, $\rho$-group and $R$ be a soluble $\up$-group acting on $U$ by definable automorphisms. Then $C_U^\circ(R)$ is a $\rho$-group.
\end{lemma}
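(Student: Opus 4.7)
The plan is to generalise Burdges' original argument for the $\up^\perp$ case \cite[Lemma 3.6]{BSignalizer} by handling the extra finite $\up$-torsion that a $U_{\up}^\perp$ assumption allows.

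First I would reduce to the case where $R$ is finite. Using the descending chain condition on centralisers and the ascending chain condition on definable, connected subgroups, together with local finiteness of the soluble $\up$-group $R$ (exactly as in the proof of Lemma~\ref{l:Upiperpactions:abelian} case~\ref{l:Upiperpactions:abelian:infinitepgrouponconnectedpperp}), one finds a finite subgroup $R_0 \leq R$ with $C_U(R_0) = C_U(R)$. Replacing $R$ by $R_0$, we may thus assume $R$ finite. I would then induct on $\rk U$ (the base case $\rk U = 0$ being trivial since then $U = 1$).

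For the abelian case the argument is clean. Since $U$ is abelian, the trace map
\[
\Tr_R \colon U \to U, \qquad u \mapsto \prod_{r \in R} u^r
\]
is a definable group homomorphism whose image lies in $C_U(R)$. Restricted to $C_U^\circ(R)$ it is simply multiplication by $|R|$; but $U$ is a connected abelian group of finite Morley rank, hence divisible \cite[Theorem~6.9]{BNGroups}, so any definable connected subgroup, in particular $C_U^\circ(R)$, is divisible as well. Thus $|R|\cdot C_U^\circ(R) = C_U^\circ(R)$, giving
\[
C_U^\circ(R) \subseteq \operatorname{Im}\Tr_R \subseteq C_U(R).
\]
As $\operatorname{Im}\Tr_R$ is connected (being the image of the connected group $U$ under a definable homomorphism), it must coincide with $C_U^\circ(R)$. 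Push-forward (Fact~\ref{f:unipotence}\ref{f:unipotence:pushandpull}) now identifies $\operatorname{Im}\Tr_R$ as a $\rho$-group, and the abelian case is settled.

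For the inductive step I would suppose $U$ is non-abelian nilpotent and consider $N = [U, U]^\circ$, a definable, connected, proper (by nilpotency), $R$-invariant subgroup which is a $\rho$-group by Fact~\ref{f:unipotence}\ref{f:unipotence:rhocommutator}. By induction on rank $C_N^\circ(R)$ is a $\rho$-group; by the abelian case applied to the $\rho$-group $U/N$, $C_{U/N}^\circ(R)$ is a $\rho$-group. Lemma~\ref{l:Upiperpactions:centralisers} identifies the latter with $C_U^\circ(R)\cdot N/N$, and push/pull-back under $U \to U/N$ (Fact~\ref{f:unipotence}\ref{f:unipotence:pushandpull}) yields that $V := C_U^\circ(R)\cdot N$ is a $\rho$-group. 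A direct inspection shows $C_V^\circ(R) = C_U^\circ(R)$, so whenever $V < U$ one applies the inductive hypothesis to $V$ and concludes.

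The main obstacle is the residual configuration $V = U$, that is, $[U, R] \leq N$, where the rank-induction no longer provides a smaller target. Here the observation $(C_U^\circ(R) \cap N)^\circ = C_N^\circ(R)$ (using connectedness to pass between $C_U$ and $C_U^\circ$) gives an exact sequence
\[
1 \longrightarrow C_N^\circ(R) \longrightarrow C_U^\circ(R) \longrightarrow U/N \longrightarrow 1
\]
up to finite quotients, with both outer terms $\rho$-groups of the same parameter. One then finishes either by an extension argument for $\rho$-groups (applying pull-back to the surjection $C_U^\circ(R) \twoheadrightarrow U/N$, whose connected kernel is the $\rho$-group $C_N^\circ(R)$) or, if preferred, by iterating the construction down the lower central series: the hypothesis $[U, R] \leq N$ propagates via the three-subgroup lemma to $[U^{(i)}, R] \leq U^{(i+1)}$ for every $i$, and successive applications of Lemma~\ref{l:Upiperpactions:centralisers} together with the abelian case on each central quotient $U^{(i)}/U^{(i+1)}$ build $C_U^\circ(R)$ level by level as a $\rho$-group.
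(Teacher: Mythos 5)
Your architecture is essentially the paper's: an averaging/trace argument plus push-forward in the abelian case, then an induction (the paper inducts on nilpotence class through an abelian connected characteristic subgroup, you on rank through $U'$, which amounts to the same thing) using Lemma~\ref{l:Upiperpactions:centralisers} to identify $C_{U/N}^\circ(R)$ with the image of $C_U^\circ(R)$, followed by pull-back along an extension whose kernel is a finite extension of the $\rho$-group $C_N^\circ(R)$. Your ``residual configuration'' $V=U$ is in fact the paper's general step, and the case $V<U$ is a harmless shortcut.

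There is, however, one false statement at the pivotal point of the abelian case: a connected abelian group of finite Morley rank is \emph{not} divisible in general. Macintyre's theorem only decomposes it into a divisible part and a part of bounded exponent, and the bounded-exponent case is exactly the one that matters here, since a $(q,\infty)$-group has exponent a power of $q$. What you actually need --- and what is true --- is that $x\mapsto x^{|R|}$ is surjective on $C_U^\circ(R)$; this follows not from divisibility but from the standing hypotheses: $|R|$ is a $\up$-number and $U$ is $U_{\up}^\perp$, so the $|R|$-torsion of $C_U^\circ(R)$ is a $\up$-group of bounded exponent with no infinite elementary abelian $\up$-subgroup, hence finite; the $|R|$-power map therefore has finite kernel, its image is generic in the connected group $C_U^\circ(R)$, and so equals it. (Equivalently, on a $q$-unipotent piece the $|R|$-power map is already bijective by coprimality, and on a divisible piece there is nothing to prove.) This is precisely where the $U_{\up}^\perp$ and $\up$-group hypotheses earn their keep, so the appeal to divisibility of connected abelian groups must be replaced by this argument; once that is done, the rest of your proof goes through.
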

\begin{proof}
The proof is by induction on the nilpotence class of $U$. First suppose that $U$ is abelian. Then by Lemma \ref{l:Upiperpactions:abelian} one has $U = [U, R] \qoplus C_U^\circ(R)$. Let $K$ stand for the finite intersection. Then $C_U^\circ(R)/K \simeq U/[U, R]$ which by push-forward \cite[Lemma 2.11]{BSignalizer} is a $\rho$-group. It follows that $C_U^\circ(R)$ itself is a $\rho$-group. (Since we could not locate a proof of this trivial fact in the literature, here it goes: let $V = C_U^\circ(R)$ and $\varphi: V \to V/K$ be the standard projection. By pull-back \cite[Lemma 2.11]{BSignalizer}, $\varphi(U_\rho(V)) = V/K = \varphi(V)$, and since $\ker \varphi$ is finite, $\rk U_\rho(V) = \rk V$. By connectedness, $V = U_\rho(V)$.)

Now let $1 < A \triangleleft U$ be an abelian definable, connected, characteristic subgroup. By induction, $C_A^\circ(R)$ and $C_{U/A}^\circ(R)$ are $\rho$-groups. Now by Lemma \ref{l:Upiperpactions:centralisers},
\begin{align*}
C_{U/A}^\circ(R) & \simeq C_U^\circ(R) A/A\\
& \simeq C_U^\circ(R)/(A\cap C_U^\circ(R))\\
& \simeq \left(C_U^\circ(R)/C_A^\circ(R)\right)/\left((A\cap C_U^\circ(R))/C_A^\circ(R)\right)\\
& = \left(C_U^\circ(R)/C_A^\circ(R)\right)/L\end{align*}
where $L = (A\cap C_U^\circ(R))/C_A^\circ(R)$ is finite. Since $C_{U/A}^\circ(R)$ is a $\rho$-group, so is $C_U^\circ(R)/C_A^\circ(R)$. But $C_A^\circ(R)$ is a $\rho$-group, so by pull-back, so is $C_U^\circ(R)$.
\end{proof}

%Aucune idée de ce qu'un analogue en caractéristique nulle pourrait être.

One could of course do the same with a set of unipotence parameters instead of a single parameter $\rho$.

\begin{remark*}
As opposed to the usual setting of $\up^\perp$ groups \cite[Lemma 3.6]{BSignalizer}, connectedness of $C_U(R)$ is not granted in the $U_{\up}^\perp$ case: think of an involutive automorphism inverting a $\rho$-group which contains a non-trivial $2$-torus.
\end{remark*}

As a consequence, if inside a group of odd type some involution $i$ acts on a $\sigma$-group $H$  with $\rho_{C(i)} \prec \sigma$, then $i$ inverts $H$. We shall use this fact with no reference.

% Voici une bonne question qui allègerait considérablement les notations : est-ce que $Z^\circ(U_{\rho_B}(B))$ est un $\rho_B$-groupe ? Si oui, je le noterai simplement $Z_B$, par opposition à $Y_B = U_{\rho_{B}}(B)$.

\subsection{Carter $\pi$-Subgroups}

The maybe not-so-familiar notion of a Carter $\pi$-subgroup was recalled in \S\ref{s:unipotence}. Bear in mind that by definition, $\pi$-groups are nilpotent.

\begin{lemma}\label{l:piCarter}
Let $H$ be a connected, soluble group of finite Morley rank, $\pi$ be a set of parameters such that $U_\pi(H') = 1$, and $L \leq H$ be a maximal $\pi$-subgroup. Then there is a Carter subgroup $Q \leq H$ of $H$ with $L = U_\pi(Q)$.
\end{lemma}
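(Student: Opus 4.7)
The plan reduces to two steps. First I would show that $L$ is a Carter $\pi$-subgroup of $H$, meaning $U_\pi(N_H^\circ(L))=L$. Then I would invoke \cite[Corollary~5.9]{FJConjugacy}, the Frécon--Jaligot correspondence already used in the proof of Lemma~\ref{l:snakes:soluble:Carter} above, which states that for any Carter $\pi$-subgroup $L$ there is a Carter subgroup $Q$ of $H$ with $U_\pi(Q)\le L\le U_\pi(Q)\cdot U_\pi(H')$. The hypothesis $U_\pi(H')=1$ then collapses this to $L=U_\pi(Q)$, as required.

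For the first step, set $N=N_H^\circ(L)$; we must establish $U_\pi(N)\le L$, the other inclusion being trivial. Let $V\le N$ be any $\pi$-subgroup; the goal is to show $V\le L$. Since $V$ normalises $L$, the product $LV$ is a definable, connected, soluble subgroup of $H$ with $L\trianglelefteq LV$. Decomposing $L=\bigast_{\rho\in\pi}L_\rho$ via Fact~\ref{f:unipotence}(i), each $L_\rho=U_\rho(L)$ is characteristic in $L$ and therefore normal in $LV$. Applying Fact~\ref{f:unipotence}(\ref{f:unipotence:rhocommutator}) inside the soluble group $LV$, the commutator $[L_\rho,V]$ is a $\rho$-subgroup; but it also lies in $(LV)'\le H'$, and since $\rho\in\pi$ the hypothesis $U_\pi(H')=1$ forces $[L_\rho,V]=1$. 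Summing over $\rho\in\pi$ yields $[L,V]=1$, so $V$ and $L$ centralise each other.

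Consequently $LV$ is a product of two commuting nilpotent subgroups, hence nilpotent; being connected, definable, and generated by $\pi$-subgroups, the decomposition of Fact~\ref{f:unipotence}(i) shows that $LV$ is itself a $\pi$-subgroup of $H$. Maximality of $L$ forces $LV=L$, whence $V\le L$, completing the proof that $L$ is a Carter $\pi$-subgroup. No serious obstacle arises; the crucial point is simply to recognise that the hypothesis $U_\pi(H')=1$ is precisely what is needed to kill the $\rho$-groups $[L_\rho,V]$ produced by Fact~\ref{f:unipotence}(\ref{f:unipotence:rhocommutator}), turning the normalising action of $V$ on $L$ into a centralising one. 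The rest is bookkeeping plus invocation of \cite[Corollary~5.9]{FJConjugacy}.
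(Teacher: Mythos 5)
Your proof is correct, but it takes a genuinely different route from the paper's. The paper proves the intermediate claim that \emph{every} $\pi$-subgroup of $H$ lies in some Carter subgroup, by a double induction: on $|\pi|$ (base case from Sylow $\rho$-theory) and on rank via Frécon's generalised centralisers $E_H(L_\rho)$ from \cite{FSous}; the hypothesis $U_\pi(H')=1$ enters only in the terminal case $E_H(L_\rho)=H$, where it forces $L\le U_\pi(F^\circ(H))\le N_H(Q)=Q$, and maximality then upgrades $L\le Q$ to $L=U_\pi(Q)$. You instead place $U_\pi(H')=1$ at the heart of the argument: by Fact~\ref{f:unipotence}~\ref{f:unipotence:rhocommutator} it turns the normalising action of any $\pi$-subgroup $V\le N_H^\circ(L)$ into a centralising one, so $LV$ is again a $\pi$-subgroup and maximality gives $V\le L$; this exhibits $L$ as a Carter $\pi$-subgroup, and \cite[Corollary~5.9]{FJConjugacy} --- invoked exactly as in the proof of Lemma~\ref{l:snakes:soluble:Carter} --- finishes immediately. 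Your version is shorter and avoids the induction entirely, at the cost of using the Frécon--Jaligot correspondence as a black box where the paper stays closer to first principles (and obtains the containment $L\le Q$ for arbitrary, not just maximal, $\pi$-subgroups along the way). The one point to watch in your write-up is the assertion that the central product $L\ast V$ is again a $\pi$-subgroup; this is unproblematic under the working convention that a $\pi$-group is a definable, connected, nilpotent group generated by its $\rho$-subgroups for $\rho\in\pi$, and the paper's own proof relies on the same convention (for $U_\pi(Q)$ and for the factor $M$ in its decomposition $L=L_\rho\ast M$), so nothing is lost.
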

\begin{proof}
It suffices to show that for any $\pi$-subgroup $L \leq H$ there is a Carter subgroup $Q$ of $H$ with $L \leq Q$.

If $|\pi| = 1$ then we are actually dealing with a single unipotence parameter $\rho$, and the result follows from the theory of Sylow $\rho$-subgroups (\cite[Lemma 4.19]{BSimple}, \cite[Theorem 5.7]{BSylow}).
If $|\pi| > 1$, write Burdges' decomposition of $L = L_\rho \ast M$, where $\rho$ is any unipotence parameter occurring in $L$, $L_\rho = U_\rho(L)$, and $M$ is a $(\pi\setminus\{\rho\})$-group. By induction there is a Carter subgroup $Q$ of $H$ with $L_\rho \leq Q$.

Now consider the generalised centraliser (a tool we already used in the proof of Lemma \ref{l:snakes:soluble:Carter}) $E = E_H(L_\rho) \geq \langle  Q, M\rangle$.
If $E < H$ then by induction on the Morley rank $L$ is contained in some Carter subgroup of $E$. Since $Q \leq E$, the former also is a Carter subgroup of $H$.

So we may assume $E = H$, and therefore $L_\rho \leq F^\circ(H)$ \cite[Corollaire 5.17]{FSous}. Actually we may assume this for any parameter $\rho$, meaning $L \leq F^\circ(H)$. Now $Q$ acts on $U_\pi(F^\circ(H))$ so $[Q, U_\pi(F^\circ(H))] \leq U_\pi(H') = 1$ and $L \leq U_\pi(F^\circ(H)) \leq N_H(Q) = Q$.
\end{proof}

\subsection{$W_p^\perp$ Groups}

Weyl groups of minimal connected simple groups have been abundantly discussed \cite{ABAnalogies, BCSemisimple, BDWeyl, FAutomorphisms}.
%For the present series of articles Jaligot considered at some point studying Weyl groups of $N_\circ^\circ$-groups but this never took shape.
We do not feel utterly interested now; as a consequence we shall not even define Weyl groups.
Instead we shall develop a more limited view which will suffice for our purposes. This line is very much in the spirit of \cite{BPTores}, the influence of which on later work should not be concealed.

\begin{notation*}
Let $G$ be a $U_p^\perp$ group of finite Morley rank. Let $W_p(G) = S/S^\circ$ for any Sylow $p$-subgroup $S$ of $G$ (these are conjugate by \cite[Theorem 4]{BCSemisimple}, our Fact \ref{f:Sylowconjugate}, so this is well-defined).
\end{notation*}

\begin{lemma}\label{l:W2perp:factor}
Let $G$ be a $U_p^\perp$ group of finite Morley rank.
\begin{enumerate}[(i)]
\item\label{l:W2perp:factor:Hconnected}
If $H \leq G$ is a definable, connected subgroup, then $W_p(H) \hookrightarrow W_p(G)$.
\item\label{l:W2perp:factor:Hnormal}
If $H \trianglelefteq G$ is a definable, normal subgroup, then $W_p(G) \twoheadrightarrow W_p(G/H)$.
\item\label{l:W2perp:factor:Hconnectednormal}
If $H \trianglelefteq G$ is a definable, connected, normal subgroup, then $W_p(G/H) \simeq W_p(G)/W_p(H)$.
\item\label{l:W2perp:factor:Hcentral}
If $G$ is connected and $H \leq Z(G)$ is a central subgroup, then $W_p(G/H) \simeq W_p(G)$.
\end{enumerate}
\end{lemma}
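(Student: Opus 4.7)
The plan is to extract each Weyl group by picking a single Sylow $p$-subgroup on each side and tracking its behaviour under inclusion or projection; the substantive work will be to show that various intersections with the connected component of a Sylow stay connected, which is where torality principles do the real work.

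For item (\ref{l:W2perp:factor:Hconnected}), I would take a Sylow $p$-subgroup $S_H$ of the connected subgroup $H$ and extend it to a Sylow $S$ of $G$, whereupon $S_H = S \cap H$ by maximality. Since $S_H^\circ$ is connected and contained in $S$, it lies in $S^\circ$, so $s S_H^\circ \mapsto s S^\circ$ defines a group homomorphism $W_p(H) \to W_p(G)$. Injectivity reduces to $(S \cap H) \cap S^\circ \subseteq S_H^\circ$: for such an $s$, the fact that $s \in S^\circ$ (which is abelian) implies $s$ centralises the maximal $p$-torus $S_H^\circ$ of $H$, so picking a maximal $p$-torus $T$ of $C_H(s)$ with $S_H^\circ \leq T$ and applying the torality principle of \S\ref{s:Sylow} inside the connected $U_p^\perp$ group $H$ gives $s \in T$; maximality of $S_H^\circ$ as a $p$-torus of $H$ then forces $T = S_H^\circ$ and $s \in S_H^\circ$.

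For items (\ref{l:W2perp:factor:Hnormal}) and (\ref{l:W2perp:factor:Hconnectednormal}), I would fix a Sylow $\hat{S}$ of $G$ and let $\varphi$ denote projection modulo $H$. The Remarks after Fact~\ref{f:Sylowconjugate} provide that $\varphi(\hat{S})$ is a Sylow of $G/H$; since $\varphi(\hat{S}^\circ)$ is connected and $\varphi(\hat{S})/\varphi(\hat{S}^\circ)$ is a quotient of the finite group $\hat{S}/\hat{S}^\circ$, one gets $\varphi(\hat{S}^\circ) = \varphi(\hat{S})^\circ$, from which the surjection of (\ref{l:W2perp:factor:Hnormal}) is immediate. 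For (\ref{l:W2perp:factor:Hconnectednormal}), the kernel of this surjection is $(\hat{S} \cap \hat{S}^\circ H)/\hat{S}^\circ = \hat{S}^\circ S_H/\hat{S}^\circ \simeq S_H/(S_H \cap \hat{S}^\circ)$ by the modular law, where $S_H := \hat{S} \cap H$ is a Sylow of $H$ (Remarks again). It then suffices to show $S_H \cap \hat{S}^\circ = S_H^\circ$: after replacing $G$ by $G^\circ$ (which is connected and $U_p^\perp$, contains $H$, and in which $\hat{S}^\circ$ remains a maximal $p$-torus), the Remark in \S\ref{s:Sylow} on maximal $p$-tori of connected normal subgroups gives that $\hat{S}^\circ \cap H$ is a maximal $p$-torus of $H$, hence connected, hence equal to $S_H^\circ$.

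For item (\ref{l:W2perp:factor:Hcentral}), every $p$-element $s \in H$ is central in $G$, so $C_G(s) = G$ is connected and $U_p^\perp$, and the torality principle places $s$ in every maximal $p$-torus of $G$, in particular in $\hat{S}^\circ$. Hence $S_H = \hat{S} \cap H \subseteq \hat{S}^\circ$, the kernel computed in (\ref{l:W2perp:factor:Hconnectednormal}) trivialises, and the surjection from (\ref{l:W2perp:factor:Hnormal}) becomes an isomorphism. The only genuine obstacle across the four items, present only in (\ref{l:W2perp:factor:Hconnectednormal}), is controlling connectedness of $\hat{S}^\circ \cap H$; the connectedness hypothesis on $H$ is exactly what makes the relevant torality argument available.
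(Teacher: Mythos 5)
Your proof is correct and follows essentially the same route as the paper's: fix compatible Sylow $p$-subgroups, observe $S_H^\circ\leq S_G^\circ$ and $\varphi(S_G^\circ)=\varphi(S_G)^\circ$ to get the natural maps, and settle injectivity and the kernel computations by torality principles (your detour in (\ref{l:W2perp:factor:Hconnectednormal}) through the remark on maximal $p$-tori of connected normal subgroups, and your use of the modular law in (\ref{l:W2perp:factor:Hconnectednormal}) and (\ref{l:W2perp:factor:Hcentral}), are only cosmetic repackagings of the same torality argument the paper cites). No gaps.
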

\begin{proof}\
\begin{enumerate}[(i)]
\item
Let $S_H$ be a Sylow $p$-subgroup of $H$ and extend it to a Sylow $p$-subgroup $S_G$ of $G$. To $w\in W_p(H)$ associate $h S_G^\circ \in W_p(G)$ where $h \in S_H$ is such that $h S_H^\circ = w$. This is a well-defined group homomorphism as $S_H^\circ \leq S_G^\circ$. It is injective since if $h \in S_H\cap S_G^\circ$, then $h \in C_{S_H}(S_H^\circ) = S_H^\circ$ by torality principles and connectedness of $H$.
\item
Let $S_H \leq S_G$ be as above and denote projection modulo $H$ by $\overline{\phantom{G}}$; we know that $\Sigma = \overline{S_G} \simeq S_G/S_H$ is a Sylow $p$-subgroup of $G/H$.
To $w \in W_p(G)$ associate $\overline{g} \Sigma^\circ \in W_p(G/H)$ where $g \in S_G$ is such that $g S_G^\circ = w$. This is a well-defined group homomorphism as $\overline{S_G^\circ} = \Sigma^\circ$. It is clearly surjective.
\item
Suppose in addition that $H$ is connected. With notations as in the argument for Claim \ref{l:W2perp:factor:Hnormal}, if $w$ is in the kernel then $g \in S_G^\circ H$, and we may suppose $g \in H$ (the converse is obvious). Hence the kernel coincides with the image of $W_p(H)$ in $W_p(G)$ given by Claim \ref{l:W2perp:factor:Hconnected}.
\item
By Claim \ref{l:W2perp:factor:Hnormal} the map $W_p(G) \to W_p(G/H)$ is a surjective group homomorphism; now if $g S_G^\circ \in W_p(G)$ lies in the kernel, since $H$ is central in $G$ one finds $g \in S_G\cap (H S_G^\circ) \leq C_{S_G}(S_G^\circ) = S_G^\circ$ by torality principles and connectedness of $G$. So the map is injective and $W_p(G) \simeq W_p(G/H)$.
\qedhere
\end{enumerate}
\end{proof}

\begin{remarks*}\
\begin{itemize}
\item
In Claims \ref{l:W2perp:factor:Hconnected} and \ref{l:W2perp:factor:Hconnectednormal}, connectedness of $H$ is necessary: consider $\Z/2\Z$ inside $\Z_{2^\infty}$, then inside $\SL_2(\C)$.
% Some connectedness assumption is necessary: consider the central product $\SL_2(\C) \ast \Z/4\Z$ over the involution, i.e. let $G_1 = \SL_2(\C)$, $G_2 = \Z/4\Z$, $i$ and $j$ the respective inolutions, and $G = (G_1\times G_2)/\langle  ij\rangle$. Let $H$ be the (isomorphic) image of $G_2$ inside $G$. Here $W_2(G) \simeq (\Z/2\Z)^2$ but $W_2(G/H) \simeq W_2(\PSL_2(\C)) \simeq \Z/2\Z$ and $W_2(H) = \Z/4\Z$.
\item
As a consequence, if $G$ is connected and $H \trianglelefteq G$ is a definable, normal subgroup, then $W_p(G/H) \simeq W_p((G/H^\circ)/(H/H^\circ)) \simeq W_p(G/H^\circ) \simeq W_p(G)/W_p(H^\circ)$.
% \item
%Incidently, if $H \leq G$ are connected then it is NOT true that $N_H(T_H) \leq N_G(T_G)$: take $G = \GL_2(\K)$, $U$ any unipotent subgroup, $H = Z(G)\cdot U$. Then $T_H = Z(G)$, $T_G$ is a maximal decent torus. But $N_H(T_H) = H$ does not normalise $T_G$.
%So good luck to handle next item.
\item
Lemma \ref{l:W2perp:factor} could be used as a qualifying test for tentative notions of the Weyl group.
%Apparently true in the algebraic category.
\end{itemize}
\end{remarks*}

We wish to suggest a bit of terminology.

\begin{definition*}
A $U_p^\perp$ group of finite Morley rank is $W_p^\perp$ if its Sylow $p$-subgroups are connected.
\end{definition*}

As a consequence of Lemma \ref{l:W2perp:factor}, when $H \trianglelefteq G$ where both are definable and connected, if $H$ and $G/H$ are $W_p^\perp$ then so is $G$.
We aim at saying a bit more about extending tori. The following result is not used anywhere in the present article.

\begin{lemma}\label{l:W2perp:extension}
Let $\hat{G}$ be a connected, $U_p^\perp$ group of finite Morley rank and $G \trianglelefteq \hat{G}$ be a definable, connected subgroup. Suppose that $\hat{G}/G$ is $W_p^\perp$. Let $\hat{S} \leq \hat{G}$ be a Sylow $p$-subgroup and $S = \hat{S} \cap G$.
Then there exist:
\begin{itemize}
\item
a $p$-torus $\hat{T} \leq \hat{G}$ with $\hat{S} = S \rtimes \hat{T}$ (semidirect product);
\item
a $p$-torus $\hat{\Theta} \leq \hat{G}$ with $\hat{S} = S \qtimes \hat{\Theta}$ (central product over a finite intersection).
\end{itemize}
\end{lemma}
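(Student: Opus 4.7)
The plan is to exploit two facts about the Sylow $p$-subgroup $\hat S$: its connected component $\hat S^\circ$ is an abelian $p$-torus (by $U_p^\perp$-ness), and by $W_p^\perp$-ness of $\hat G/G$, the image $\hat S/S$ in the quotient is connected, hence itself a $p$-torus. Both $\hat T$ and $\hat\Theta$ will be built inside $\hat S^\circ$ via divisibility arguments; the central product is the main obstacle. First I would collect basic reductions using the remarks following Fact~\ref{f:Sylowconjugate}: $S = \hat S\cap G$ is a Sylow $p$-subgroup of $G$ with $S^\circ = \hat S^\circ\cap G$, and $\hat S G/G\simeq \hat S/S$ is the Sylow $p$-subgroup of $\hat G/G$, connected by hypothesis. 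Then $\hat S^\circ S/S$ is a divisible subgroup of $\hat S/S$ (image of a connected subgroup) of finite index (as $\hat S/(\hat S^\circ S)$ is a quotient of the finite $\hat S/\hat S^\circ$); a divisible subgroup of finite index in a $p$-torus being the whole, $\hat S = \hat S^\circ \cdot S$ and $\hat S/S\simeq \hat S^\circ/S^\circ$.

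The semidirect decomposition then follows from injectivity of divisible abelian groups: since $\hat S^\circ$ is a divisible abelian $p$-group and $S^\circ\le \hat S^\circ$ is a divisible subgroup, $\hat S^\circ = S^\circ\oplus \hat T$ for some $p$-torus $\hat T$. Then $\hat T\cap S = \hat T\cap S^\circ = 1$ and $S\hat T = S\cdot \hat S^\circ = \hat S$; as $S\trianglelefteq \hat S$, this yields $\hat S = S\rtimes \hat T$.

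The central product requires more work: I must find a $p$-torus $\hat\Theta$ commuting with $S$. Consider the conjugation action of the finite $p$-group $F = S/S^\circ$ on $\hat S^\circ$; since $[\hat S^\circ, S]\le \hat S^\circ\cap S = S^\circ$, the induced $F$-action on the quotient $\hat S^\circ/S^\circ$ is trivial. The key step is a norm argument: the map $N\colon \hat S^\circ\to \hat S^\circ$ defined by $y\mapsto \prod_{f\in F} f(y)$ takes values in $C_{\hat S^\circ}(F)$, and reduces modulo $S^\circ$ to the $|F|$-th power map on $\hat S^\circ/S^\circ$; as $\hat S^\circ/S^\circ$ is a $p$-torus and $|F|$ is a $p$-power, this is surjective on the quotient. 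Therefore $C_{\hat S^\circ}(F)\cdot S^\circ = \hat S^\circ$. Passing to connected components and using injectivity of divisibles once more to split off the divisible subgroup $(C_{\hat S^\circ}(F)^\circ\cap S^\circ)^\circ$ from $C_{\hat S^\circ}(F)^\circ$, I extract a $p$-torus $\hat\Theta\le C_{\hat S^\circ}(F)$ with $\hat\Theta\cdot S = \hat S$ and $\hat\Theta\cap S$ finite; since $\hat S^\circ$ is abelian one has $C_{\hat S^\circ}(S) = C_{\hat S^\circ}(F)$, so $\hat\Theta$ commutes with $S$ and the central product $\hat S = S \qtimes \hat\Theta$ follows.
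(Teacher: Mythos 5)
Your proof is correct and follows essentially the same route as the paper: reduce to $\hat{S} = S\hat{S}^\circ$ with $\hat{S}^\circ \cap S = S^\circ$, then use injectivity of $p$-tori to complement $S^\circ$ in $\hat{S}^\circ$ for the semidirect product, and complement the connected intersection inside the centraliser of $S$ in $\hat{S}^\circ$ for the central product. The only difference is that where the paper obtains $\hat{S}^\circ = [\hat{S}^\circ,\hat{S}] \qoplus C^\circ_{\hat{S}^\circ}(\hat{S})$ by citing Lemma~\ref{l:Upiperpactions:abelian} (plus Zilber to push $[\hat{S}^\circ,\hat{S}]$ into $S^\circ$), you inline the special case needed, namely the norm-map computation for the finite $p$-group $S/S^\circ$ acting trivially modulo $S^\circ$ --- which is exactly how that lemma is proved --- so the two arguments coincide in substance.
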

\begin{proof}
We know that $S$ is a Sylow $p$-subgroup of $G$ and that $\hat{S}/S \simeq \hat{S}G/G$ is a Sylow $p$-subgroup of $\hat{G}/G$; as the latter is $W_p^\perp$ it is a $p$-torus.
In particular $\hat{S} = \hat{S}^\circ S$. Note that $S \cap \hat{S}^\circ \leq C_{S}(S^\circ) = S^\circ$ by torality principles and connectedness of $G$.

Bear in mind that $p$-tori are injective as $\Z$-modules. Inside $\hat{S}^\circ$ take a direct complement $\hat{T}$ of $S^\circ$, so that $\hat{S}^\circ = S^\circ \oplus \hat{T}$. Then $\hat{S} = S \hat{S}^\circ = S \hat{T}$, but $S \cap \hat{T} \leq S \cap \hat{S}^\circ \cap \hat{T} \leq S^\circ \cap \hat{T} = 1$. Hence $\hat{S} = S \rtimes \hat{T}$.

We now consider the action of $\hat{S}$ on $\hat{S}^\circ$; observe that $\hat{S}$ as a pure group has finite Morley rank, so Lemma \ref{l:Upiperpactions:abelian} applies and yields $\hat{S}^\circ = [\hat{S}^\circ, \hat{S}] \qoplus C_{\hat{S}^\circ}^\circ(\hat{S})$. Since $\hat{S}/S$ is a $p$-torus, it is abelian, so $[\hat{S}^\circ, \hat{S}] \leq \hat{S}' \leq S$, and by Zilber's indecomposibility theorem $[\hat{S}^\circ, \hat{S}] \leq S^\circ$.
Inside $C_{\hat{S}^\circ}^\circ(\hat{S})$ take a direct complement $\hat{\Theta}$ of $C_{S^\circ}^\circ(\hat{S})$, so that $C_{\hat{S}^\circ}^\circ(\hat{S}) = C_{S^\circ}^\circ(\hat{S}) \oplus \hat{\Theta}$. Then $\hat{S} = S \hat{S}^\circ = S C_{\hat{S}^\circ}^\circ(\hat{S}) = S \hat{\Theta}$, and $\hat{\Theta} \leq C_{\hat{S}^\circ}^\circ(\hat{S})$ commutes with $S$. Moreover $(S \cap \hat{\Theta})^\circ \leq (C_S(\hat{S}) \cap \hat{\Theta})^\circ \leq C_{S^\circ}^\circ(\hat{S}) \cap \hat{\Theta} = 1$ by construction, so $\hat{S} = S \qtimes \hat{\Theta}$.
\end{proof}

\begin{remark*}
One may not demand that $\hat{S} = S \times \hat{T}$ (direct product). 
Consider the two groups $\SL_2(\C)$ with involution $i$ and $\C^\times$ with involution $j$. Let $\hat{G} = (\SL_2(\C) \times \C^\times)/\langle  ij\rangle$ and $\varphi: \SL_2(\C) \times \C^\times \to \hat{G}$ be the standard projection.
Let $G = \varphi(\SL_2(\C))\simeq \SL_2(\C)$ and $\hat{\Theta} = \varphi(\C^\times) \simeq \C^\times$.
Fix any Sylow $2$-subgroup $\hat{S}$ of $\hat{G}$. Then with $S = \hat{S} \cap G$ one has $S \hat{\Theta} = S \qtimes \hat{\Theta} = \hat{S}$, and $S \cap \hat{\Theta} = \langle  \varphi(i)\rangle$.

If one asks for a semidirect complement $\hat{T}$, the latter must contain its own involution, which will be $\varphi(ab)$ (or possibly $\varphi(iab)$, a similar case), where $a \in \varphi^{-1}(S)\leq \SL_2(\C)$ satisfies $a^2 = i$ and $b^2 = j$ in $\C^\times$.
Remember that inside a fixed Sylow $2$-subgroup of $\SL_2(\C)$, every element of order four (be it toral inside the fixed Sylow $2$-subgroup or not) is inverted by another element of order four.
So let $\zeta \in \varphi^{-1}(S)$ invert $a$. Then:
\[\varphi(\zeta^{ab}) = \varphi(\zeta^a) = \varphi(i \zeta) \neq \varphi(\zeta)\]
so the action of $\hat{T}$ on $S$ is always non-trivial.

One may not demand $\hat{S} = S \times \hat{T}$, and in any case nothing can apparently prevent $d(\hat{T})$ from intersecting $G$ non-trivially, so the question is rather pointless.
\end{remark*}

\subsection{A Counting Lemma}

The following quite elementary Lemma was devised in Cappadocia in 2007 as an explanation of \cite[Corollaire 5.1.7]{DGroupes} (or \cite[Corollaire 4.7]{DGroupes2}). It will be used only once.

\begin{lemma}[Göreme]\label{l:Goreme}
Let $G$ be a connected, $U_2^\perp$, $W_2^\perp$ group of finite Morley rank. Then the number of conjugacy classes of involutions is odd (or zero).
\end{lemma}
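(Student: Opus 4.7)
My plan is to reduce the count of $G$-conjugacy classes of involutions to counting orbits of a finite \emph{odd-order} group acting on an elementary abelian $2$-group, and to finish with an elementary parity argument.

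First I fix a Sylow $2$-subgroup $S$ of $G$. Since $G$ is both $U_2^\perp$ and $W_2^\perp$, the group $S$ is a connected $2$-torus, so $S \simeq \Z_{2^\infty}^d$ with $d = \Pr_2(G)$; if $d = 0$, then $I(G) = \emptyset$ and the statement holds trivially, so I may assume $d \geq 1$. Let $V = \Omega_1(S) = I(S) \cup \{1\}$, of order $2^d$, and $W = N_G(S)/C_G(S)$; rigidity of tori gives $N_G^\circ(S) = C_G^\circ(S)$, so $W$ is finite.

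I claim that $G$-conjugacy classes of involutions are in bijection with $W$-orbits on $V \setminus \{1\}$. By torality principles, every involution of $G$ is $G$-conjugate to one in $S$; and if $i, j \in S$ satisfy $j = i^g$, then $S$ and $S^g$ are two abelian Sylow $2$-subgroups of $G$ both containing $j$, hence are Sylow $2$-subgroups of the $U_2^\perp$ group $C_G(j)$, so Fact~\ref{f:Sylowconjugate} applied inside $C_G(j)$ produces $c \in C_G(j)$ with $S^{gc} = S$, meaning $gc \in N_G(S)$ conjugates $i$ to $j$; since $C_G(S)$ fixes $V$ pointwise, conjugacy really factors through $W$.

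Next I show $|W|$ is odd. The crucial point is that $S$ is the \emph{unique} Sylow $2$-subgroup of $N_G(S)$: being normal and soluble, $S$ lies in every Sylow $2$-subgroup of $N_G(S)$, and any such Sylow $2$-subgroup $S_0$ is itself a soluble $2$-subgroup of $G$, hence contained in some $G$-conjugate $S^g$; but the inclusion $S \leq S^g$ between two divisible abelian $2$-groups of the same Prüfer rank $d$ splits (by injectivity of divisible groups) with quotient $S^g/S$ divisible of Prüfer rank $0$, forcing $S^g = S$ and thus $S_0 = S$. By torsion lifting, any $2$-element $w \in W$ lifts to a $2$-element $m \in N_G(S)$, which must lie in $S \leq C_G(S)$, so $w = 1$: $W$ has odd order.

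To conclude, every $W$-orbit on $V$ has odd cardinality (dividing $|W|$), so $2^d = |V|$ is a sum of odd integers; as $d \geq 1$ this sum must have an even number of terms, and discarding the singleton orbit $\{1\}$ leaves an odd number of $W$-orbits on $I(S)$, as required. The one step calling for genuine care is the uniqueness of the Sylow $2$-subgroup of $N_G(S)$, where the $W_2^\perp$ hypothesis enters essentially through connectedness (and hence divisibility) of $S$; the rest is a Burnside-style parity count.
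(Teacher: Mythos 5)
Your proof is correct and follows essentially the same route as the paper's: torality to push every class into a fixed Sylow $2$-torus $S$, Burnside-style fusion control by $N_G(S)$, the observation that all $2$-elements of $N_G(S)$ lie in the abelian group $S$ (hence centralise $I(S)$), and a parity count on $|I(S)| = 2^d - 1$. The only difference is presentational: you run a direct orbit count for the odd-order group $W = N_G(S)/C_G(S)$, whereas the paper argues by contradiction from a conjugacy class meeting $S$ in an even number of involutions.
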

\begin{proof}
By torality principles, every class is represented in a fixed Sylow $2$-subgroup $S = S^\circ$. We group involutions of $S^\circ$ by classes $\gamma_k$, and assume we find an even number of these: $I(S^\circ) = \sqcup_{k = 1}^{2m} \gamma_k$. Since the number of involutions in $S^\circ$ is however odd, some class, say $\gamma$, has an even number of involutions. Now $N = N_G(S)$ acts on $\gamma$; by definition of a conjugacy class and by a classical fusion control argument \cite[Lemma 10.22]{BNGroups}, $N$ acts transitively on $\gamma$. Hence $[N: C_N(\gamma)] = |\gamma|$ is even. Lifting torsion, there is a non-trivial $2$-element $\zeta$ in $N \setminus C_N(\gamma)$. Since $S \trianglelefteq N$, one has $\zeta \in S = S^\circ \leq C_N(\gamma)$, a contradiction.
\end{proof}

The author hoped to be able to use this Lemma without any form of bound on the Prüfer $2$-rank. He failed as one shall see in Step~\ref{t:st:PrhatG=1} of the Theorem. The general statement remains as a relic of happier times past.

\section{The Proof --- Before the Maximality Proposition}\label{S:before}

\begin{theorem*}
Let $\hat{G}$ be a connected, $U_2^\perp$ group of finite Morley rank and $G \trianglelefteq \hat{G}$ be a definable, connected, non-soluble, $N_\circ^\circ$-subgroup.

Then the Sylow $2$-subgroup of $G$ has one of the following structures: isomorphic to that of $\PSL_2(\C)$, isomorphic to that of $\SL_2(\C)$, a $2$-torus of Prüfer $2$-rank at most $2$.

Suppose in addition that for all involutions $\iota \in I(\hat{G})$, the group $C_G^\circ(\iota)$ is soluble.

Then $m_2(\hat{G}) \leq 2$, one of $G$ or $\hat{G}/G$ is $2^\perp$, and involutions are conjugate in $\hat{G}$. Moreover one of the following cases occurs:
\begin{description}
\item[$\bullet$ PSL$_2$:]
$G \simeq \PSL_2(\K)$ in characteristic not $2$; $\hat{G}/G$ is $2^\perp$;
\item[$\bullet$ CiBo$_\emptyset$:]
$G$ is $2^\perp$; $m_2(\hat{G}) \leq 1$; for $\iota \in I(\hat{G})$, $C_G(\iota) = C_G^\circ(\iota)$ is a self-normalising Borel subgroup of $G$;
\item[$\bullet$ CiBo$_1$:]
$m_2(G) = m_2(\hat{G}) = 1$; $\hat{G}/G$ is $2^\perp$; for $i \in I(\hat{G}) = I(G)$, $C_G(i) = C_G^\circ(i)$ is a self-normalising Borel subgroup of $G$;
\item[$\bullet$ CiBo$_2$:]
$\Pr_2(G) = 1$ and $m_2(G) = m_2(\hat{G}) = 2$; $\hat{G}/G$ is $2^\perp$; for $i \in I(\hat{G}) = I(G)$, $C_G^\circ(i)$ is an abelian Borel subgroup of $G$ inverted by any involution in $C_G(i)\setminus\{i\}$ and satisfies $\rk G = 3 \rk C_G^\circ(i)$;
\item[$\bullet$ CiBo$_3$:]
$\Pr_2(G) = m_2(G) = m_2(\hat{G}) = 2$; $\hat{G}/G$ is $2^\perp$; for $i \in I(\hat{G}) = I(G)$, $C_G(i) = C_G^\circ(i)$ is a self-normalising Borel subgroup of $G$; if $i \neq j$ are two involutions of $G$ then $C_G(i) \neq C_G(j)$.
\end{description}
\end{theorem*}

The proof requires eight propositions all strongly relying on the $N_\circ^\circ$ assumption, the deepest of which will be the maximality Proposition~\ref{p:maximality}.
Let us briefly describe the global outline. More detailed information will be found before each proposition.

In Proposition~\ref{p:2structure} (\S\ref{s:2structure}) we determine the $2$-structure of $N_\circ^\circ$-groups by elementary methods. Proposition~\ref{p:genericity} (\S\ref{s:genericity}) is a classical rank computation required both by the Algebraicity Proposition~\ref{p:algebraicity} (\S\ref{s:algebraicity}) which identifies $\PSL_2(\K)$ through reconstruction of its BN-pair, and by the Maximality Proposition~\ref{p:maximality} which shows that in non-algebraic configurations centralisers$^\circ$ of involutions are Borel subgroups.
The proof may be of interest to the expert in finite group theory; perhaps he will find something unexpected there.
Proposition~\ref{p:maximality} will take all of \S\ref{S:maximality} but actually requires two more technical preliminaries: Propositions \ref{p:DevilsLadder} (\S\ref{s:DevilsLadder}) and \ref{p:Yanartas} (\S\ref{s:Yanartas}), which deal with actions of involutions and torsion, respectively. After Proposition~\ref{p:maximality} things go faster. We study the action of an infinite dihedral group in Proposition~\ref{p:dihedral} (\S\ref{s:dihedral}) and a strong embedding configuration in Proposition~\ref{p:strongembedding} (\S\ref{s:strongembedding}). Both are rather classical, methodologically speaking; Proposition~\ref{p:dihedral} is more involved than Proposition~\ref{p:strongembedding}; they can be read in any order but both rely on Maximality. The final assembling takes place in \S\ref{s:theorem} where all preliminary propositions  \ref{p:2structure}, \ref{p:genericity}, \ref{p:DevilsLadder} and \ref{p:Yanartas} reappear as independent themes.

The resulting architecture surprised the author. In the original, minimal connected simple setting one proceeded by first bounding the Prüfer $2$-rank \cite{BCJMinimal} and then studying the remaining cases \cite{DGroupes1, DGroupes2}. There maximality propositions had to be proved three times in order to complete the analysis. The reason for such a clumsy treatment, with one part of the proof being repeated over and over again, was that torsion arguments were systematically based on some control on involutions.
Here we do the opposite. By providing careful torsion control in Proposition~\ref{p:Yanartas} and relaxing our expectations on conjugacy classes of involutions we shall be able to run maximality without prior knowledge of the Prüfer $2$-rank. This seems to be the right level both of elegance and generality. Bounding the Prüfer $2$-rank then follows by adapting a small part of \cite{BCJMinimal}.

Before the curtain opens one should note that bounding the Prüfer $2$-rank of $\hat{G}$ a priori is possible if one assumes $G$ to be $2^\perp$ as Burdges noted for \cite{BCDAutomorphisms}. We do not follow this line.

\subsection{The $2$-Structure Proposition}\label{s:2structure}

Proposition~\ref{p:2structure} hereafter comes directly from \cite[Chapitre 4 and Addendum]{DGroupes}, published as \cite[\S 2]{DGroupes2}. It is the most elementary of our propositions, and together with the Strong Embedding Proposition~\ref{p:strongembedding} one of the two not requiring almost-solubility of centralisers of involutions.

\begin{proposition}[$2$-Structure]\label{p:2structure}
Let $G$ be a connected, $U_2^\perp$, $N_\circ^\circ$-group of finite Morley rank. Then the Sylow $2$-subgroup of $G$ has the following form:
\begin{itemize}
\item
connected, i.e. a possibly trivial $2$-torus;
\item
isomorphic to that of $\PSL_2(\C)$;
\item
isomorphic to that of $\SL_2(\C)$, in which case $C_G^\circ(i)$ is non-soluble for any involution $i$ of $G$.
\end{itemize}
\end{proposition}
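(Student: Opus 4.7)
The plan is to exploit the $U_2^\perp$-assumption layer by layer on a Sylow~$2$-subgroup $S$ and use the $N_\circ^\circ$-property to control how disconnected pieces sit above the connected one. Since $G$ is $U_2^\perp$, $S$ is toral-by-finite with $T := S^\circ$ a $2$-torus (\S\ref{s:semisimplicity}). If $T = 1$, I apply torality principles (the Fact after Fact~\ref{f:Sylowconjugate}) inside the $U_2^\perp$ centraliser of any alleged involution $i \in G$: such $i$ would lie in a maximal $2$-torus of $C_G(i)$, producing a non-trivial $2$-torus of $G$ and contradicting $T = 1$. So $G$ is $2^\perp$ and $S = 1$. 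If $T \ne 1$ and $S = T$, the Sylow is already connected. So the substantive case is $T \ne 1 < S/T$.

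\textbf{Forcing inversion.} In the disconnected case, torsion lifting provides a $2$-element $s \in S \setminus T$ with $s^2 \in T$. The automorphism of $T$ induced by $s$ has order dividing $2$, hence is diagonalisable over $\Z_2$, giving a decomposition $T = T_+ \qoplus T_-$ with $T_+ := C_T^\circ(s)$. The key step is to prove $T_+ = 1$. Suppose otherwise: then $N_G^\circ(T_+)$ is soluble by $N_\circ^\circ$, so by the structure of torsion in connected soluble $U_2^\perp$ groups (\S\ref{s:semisimplicity}) its Sylow~$2$-subgroup is connected, necessarily equal to $T$. On the other hand, torality principles applied inside $C_G(s)$ place $s$ in a maximal $2$-torus of $C_G(s)$; enlarging via Sylow conjugacy (Fact~\ref{f:Sylowconjugate}) inside $C_G(T_+)$, one arranges $s$ to lie in a maximal $2$-torus $T^g$ of $G$ containing~$T_+$. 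Both $T$ and $T^g$ are then maximal $2$-tori of $C_G(T_+)$ containing $T_+$, so they are conjugate by some $h \in C_G^\circ(T_+) \leq N_G^\circ(T_+)$: $T^g = T^h$. Hence $s' := h^{-1}sh \in T$ and $s = h s' h^{-1}$ lies in the soluble group $N_G^\circ(T_+)$, whose Sylow~$2$ being connected forces any $2$-element to be toral in a Sylow~$2$-torus of that group --- and a rigidity-of-tori bookkeeping then forces $s \in T$, a contradiction. So $T_+ = 1$, i.e.\ $s$ inverts $T$.

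\textbf{Identifying the Sylow.} Once $s$ inverts $T$, one has $C_T(s) = T[2]$ and $s^2 \in T[2]$. The proof that $\Pr_2(T) = 1$ in this disconnected case runs along the same lines as above: assuming $\Pr_2(T) \ge 2$, a judiciously chosen involution $j \in T[2]$ produces, via the same torality-plus-conjugacy machinery, two distinct maximal $2$-tori sharing an infinite $2$-subtorus, which the previous step has just ruled out. Hence $T \simeq \Z_{2^\infty}$ has a unique involution $i_0$, and $s^2 \in \{1, i_0\}$ gives the two possibilities: $S \simeq T \rtimes \langle s\rangle$ (Sylow of $\PSL_2(\C)$) when $s^2 = 1$, and $S = \langle T, s\rangle$ of generalised-quaternion type (Sylow of $\SL_2(\C)$) when $s^2 = i_0$. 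Applying the same principle to $2$-elements of higher order in $S/T$ forces $|S/T| = 2$.

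\textbf{Non-solubility in the $\SL_2$-case; main obstacle.} For the last clause, assume $S$ is of $\SL_2(\C)$-type with central involution $i_0$, and suppose for contradiction that $H := C_G^\circ(i_0)$ is soluble. Then the Sylow~$2$-subgroup of $H$ is connected, hence equal to $T$; yet $s$ (with $s^2 = i_0$) lies in $C_G(i_0) \setminus H$ and induces on $H$ an involutive automorphism inverting its Sylow~$2$-torus~$T$. Applying Lemma~\ref{l:involutiveaction:soluble} to $(H, s)$ and comparing Sylow~$2$-subgroups of $C_G(i_0)$ via Fact~\ref{f:Sylowconjugate} yields the desired contradiction. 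The main obstacle of the whole proposition is the inversion step of the second paragraph: extracting from the $N_\circ^\circ$-assumption just enough rigidity to rule out common infinite $2$-subtori of two distinct maximal $2$-tori. All subsequent case-analysis (bound on Prüfer rank, quotient structure, non-solubility in the $\SL_2$-case) is then a controlled variation on that same principle.
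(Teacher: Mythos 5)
Your opening move is the right one (an element $\zeta$ of minimal order in $S\setminus S^\circ$ must invert $S^\circ$, because otherwise $C^\circ_{S^\circ}(\zeta)\neq 1$ traps $\langle S^\circ,\zeta\rangle$ in a soluble group with connected Sylow $2$-subgroups), but the way you execute it has a genuine hole. To conclude you must place $\zeta$ \emph{inside} the connected soluble group $N_G^\circ(T_+)$; a priori $\zeta$ only normalises (indeed centralises) $T_+$, so it lives in $N_G(T_+)$, and the $N_\circ^\circ$ hypothesis says nothing about the finite quotient. Your detour through Sylow conjugacy in $C_G(T_+)$ silently assumes the conjugating element $h$ can be taken in $C_G^\circ(T_+)$ --- but Fact~\ref{f:Sylowconjugate} only gives $h\in C_G(T_+)$, and upgrading this to the connected component is exactly the missing ingredient. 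The paper closes this gap by quoting the Alt\i nel--Burdges theorem that centralisers of tori in connected groups are connected (\cite[Theorem 1]{ABAnalogies}), which gives $\zeta\in C_G(T_+)=C_G^\circ(T_+)\leq N_G^\circ(T_+)$ in one line; without citing some form of that result your inversion step does not close, and the ``rigidity-of-tori bookkeeping'' at the end of your second paragraph is doing no work.

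The larger gap is in your third paragraph. Bounding the Pr\"ufer rank in the disconnected case is \emph{not} ``a controlled variation on the same principle'', and nothing you have proved rules out two distinct maximal $2$-tori sharing an infinite subtorus (that is not even the relevant obstruction --- two such tori would simply cohabit a connected soluble group). The paper's actual argument is a minimal-counterexample induction: assuming $\Pr_2\geq 2$ and $Z(G)=1$, the inverting element $\zeta$ centralises $\Omega=\Omega_2(S^\circ)$, which has order $\geq 4$; one finds a maximal $2$-torus $T$ of $C_G^\circ(\zeta)$ normalised by $\Omega$ with $\zeta\in T$, picks $i\in\Omega$ with $C_T^\circ(i)\neq 1$, forces $i\in T$, and then observes that $C_G^\circ(i)$ is a \emph{proper} connected subgroup containing both $S^\circ$ and $\zeta$, i.e.\ a smaller counterexample. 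That inductive descent is the real content of the proposition and is absent from your proposal. (The Pr\"ufer rank $1$ classification you re-derive is handled in the paper by citation to \cite[Proposition 27]{DJSmall}, which is fine either way.) Finally, your non-solubility argument points the wrong way: the contradiction is not that $s$ acts on $H=C_G^\circ(i_0)$ from outside, but that torality forces $s$ (hence a full, disconnected Sylow $2$-subgroup of $G$) \emph{into} $C_G^\circ(i_0)$, which is incompatible with $H$ being soluble with connected Sylow $2$-subgroups; your appeal to Lemma~\ref{l:involutiveaction:soluble} never actually produces a contradiction.
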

\begin{proof}
If the Prüfer rank is $0$ this is a consequence of the analysis of degenerate type groups \cite{BBCInvolutions}. If it is $1$, this is well-known, see for reference \cite[Proposition 27]{DJSmall}. Notice that if the Sylow $2$-subgroup is as in $\SL_2(\C)$ and $i$ is any involution, then by torality principles all Sylow $2$-subgroups of $C_G(i)$ are in $C_G^\circ(i)$, but none is connected: this, and the structure of torsion in connected, soluble groups of finite Morley rank prevents $C_G^\circ(i)$ from being soluble.

So we suppose that the Prüfer $2$-rank is at least $2$ and show that a Sylow $2$-subgroup $S$ of $G$ is connected. Let $G$ be a minimal counterexample to this statement. Then $G$ is non-soluble. Since $G$ is an $N_\circ^\circ$-group, $Z(G)$ is finite, but we actually may suppose that $G$ is centreless. For if the result holds of $G/Z(G)$, then $SZ(G)/Z(G)$ is a Sylow $2$-subgroup of $G/Z(G)$, and therefore connected, so that $S \leq S^\circ Z(G) \cap S \leq C_S(S^\circ) = S^\circ$ by torality principles. So we may assume $Z(G) = 1$.

Still assuming that the Prüfer $2$-rank is at least $2$ we let $\zeta \in S\setminus S^\circ$ have minimal order, so that $\zeta^2 \in S^\circ$. Let $\Theta_1 = C_{S^\circ}^\circ(\zeta)$. If $\Theta_1 \neq 1$ then $\langle  S^\circ, \zeta\rangle \leq C_G(\Theta_1)$ which is connected by \cite[Theorem 1]{ABAnalogies} and soluble since $G$ is an $N_\circ^\circ$-group. The structure of torsion in such groups yields $\zeta \in S^\circ$, a contradiction. So $\Theta_1 = C_{S^\circ}^\circ(\zeta) = 1$ and $\zeta$ therefore inverts $S^\circ$. In particular $\zeta$ centralises the group $\Omega = \Omega_2(S^\circ)$ generated by involutions of $S^\circ$, and $\Omega$ normalises $C_G^\circ(\zeta)$.
By normalisation principles $\Omega$ normalises a maximal $2$-torus $T$ of $C_G^\circ(\zeta)$; by torality principles, $\zeta \in T$ and $T$ has the same Prüfer $2$-rank as $S$. Now $|\Omega|\geq 4$ so there is $i \in \Omega$ such that $\Theta_2 = C_T^\circ(i)$ is non-trivial. Then $\langle  T, i\rangle \leq C_G(\Theta_2)$ which is soluble and connected as above, implying $i \in T$. This is not a contradiction yet, but now $\zeta \in T \leq C_G^\circ(i)$ and of course $S^\circ \leq C_G^\circ(i)$. Hence $C_G^\circ(i) < G$ is a smaller counterexample, a contradiction. Connectedness is proved.
\end{proof}

\begin{remark*}
One can show that if $\alpha \in G$ is a $2$-element with $\alpha^2 \neq 1$, then $C_G(\alpha)$ is connected.

For let $\alpha \in G$ have order $2^k$ with $k > 1$. By Steinberg's torsion theorem (our Fact \ref{f:Steinberg}), $C_G(\alpha)/C_G^\circ(\alpha)$ has exponent dividing $2^k$. Using torality principles, fix a maximal $2$-torus $T$ of $G$ containing $\alpha$.
If the Sylow $2$-subgroup of $G$ is connected, then $T$ is a Sylow $2$-subgroup of $G$ included in $C_G^\circ(\alpha)$: hence $C_G(\alpha) = C_G^\circ(\alpha)$.
If the Sylow $2$-subgroup of $G$ is isomorphic to that of $\PSL_2(\C)$ or to that of $\SL_2(\C)$, then any $2$-element $\zeta \in C_G(\alpha)$ normalising $T$ centralises $\alpha$ of order at least $4$, so it also centralises $T$. It follows from torality principles that $\zeta \in T \leq C_G^\circ(\alpha)$, and $C_G(\alpha)$ is connected again.

We shall not use this remark.
\end{remark*}

\subsection{The Genericity Proposition}\label{s:genericity}

\begin{quote}
\itshape
Considerations concerning the distribution of involutions in the cosets of a given subgroup are often useful in the study of groups of even order.
\end{quote}

So wrote Bender in the beginning of \cite{BFiniteLarge}. The first instance of this method in the finite Morley rank context seems to be \cite[after Lemma 7]{BDNCIT} which with \cite{BNCIT2} aimed at identifying $\SL_2(\K)$ in characteristic $2$. Jaligot brought it to the odd type setting \cite{JFT}. The present subsection is the cornerstone of Propositions \ref{p:algebraicity} and \ref{p:maximality} and is used again when conjugating involutions in Step~\ref{t:st:conjugacy} of the final argument. We introduce subsets of a group $H$ describing the distribution of involutions in the translates of $H$.

\begin{notation*}
For $\kappa$ an involutive automorphism and $H$ a subgroup of some ambient group, we let $T_H(\kappa) = \{h \in H : h^\kappa = h^{-1}\}$. (This set is definable as soon as $\kappa$ and $H$ are.)
\end{notation*}

The following is completely classical; the proof will not surprise the experts and is included for the sake of self-containedness. It will be applied only when $H$ is a Borel subgroup of $G$.

\begin{proposition}[Genericity]\label{p:genericity}
Let $\hat{G}$ be a connected, $U_2^\perp$ group of finite Morley rank and $G \trianglelefteq \hat{G}$ be a definable, connected, non-soluble, $N_\circ^\circ$-subgroup.

Suppose that $\hat{G} = G\cdot d(\hat{S}^\circ)$ for some maximal $2$-torus $\hat{S}^\circ$ of $\hat{G}$.

Let $\iota \in I(\hat{G})$ and $H \leq G$ be a definable, infinite, soluble subgroup of $G$. Then $K_H = \{\kappa \in \iota^{\hat{G}} \setminus N_{\hat{G}}(H) : \rk T_H(\kappa) \geq \rk H - \rk C_G(\iota)\}$ is generic in $\iota^{\hat{G}}$.
\end{proposition}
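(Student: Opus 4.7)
The plan is to follow the classical Bender-style coset-counting technique, using the hypothesis $\hat{G} = G\cdot d(\hat{S}^\circ)$ to move the rank accounting from $\hat{G}$ down to $G$. First, by replacing $\iota$ by a suitable $\hat{G}$-conjugate I would arrange that $\iota \in \hat{S}^\circ$, which makes $d(\hat{S}^\circ) \leq C_{\hat{G}}(\iota)$; combined with the hypothesis, this gives $\hat{G} = G\cdot C_{\hat{G}}(\iota)$. A standard rank computation then yields
\[\rk \iota^{\hat{G}} \;=\; \rk\hat{G} - \rk C_{\hat{G}}(\iota) \;=\; \rk G - \rk C_G(\iota),\]
the last equality following from $G\cap C_{\hat{G}}(\iota) = C_G(\iota)$. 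Since $G\trianglelefteq \hat{G}$ and $\iota^G\subseteq \iota G$ is connected of the same rank as $\iota^{\hat{G}}$, the $G$-class $\iota^G$ is generic in $\iota^{\hat{G}}$, and I may argue within it.

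The rank estimate on $T_H(\kappa)$ I would derive from a single coset map. For $\kappa \in \iota^G$ the element $\iota\kappa$ sits in $G$, so there is a definable map
\[\iota^G \longrightarrow G/H,\qquad \kappa \longmapsto \iota\kappa\, H.\]
Its image has rank at most $\rk G - \rk H$, and the generic fibre therefore has rank at least $\rk\iota^G - (\rk G - \rk H) = \rk H - \rk C_G(\iota)$. The fibre over $\iota\kappa H$ is exactly $\kappa H \cap \iota^G$, and any element $\kappa h$ of it is an involution, so $h^\kappa = h^{-1}$ and $h \in T_H(\kappa)$; thus $\kappa H \cap \iota^G \subseteq \kappa\, T_H(\kappa)$, and one obtains $\rk T_H(\kappa)\geq \rk H - \rk C_G(\iota)$ for generic $\kappa \in \iota^{\hat{G}}$.

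It remains to show that $\iota^{\hat{G}}\setminus N_{\hat{G}}(H)$ is itself generic in $\iota^{\hat{G}}$; intersecting this with the previous generic set will then give the genericity of $K_H$. I would proceed by contradiction: if $\iota^{\hat G}\cap N_{\hat G}(H)$ were generic, then the normal closure $L = \langle \iota^{\hat{G}}\rangle^\circ$ would satisfy $L \leq G$ (products of pairs of $\iota$-conjugates lie in $G$ by normality) and $L\leq N_{\hat G}(H)^\circ$. Applying the $N_\circ^\circ$ hypothesis to a non-trivial, definable, connected, abelian, characteristic subgroup $A \leq H^\circ$ (for example $A = U_{\rho_{H^\circ}}(Z^\circ(F^\circ(H^\circ)))$, non-trivial by Fact \ref{f:unipotence} \ref{f:unipotence:UZFneq1}), one gets that $(N_{\hat{G}}(H)\cap G)^\circ$ is soluble, so $L$ is a soluble, connected, definable, normal subgroup of $\hat{G}$ containing every $\hat{G}$-conjugate of $\iota$; invoking the structure of $2$-torsion in connected soluble $U_2^\perp$ groups (torality principles and rigidity of the $2$-torus of $L$, together with a Frattini argument) one forces $\hat{G} = L\cdot C_{\hat{G}}(\iota)$ and then a contradiction with the positivity of $\rk\iota^{\hat{G}}$.

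The main obstacle I anticipate is precisely this last step: the coset-counting in the middle paragraph is the core Bender-style input and goes through much as in the finite case, whereas excluding $\iota^{\hat{G}}\subseteq N_{\hat{G}}(H)$ requires a finer structural analysis of the soluble $2$-local subgroup $L$ and careful use of the $N_\circ^\circ$ hypothesis against the non-solubility of $G$.
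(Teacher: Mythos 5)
Your reduction to $\iota \in \hat{S}^\circ$ and your coset-counting map $\kappa \mapsto \iota\kappa H$ are exactly the paper's argument (the paper uses $\kappa \mapsto \kappa H$ into $G\langle\iota\rangle/H$, which is the same computation), and that part is correct. The gap is in the step you yourself flag as the main obstacle: from the genericity of $\iota^{\hat{G}} \cap N_{\hat{G}}(H)$ in $\iota^{\hat{G}}$ you cannot conclude that $L = \langle \iota^{\hat{G}}\rangle^\circ \leq N_{\hat{G}}(H)^\circ$. The normal closure is generated by \emph{all} conjugates of $\iota$, including the non-generic ones lying outside $N_{\hat{G}}(H)$, so there is no containment; and if instead you generate only by the generic part $\iota^{\hat{G}}\cap N_{\hat{G}}(H)$, the resulting subgroup does lie in $N_{\hat{G}}(H)$ but is no longer visibly normal in $\hat{G}$, so the "soluble normal subgroup" argument cannot start. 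This is precisely why the paper invokes \cite[Lemma 2.16]{DJGroups}, which upgrades the generic intersection to a definable, $\hat{G}$-\emph{invariant} subset $X_1 \subseteq \iota^{\hat{G}}\cap N_{\hat{G}}(H)$ generic in the class; only then does the set $X_2 = X_1\cdot X_1 \subseteq N_G(H)$ become an infinite $\hat{G}$-invariant subset of $G$, and the contradiction is delivered by \cite[Lemma 3.33]{DJGroups} (an infinite $G$-invariant subset generically contained in a definable soluble subgroup forces $G$ itself to be soluble). Neither lemma is a formality, and your sketch contains no substitute for them.

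A secondary weakness: even granting $L \leq N_{\hat{G}}(H)^\circ$, your final contradiction is not one. A connected non-soluble group may well possess a non-trivial definable, connected, soluble, normal subgroup, and "$\hat{G} = L\cdot C_{\hat{G}}(\iota)$ versus positivity of $\rk \iota^{\hat{G}}$" does not clash with anything ($\iota^{\hat{G}} = \iota^L$ merely bounds the rank of the class by $\rk L$). The correct way to close, once one has an infinite definable normal subgroup of $G$ with soluble connected component $S$, is your own earlier device turned on $S$ rather than on $H$: a non-trivial definable, connected, abelian, characteristic subgroup $A \leq S$ is normal in $G$, so $N_G^\circ(A) = G$ is soluble by the $N_\circ^\circ$ hypothesis, contradicting non-solubility of $G$ --- which is in substance what \cite[Lemma 3.33]{DJGroups} packages.
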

\begin{proof}
The statement is invariant under conjugating $\hat{S}^\circ$ so by torality principles we may assume $\iota \in \hat{S}^\circ$; in particular $\iota^{\hat{G}} = \iota^G$.
We shall first show that $\iota^{\hat{G}}\setminus N_{\hat{G}}(H)$ is generic in $\iota^{\hat{G}}$. \cite[Lemmas 2.16 and 3.33]{DJGroups} were supposed to do this, but they only apply when $\iota \in G$. Minor work must be added.

Suppose that $\iota^{\hat{G}}\setminus N_{\hat{G}}(H)$ is not generic in $\iota^{\hat{G}}$. Then by a degree argument, $\iota^{\hat{G}}\cap N_{\hat{G}}(H)$ is generic in $\iota^{\hat{G}}$. Inside $\hat{G}$ apply \cite[Lemma 2.16]{DJGroups} with $X = \iota^{\hat{G}}$ and $M = N_{\hat{G}}(H)$: $X\cap M$ contains a definable, $\hat{G}$-invariant subset $X_1$ which is generic in $X$. Note that $X$ is infinite as otherwise $\iota$ inverts $\hat{G}$, so $X_1$ is infinite as well.
We cannot directly apply \cite[Lemma 3.33]{DJGroups} as $\hat{G}$ itself need not be $N_\circ^\circ$. So let $X_2 = \{\kappa\lambda: \kappa, \lambda \in X_1\}$, which is an infinite, $\hat{G}$-invariant subset of $N_{\hat{G}}(H)$. Since $X_1 \subseteq \iota^{\hat{G}} = \iota^G \subseteq \iota G = G \iota$, $X_2$ is actually a subset of $G$. Hence $X_2 \subseteq N_G(H)$.
The latter need not be soluble but is a finite extension of $N_G^\circ(H)$, which is. Since $X_2$ is infinite and has degree $1$, there is a generic subset $X_3$ of $X_2$ which is contained in some translate $nN_G^\circ(H)$ of $N_G^\circ(H)$, where $n \in N_G(H)$. Then $X_3 \subseteq N_G^\circ(H)\cdot \langle  n\rangle$ which is a definable, soluble group we denote by $M_2$; $X_3$ itself may fail to be $G$-invariant. But $X_2$ is a $G$-invariant subset such that $X_3 \subseteq X_2\cap M_2$ is generic in $X_2$. By \cite[Lemma 3.33]{DJGroups} applied in $G = G^\circ$ to $X_2$ and $M_2$, $G$ is soluble: a contradiction.

The end of the proof is rather worn-out. Consider the definable function $\varphi: \iota^{\hat{G}}\setminus N_{\hat{G}}(H) \to G\cdot\langle  \iota\rangle/H$ which maps $\kappa$ to $\kappa H$. The domain has rank $\rk \iota^{\hat{G}} = \rk \iota^G = \rk G - \rk C_G(\iota)$. The image set has rank at most $\rk G - \rk H$. So the generic fiber has rank at least $\rk H - \rk C_G(\iota)$. But if $\kappa, \lambda$ lie in the same fiber, then $\kappa H = \lambda H$ and $\kappa \lambda \in T_H(\kappa)$. Hence for generic $\kappa$, $\rk T_H(\kappa) \geq \rk \varphi^{-1}(\varphi(\kappa)) \geq \rk H - \rk C_G(\iota)$.
\end{proof}

As it turns out, the algebraic properties of $T_H(\kappa)$ are not always as good as one may wish, and one then focuses on the following sets instead.

\begin{notation*}
For $\kappa$ an involutive automorphism and $H$ a subgroup of some ambient group, we let $\T_H(\kappa) = \{h^2 \in H : h^\kappa = h^{-1}\} \subseteq T_H(\kappa)$. (This set is definable as soon as $\kappa$ and $H$ are.)
\end{notation*}

There is no a priori estimate on $\rk \T_H(\kappa)$, and Proposition~\ref{p:Yanartas} will remedy this.
The $\T$ sets were denoted $\tau$ in \cite{DGroupes}; interestingly enough, they were already used in \cite[Notation 7.4]{BCJMinimal}.

\subsection{The Algebraicity Proposition}\label{s:algebraicity}

We now return to the historical core of the subject. 

Identifying $\SL_2(\K)$ is a classical topic in finite group theory. Proposition~\ref{p:algebraicity} may be seen as a very weak form of the Brauer-Suzuki-Wall Theorem \cite{BSWCharacterization} in odd characteristic. However \cite{BSWCharacterization} heavily relied on character theory, a tool not available in and perhaps not compatible in spirit with the context of groups of finite Morley rank. (One may even interpret the expected failure of the Feit-Thompson theorem in our context as evidence for this thesis.) A character-free proof of outstanding elegance was found by Goldschmidt. Yet his article \cite{GElements} dealt only with the characteristic $2$ case, and ended on the conclusive remark:
\begin{quote}
\itshape
Finally, some analogues of Theorem 2 [Goldschmidt's version of BSW] may hold for odd primes but [\dots] this problem seems to be very difficult.
\end{quote}
Bender's investigations in odd characteristic \cite{BBrauer} and \cite{BFiniteDihedral} both require some character theory. We do not know of a general yet elementary identification theorem for $\PSL(2,q)$ with odd $q$, and hope that the present paper will help ask the question.

In the finite Morley rank context various results identifying $\PSL_2(\K)$ exist, starting with Cherlin's very first article in the field \cite{CGroups} and Hrushovski's generalisation \cite{HAlmost}. For groups of even type \cite{BDNCIT, BNCIT2} provide identification using heavy rank computations. In a different spirit, the reworking of Zassenhaus' classic \cite{ZKennzeichnung} by Nesin \cite{NSharply} and its extension \cite{DNZassenhaus} identify $\PSL_2(\K)$ among $3$-transitive groups; the latter gives a very handy statement.

Most of the ideas in the proof below are in \cite{DGroupes1} and in many other articles before. Only two points need be commented on.
\begin{itemize}
\item
%First, the elegant Bender-style argument in Step~\ref{p:algebraicity:st:parametercontrol} was as the author remembers found by Cherlin for \cite{BCDAutomorphisms}.
First, we shift from the tradition as in \cite{CJTame, DGroupes1} of invoking the results on permutation groups Nesin had ported to the finite Morley rank context (\cite{DNZassenhaus}, see above).

We decided to use final identification arguments based on the theory of Moufang sets instead. At that point of the analysis the difference may seem essentially cosmetic but the Moufang setting is in our opinion more appropriate as it focuses on the BN-pair. We now rely on recent work by Wiscons \cite{WGroups}.

(Incidently, Nesin had started thinking about BN-pairs in jail \cite{NSplit} but was released before reaching an identification theorem for $\PSL_2(\K)$ in this context; not returning to gaol he apparently never returned to the topic.)
\item
Second, we refrained from using Frécon homogeneity.
%which was however thought a few years ago to be crucial in characteristic $0$.
This makes the proof only marginally longer in Step~\ref{p:algebraicity:st:K}. The reasons for doing so were consistency with not using it in Proposition~\ref{p:maximality}, and the mere challenge as it was thought a few years ago to be unavoidable.
\end{itemize}

\begin{proposition}[Algebraicity]\label{p:algebraicity}
Let $\hat{G}$ be a connected, $U_2^\perp$ group of finite Morley rank and $G \trianglelefteq \hat{G}$ be a definable, connected, non-soluble, $N_\circ^\circ$-subgroup.
Suppose that for all $\iota \in I(\hat{G})$, $C_G^\circ(\iota)$ is soluble.

Suppose that there exists $\iota \in I(\hat{G})$ such that $C_G^\circ(\iota)$ is contained in two distinct Borel subgroups. Then $G$ has the same Sylow $2$-subgroup as $\PSL_2(\K)$. If in addition $\iota \in G$, then $G\simeq \PSL_2(\K)$, where $\K$ is an algebraically closed field of characteristic not $2$.
\end{proposition}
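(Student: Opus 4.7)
Let $H = C_G^\circ(\iota)$, a soluble, connected subgroup contained in two distinct Borels $B_1 \neq B_2$. The plan is to show that $H$ is an abelian Carter subgroup of $G$ with $N_G(H)/H$ of order $2$, producing a split $BN$-pair of rank $1$; then Zilber's Field Theorem yields a field $\K$ controlling the Sylow $2$-structure, and Wiscons's Moufang identification theorem \cite{WGroups} delivers $\PSL_2(\K)$ in characteristic not $2$ when $\iota \in G$.

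The first and most delicate step is parameter control. Let $I = (B_1 \cap B_2)^\circ \supseteq H$; up to enlarging, one may take $I$ maximal among connected components of intersections of distinct Borels. If $I$ were non-abelian, Facts \ref{f:Bender} and \ref{f:Bender2} would apply and force $\rho_{B_1} \neq \rho_{B_2}$, with an $I$-Carter subgroup $Q$ simultaneously a Carter subgroup of $B_h$ and with $U_{\rho'}(H)$ sitting in $F^\circ(B_\ell)$. Now $\iota$ centralises $H \leq I$; via Lemma \ref{l:Bender:addendum} $\iota$ normalises both $B_1$ and $B_2$, and using the commutation principles (Fact \ref{f:commutation}) together with Lemma \ref{l:Upiperpactions:conclusion} on the $\iota$-action on the unipotent radicals $U_{\rho'}(F^\circ(B_\ell))$ and $U_{\rho_h}(F^\circ(B_h))$, one contradicts solubility of $C_G^\circ(\iota)$ (typically by bigeneration, Fact \ref{f:bigeneration}, applied to an elementary abelian $2$-group produced from $\iota$ and a toral involution of $H$). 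Hence $I$ is abelian. Fact \ref{f:nilpab} together with the uniqueness principle (Fact \ref{f:uniqueness}) then gives $H = I$, and forces $\rho_{B_1} \neq \rho_{B_2}$ with $H$ abelian.

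With $H$ abelian, one shows $N_G^\circ(H) = H$: any $x \in N_G^\circ(H) \setminus H$ would, by uniqueness of Borels with given parameter containing $H$, normalise both $B_1$ and $B_2$, and iterating the argument (together with normalisation principles) contradicts solubility of the generated subgroup. So $H$ is a Carter subgroup. Using Lemma \ref{l:involutiveaction:central2torus} applied inside $N_G(H)$, the Weyl coset $N_G(H)/H$ is shown to contain an involution $w$ inverting $H$ (this is where a second involution must be exhibited, either by toral analysis of $\iota$ inside $\hat G$ or by a generic argument using Proposition \ref{p:genericity} applied to the Borels containing $H$). Zilber's Field Theorem applied to the action of $H$ on a minimal $H$-invariant subgroup $V \leq U_{\rho_{B_1}}(Z(F^\circ(B_1)))$ then produces an algebraically closed field $\K$ with $H \hookrightarrow \K^\times$; this pins the Sylow $2$-subgroup of $H$ down to $\Z_{2^\infty}$, and combined with the inverting involution $w$ gives the $\PSL_2(\C)$-type Sylow $2$-subgroup.

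Finally, if $\iota \in G$, conjugates of $B_1$ under $w$ and translates under $H$ produce the two cosets of $N_G(H)$ in a rank-$1$ $BN$-pair. The action of $G$ on $G/N_G(H)$ is then $2$-transitive and split, making it a Moufang set with root groups inherited from the field $\K$; Wiscons's theorem \cite{WGroups} identifies $G \simeq \PSL_2(\K)$, and characteristic $\neq 2$ follows since $\iota$ is a non-trivial element of the torus $H$. The main obstacle is the parameter control step: without Frécon homogeneity, one must handle the non-abelian Bender configuration purely via the centralising involution and commutation principles, which is technical though contained in scope.
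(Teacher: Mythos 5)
Your outline shares the paper's endgame (field interpretation, split BN-pair, Wiscons) but it breaks down at the point where all the real work happens: producing the second involution and the full multiplicative group. You apply Zilber's Field Theorem to the action of $H=C_G^\circ(\iota)$ on a minimal subgroup of $U_{\rho_{B_1}}(Z(F^\circ(B_1)))$ and conclude that $H$ embeds into $\K^\times$, whence a Pr\"ufer $2$-subgroup of $H$ and an inverting involution $w$. But a mere embedding $H\hookrightarrow\K^\times$ gives no torsion at all: a proper definable connected subgroup of $\K^\times$ can be torsion-free (this is exactly the bad-field issue, and in characteristic $0$ Wagner's theorem does not save you). The paper gets \emph{surjectivity} onto $\K^\times$ — and hence the $2$-torus and the non-connected Sylow $2$-subgroup — only by running the field theorem on $\Theta_\kappa^\circ$, essentially $T_B(\kappa)$ for a \emph{generic conjugate} $\kappa$ of $\iota$ not normalising $B$, where Proposition~\ref{p:genericity} supplies the lower bound $\rk T_B(\kappa)\geq\rk B-\rk C_G^\circ(\iota)$ that is matched against $\rk\iota^U=\rk U$. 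Your parenthetical ``either by toral analysis of $\iota$ inside $\hat G$ or by a generic argument using Proposition~\ref{p:genericity}'' is precisely the missing proof; without it there is no involution inverting $H$, no control of the Sylow $2$-subgroup, and no Weyl element for the BN-pair. Relatedly, your appeal to bigeneration (Fact~\ref{f:bigeneration}) in the parameter-control step is unavailable: $G$ is non-soluble and not known to be $2^\perp$, and the ``toral involution of $H$'' you want to pair with $\iota$ is exactly what has not yet been constructed.

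Two further points would need repair even granting the above. First, $C_G^\circ(\iota)$ need not equal $(B_1\cap B_2)^\circ$ nor be a Carter subgroup at the outset; the paper only proves $U_{\rho_B}(C_G^\circ(\iota))=1$ (Step~\ref{p:algebraicity:st:parametercontrol}) and identifies the abelian torus as $\Theta_k^\circ=C_G^\circ(j_k)$ only after the field is in hand. Second, the Moufang/BN-pair identification acts on $G/B$ with point stabiliser the Borel $B=U\rtimes H$, not on $G/N_G(H)$; you must exhibit the root group $U$, check $B\cap B^\omega=H$ and the Bruhat decomposition $G=B\sqcup B\omega B$ (done in the paper by a genericity count on $UgB$), none of which appears in your sketch.
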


\begin{proof}
Since $\hat{G}$ is connected, every involution $\iota$ is toral: say $\iota \in \hat{S}^\circ$ a $2$-torus. We may therefore assume that $\hat{G} = G\cdot d(\hat{S}^\circ)$, so that the standard rank computations of the Genericity Proposition~\ref{p:genericity} apply. Moreover, $\hat{G}/G$ is connected and abelian, hence $W_2^\perp$.

\begin{notationinproof}\label{p:algebraicity:n:B,K,kappa}\
\begin{itemize}
\item
Let $B \geq C_G^\circ(\iota)$ be a Borel subgroup of $G$ maximising $\rho_B$; let $\rho = \rho_B$.
\item
Let $K_B = \{\kappa \in \iota^{\hat{G}}\setminus N_{\hat{G}}(B) : \rk T_B(\kappa) \geq \rk B - \rk C_G^\circ(\iota)\}$; by the Genericity Proposition~\ref{p:genericity}, $K_B$ is generic in $\iota^{\hat{G}}$.
\item
Let $\kappa \in K_B$.
\end{itemize}
\end{notationinproof}

Note that it is not clear at this point whether $\iota$ normalises $B$.

\begin{step}\label{p:algebraicity:st:parametercontrol}
$U_\rho(C_G^\circ(\iota)) = 1$. If $U \leq B$ is a non-trivial $\rho$-group, $H \leq G$ is a definable, connected subgroup of $G$ containing $U$, and $\lambda \in \iota^{\hat{G}}$ normalises $H$, then $\lambda$ normalises $B$.
\end{step}
\begin{proofclaim}
For this proof letting $Y_B = U_\rho(Z(F^\circ(B)))$ will spare a few parentheses; by Fact \ref{f:unipotence} \ref{f:unipotence:UZFneq1}, $Y_B \neq 1$.

Suppose $U_\rho(C_G^\circ(\iota)) \neq 1$. Let $D\neq B$ be a Borel subgroup of $G$ containing $C_G^\circ(\iota)$ and maximising $H = (B\cap D)^\circ$: such a Borel subgroup exists by assumption on $C_G^\circ(\iota)$. By construction $\rho_D \succcurlyeq \rho_\iota = \rho_B \succcurlyeq \rho_D$, so all are equal. If $H$ is not abelian then by \cite[4.50(3) and (6) (our Fact \ref{f:Bender})]{DJGroups} $\rho_B \neq \rho_D$, a contradiction.
Hence $H$ is abelian, and in particular $C_G^\circ(\iota) \leq H \leq C_G^\circ(U_\rho(H))$ which is a soluble group; by definition of $B$, the parameter of $C_G^\circ(U_\rho(H))$ is $\rho$. It follows from uniqueness principles (Fact \ref{f:uniqueness}) that $U_\rho(H)$ is contained in a unique Sylow $\rho$-subgroup of $G$. This must be $U_\rho(B) = U_\rho(D)$, so $B = D$: a contradiction.

We just proved $\rho_\iota \prec \rho$.
It follows that for any $\sigma \succcurlyeq \rho$, any $\iota$-invariant $\sigma$-group is inverted by $\iota$.
Now let $U$, $H$, and $\lambda$ be as in the statement. There is a Sylow $\rho$-subgroup $V$ of $H$ containing $U$. By normalisation principles $\lambda$ has an $H$-conjugate $\mu$ normalising $V$: so $\mu$ inverts $V \geq U$.

Let $C = C_G^\circ(U)$, a definable, connected, soluble group. Since $U \leq U_\rho(B)$, one has $Y_B \leq C$. So there is a Sylow $\rho$-subgroup $W$ of $C$ containing $Y_B$. As $\mu$ inverts $U$ it normalises $C$; by normalisation principles $\mu$ has a $C$-conjugate $\nu$ normalising $W$: so $\nu$ inverts $W \geq Y_B$. Now $\nu$ also inverts $U_{\rho_C}(C)$, and commutation principles (our Fact \ref{f:commutation}) yield $[U_{\rho_C}(C), Y_B] = 1$, whence $U_{\rho_C}(C) \leq C_G^\circ(Y_B) \leq B$. At this point it is clear that $\rho_C = \rho$ and $U_\rho(B)$ is the only Sylow $\rho$-subgroup of $G$ containing $U$ by uniqueness principles.

On the other hand $\mu$ inverts $U_{\rho_H}(H)$ and $U$, so by commutation principles $[U_{\rho_H}(H), U] = 1$ and $U_{\rho_H}(H) \leq C$, meaning that $\rho_H = \rho$ as well. Hence $\lambda$ inverts $U_{\rho_H}(H) = U_\rho(H) \geq U$. Since $U_\rho(B)$ is the only Sylow $\rho$-subgroup of $G$ containing $U$, $\lambda$ normalises $B$.
\end{proofclaim}

\begin{notationinproof}
Let $L_\kappa = B\cap B^\kappa$ and $\Theta_\kappa = \{\ell \in L_\kappa: \ell\ell^\kappa \in L_\kappa'\}$.
\end{notationinproof}

\begin{step}\label{p:algebraicity:st:Thetakappa}
$L_\kappa$ and $\Theta_\kappa$ are infinite, definable, $\kappa$-invariant, abelian-by-finite groups. Moreover $\Theta_\kappa^\circ \subseteq T_B(\kappa) \subseteq \Theta_\kappa$.
\end{step}
\begin{proofclaim}
$L_\kappa'$ is finite since we otherwise let $H = C_G^\circ(L_\kappa') \geq U_\rho(Z(F^\circ(B)))$ which is definable, connected, and soluble since $G$ is an $N_\circ^\circ$-group: Step~\ref{p:algebraicity:st:parametercontrol} shows that $\kappa$ normalises $B$, contradicting its choice in Notation~\ref{p:algebraicity:n:B,K,kappa}. It follows that $L_\kappa^\circ$ is abelian and $L_\kappa$ is abelian-by-finite. $\Theta_\kappa$ is clearly a definable, $\kappa$-invariant subgroup of $L_\kappa$, so it is abelian-by-finite as well. By construction $T_B(\kappa) \subseteq \Theta_\kappa$, and $\Theta_\kappa$ is therefore infinite.

We now consider the action of $\kappa$ on $\Theta_\kappa^\circ$ and find according to Lemma \ref{l:Upiperpactions:abelian} a decomposition $\Theta_\kappa^\circ = C_{\Theta_\kappa^\circ}^\circ(\kappa) \qoplus [\Theta_\kappa^\circ, \kappa]$. Now the definable function $\varphi: C_{\Theta_\kappa^\circ}^\circ(\kappa) \to L_\kappa'$ which maps $t$ to $t t^\kappa = t^2$ is a group homomorphism, so by connectedness and since $L_\kappa'$ is finite, $C_{\Theta_\kappa^\circ}^\circ(\kappa)$ has exponent $2$: it is trivial. So $\kappa$ inverts $\Theta_\kappa^\circ$, meaning $\Theta_\kappa^\circ \subseteq T_B(\kappa)$.
\end{proofclaim}

\begin{notationinproof}
Let $U \leq [U_\rho(Z(F^\circ(B))), \Theta_\kappa^\circ]$ be a non-trivial, $\Theta_\kappa^\circ$-invariant $\rho$-subgroup minimal with these properties.
\end{notationinproof}

\begin{step}\label{p:algebraicity:st:K}
$U$ does exist and $C_U^\circ(\iota) = 1$; $C_{\Theta_\kappa^\circ}(U)$ is finite and there exists an algebraically closed field structure $\K$ with $U \simeq \K_+$ and $\Theta_\kappa^\circ/C_{\Theta_\kappa^\circ}(U) \simeq \K^\times$. Moreover $G$ has the same Sylow $2$-subgroup as $\PSL_2(\K)$.
\end{step}
\begin{proofclaim}
Here again we let $Y_B = U_\rho(Z(F^\circ(B))) \neq 1$.

If $\Theta_\kappa^\circ$ centralises $Y_B$ then the $\kappa$-invariant, definable, connected, soluble group $C_G^\circ(\Theta_\kappa^\circ)$ contains $Y_B$ and Step~\ref{p:algebraicity:st:parametercontrol} forces $\kappa$ to normalise $B$, against its choice in Notation~\ref{p:algebraicity:n:B,K,kappa}. Hence $[Y_B, \Theta_\kappa^\circ] \neq 1$; it is a $\rho$-group (Fact \ref{f:unipotence} \ref{f:unipotence:rhocommutator}; no need for Frécon homogeneity here).

We show that $C_U^\circ(\iota) = 1$; be careful that $\iota$ need not normalise $U$ nor even $B$. Yet if $C_U^\circ(\iota)$ is infinite then Step~\ref{p:algebraicity:st:parametercontrol} applied to $C_G^\circ(C_U^\circ(\iota)) \geq Y_B$ forces $\iota$ to normalise $B$, and then $\iota$ inverts $U_\rho(B) \geq U \geq C_U^\circ(\iota)$: a contradiction.

Suppose that $C_{\Theta_\kappa^\circ}(U)$ is infinite; Step~\ref{p:algebraicity:st:parametercontrol} applied to $C_G^\circ(C_{\Theta_\kappa^\circ}(U)) \geq U$ forces $\kappa$ to normalise $B$: a contradiction.
We now wish to apply Zilber's Field Theorem. It may look like we fall short of $\Theta_\kappa^\circ$-minimality but fear not. Follow for instance the proof in \cite[Theorem 9.1]{BNGroups}. It suffices to check that any non-zero $r$ in the subring of $\End(U)$ generated by $\Theta_\kappa^\circ$ is actually an automorphism. But by push-forward \cite[Lemma 2.11]{BSignalizer} $\im r \simeq U/\ker r$ is a non-trivial, $\Theta_\kappa^\circ$-invariant $\rho$-subgroup. By minimality of $U$ as such, $r$ is surjective. In particular $\ker r$ is finite.
Suppose it is non-trivial and form, like in \cite[Theorem 9.1]{BNGroups}, the chain $(\ker r^n)$. Each term is $\Theta_\kappa^\circ$-central by connectedness, so $C_U^\circ(\Theta_\kappa^\circ)$ contains an infinite torsion subgroup $A$. If there is some torsion unipotence then $A = U$ by minimality as a $\rho$-group, and $\Theta_\kappa^\circ$ centralises $U$: a contradiction.
So $A$ contains a non-trivial $q$-torus for some prime number $q$. This means that there is a $q$-torus in $[Y_B, \Theta_\kappa^\circ] \leq B'$ which contradicts, for instance, \cite[Proposition 3.26]{FHall}.
Hence every $r \in \langle  \Theta_\kappa^\circ\rangle_{\End(U)}$ is actually an automorphism of $U$: field interpretation applies (it also follows, a posteriori, that $U$ is $\Theta_\kappa^\circ$-minimal all right).

A priori $\Theta_\kappa^\circ/C_{\Theta_\kappa^\circ}(U)$ simply embeds into $\K^\times$. But one has by Step~\ref{p:algebraicity:st:Thetakappa} and the definition of $\kappa$:
\begin{align*}
\rk \Theta_\kappa^\circ/C_{\Theta_\kappa^\circ}(U) & = \rk \Theta_\kappa^\circ = \rk T_B(\kappa) \geq \rk B - \rk C_G^\circ(\iota) = \rk B - \rk C_B^\circ(\iota) = \rk \iota^B\\ & \geq \rk \iota^U = \rk U - \rk C_U(\iota) = \rk U = \rk \K_+
\end{align*}
It follows that $\Theta_\kappa^\circ/C_{\Theta_\kappa^\circ}(U) \simeq \K^\times$.
At this point $\Theta_\kappa^\circ$ contains a non-trivial $2$-torus. By the $2$-structure Proposition~\ref{p:2structure} and in view of the assumption on centralisers of involutions, the Sylow $2$-subgroup of $G$ is either connected or isomorphic to that of $\PSL_2(\K)$. Suppose it is connected. Then $G$ is $W_2^\perp$; since $\hat{G}/G$ is as well, so is $\hat{G}$ by Lemma \ref{l:W2perp:factor}. This contradicts the fact that $\kappa$ inverts the $2$-torus of $\Theta_\kappa^\circ$.
\end{proofclaim}

For the rest of the proof we now suppose that $\iota$ lies in $G$. So we may assume $\hat{G} = G$. Bear in mind that since the Prüfer $2$-rank is $1$ by Step~\ref{p:algebraicity:st:K}, all involutions are conjugate.

\begin{notationinproof}\
\begin{itemize}
\item
Let for consistency of notations $i = \iota \in G$ and $k = \kappa \in G$. (By torality principles, $i \in C_G^\circ(i) \leq B$.)
\item
Let $j_k$ be the involution in $\Theta_k^\circ$.
\end{itemize}
\end{notationinproof}

Since $i, j_k$ are in $B$ they are $B$-conjugate. In particular $C_G^\circ(j_k) \leq B$.

\begin{step}\label{p:algebraicity:st:centralisers}
$\Theta_k^\circ = C_G^\circ(j_k)$. Moreover $\rk U = \rk C_G^\circ(i) = \rk \Theta_k$, $\rk B \leq 2 \rk U$, and $\rk G \leq \rk B + \rk U$.
\end{step}
\begin{proofclaim}
One inclusion is clear by abelianity of $\Theta_k^\circ$ obtained in Step~\ref{p:algebraicity:st:Thetakappa}.
Now let $N = N_G^\circ(C_G^\circ(k, j_k))$. Since $L_k^\circ$ is abelian by Step~\ref{p:algebraicity:st:Thetakappa}, so are $C_G^\circ(j_k) \leq L_k^\circ$ and its conjugate $C_G^\circ(k)$. Hence $\Theta_k^\circ \leq C_G^\circ(j_k) \leq N$ and by torality $k \in C_G^\circ(k) \leq N$. So $N$ contains a non-trivial $2$-torus and an involution inverting it: by the structure of torsion in definable, connected, soluble groups, $N$ is not soluble. Since $G$ is an $N_\circ^\circ$-group, one has $C_G^\circ(k, j_k) = 1$, so $k$ inverts $C_G^\circ(j_k)$. Hence $C_G^\circ(j_k) \leq \Theta_k^\circ$.

We now compute ranks.
By Steps \ref{p:algebraicity:st:K} and \ref{p:algebraicity:st:centralisers}, $\rk C_G^\circ(i) = \rk \Theta_k^\circ = \rk \K^\times = \rk \K_+ = \rk U$. By definition of $k \in K_B$ and Step~\ref{p:algebraicity:st:Thetakappa}, $\rk \Theta_k^\circ = \rk T_B(k) \geq \rk B - \rk C_B(i)$, so $\rk B \leq 2 \rk U$.

Now remember that $k$ varies in a set $K_B$ generic in $i^G$. Let $f: K_B \to i^B$ be the definable function mapping $k$ to $j_k$. If $j_k = j_\ell$ then $\ell \in C_G(j_k)$ and the latter has the same rank as $C_G(i)$ so we control fibers. Hence:
\[\rk G - \rk C_G(i) = \rk i^G = \rk K_B \leq \rk i^B + \rk C_G(i) = \rk i^B + \rk C_B(i) = \rk B\]
that is, $\rk G \leq \rk B + \rk C_G(i)$.
\end{proofclaim}

For the end of the proof $k$ will stay fixed; conjugating again in $B$ we may therefore suppose that $j_k = i$.

\begin{notationinproof}
Let $N = C_G(i)$ and $H = B \cap N$.
\end{notationinproof}

\begin{step}
$(B, N, U)$ forms a split BN-pair of rank $1$ (see \cite{WGroups} if necessary).
\end{step}
\begin{proofclaim}
We must check the following:
\begin{itemize}
\item
$G = \langle  B, N\rangle$;
\item
$[N:H] = 2$;
\item
for any $\omega \in N\setminus H$, one has $H = B \cap B^\omega$, $G = B\sqcup B\omega B$, and $B^\omega\neq B$;
\item
$B = U\rtimes H$.
\end{itemize}
First, $H = B \cap N = C_B(i) = C_B^\circ(i)$ by Steinberg's torsion theorem and the structure of torsion in $B$. By the structure of the Sylow $2$-subgroup obtained in Step~\ref{p:algebraicity:st:K}, $H < N$, so using Steinberg's torsion theorem again $[N:H] = 2$. Hence for any $\omega \in N \setminus H = Hk$ one has $B^\omega = B^k \geq H^k = H$ and $H \leq B\cap B^k$. Now by the structure of torsion in $B$, the intersection $B\cap B^k$ centralises the $2$-torus in the abelian group $(B\cap B^k)^\circ = L_k^\circ$ so $B\cap B^k \leq C_B(i) = H$.

Recall that the action of $H = C_G^\circ(i) = \Theta_k^\circ$ on $U$ induces a field structure; in particular $H \cap U \leq C_U(\Theta_k^\circ) = 1$. So $U\cdot H = U\rtimes H$ has rank $2\rk U \geq \rk B$ by Step~\ref{p:algebraicity:st:centralisers} and therefore $B = U\rtimes H$.

It remains to obtain the Bruhat decomposition. But first note that if $C_{N_G(B)}(i) > C_B(i)$ then $C_{N_G(B)}(i) = N$ contains $k$, which contradicts $k \notin N_G(B)$ from Notation~\ref{p:algebraicity:n:B,K,kappa}. So $C_{N_G(B)}(i) = C_B(i)$ and since $B$ conjugates its involutions a Frattini argument yields $N_G(B) \subseteq B\cdot C_{N_G(B)}(i) = B$.

Finally let $g \in G\setminus B$; $g$ does not normalise $B$. Let $X = (U\cap B^g)^\circ$ and suppose $X \neq 1$. In characteristic $p$ this contradicts uniqueness principles. In characteristic $0$, $U \simeq \K_+$ is minimal \cite[Corollaire 3.3]{PGroupes}, so $X = U$; at this point $U = U_\rho(B^g) = U^g$, a contradiction again. In any case $X = 1$. In particular $UgB$ has rank $\rk U + \rk B = \rk G$ by Step~\ref{p:algebraicity:st:centralisers} and $UgB$ is generic in $G$. This also holds of $UkB$ so $g \in BkB$ and $G = B\sqcup BkB = B\sqcup B\omega B$ for any $\omega \in N\setminus H$. This certainly implies $G = \langle  B, N\rangle$.
\end{proofclaim}

We finish the proof with \cite[Theorem 1.2]{WGroups} or \cite[Theorem 2.1]{DMTSpecial}, depending on the characteristic. If $U$ has exponent $p$, then $U_p(H) = 1$ as $H \simeq \K^\times$, so \cite[Theorem 1.2]{WGroups} applies. If not, then $U$ is torsion-free: we use \cite[Theorem 2.1]{DMTSpecial} instead. In any case, $G/\cap_{g \in G} B^g \simeq \PSL_2(\K)$ for some field structure $\K$ which a priori need not be the same as in Step~\ref{p:algebraicity:st:K} but could easily be proved to. Since $\cap_{g \in G} B^g$ is a normal, soluble subgroup, it is finite as $G$ is an $N_\circ^\circ$-group, and therefore central by connectedness. But central extensions of finite Morley rank of quasi-simple algebraic groups are known \cite[Corollary 1]{ACCentral}, so $G\simeq \SL_2(\K)$ or $\PSL_2(\K)$, and the first is impossible by assumption on the centralisers of involutions.
\end{proof}

\begin{remark*}
In order to prove non-connectedness of the Sylow $2$-subgroup of $G$, one only needs solubility of $C_G^\circ(\iota)$ regardless of how centralisers of involutions in other classes may behave. But in order to continue one needs much more.
\begin{itemize}
\item
One cannot work with $j_\kappa$ as all our rank computations rely on the equality $\rk C_G(j_\kappa) = \rk C_G(\iota)$, for which there is no better reason than conjugacy with $\iota$. This certainly implies $\iota \in G$ to start with.
\item
One cannot entirely drop $\iota$ and focus on $j_\kappa$, since there is no reason why $C_G^\circ(j_\kappa)$ should be soluble.
\end{itemize}
\end{remark*}

\subsection{The Devil's Ladder}\label{s:DevilsLadder}

Proposition~\ref{p:DevilsLadder} comes from \cite[Proposition 5.4.9]{DGroupes} and was realised (somewhere in Turkey, in 2007) to be more general; the name was given after a Ligeti study. The first lucid uses were in \cite{DJ4alpha} and \cite{BCDAutomorphisms}. Both the statement and the proof have undergone considerable change since: in 2013  %\cite{DJ4beta}
 the argument still took three pages.

% The following comes from \cite[Proposition 5.4.9]{DGroupes} with first clear appearances in \cite{DJ4alpha} and \cite{BCDAutomorphisms}.
We shall climb the ladder three times: in order to control torsion which is the very purpose of Proposition~\ref{p:Yanartas}, at a rather convoluted moment in Step~\ref{p:maximality:st:conjugacy} of Maximality Proposition~\ref{p:maximality}, and in order to conjugate involutions in the very end of the proof of our Theorem, Step~\ref{t:st:conjugacy}. It may be viewed as an extreme form of Proposition~\ref{p:algebraicity}, Step~\ref{p:algebraicity:st:parametercontrol}; the effective contents of the argument are not perfectly intuitive but for a contradiction proof it suffices to stand firm longer than the group.

% The argument was found and named in 2007 after a Ligeti study and greatly simplified since: in \cite{DJ4beta} the proof still took three pages.

\begin{proposition}[The Devil's Ladder]\label{p:DevilsLadder}
Let $\hat{G}$ be a connected, $U_2^\perp$, $W_2^\perp$ group of finite Morley rank and $G \trianglelefteq \hat{G}$ be a definable, connected, non-soluble, $N_\circ^\circ$-subgroup.
Suppose that for all $\iota \in I(\hat{G})$, $C_G^\circ(\iota)$ is soluble.

Let $\kappa, \lambda \in I(\hat{G})$ be two involutions. Suppose that for all $\mu \in I(\hat{G})$ such that $\rho_\mu \succ \rho_\kappa$, $C_G^\circ(\mu)$ is a Borel subgroup of $G$.

Let $B \geq C_G^\circ(\kappa)$ be a Borel subgroup of $G$ and $1 \neq X \leq F^\circ(B)$ be a definable, connected subgroup which is centralised by $\kappa$ and inverted by $\lambda$.

Then $C_G^\circ(X) \leq B$ and $B$ is the only Borel subgroup of $G$ of parameter $\rho_B$ containing $C_G^\circ(X)$; in particular $\kappa$ and $\lambda$ normalise $B$.
\end{proposition}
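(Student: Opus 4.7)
Let me propose a plan for the Devil's Ladder.

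Let $H := C_G^\circ(X)$ and $\rho := \rho_B$. Since $X$ is a non-trivial definable connected subgroup of the nilpotent group $F^\circ(B)$, its connected centre $Z^\circ(X)$ is non-trivial, definable, connected, and abelian; by $N_\circ^\circ$-ness, $N_G^\circ(Z^\circ(X))$ is soluble, and $H \leq C_G^\circ(Z^\circ(X))$ inherits solubility. The subgroup $Y_B := U_\rho(Z(F^\circ(B)))$ --- non-trivial by Fact~\ref{f:unipotence}~\ref{f:unipotence:UZFneq1} --- is a normal $\rho$-subgroup of $B$ centralising $X \leq F^\circ(B)$, so $Y_B \leq H$. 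Applying the Remark following Fact~\ref{f:uniqueness} to $Y_B \trianglelefteq B$ already yields one of the stated conclusions: $B$ is the only Borel of $G$ of parameter $\rho$ containing $Y_B$ (a fortiori the only such containing $H$), and $N_{\hat G}(Y_B) \leq N_{\hat G}(B)$.

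The remaining containment $H \leq B$ is the heart of the proof. Suppose for contradiction that $H$ sits in some Borel $D \neq B$; then $Y_B \leq D$ forces $\rho_D \succcurlyeq \rho$, and the uniqueness of the previous paragraph rules out $\rho_D = \rho$, so $\rho_D \succ \rho \succcurlyeq \rho_\kappa$. Choose such a $D$ so that $(B \cap D)^\circ$ is maximal among connected components of intersections of distinct Borels of $G$ containing $H$. If $(B \cap D)^\circ$ is non-abelian, the Bender-style analysis of Facts~\ref{f:Bender}--\ref{f:Bender2} applied to the pair $(B, D)$ (with $B$ the ``light'' Borel) yields $U_{\rho'}((B \cap D)^\circ) \leq F^\circ(B)$ and $N_G^\circ(U_{\rho'}((B \cap D)^\circ)) \leq B$, where $\rho' := \rho_{((B \cap D)^\circ)'}$; if $(B \cap D)^\circ$ is abelian, then $H$ itself is abelian and the analysis simplifies. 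In either case the plan is to combine normalisation principles (Lemma~\ref{l:snakes:soluble:Carter} supplying an $H$-conjugate $\kappa'$ of $\kappa$ normalising a chosen Sylow $\sigma$-subgroup of $H$), Lemma~\ref{l:Upiperpactions:conclusion} (whose corollary forces $\kappa'$ to invert such a Sylow whenever $\sigma \succ \rho_\kappa$), commutation principles, and the ladder hypothesis on $\mu$-involutions (which makes $C_G^\circ(\mu)$ a Borel whenever $\rho_\mu \succ \rho_\kappa$), running the machinery of Step~\ref{p:algebraicity:st:parametercontrol} of Proposition~\ref{p:algebraicity} iteratively along the parameter ladder. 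Each rung either produces a Borel of parameter $\rho$ distinct from $B$ but still containing $Y_B$ --- contradicting uniqueness --- or else yields a fresh rung at higher parameter; since the ladder of unipotence parameters has finite length, the climb must terminate in contradiction.

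With $H \leq B$ in hand, the final assertions are automatic: $\kappa$ centralises and $\lambda$ inverts $X$, so both normalise $H$; by the uniqueness noted above, their conjugates $B^\kappa$ and $B^\lambda$ are Borels of parameter $\rho$ containing $H$, hence both coincide with $B$. The main obstacle is the second paragraph: orchestrating the interplay of normalisation, commutation, Bender intersections, and the $\mu$-hypothesis so that the ladder of parameters collapses is intricate, which, as warned in \S\ref{s:Borel}, is exactly the function of Burdges' technology and explains the proposition's evocative name.
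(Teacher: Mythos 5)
Your bookkeeping is sound: the solubility of $H=C_G^\circ(X)$, the containment $U_{\rho_B}(Z(F^\circ(B)))\leq H$, the resulting uniqueness of $B$ among Borel subgroups of parameter $\rho_B$ over that subgroup, and the closing deduction that $\kappa$ and $\lambda$ normalise $B$ once $H\leq B$ is known, all match the paper. But the heart of the proposition --- the actual proof that $H\leq B$ --- is missing. You defer it to an ``iterative climb'' through Bender intersections of $B$ with a hypothetical second Borel $D\supseteq H$, asserting that each rung either contradicts uniqueness or moves up the parameter ladder. That is a hope, not an argument: you never say how a rung is climbed, and more importantly you never explain where the one load-bearing hypothesis --- that $C_G^\circ(\mu)$ is a Borel subgroup for every involution $\mu$ with $\rho_\mu\succ\rho_\kappa$ --- enters. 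In fact the paper uses no Bender method here and does not iterate at all.

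The actual mechanism is a single dichotomy on $\rho_H$ versus $\rho_\kappa$. One first manufactures, by normalisation principles inside $C_{\hat G}(X)$ together with the $W_2^\perp$ hypothesis, a conjugate $\lambda'$ of $\lambda$ which still inverts $X$ but now commutes with $\kappa$. If $\rho_H\succ\rho_\kappa$, then $\kappa$ inverts the (hence abelian) group $U_{\rho_H}(H)$; the four-group $\langle\kappa,\lambda'\rangle$ acts on it, so one of $\lambda'$, $\kappa\lambda'$ --- call it $\mu$ --- has $Y=C^\circ_{U_{\rho_H}(H)}(\mu)\neq 1$. Then $\rho_\mu\succ\rho_\kappa$, the hypothesis makes $C_G^\circ(\mu)$ a Borel subgroup $B_\mu$, and a short chain of commutation principles (Fact~\ref{f:commutation}) applied to $Y$, $U_{\rho_\mu}(B_\mu)$ and $D=C_G^\circ(Y)$ traps $X\leq C_G^\circ(U_{\rho_H}(H))\leq D\leq B_\mu=C_G^\circ(\mu)$ --- absurd, since $\mu$ inverts $X$. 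This forces $\rho_H\preccurlyeq\rho_\kappa$, whence $\rho_B\preccurlyeq\rho_H\preccurlyeq\rho_\kappa\preccurlyeq\rho_B$ and all are equal; uniqueness principles (Fact~\ref{f:uniqueness}) applied to $U_{\rho_H}(H)\geq U_{\rho_B}(Z(F^\circ(B)))$ then yield $N_{\hat G}(H)\leq N_{\hat G}(B)$ and the remaining conclusions. Without the construction of $\lambda'$ and $\mu$ your proposal never touches the $\mu$-hypothesis concretely, so the gap is essential rather than expository.
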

\begin{proof}
First observe that $\kappa \in C_{\hat{G}}(X)$ which is $\lambda$-invariant, so by normalisation principles $\lambda$ has a $C_{\hat{G}}(X)$-conjugate $\lambda'$ which normalises some Sylow $2$-subgroup of $C_{\hat{G}}(X)$ containing $\kappa$. By the $W_2^\perp$ assumption the Sylow $2$-subgroup of $\hat{G}$ is abelian, so $[\kappa, \lambda'] = 1$; also observe that $\lambda'$ inverts $X$.
Let $C = C_G^\circ(X)$, a definable, connected, and soluble group since $G$ is an $N_\circ^\circ$-group.

First suppose $\rho_C \succ \rho_\kappa$. Then $\kappa$ inverts $U_{\rho_C (C)}$, which is therefore abelian. Since the four-group $\langle  \kappa, \lambda'\rangle$ normalises $U_{\rho_C}(C)$, one of the two involutions $\lambda'$ or $\kappa \lambda'$, call it $\mu$, satisfies $Y = C_{U_{\rho_C}(C)}^\circ(\mu) \neq 1$. Note that $Y$ is a $\rho_C$-group. Let $D = C_G^\circ(Y) \geq U_{\rho_C}(C)$; it is a definable, connected, soluble, $\kappa$-invariant subgroup. Since $\rho_D \succcurlyeq \rho_C \succ \rho_\kappa$, $\kappa$ inverts $U_{\rho_D}(D)$. On the other hand, $Y \leq C_G^\circ(\mu)$ so $\rho_\mu \succ \rho_\kappa$ and by assumption, $C_G^\circ(\mu)$ is a Borel subgroup of $G$, say $B_\mu$. Since $\kappa$ and $\mu$ commute, $\kappa$ normalises $B_\mu$ and since $\rho_\mu \succ \rho_\kappa$, $\kappa$ inverts $U_{\rho_\mu}(B_\mu) \trianglelefteq B_\mu$. It also inverts $Y \leq B_\mu$, so by commutation principles $[U_{\rho_\mu}(B_\mu), Y] = 1$ and $U_{\rho_\mu}(B_\mu) \leq C_G^\circ(Y) = D$.

We are still assuming $\rho_C \succ \rho_\kappa$. The involution $\kappa$ inverts $U_{\rho_D}(D) \trianglelefteq D$ and $U_{\rho_\mu}(B_\mu) \leq D$; so by commutation principles $[U_{\rho_\mu}(B_\mu), U_{\rho_D}(D)] = 1$ and $U_{\rho_D}(D) \leq N_G^\circ( U_{\rho_\mu}(B_\mu)) = B_\mu$. At this stage it is clear that $\rho_D = \rho_\mu$ and $U_{\rho_\mu}(B_\mu) = U_{\rho_D}(D)$. In particular $D \leq N_G^\circ(U_{\rho_\mu}(B_\mu)) = B_\mu$. As a conclusion,
\[X \leq C_G^\circ(U_{\rho_C}(C)) \leq C_G^\circ(Y) = D \leq B_\mu = C_G^\circ(\mu)\]
against the fact that $\mu$ inverts $X$.

This contradiction shows that $\rho_C \preccurlyeq \rho_\kappa$. Now $X \leq F^\circ(B)$, so $U_{\rho_B}(Z(F^\circ(B))) \leq C_G^\circ(X) = C$; hence $\rho_B \preccurlyeq \rho_C \preccurlyeq \rho_\kappa \preccurlyeq \rho_B$ and equality holds. Since by uniqueness principles $U_{\rho_B}(B)$ is the only Sylow $\rho_B$-subgroup of $G$ containing $U_{\rho_B}(Z(F^\circ(B)))$, it also is unique as such containing $U_{\rho_C}(C)$. Hence $N_{\hat{G}}(C) \leq N_{\hat{G}} (U_{\rho_C}(C)) \leq N_{\hat{G}}(B)$. Therefore $\kappa$ and $\lambda$ normalise $B$.
\end{proof}

\subsection{Inductive Torsion Control}\label{s:Yanartas}

It will be necessary to control torsion in the $T_B(\kappa)$-sets. In \cite{DGroupes} this was redone for each conjugacy class of involutions by \emph{ad hoc} arguments which could, in high Prüfer rank, get involved (the ``Birthday Lemmas'' \cite[Lemmes 5.3.9 and 5.3.10]{DGroupes} published as \cite[Lemmes 6.9 and 6.10]{DGroupes2}). We proceed more uniformly although some juggling is required. Like in \cite{DGroupes2} the argument will be applied twice: to start the proof of the Maximality Proposition~\ref{p:maximality}, and later to conjugate involutions in Step~\ref{t:st:conjugacy} of the final argument. This accounts for the disjunction in the statement.

There was nothing equally technical in \cite{BCDAutomorphisms} as controlling involutions there was trivial. An inner version of the argument was found in Yanarta\c{s} in the Spring of 2007 and added to \cite{DJ4alpha}. Externalising involutions is no major issue.

\begin{proposition}[Inductive Torsion Control]\label{p:Yanartas}
Let $\hat{G}$ be a connected, $U_2^\perp$, $W_2^\perp$ group of finite Morley rank and $G \trianglelefteq \hat{G}$ be a definable, connected, non-soluble, $N_\circ^\circ$-subgroup.
Suppose that for all $\iota \in I(\hat{G})$, $C_G^\circ(\iota)$ is soluble.

Let $\iota \in I(\hat{G})$ and $B \geq C_G^\circ(\iota)$ be a Borel subgroup. Suppose that for all $\mu \in I(\hat{G})$ such that $\rho_\mu \succ \rho_\iota$, $C_G^\circ(\mu)$ is a Borel subgroup of $G$.
Let $\kappa \in I(\hat{G}) \setminus N_{\hat{G}}(B)$ be such that $T_B(\kappa)$ is infinite.

Suppose either that $B = C_G^\circ(\iota)$ or that $\iota$ and $\kappa$ are $\hat{G}$-conjugate. Then $\T_B(\kappa)$ has the same rank as $T_B(\kappa)$, and contains no torsion elements.
\end{proposition}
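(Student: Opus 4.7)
\medskip

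\noindent\textbf{Plan.} The two conclusions are linked: given that $\T_B(\kappa)$ has no torsion, the rank equality is automatic. Indeed, $\T_B(\kappa)$ contains no nontrivial torsion element precisely when $T_B(\kappa)$ contains no torsion element of order at least $3$, since for $h$ torsion, $h^2$ is torsion and nontrivial iff $|h|\geq 3$. Once this is established, the squaring map $h\mapsto h^2$ from $T_B(\kappa)$ onto $\T_B(\kappa)$ has rank-$0$ generic fibers: for $h,h'\in T_B(\kappa)$ with $h^2=(h')^2$, the correction $h^{-1}h'$ is forced into a $2$-torsion set (finite in the $U_2^\perp$-group $B$, whose Sylow $2$-subgroup is a $2$-torus with bounded finite torsion at each order), so $\rk\T_B(\kappa)=\rk T_B(\kappa)$.

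\medskip

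\noindent The core task is therefore: no $h\in T_B(\kappa)$ is torsion of order $\geq 3$. Suppose for contradiction some such $h$ exists; replacing $h$ by a suitable power, we may assume $h$ has prime-power order, either odd prime $p$ or exactly $4$. The strategy is to produce a nontrivial, definable, connected subgroup $X\leq F^\circ(B)$ centralised by $\iota$ and inverted by $\kappa$; the Devil's Ladder (Proposition~\ref{p:DevilsLadder}) then forces $\kappa\in N_{\hat G}(B)$, contradicting the choice of $\kappa$. Note that $B\geq C_G^\circ(\iota)$ already, so $\iota$ plays the role of Devil's Ladder's centralising involution; in Case~A where $B=C_G^\circ(\iota)$, the centralisation of $X$ by $\iota$ is automatic for any $X\leq B$, while in Case~B with $\iota$ and $\kappa$ conjugate we must control the $\iota$-action on $B$ more carefully, using the equality of parameters $\rho_\iota=\rho_\kappa$.

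\medskip

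\noindent For $h$ of order $4$, the involution $j=h^2\in\langle h\rangle\leq B$ is centralised by $\kappa$ (being inverted by $\kappa$ and an involution), and in Case~A is also centralised by $\iota$. Consider $C_G^\circ(j)$, which is soluble by the standing hypothesis. If $\rho_j\succ\rho_\iota$ then by our inductive hypothesis $C_G^\circ(j)=B_j$ is a Borel, which $\kappa$ and $\iota$ both normalise; since $B_j$ contains the full $2$-torus of $B$ through $j$ as well as $h$, the intersection $(B\cap B_j)^\circ$ is large, and uniqueness principles (Fact~\ref{f:uniqueness}) applied to a suitable $\kappa$-invariant $\rho_B$-subgroup built from the $2$-torus containing $h$ and from $U_{\rho_B}(Z(F^\circ(B)))$ force $\kappa\in N_{\hat G}(B)$. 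In the residual case $\rho_j\preccurlyeq\rho_\iota$, Borel maximality applied to the soluble connected $C_G^\circ(j)$ contained in (or sharing a dense intersection with) $B$ gives $C_G^\circ(j)=B$ and the normalisation of $B$ by $\kappa$ directly. For $h$ of odd prime order $p$, the Sylow $p$-subgroup of $B$ containing $h$ is connected by the structure of torsion in connected soluble groups, and a $p$-torus part of it furnishes the candidate for $X$ after extracting the $\kappa$-inverted piece via Lemma~\ref{l:Upiperpactions:abelian}; here the $W_2^\perp$ assumption is used to keep involutions and tori in compatible positions, and the inductive hypothesis on $\rho_\mu\succ\rho_\iota$ handles any intermediate involution one may need to promote to a Borel centraliser.

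\medskip

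\noindent The principal obstacle is landing $X$ inside $F^\circ(B)$: a $2$-torus of $B$ need not lie in $F^\circ(B)$ when $B$ has a nontrivial Carter component acting on its Fitting subgroup. Circumventing this requires either transferring the analysis to a second Borel such as $B_j$ and then returning to $B$ through Bender-style intersection arguments (Fact~\ref{f:Bender} and Lemma~\ref{l:Bender:addendum}), or else exploiting the $\iota$-centralisation to ensure that the relevant subgroup of the maximal torus of $B$ already sits in the centre of $F^\circ(B)$. Once the torsion-freeness of $\T_B(\kappa)$ is secured in all combinations of Case~A/B and $2$-power/odd-prime, the rank equality follows from the fiber argument in the first paragraph.
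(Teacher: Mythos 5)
There is a genuine gap, in fact two. First, your treatment of odd torsion presupposes that an element $h \in T_B(\kappa)$ of odd prime order $p$ is toral (``the Sylow $p$-subgroup of $B$ containing $h$ is connected \dots and a $p$-torus part of it furnishes the candidate for $X$''). But $B$ is only $U_2^\perp$, not $U_p^\perp$: a priori $B$ may contain a nontrivial $p$-unipotent subgroup, its Sylow $p$-subgroup may have no torus part at all, and $h$ may sit inside the unipotent piece. Excluding this is the real work: the paper first proves $U_p(B)=1$ for every prime $p$, by letting the abelian-by-finite group $\Theta_\kappa=\{\ell\in B\cap B^\kappa:\ell\ell^\kappa\in(B\cap B^\kappa)'\}\supseteq T_B(\kappa)$ act on a minimal subgroup of $U_p(B)$, invoking Zilber's Field Theorem and Wagner's theorem to produce a $q$-torus in $\Theta_\kappa^\circ$, and deriving a contradiction with the finiteness of $(B\cap B^\kappa)'$. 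Only after that is every prime-order element of $B$ toral, and then a short $W_2^\perp$ argument (an involution inverting a toral element must centralise it, since a conjugate of it lands inside the relevant $2$-torus, which is already a full Sylow $2$-subgroup) kills all torsion of order $>2$ in $T_B(\kappa)$ in one stroke --- no Devil's Ladder, no case split on $p$ odd versus $4$, and no need to manufacture the connected subgroup $X\leq F^\circ(B)$ whose construction you yourself flag as ``the principal obstacle'' and never complete.

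Second, your rank equality is not secured. The squaring map $T_B(\kappa)\twoheadrightarrow\T_B(\kappa)$ has as its fibre over $1$ the set of involutions of $T_B(\kappa)$, and nothing you say prevents this from being infinite (indeed of full rank), in which case $\rk\T_B(\kappa)$ could drop below $\rk T_B(\kappa)$; your claim that $h^2=(h')^2$ forces $h^{-1}h'$ to be $2$-torsion also uses commutation you do not have in the soluble group $B$. The paper proves that $T_B(\kappa)$ contains only \emph{finitely} many involutions, and this is exactly where the Devil's Ladder and the disjunctive hypothesis ($B=C_G^\circ(\iota)$ or $\iota\sim_{\hat G}\kappa$) are used: two $B$-conjugate involutions $i,j\in T_B(\kappa)$ would give a nontrivial torsion-free $X=d((ij))\leq F^\circ(B)$ centralised by $\kappa$ and inverted by $i$, and Proposition~\ref{p:DevilsLadder} applied to $(\iota,\kappa i)$ or, in the conjugate case, to $(\kappa,i)$ inside $B^\gamma$, forces $\kappa\in N_{\hat G}(B)$. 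The rank equality then follows from the covering $T_B(\kappa)\subseteq\bigcup_m i_m\T_B(\kappa)$ over the finitely many involutions $i_m$. Your proposal never uses the disjunctive hypothesis in any essential way, which is a sign that this step is missing.
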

\begin{proof}
First remember that since $\hat{G}$ is $W_2^\perp$, if some involution $\omega \in I(\hat{G})$ inverts a toral element $t \in \hat{G}$, then $t^2 =1$. One may indeed take a maximal decent torus $\hat{T}$ of $\hat{G}$ containing $t$; then $\omega$ normalises $C_{\hat{G}}^\circ(t)$ which contains $\hat{T}$ and its $2$-torus $\hat{T}_2$, so by normalisation principles $\omega$ has a $C_{\hat{G}}^\circ(t)$-conjugate $\omega'$ normalising $\hat{T}_2$. By the $W_2^\perp$ assumption, the latter already is a Sylow $2$-subgroup of $\hat{G}$, whence $\omega' \in \hat{T}_2 \leq C_{\hat{G}}^\circ(t)$. It follows that $\omega$ centralises $t$; it also inverts it by assumption, so $t^2 = 1$.

The proof starts here.

We first show that $B$ has no torsion unipotence. The argument is a refinement of Step~\ref{p:algebraicity:st:Thetakappa} of Proposition~\ref{p:algebraicity}. Suppose that there is a prime number $p$ with $U_p(B) \neq 1$. Let $L_\kappa = B \cap B^\kappa$ (be careful that we do \emph{not} consider the connected component). Since $C_G^\circ(L_\kappa')$ contains both $U_p(Z(F^\circ(B))$ and $U_p(Z(F^\circ(B^\kappa)$, uniqueness principles imply that $L_\kappa'$ is finite. Unfortunately $L_\kappa$ need not be abelian so let us introduce:
\[\Theta_\kappa = \{\ell \in L_\kappa : \ell \ell ^\kappa \in L_\kappa'\}\]
which is a definable, $\kappa$-invariant subgroup of $B$ containing $T_B(\kappa)$; in particular it is infinite. Also note that $\Theta_\kappa^\circ$ is abelian. Now let $A \leq U_p (B)$ be a $\Theta_\kappa^\circ$-minimal subgroup. $\Theta_\kappa^\circ$ cannot centralise $A$ since otherwise $C_G^\circ(\Theta_\kappa^\circ) \geq \langle   A, A^\kappa\rangle$, against uniqueness principles. So by Zilber's field theorem the action induces an algebraically closed field of characteristic $p$ structure. By Wagner's theorem on fields \cite[consequence of Corollary 9]{WFields} $\Theta_\kappa^\circ$ contains a $q$-torus $T_q$ for some $q \neq p$. Up to taking the maximal $q$-torus of $\Theta_\kappa^\circ$ we may assume that $\kappa$ normalises $T_q$. Write if necessary $T_q$ as the sum of a $\kappa$-centralised and a $\kappa$-inverted subgroup; by the first paragraph of the proof, $\kappa$ centralises $T_q$. So for any $t \in T_q$ one has $t t^\kappa = t^2 \in L_\kappa'$, therefore $T_q \leq L_\kappa'$ against finiteness of the latter.

We have disposed of torsion unipotence inside $B$, and every element of prime order in $B$ is toral by the structure of torsion in definable, connected, soluble groups. By the first paragraph of the proof, no element of finite order $\neq 2$ of $B$ is inverted by \emph{any} involution (this will be used in the next paragraph with an involution distinct from $\kappa$). In particular $d(t^2)$ is torsion-free for any $t\in T_B(\kappa)$; hence the definable hull of any element of $\T_B(\kappa)$ is torsion-free.

We now show that $T_B(\kappa)$ can contain but finitely many involutions (possibly none). Suppose that it contains infinitely many. Since $B$ has only finitely many conjugacy classes of involutions, there are $i, j \in T_B(\kappa)$ which are $B$-conjugate. Now $i \in B$ so $\{B, i\} \subseteq F^\circ(B)$; by Lemma \ref{l:involutiveaction:soluble} (although \cite[Lemma 24]{DJSmall} would do here) $B = B^{+_i} \cdot \{B, i\}$ so there is $x \in \{B, i\} \subseteq (F^\circ(B))^{-_i}$ with $j = i^x$. Since $i$ inverts $x$, $d(x^2)$ is torsion-free. Also, $1 \neq ij = i i^x = x^2 \in F^\circ(B)$.
Let $X = d(x^2)$ which is an abelian, definable, connected, infinite subgroup; like $ij$ it is centralised by $\kappa$ and inverted by $i$. There are two cases.
\begin{itemize}
\item
If $B = C_G^\circ(\iota)$ then $\iota$ centralises $X$ whereas $\kappa i$ inverts it (yes, $\kappa$ and $i$ do commute). Since $X \leq F^\circ(B)$ with $C_G^\circ(\iota) \leq B$, the Devil's Ladder, Proposition~\ref{p:DevilsLadder}, applied to the pair $(\iota, \kappa i)$ leads to $\kappa i \in N_{\hat{G}}(B)$ and $\kappa \in N_{\hat{G}}(B)$: a contradiction.
\item
If $\kappa$ is $\hat{G}$-conjugate to $\iota$, say $\kappa = \iota^\gamma$ for some $\gamma \in \hat{G}$, we work in $B^\gamma \geq C_G^\circ(\kappa)$. Since $\kappa$ centralises $X$, $X \leq B^\gamma$. Since $i \in C_{\hat{G}}(\kappa) \cap B \leq C_G(\kappa)$, and by connectedness of the Sylow $2$-subgroup of $\hat{G}$, one has $i \in C_G^\circ(\kappa) \leq B^\gamma$. Since $i$ inverts $X$, $X \leq F^\circ(B^\gamma)$. Finally by conjugacy $\rho_\kappa = \rho_\iota$ so climbing the Devil's Ladder for the pair $(\kappa, i)$ we find $C_G^\circ(X) \leq B^\gamma = B^{\gamma\kappa}$. Since $X \leq F^\circ(B)$ this implies $U_{\rho_B}(Z(F^\circ(B))) \leq C_G^\circ(X) \leq B^\gamma$. Uniqueness principles now yield $B = B^\gamma$. Hence $\kappa\in N_{\hat{G}}(B)$: a contradiction.
\end{itemize}

We conclude to rank equality. Let $i_1, \dots, i_n$ be the finitely many involutions in $T_B(\kappa)$ (possibly $n = 0$) and set $i_0 = 1$. If $t \in T_B(\kappa)$ then the torsion subgroup of $d(t)$ is some $\langle  i_m\rangle$, so $d(i_m t)$ is $2$-divisible, and $i_m t \in \T_B(\kappa)$. Hence $T_B(\kappa) \subseteq \cup i_m \T_B(\kappa)$, which proves $\rk \T_B(\kappa) = \rk T_B(\kappa)$.
\end{proof}

\begin{remarks*}\
\begin{itemize}
\item
One needs $T_B(\kappa)$ to be infinite only to show $U_p(B) = 1$; if one were to assume the latter, the rest of the argument would still work with finite $T_B(\kappa)$, and yield $\T_B(\kappa) = \{1\}$.
\item
The fact that $U_p(B) = 1$ is a strong indication of the moral inconsistency of the configuration.
\end{itemize}
\end{remarks*}

\section{The Proof --- The Maximality Proposition}\label{S:maximality}

Proposition~\ref{p:maximality} is the technical core of the present article; we would be delighted to learn of a finite group-theoretic analogue. It was first devised in the context of minimal connected simple groups of odd type \cite{DGroupes}, then ported to $N_\circ^\circ$-groups of odd type \cite{DJ4alpha}, and to actions on minimal connected simple groups of degenerate type \cite{BCDAutomorphisms}.
%Although the general line (exposed below) has not changed, neither of these adaptations was entirely straightforward; nor is combining them.
The main idea and the final contradiction have not changed but every generalisation has required new technical arguments. So neither of the above mentioned adaptations was routine; nor was combining them.
We can finally state a general form.

\begin{proposition}[Maximality]\label{p:maximality}
Let $\hat{G}$ be a connected, $U_2^\perp$ group of finite Morley rank and $G \trianglelefteq \hat{G}$ be a definable, connected, $W_2^\perp$, non-soluble, $N_\circ^\circ$-subgroup.
Suppose that for all $\iota \in I(\hat{G})$, $C_G^\circ(\iota)$ is soluble.

Then for all $\iota \in I(\hat{G})$, $C_G^\circ(\iota)$ is a Borel subgroup of $G$.
\end{proposition}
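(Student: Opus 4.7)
I would argue by contradiction: suppose that for some $\iota \in I(\hat{G})$ the group $C_G^\circ(\iota)$ is properly contained in a Borel subgroup $B$ of $G$. By torality, after replacing $\hat{G}$ by $G\cdot d(\hat{S}^\circ)$ for a maximal $2$-torus $\hat{S}^\circ$ containing $\iota$, the Genericity Proposition~\ref{p:genericity} becomes available. The Algebraicity Proposition~\ref{p:algebraicity}, combined with the $W_2^\perp$ assumption on $G$, forces $B$ to be the \emph{unique} Borel of $G$ containing $C_G^\circ(\iota)$: otherwise $G$ would have the Sylow $2$-structure of $\PSL_2(\K)$, contradicting connectedness of its Sylow $2$-subgroup. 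In particular $\iota \in N_{\hat{G}}(B)$.

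Next I would perform a downward induction on $\rho_\iota$: pick a hypothetical counterexample with $\rho_\iota$ maximal with respect to $\succcurlyeq$. Then for every $\mu \in I(\hat{G})$ with $\rho_\mu \succ \rho_\iota$ the group $C_G^\circ(\mu)$ is already a Borel of $G$, which is exactly the hypothesis needed to invoke the Devil's Ladder (Proposition~\ref{p:DevilsLadder}) and Inductive Torsion Control (Proposition~\ref{p:Yanartas}) for involutions of parameter $\rho_\iota$. Now apply Genericity: the set $K_B$ of $\kappa \in \iota^{\hat{G}}\setminus N_{\hat{G}}(B)$ with $\rk T_B(\kappa) \geq \rk B - \rk C_G(\iota) > 0$ is generic in $\iota^{\hat{G}}$. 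Because every $\kappa \in K_B$ is $\hat{G}$-conjugate to $\iota$, Inductive Torsion Control yields $\rk \T_B(\kappa) = \rk T_B(\kappa)$ together with $\T_B(\kappa)$ torsion-free.

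Fix $\kappa \in K_B$. The crucial geometric observation is that $T_B(\kappa) \subseteq B \cap B^\kappa$, since $b^\kappa = b^{-1}$ implies $b = (b^{-1})^\kappa \in B^\kappa$. Hence the intersection $H = (B\cap B^\kappa)^\circ$ captures a large, torsion-free, $\kappa$-inverted slice coming from $\T_B(\kappa)$. I would split according to whether $H$ is abelian or not: in the abelian case the hull of $\T_B(\kappa)$ is itself a definable, connected, torsion-free abelian $\kappa$-inverted subgroup; in the non-abelian case one has to call on the Bender apparatus (Facts~\ref{f:Bender}, \ref{f:Bender2} and Lemma~\ref{l:Bender:addendum}) which pins down the parameters $\rho_B$, $\rho_{B^\kappa}$, $\rho'$ and localises $U_{\rho'}(H)$ inside $F^\circ$ of the light Borel. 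In either case the goal is to exhibit a non-trivial, definable, connected, torsion-free abelian subgroup $X \leq F^\circ(B)$ that is simultaneously inverted by $\kappa$ and centralised by a suitable involution $\mu$ drawn from $\{\iota, \iota\kappa, \dots\}$ produced via commuting $2$-tori (the $W_2^\perp$ hypothesis makes this possible after a normalisation-principles adjustment).

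Once such an $X$ is produced, the Devil's Ladder applied to the pair $(\mu,\kappa)$ forces $\kappa \in N_{\hat{G}}(B)$, directly contradicting $\kappa \in K_B$. The main obstacle I anticipate is precisely the non-abelian intersection case---the step introducing some $J_\kappa$ that packages the relevant $\kappa$-inverted piece of $H$: one cannot rely on Frécon homogeneity, so locating $X$ genuinely inside $F^\circ(B)$ requires a careful bookkeeping of unipotence parameters, and one must leverage the strict inclusion $C_G^\circ(\iota) \subsetneq B$ (so that $\{B,\iota\}$ is non-trivial) to align the centralising involution with the inverting one. Everything else---the Genericity estimate, torsion-freeness of $\T_B(\kappa)$, and the final ladder climb---is morally routine by the time one has $X$ in place.
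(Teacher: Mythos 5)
Your setup (reduction to $\hat G = G\cdot d(\hat S^\circ)$, uniqueness of $B$ over $C_G^\circ(\iota)$ via Proposition~\ref{p:algebraicity} and the $W_2^\perp$ hypothesis, downward induction on $\rho_\iota$, the generic set $K_B$, and torsion-freeness of $\T_B(\kappa)$ via Proposition~\ref{p:Yanartas}) matches the paper's opening moves. But the endgame you propose cannot close. You want to produce, for $\kappa\in K_B$, a non-trivial definable, connected $X\leq F^\circ(B)$ inverted by $\kappa$ and centralised by an involution $\mu$ with $C_G^\circ(\mu)\leq B$, and then climb the Devil's Ladder to force $\kappa\in N_{\hat G}(B)$. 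The difficulty is that such an $X$ provably does not exist: the abelianity step of the actual proof (Step~\ref{p:maximality:st:abelianity}) shows precisely that $\T_B(\kappa)\cap C_G(\mu)=1$ for every involution $\mu$ with $C_G^\circ(\mu)\leq B$ --- and it proves this by running exactly the Ladder argument you describe and reading the resulting ``contradiction'' as a constraint on $\T_B(\kappa)$, not as a refutation of the whole configuration. In other words, your intended final contradiction is a lemma inside the real proof, and its content is that the centralised-and-inverted subgroup you are hoping for is trivial. (If instead $\mu$ is an involution whose centraliser$^\circ$ lies in some \emph{other} Borel $B'$, the Ladder only yields $\kappa\in N_{\hat G}(B')$, which contradicts nothing.)

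The actual contradiction is of a different nature: a counting collapse in the style of Jaligot. One first shows that $J_\kappa=U_\pi(C_B^\circ(\T_B(\kappa)))$ --- note: the \emph{centralising} side of $\T_B(\kappa)$, not the $\kappa$-inverted piece of $(B\cap B^\kappa)^\circ$ --- is an abelian Carter $\pi$-subgroup of $B$ with strong rigidity properties; this is where the Bender apparatus genuinely enters, and where the abelian/non-abelian dichotomy you flag actually lives. From that rigidity one deduces that every $\T_B(\kappa)$, $\kappa\in K_B$, is $G$-conjugate to the fixed normal subgroup $Y=\{B,\iota_0\}=[J_\kappa,\iota]$ of $B$, that $\rk B=\rk C_G(\iota_0)+\rk Y$, and that a non-trivial element of $Y$ lies in only finitely many $G$-conjugates of $Y$. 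Comparing the rank of $\bigcup_{\kappa\in K_B}\T_B(\kappa)$ (which lies inside $B$) with the rank of $\bigcup_{g}Y^g$ then shows that a generic part of the $G$-invariant set $Y^G$ sits inside the soluble subgroup $B$, contradicting \cite[Lemma 3.33]{DJGroups}. None of this conjugacy and counting machinery is anticipated by your outline, so the gap is not a matter of bookkeeping in the non-abelian Bender case but the absence of a workable final contradiction.
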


\begin{proof}
The proof is longer and more demanding than others in the article, but one should be careful to distinguish two levels.
\begin{itemize}
\item
At a superficial level, all arguments resorting to local analysis in $G$ and to the Bender method (Steps \ref{p:maximality:st:abelianity} and \ref{p:maximality:st:Jkappa}) would be much shorter and more intuitive if one knew that Borel subgroups of $G$ have abelian intersections. There is no hope to prove such a thing but it may be a good idea to have a quick look at the structure of the proof in this ideally-behaved case.
\item
At a deeper level, assuming abelianity of intersections does not make the statement of the proposition obvious and the reader is invited to think about it. Even with abelian intersections of Borel subgroups there would still be something to prove;  this certainly uses the $T_B(\kappa)$ sets and rank computations of \S\ref{s:genericity} as nothing else is available. As a matter of fact, even under abelian assumptions, we cannot think of a better strategy than the following.
\end{itemize}
The long-run goal (Step~\ref{p:maximality:st:concentration}) is to collapse the configuration by showing that $G$-conjugates of some subgroup of $G$ generically lie inside $B$. This form of contradiction was suggested by Jaligot to the author then his PhD student for \cite{DGroupes1}. It is typical of Jaligot's early work in odd type \cite[Lemme~2.13]{JFT}. (The author's original argument based on the distribution of involutions was both doubtful and less elegant; even recently he could feel the collapse in terms of involutions, but failed to write it down properly.)

Controlling generic $G$-conjugates of an arbitrary subgroup is not an easy task. The surprise (Step~\ref{p:maximality:st:conjugacy}) is that the $T_B(\kappa)$ sets, or more precisely the $\T_B(\kappa)$ sets, form the desired family. Seeing this requires a thorough analysis of $\T_B(\kappa)$, and embedding it into some abelian subgroup of $B$ with pathological rigidity properties (Step~\ref{p:maximality:st:Jkappa}).
The crux of the argument involves some intersection of Borel subgroups. Interestingly enough, abelian intersections could be removed from \cite{DGroupes, DGroupes1, DGroupes2, DJ4alpha} by a somehow artificial observation on torsion; abelian intersections started playing a non-trivial role in \cite{BCDAutomorphisms} but as a result the global proof then divided into two parallel lines. We could find a more uniform treatment, although the proof of Step~\ref{p:maximality:st:Jkappa} still divides into two along the line of abelianity.

The beginning of the argument (Steps \ref{p:maximality:st:abelianity}, \ref{p:maximality:st:action}, \ref{p:maximality:st:uniqueness}) simply prepares for the analysis, showing that $\T_B(\kappa)$ behaves like a semisimple group. Of course controlling torsion with Proposition~\ref{p:Yanartas} is essential in the first place; studying torsion separately thus allowing inductive treatment was the main success of \cite{DJ4alpha}. The proof starts here.

\subsection{The Reactor}

Since $\hat{G}$ is connected, by torality principles every involution has a conjugate in some fixed $2$-torus $\hat{S}^\circ$. We may therefore assume that $\hat{G} = G\cdot d(\hat{S}^\circ)$, so that the standard rank computations of the Genericity Proposition~\ref{p:genericity} apply. Moreover, $\hat{G}/G$ is connected and abelian, hence $W_2^\perp$. Since $G$ is $W_2^\perp$ as well, so is $\hat{G}$ by Lemma \ref{l:W2perp:factor}.

We then proceed by descending induction on $\rho_\iota$ and fix some involution $\iota_0 \in I(\hat{G})$ such that for any $\mu \in I(\hat{G})$ with $\rho_\mu \succ \rho_{\iota_0}$, $C_G^\circ(\mu)$ is a Borel subgroup. Notice that induction will not be used as such in the current proof but merely in order to apply Propositions \ref{p:DevilsLadder} and \ref{p:Yanartas}.

Be warned that there will be some running ambiguity on $\iota_0$ starting from Notation~\ref{p:maximality:n:IB} onwards, the resolution being in the proof of Step~\ref{p:maximality:st:conjugacy}.

\begin{notationinproof}\
\begin{itemize}
\item
Let $B \geq C_G^\circ(\iota_0)$ be a Borel subgroup of $G$ and suppose $B > C_G^\circ(\iota_0)$; let $\rho = \rho_B$.
\item
Let $K_B = \{\kappa \in \iota_0^{\hat{G}}\setminus N_{\hat{G}}(B) : \rk T_B(\kappa) \geq \rk B - \rk C_G^\circ(\iota_0)\}$; by the Genericity Proposition~\ref{p:genericity}, $K_B$ is generic in $\iota_0^{\hat{G}}$.
\item
Let $\kappa \in K_B$.
\item
For the moment we simply write $\T = \T_B(\kappa)$.
\end{itemize}
\end{notationinproof}

By Inductive Torsion Control (Proposition~\ref{p:Yanartas}), one has $\rk \T \geq \rk B - \rk C_G^\circ(\iota_0)$, and $\T$ contains no torsion elements.

\begin{step}[uniqueness]\label{p:maximality:st:uniqueness}\
\begin{enumerate}[(i)]
\item\label{p:maximality:st:uniqueness:i}
$B$ is the only Borel subgroup of $G$ containing $C_G^\circ(\iota_0)$.
\item\label{p:maximality:st:uniqueness:ii}
$N_{\hat{G}}(B)$ contains a Sylow $2$-subgroup $\hat{S}_0$ of $\hat{G}$.
\item\label{p:maximality:st:uniqueness:iii}
% In particular
If $\lambda \in I(\hat{G}) \cap N_{\hat{G}}(B)$, then $[B, \lambda] \leq F^\circ(B)$ and $B = B^{+_\lambda} \cdot (F^\circ(B))^{-_\lambda}$ with finite fibers.
\item\label{p:maximality:st:uniqueness:iv}
% Moreover
$(N_G(B))^{-_\lambda} \subseteq B$.
\end{enumerate}
\end{step}
\begin{proofclaim}
Since $G$ is $W_2^\perp$, by Algebraicity Proposition~\ref{p:algebraicity} there is a unique Borel subgroup of $G$ containing $C_G^\circ(\iota_0)$; in particular $C_{\hat{G}}(\iota_0)$ normalises $B$. By torality principles, $N_{\hat{G}}(B)$ contains a full Sylow $2$-subgroup $\hat{S}_0$ of $\hat{G}$, which is a $2$-torus as $\hat{G}$ is $W_2^\perp$.
Now let $\lambda \in I(\hat{G}) \cap N_{\hat{G}}(B)$. Conjugating in $N_{\hat{G}}(B)$ we may suppose $\lambda \in \hat{S}_0$. Then $\hat{B} = B\cdot d(\hat{S}_0)$ is a definable, connected, soluble group, so $\hat{B}' \leq F^\circ(\hat{B})$. Using Zilber's indecomposibility theorem, $[B, \lambda] \leq [B, \hat{S}_0] \leq (B\cap F^\circ(\hat{B}))^\circ \leq F^\circ(B)$. So Lemma \ref{l:involutiveaction:soluble} yields $B = (B^+)^\circ\cdot \{B, \lambda\}$. Of course $\{B, \lambda\} \subseteq (F^\circ(B))^{-_\lambda}$.

It remains to prove \ref{p:maximality:st:uniqueness:iv}.
The $2$-torus $\hat{S}_0$ also acts on $N_G(B)$, so it centralises the finite set $N_G(B)/B$. It follows that if $n \in (N_G(B))^{-_\lambda}$, then $nB = n^\lambda B = n^{-1} B$, that is, $n^2 \in B$. If $G$ has no involutions then neither does $N_G(B)/B$ by torsion lifting. But if $G$ does have involutions, then by torality principles $B \geq C_G^\circ(\iota_0)$ already contains a maximal $2$-torus of $G$, which is a Sylow $2$-subgroup of $G$: hence in that case again, $N_G(B)/B$ has no involutions. In any case $n \in B$, which proves $(N_G(B))^{-_\lambda} \subseteq B$.
\end{proofclaim}

The most important claims for the moment are \ref{p:maximality:st:uniqueness:i} and \ref{p:maximality:st:uniqueness:iii}. Claim \ref{p:maximality:st:uniqueness:iv} will play its role in the sole final Step but was more conveniently proved here.

\subsection{The Fuel}

Controlling $\iota_0^G \cap N_{\hat{G}}(B)$ was claimed to be essential in \cite[after Corollaire 5.37]{DGroupes2}. We can actually do without but this will result in some counterpoint of involutions with a final chord at the very end of the proof of Step~\ref{p:maximality:st:conjugacy}.

\begin{notationinproof}\label{p:maximality:n:IB}
Let $I_B = \{\iota \in \iota_0^{\hat{G}}: C_G^\circ(\iota) \leq B\}$.
\end{notationinproof}

\begin{remarks*}
$I_B = \iota_0^{N_G(B)}$ and any maximal $2$-torus $\hat{S} \leq N_{\hat{G}}(B)$ intersects $I_B$, two facts we shall use with no reference. A proof and an observation follow.
\begin{itemize}
\item
If $\iota \in I_B$ then there is $x \in \hat{G} = G\cdot d(\hat{S}_0)$ with $\iota = \iota_0^x$, where $\hat{S}_0$ is a $2$-torus containing $\iota_0$; one may clearly assume $x \in G$. Now by Uniqueness Step~\ref{p:maximality:st:uniqueness} \ref{p:maximality:st:uniqueness:i} and definition of $I_B$, $B^x$ is the only Borel subgroup of $G$ containing $C_G^\circ(\iota) \leq B$, whence $x \in N_G(B)$ and $I_B \subseteq \iota_0^{N_G(B)}$. The converse inclusion is obvious.

By Step~\ref{p:maximality:st:uniqueness} \ref{p:maximality:st:uniqueness:ii}, $N_{\hat{G}}(B)$ contains a Sylow $2$-subgroup of $\hat{G}$ so any maximal $2$-torus $\hat{S} \leq N_{\hat{G}}(B)$ is in fact a Sylow $2$-subgroup of $N_{\hat{G}}(B)$, and contains an $N_{\hat{G}}(B)$-conjugate $\iota$ of $\iota_0$; then $\iota \in \hat{S} \cap I_B$.
\item
On the other hand it is not clear at all whether equality holds in $I_B \subseteq \iota_0^G\cap N_{\hat{G}}(B)$. 
As a matter of fact we cannot show that $B$ is self-normalising in $G$; this is easy when $G$ is $2^\perp$ but not in general. At this point, using $C_G^\circ(\iota) < B$, there is a lovely little argument showing that $C_G(\iota)$ is connected
% % By Steinberg's torsion theorem and connectedness of the Sylow $2$-subgroup of $\hat{G}$, $C_{\hat{G}}(\iota) = C_{\hat{G}}^\circ(\iota)$. Since Steinberg's torsion theorem relies on the connectedness of centralisers of decent tori in connected groups, there is no hope to obtain a similar statement for the action of an ``outer'' torus. So we argue as follows.
% % 
% % By torsion lifting, it suffices to show that any $p$-element $c \in C_G(\iota)$ actually lies in $C_G^\circ(\iota)$. Now Inductive Torsion Control (Proposition~\ref{p:Yanartas}) actually shows that $U_p(B) = 1$. In our current setting, $\hat{G} = G \cdot d(\hat{S})$, so $C_{\hat{G}}(\iota) = C_G(\iota) \cdot d(\hat{S})$ is $U_p^\perp$. By the structure of torsion in soluble groups, there is a maximal $p$-torus $\hat{P} \leq C_{\hat{G}}(\iota)$ of $C_{\hat{G}}(\iota)$ containing $c$. Observe that $\hat{P}$ is also a maximal $p$-torus of $\hat{G}$.
%%%% ou faut-il que $\hat{G}$ soit $U_p^\perp$?
% % Hence $P = \hat{P}\cap G$ is a maximal $p$-torus of $G$, and $c \in P \leq C_G^\circ(\iota)$.
% %
% % Notice that the argument relies on the absence of $p$-unipotence, and on Proposition~\ref{p:Yanartas}. So it will not work for $C_G^\circ(\iota)$ a Borel subgroup.
(which is not obvious if $G < \hat{G}$ as Steinberg's torsion theorem no longer applies), but one cannot go further. Moreover, self-normalisation techniques \emph{à la} \cite{ABFWeyl} do not work in the $N_\circ^\circ$ context.

The first claim below will remedy this.
\end{itemize}
\end{remarks*}

\begin{step}[action]\label{p:maximality:st:action}\
\begin{enumerate}[(i)]
\item\label{p:maximality:st:action:i}
If $\lambda \in \iota_0^G \cap N_{\hat{G}}(B)$ but $\lambda \notin I_B$, then $\lambda$ inverts $U_\rho(Z(F^\circ(B)))$.
\item\label{p:maximality:st:action:ii}
$[U_\rho(Z(F^\circ(B))), \T] \neq 1$.
\end{enumerate}
\end{step}
\begin{proofclaim}
For this proof letting $Y_B = U_\rho(Z(F^\circ(B)))$ will spare a few parentheses.

Let $\lambda$ be as in the statement and suppose that $X = C_{Y_B}^\circ(\lambda)$ is non-trivial. Then $X$ is a $\rho$-group. By uniqueness Step~\ref{p:maximality:st:uniqueness} \ref{p:maximality:st:uniqueness:iii}, $B = B^{+_\lambda} \cdot (F^\circ(B))^{-_\lambda}$; obviously both terms normalise $X$ so $X \trianglelefteq B$. It follows from uniqueness principles that $U_\rho(B)$ is the only Sylow $\rho$-subgroup of $G$ containing $X$. Since $X \leq C_G^\circ(\lambda)$ is contained in some conjugate $B^x$ of $B$, we have $U_\rho(B^x) = U_\rho(B)$ so $C_G^\circ(\lambda) \leq B$ and $\lambda \in I_B$: a contradiction.

We move to the second claim. Suppose that $\T$ centralises $Y_B$. Let $C = C_G^\circ(\T)$, a definable, connected, soluble, $\kappa$-invariant subgroup; let $U$ be a Sylow $\rho$-subgroup of $C$ containing $Y_B$. By normalisation principles $\kappa$ has a $C$-conjugate $\lambda$ normalising $U$, and inverting $\T$. Since $U_\rho(B)$ is the only Sylow $\rho$-subgroup of $G$ containing $Y_B$, $\lambda$ normalises $B$. Hence $\lambda \in \iota_0^{\hat{G}} \cap N_{\hat{G}}(B) = \iota_0^G \cap N_{\hat{G}}(B)$. We see two cases.

First suppose $\lambda \notin I_B$. Then by claim \ref{p:maximality:st:action:i}, $\lambda$ inverts $Y_B$. If $\rho_C = \rho$, then apply uniqueness principles: $U_\rho(B)$ is the only Sylow $\rho$-subgroup of $G$ containing $Y_B$, so it also is the only Sylow $\rho$-subgroup of $G$ containing $U_\rho(C)$. As the latter is $\kappa$-invariant, so is $B$: a contradiction. Therefore $\rho_C \succ \rho$. It follows that $\lambda$ inverts $U_{\rho_C}(C)$, whence $[U_{\rho_C}(C), Y_B] = 1$ by commutation principles. This forces $U_{\rho_C}(C) \leq C_G^\circ(Y_B) \leq B$, against $\rho_C \succ \rho$.

So $\lambda \in I_B$, i.e. $C_G^\circ(\lambda) \leq B$. But by uniqueness Step~\ref{p:maximality:st:uniqueness} \ref{p:maximality:st:uniqueness:iii}, $\T \subseteq (F^\circ(B))^{-_\lambda}$, and therefore $\T \subseteq F^\circ(B)\cap F^\circ(B)^\kappa$. Since all elements in $\T$ are torsion-free by the Torsion Control Proposition~\ref{p:Yanartas}, one even has $\T \subseteq (F^\circ(B)\cap F^\circ(B)^\kappa)^\circ$. The latter is abelian by \cite[4.46(2) (our Fact \ref{f:Bender})]{DJGroups}, and $\T$ is therefore a definable, connected, abelian subgroup.
Now always by the Torsion Control and Genericity Propositions \ref{p:Yanartas} and \ref{p:genericity}, and by the decomposition of $B$ obtained in Step~\ref{p:maximality:st:uniqueness} \ref{p:maximality:st:uniqueness:iii}, one has:
\[\rk \T = \rk T_B(\kappa) \geq \rk B - \rk C_G^\circ(\iota_0) = \rk B - \rk C_G^\circ(\lambda) = \rk B - \rk C_B^\circ(\lambda) = \rk (F^\circ(B))^{-\lambda}\]
A definable set contains at most one definable, connected, generic subgroup, so $\T$ is the only definable, connected, generic group included in $(F^\circ(B))^{-\lambda}$: hence $N_{\hat{G}}((F^\circ(B))^{-\lambda}) \leq N_{\hat{G}}(\T)$ and $B^{+_\lambda}$ normalises $\T$. Moreover $\T\cap B^{+_\lambda} = 1$ since $\lambda$ inverts $\T$ and $\T$ contains no torsion elements. So $\T \cdot B^{+_\lambda} = \T \rtimes B^{+_\lambda}$ is a definable subgroup of rank $\geq \rk (F^\circ(B_\lambda))^{-_\lambda} + \rk B^{+_\lambda} = \rk B$ by Step~\ref{p:maximality:st:uniqueness}~\ref{p:maximality:st:uniqueness:iii}. Hence $B = \T \rtimes B^{+_\lambda}$ normalises $\T$; $B = N_G^\circ(\T)$ since $G$ is an $N_\circ^\circ$-group. In particular $\kappa$ normalises $B$: a contradiction.
\end{proofclaim}

Claim \ref{p:maximality:st:action:i} will be used only once more, in the next Step.

\subsection{The Fuel, refined}

\begin{step}[abelianity]\label{p:maximality:st:abelianity}\
\begin{enumerate}[(i)]
\item\label{p:maximality:st:abelianity:i}
If $\iota \in I_B$ then $\T \cap C_G(\iota) = 1$.
\item\label{p:maximality:st:abelianity:ii}
There is no definable, connected, soluble, $\kappa$-invariant group containing $U_\rho(Z(F^\circ(B)))$ and $\T$.
\item\label{p:maximality:st:abelianity:iii}
$\T$ is a definable, abelian, torsion-free group.
\end{enumerate}
\end{step}
\begin{proofclaim}
The first claim is easy. Let $\iota \in I_B$ and $t \in \T\setminus\{1\}$ be such that $t^\iota = t$. Then $\iota \in C_{\hat{G}}(t)$ which is $\kappa$-invariant; by normalisation principles and abelianity of the Sylow $2$-subgroup, $\kappa$ has a $C_{\hat{G}}(t)$-conjugate $\lambda$ commuting with $\iota$.
%we do not use that by $*$-local$_\circ^\circ$ solubility and lack of torsion of $\T$, Proposition~\ref{p:Yanartas}, $C_{\hat{G}}(t)$ is soluble-by-finite
By uniqueness Step~\ref{p:maximality:st:uniqueness} \ref{p:maximality:st:uniqueness:i}, $B$ is the only Borel subgroup of $G$ containing $C_G^\circ(\iota)$, so $\lambda$ normalises $B$. Recall from Inductive Torsion Control, Proposition~\ref{p:Yanartas}, that $d(t)$ is torsion-free. By uniqueness Step~\ref{p:maximality:st:uniqueness} \ref{p:maximality:st:uniqueness:iii}, $t^\lambda = t^\kappa = t^{-1}$ forces $t^2 = [t^{-1}, \lambda] \in F^\circ(B)$ and $t\in F^\circ(B)$. We then apply the Devil's Ladder, Proposition~\ref{p:DevilsLadder}, to the action of $\langle  \iota, \kappa\rangle$ on $d(t)$ and find that $\kappa$ normalises $B$: a contradiction.

As the proof of the second claim is a little involved let us first see how it entails the third one. Suppose that $X = (F^\circ(B)\cap F^\circ(B)^\kappa)^\circ$ is non-trivial and let $H = N_G^\circ(X)$; then $G$ being an $N_\circ^\circ$-group and the second claim yield a contradiction. Hence $X = 1$ which proves abelianity of $(B\cap B^\kappa)^\circ$. Then, since elements of $\T\subseteq B\cap B^\kappa$ contain no torsion in their definable hulls by Proposition~\ref{p:Yanartas}, one has $\T\subseteq (B\cap B^\kappa)^\circ$ and $\T$ is therefore an abelian group, obviously definable and torsion-free. So we now proceed to proving the second claim. Here again we let $Y_B = U_\rho(Z(F^\circ(B)))$.

Let $L$ be a definable, connected, soluble, $\kappa$-invariant group containing $Y_B$ and $\T$. We shall show that $Y_B$ and $\T$ commute, which will contradict Step~\ref{p:maximality:st:action} \ref{p:maximality:st:action:ii}. To do this we proceed piecewise in the following sense. Bear in mind that for $t\in \T$, $d(t)$ is torsion-free by Inductive Torsion Control, Proposition~\ref{p:Yanartas}, so one may take Burdges' decomposition of the definable, connected, abelian group $d(t)$. As a result, the set $\T$ is a union of products of various abelian $\tau$-groups for various parameters $\tau$. We shall show that each of them centralises $Y_B$, which will be the contradiction.

So we let $\tau$ be a parameter and $\Theta$ be an abelian $\tau$-group included in the set $\T$. If $\tau = \rho$ then we are done as $\Theta \leq U_\rho(B)$. So suppose $\tau \prec \rho$ and prepare to use the Bender method (\S\ref{s:Borel}).
Since $L \geq \langle  Y_B, \T\rangle$, $L$ is not abelian by Step~\ref{p:maximality:st:action} \ref{p:maximality:st:action:ii}.

Let $C \leq G$ be a Borel subgroup of $G$ containing $N_G^\circ(L') \geq L$ and maximising $\rho_C$. Notice that:
\[U_{\rho_C}(Z(F^\circ(C))) \leq C_G^\circ(F^\circ(C)) \leq C_G^\circ(C') \leq C_G^\circ(L') \leq N_G^\circ(L')\]
so by uniqueness principles and definition of $C$, $C$ is actually the only Borel subgroup of $G$ containing $N_G^\circ(L')$. As the latter is $\kappa$-invariant, so is $C$; in particular $C\neq B$. Moreover $Y_B \leq C$ so uniqueness principles force $\rho_C \succ \rho$, and $H = (B\cap C)^\circ \geq \langle  Y_B, \T\rangle$ is non-abelian. So we are under the assumptions of Fact \ref{f:Bender2} with $B_\ell = B$ and $B_h = C$.

We determine the linking parameter $\rho'$, i.e. the only parameter of the homogeneous group $H'$ \cite[4.51(3) (Fact \ref{f:Bender2})]{DJGroups}. Fact \ref{f:unipotence} \ref{f:unipotence:rhocommutator} (no need for Frécon homogeneity here) shows that the by Step~\ref{p:maximality:st:action}~\ref{p:maximality:st:action:ii} non-trivial commutator $[Y_B, \T]$ is a $\rho$-subgroup of $H'$, hence $\rho' = \rho$.

We now construct a most remarkable involution. Let $V_\rho \leq C$ be a Sylow $\rho$-subgroup of $C$ containing $Y_B$. Since $\kappa$ normalises $C$, it has by normalisation principles a $C$-conjugate $\lambda$ normalising $V_\rho$. By uniqueness principles, $U_\rho(B)$ is the only Sylow $\rho$-subgroup of $G$ containing $Y_B$, so $\lambda$ normalises $B$. If $\lambda\notin I_B$ then by Step~\ref{p:maximality:st:action} \ref{p:maximality:st:action:i} $\lambda$ inverts $Y_B$; since $\rho_C \succ \rho$ it certainly inverts $U_{\rho_C}(C)$ as well, whence by commutation principles $[Y_B, U_{\rho_C}(C)] = 1$ and $U_{\rho_C}(C) \leq C_G^\circ(Y_B) \leq B$, contradicting $\rho_C \succ \rho$. Hence $\lambda \in I_B$; it normalises $B$ and $C$ (hence $H$).

We return to our abelian $\tau$-group $\Theta$ included in the set $\T$, with $\tau \prec \rho$. Let $V_\tau \leq H$ be a Sylow $\tau$-subgroup of $H$ containing $\Theta$. By normalisation principles $\lambda$ has an $H$-conjugate $\mu$ normalising $V_\tau$. We shall prove that $\mu$ actually centralises $V_\tau$; little work will remain after that. Observe that $V_\tau$ is a definable, connected, nilpotent group contained in two different Borel subgroups of $G$ so by \cite[4.46(2) (Fact \ref{f:nilpab})]{DJGroups} it is abelian. By the commutator argument of Fact \ref{f:unipotence} \ref{f:unipotence:rhocommutator} or the simpler push-forward argument of Fact \ref{f:unipotence} \ref{f:unipotence:pushandpull} (no need for Frécon homogeneity here), $[V_\tau, \mu]$ is a $\tau$-group inverted by $\mu$.

Now note that $\mu$, like $\lambda$, is in $I_B$, and normalises $B$ and $C$. Moreover by Step~\ref{p:maximality:st:uniqueness} \ref{p:maximality:st:uniqueness:iii}, $[V_\tau, \mu] \leq F^\circ(B)$. We shall prove that $[V_\tau, \mu] \leq F^\circ(C)$ as well by making it commute with all of $F^\circ(C)$, checking it on each term of Burdges' decomposition of $F^\circ(C)$. Keep Fact \ref{f:Bender2} in mind.

First, by \cite[4.38]{DJGroups}, $\rho' = \rho$ is the least parameter in $F^\circ(C)$; we handle it as follows. Recall that $[V_\tau, \mu] \leq F^\circ(B)$ is a $\tau$-group, so $[V_\tau, \mu] \leq U_\tau(F^\circ(B))$. By \cite[4.52(7)]{DJGroups} and since $\rho'=\rho\neq \tau$, the latter is in $Z(H)$. But by \cite[4.52(3)]{DJGroups}, $U_\rho(F^\circ(C)) = U_{\rho'}(F^\circ(C)) = (F^\circ(B)\cap F^\circ(C))^\circ \leq H$, so $[V_\tau, \mu]$ does commute with $U_\rho(F^\circ(C))$. Now let $\sigma \succ \rho$ be another parameter. Remember that $\mu$ normalises $C$; since $\mu \in \iota_0^{\hat{G}}$, $\sigma \succ \rho_\mu$ and $\mu$ inverts $U_\sigma(F^\circ(C))$. It inverts $[V_\tau, \mu]$ as well so commutation principles force $[V_\tau, \mu]$ to centralise $U_\sigma(F^\circ(C))$.

As a consequence $[V_\tau, \mu] \leq C$ centralises $F^\circ(C)$. Unfortunately this is not quite enough to apply the Fitting subgroup theorem as literally stated in \cite[Proposition 7.4]{BNGroups} due to connectedness issues. The first option is to note that with exactly the same proof as in \cite[Proposition~7.4]{BNGroups}: in any connected, soluble group $K$ of finite Morley rank one has $C_K^\circ(F^\circ(K)) \leq F^\circ(K)$. Another option is to observe that by \cite[4.52(1)]{DJGroups}, $F^\circ(C)$ has no torsion unipotence: in particular, the torsion in $F(C)$ is central in $C$ \cite[2.14]{DJGroups}. Altogether $[V_\tau, \mu]$ commutes with $F(C)$ and we then use the Fitting subgroup theorem stated in \cite[Proposition 7.4]{BNGroups} to conclude $[V_\tau, \mu] \leq F(C)$. Either way we find $[V_\tau, \mu] \leq F^\circ(C)$, and we already knew $[V_\tau, \mu] \leq F^\circ(B)$.
By connectedness $[V_\tau, \mu] \leq (F^\circ(B)\cap F^\circ(C))^\circ$. But the latter as we know \cite[4.51(3)]{DJGroups} is $\rho' = \rho$-homogeneous: since $\rho \succ \tau$, this shows $[V_\tau, \mu] = 1$.

In particular $\mu \in I_B$ centralises $\Theta \leq V_\tau$. By claim \ref{p:maximality:st:action:i}, $\Theta = 1$ which certainly commutes with $Y_B$. This contradiction finishes the proof of claim \ref{p:maximality:st:action:ii}.
\end{proofclaim}

\begin{remark*}
It is possible to avoid using the Devil's Ladder in the proof of claim \ref{p:maximality:st:abelianity:i}. Postpone and finish the proof of claim \ref{p:maximality:st:abelianity:ii} as follows:
\begin{quote}
In particular $\mu \in I_B$ centralises $\Theta$, so $\mu \in C_{\hat{G}}(\Theta)$ which is $\kappa$-invariant. By normalisation principles and abelianity of the Sylow $2$-subgroup, $\kappa$ has a $C_{\hat{G}}(\Theta)$-conjugate $\nu$ commuting with $\mu$. Since $\mu \in I_B$, by uniqueness Step~\ref{p:maximality:st:uniqueness} \ref{p:maximality:st:uniqueness:i}, $\nu$ normalises $B$. By Step~\ref{p:maximality:st:uniqueness}~\ref{p:maximality:st:uniqueness:iii}, $\Theta = [\Theta, \nu] \leq F^\circ(B)$ commutes with $Y_B$. Hence all of $\T$ commutes with $Y_B$, against Step~\ref{p:maximality:st:action} \ref{p:maximality:st:action:ii}.
\end{quote}
Then prove claim \ref{p:maximality:st:abelianity:i}:
\begin{quote}
Now let $t \in \T\setminus\{1\}$ be centralised by $\iota \in I_B$. Like in the previous paragraph, $t\in F^\circ(B)$; $t$ has infinite order and is inverted by $\kappa$. But we proved in the third claim that $(F^\circ(B)\cap F^\circ(B)^\kappa)^\circ = 1$, a contradiction.
\end{quote}
\end{remark*}

Both claims \ref{p:maximality:st:abelianity:i} and \ref{p:maximality:st:abelianity:iii} are crucial. Claim \ref{p:maximality:st:abelianity:ii} is a gadget used in the proof of claim \ref{p:maximality:st:abelianity:iii} and in the next step.

\subsection{The Core}

\begin{notationinproof}\label{p:maximality:n:J}\
\begin{itemize}
\item
Let $\pi$ be the set of parameters occurring in $\T$.
\item
Let $J_\kappa = U_\pi(C_B^\circ(\T))$ (one has $\T \leq J_\kappa$ by Step~\ref{p:maximality:st:abelianity} \ref{p:maximality:st:abelianity:iii}).
\end{itemize}
\end{notationinproof}

We feel extremely uncomfortable with the next step. The question of why to maximise over $C_B^\circ(\T)$ is a mystery and always was. Nine years before writing these lines, the author then a PhD student produced an incorrect study of some similar maximal intersection configuration, and after noticing a well-hidden flaw had to reassemble his proof by trying all possible maximisations. Exactly the same happened to him again. We feel like one piece of the puzzle is still missing, or more confusingly that we are playing with incomplete sets of pieces from distinct puzzles. There are many ways to get it wrong and the step works by miracle.

%\cite{BCDAutomorphisms} was an essential albeit partial stage of the process of rewriting. For the first time it became clear that the strategy for Proposition~\ref{p:maximality} is about abelian configurations as well. We give a uniform statement but the proof still retains a case division.

\begin{step}[rigidity]\label{p:maximality:st:Jkappa}
$J_\kappa$ is an abelian Carter $\pi$-subgroup of $B$. There is a maximal $2$-torus $\hat{S}$ of $\hat{G}$ contained in $N_{\hat{G}}(B) \cap N_{\hat{G}}(J_\kappa)$, and for any $\iota \in I_B \cap \hat{S}$, one has
$C_{U_\pi(N_G^\circ(J_\kappa))}^\circ(\iota) \leq C_G^\circ(\T)$.
\end{step}
\begin{proofclaim}
First of all, observe that by torality principles there is a maximal $2$-torus $\hat{S}_0$ of $\hat{G}$ containing $\iota_0$; by uniqueness Step~\ref{p:maximality:st:uniqueness} \ref{p:maximality:st:uniqueness:i} $\hat{S}_0$ normalises $B$. Bear in mind that any maximal $2$-torus in $N_{\hat{G}}(B)$ contains an involution in $I_B$.

We need more stucture now, so let $C \neq B$ be a Borel subgroup of $G$ containing $C_B^\circ(\T)$ and maximising $H = (B\cap C)^\circ$. There is such a Borel subgroup indeed since $C_G^\circ(\T)$ is $\kappa$-invariant whereas $B$ is not. As one expects there are two cases and we first deal with the abelian one. The other will be more involved technically, but there will be no more complications of this kind when we are done.

Suppose that $H$ is abelian. Since $H \geq C_B^\circ(\T) \geq \T$ by abelianity of the latter, Step~\ref{p:maximality:st:abelianity} \ref{p:maximality:st:abelianity:iii}, and since $H$ is supposed to be abelian as well, $H = C_B^\circ(\T) \leq N_G^\circ(J_\kappa)$. We now consider $N_G^\circ(J_\kappa)$. It is not clear at all whether $B$ contains $N_G^\circ(J_\kappa)$ but one may ask.

If ($H$ is abelian and) $B$ happens to be the only Borel subgroup of $G$ containing $N_G^\circ(J_\kappa)$, then:
\[U_\pi\left(N_{C_G^\circ(\T)}^\circ(J_\kappa)\right) \leq U_\pi\left( N_{C_B^\circ(\T)}^\circ(J_\kappa)\right) = U_\pi\left(C_B^\circ(\T)\right) = J_\kappa\]
and $J_\kappa \leq C_G^\circ(\T)$ is a Carter $\pi$-subgroup of $C_G^\circ(\T)$. As the latter is $\kappa$-invariant, by normalisation principles $\kappa$ has a $C_G^\circ(\T)$-conjugate $\lambda$ normalising $J_\kappa$. But our current assumption that $B$ is the only Borel subgroup of $G$ containing $N_G^\circ(J_\kappa)$ forces $\lambda$ to normalise $B$ as well. By Step~\ref{p:maximality:st:uniqueness} \ref{p:maximality:st:uniqueness:iii} and since $\lambda$ like $\kappa$ inverts the $2$-divisible group $\T$, we find $\T = [\T, \lambda] \leq F^\circ(B)$ which contradicts Step~\ref{p:maximality:st:action}~\ref{p:maximality:st:action:ii}.

So (provided $H$ is abelian) $B$ is not the only Borel subgroup of $G$ containing $N_G^\circ(J_\kappa)$: let $D \neq B$ one such. Then $C_B^\circ(\T) = H \leq N_B^\circ(J_\kappa) \leq (B\cap D)^\circ$ so by maximality of $H$, $H = (B\cap D)^\circ = N_B^\circ(J_\kappa)$ and $J_\kappa = U_\pi(C_B^\circ(\T)) = U_\pi(H)$ is a Carter $\pi$-subgroup of $B$. By normalisation principles there is a $B$-conjugate $\hat{S}$ of $\hat{S}_0$ normalising $J_\kappa$. For $\iota \in \hat{S} \cap I_B$ one has $C_G^\circ(\iota) \leq B$ and:
\[C_{U_\pi\left(N_G^\circ(J_\kappa)\right)}^\circ(\iota) \leq N_B^\circ(J_\kappa) = H \leq C_G^\circ(\T)\]
It is not easy to say more as $N_G^\circ(J_\kappa)$ need not be nilpotent, but we are done with the proof in the abelian case.

We now suppose that $H$ is not abelian. However $H \geq C_B^\circ(\T)$ so if $D \neq B$ is a Borel subgroup of $G$ containing $H$, one has by definition of the latter $H = (B\cap D)^\circ$. By \cite[4.50(3) and (6) (Fact~\ref{f:Bender})]{DJGroups}, we are under the assumptions of Fact \ref{f:Bender2}. Keep it at hand. Let $Q \leq H$ be a Carter subgroup of $H$. Let $\rho'$ denote the parameter of the homogeneous group $H'$. Studying $J_\kappa$ certainly means asking about $\rho'$ and $\pi$.

Here is a useful principle: if $\sigma$ is a \emph{set} of parameters not containing $\rho'$, $V_\sigma \leq H$ is a $\sigma$-subgroup of $H$, and $\hat{S} \leq N_{\hat{G}}(B) \cap N_{\hat{G}}(V_\sigma) \cap N_{\hat{G}}(C)$ is a $2$-torus, then $\hat{S}$ centralises $V_\sigma$. It is easily proved: first, $V_\sigma$ being nilpotent by definition of a $\sigma$-group and contained in two distinct Borel subgroups, is abelian by Fact~\ref{f:nilpab}. Now let $\hat{B} = B\cdot d(\hat{S})$, a definable, connected, soluble subgroup of $\hat{G}$. Then by Zilber's indecomposibility theorem, $[B, \hat{S}] \leq (F^\circ(\hat{B})\cap B)^\circ \leq F^\circ(B)$ and likewise in $C$. Hence $[V_\sigma, \hat{S}]\leq (F^\circ(B) \cap F^\circ(C))^\circ$ which is $\rho'$-homogeneous \cite[4.52(3)]{DJGroups}. As $\rho'\notin \sigma$, we have $[V_\sigma, \hat{S}] = 1$ by (Fact \ref{f:unipotence} \ref{f:unipotence:pushandpull} or \ref{f:unipotence:rhocommutator}), and $\hat{S}$ centralises $V_\sigma$.

The argument really starts here. First, $\rho' \in \pi$. Otherwise by lemma \ref{l:piCarter}, $\T$ is included in a Carter subgroup of $H$; we may assume $\T \leq Q$, and in particular by abelianity of $Q$ (Fact \ref{f:nilpab}), $Q \leq C_G^\circ(\T)$. By Lemma \ref{l:Bender:addendum}, $N_{\hat{G}}(Q) \leq N_{\hat{G}}(B) \cup N_{\hat{G}}(C)$. So there are two cases (yes, this does work for groups).
\begin{itemize}
\item
First suppose ($\rho'\notin \pi$ and) $N_{\hat{G}}(Q) \leq N_{\hat{G}}(C)$. In particular $N_B^\circ(Q) \leq N_H^\circ(Q) = Q$ and $Q$ is a Carter subgroup of $B$. By normalisation principles, $\hat{S}_0$ has a $B$-conjugate $\hat{S}$ in $N_{\hat{G}}(B) \cap N_{\hat{G}}(Q) \leq N_{\hat{G}}(B)\cap N_{\hat{G}}(U_\pi(Q)) \cap N_{\hat{G}}(C)$. As we noted $\hat{S}$ must centralise $U_\pi(Q) \geq \T$. But there is an involution $\iota \in \hat{S}\cap I_B$, and this contradicts Step~\ref{p:maximality:st:abelianity} \ref{p:maximality:st:abelianity:i}.
\item
Hence (still assuming $\rho'\notin \pi$) one has $N_{\hat{G}}(Q) \leq N_{\hat{G}}(B)$. Then $N_{C_G^\circ(\T)}^\circ(Q) \leq N_{C_B^\circ(\T)}^\circ(Q) \leq N_H^\circ(Q) = Q$ and $Q \leq C_G^\circ(\T)$ is a Carter subgroup of $C_G^\circ(\T)$. As the latter is $\kappa$-invariant, by normalisation principles $\kappa$ has a $C_G^\circ(\T)$-conjugate $\lambda$ normalising $Q$. Now since $N_{\hat{G}}(Q) \leq N_{\hat{G}}(B)$, $\lambda$ normalises $B$. Then $\T$ is inverted by $\lambda$ and $2$-divisible, whence $\T = [\T, \lambda] \leq [B, \lambda] \leq F^\circ(B)$ by Step~\ref{p:maximality:st:uniqueness} \ref{p:maximality:st:uniqueness:iii}, contradicting Step~\ref{p:maximality:st:action} \ref{p:maximality:st:action:ii}.
\end{itemize}

So we have proved $\rho' \in \pi$. On the other hand $\rho_B = \rho \notin \pi$ as otherwise $C_G^\circ(U_\rho(\T))$ would contradict Step~\ref{p:maximality:st:abelianity} \ref{p:maximality:st:abelianity:ii}. Suppose for a second $\rho_C \succ \rho_B$; then since $\rho \neq \rho'$, one has $U_\rho(Z(F^\circ(B))) \leq Z(H) \leq C_G^\circ(\T)$ \cite[4.52(7)]{DJGroups}, against Step~\ref{p:maximality:st:action} \ref{p:maximality:st:action:ii}. Since parameters differ \cite[4.50(6)]{DJGroups} one has $\rho_B \succ \rho_C$. In particular \cite[4.52(2)]{DJGroups}, $Q$ is a Carter subgroup of  $B$.

We now show that $\T$ is $\rho'$-homogeneous, i.e. $\pi = \{\rho'\}$. Let $\sigma = \pi \setminus\{\rho'\}$. Since $H'$ is $\rho'$-homogeneous, by Lemma \ref{l:piCarter} we may assume that $U_\sigma(\T) \leq Q$. Now $U_{\rho'}(H) = U_{\rho'}(F^\circ(H))$ is a Sylow $\rho'$-subgroup of $B$ \cite[implicit but clear in 4.52(6)]{DJGroups}. By normalisation principles $\hat{S}_0$ has a $B$-conjugate $\hat{S}$ in $N_{\hat{G}}(B)\cap N_{\hat{G}}(U_{\rho'}(H)) \leq N_{\hat{G}}(B) \cap N_{\hat{G}}(C)$ \cite[4.52(6)]{DJGroups}. Hence $\hat{S}$ normalises $H$. But $Q$ is a Carter subgroup of $H$ so by normalisation principles over $H$, $\hat{S}$ has an $H$-conjugate $\hat{S}_1$ in $N_{\hat{G}}(B) \cap N_{\hat{G}}(C) \cap N_{\hat{G}}(Q)$. By our initial principle, $\hat{S}_1$ centralises $U_\sigma(Q) \geq U_\sigma(\T)$. Since $\hat{S}_1$ contains an involution in $I_B$, we find $U_\sigma(\T) = 1$ by Step~\ref{p:maximality:st:abelianity} \ref{p:maximality:st:abelianity:i}, as desired. Hence $\T$ is $\rho'$-homogeneous.

As a conclusion $\pi = \{\rho'\}$ and $J_\kappa = U_{\rho'}(C_B^\circ(\T)) \leq U_{\rho'}(H)$. The latter is an abelian Sylow $\rho'$-subgroup of $B$ \cite[implicit but clear in 4.52(6) and noted above]{DJGroups}. Also, $\T \leq U_{\rho'}(H) \leq C_B^\circ(\T)$ and $J_\kappa = U_{\rho'}(H)$. We constructed a maximal $2$-torus $\hat{S} \leq N_{\hat{G}}(B) \cap N_{\hat{G}}(J_\kappa)$ a minute ago.

Finally fix $\iota \in \hat{S} \cap I_B$. We aim at showing that $C_{U_{\rho'}(N_G^\circ(J_\kappa))}^\circ(\iota) \leq C_G^\circ(\T)$. Recall that $\hat{S}$ normalises $C$. By normalisation principles $\hat{S}$ normalises some Sylow $\rho'$-subgroup $V_{\rho'}$ of $C$. Then with Lemma \ref{l:involutiveaction:central2torus} under the action of $\iota$, $V_{\rho'} = (V_{\rho'}^+)^\circ \cdot \{V_{\rho'}, \iota\}$. Now $(V_{\rho'}^+)^\circ$ is a $\rho'$-subgroup of $(B \cap C)^\circ = H$, so $(V_{\rho'}^+)^\circ \leq J_\kappa \leq F^\circ(C)$ \cite[4.52(6)]{DJGroups}. Letting $\hat{C} = C\cdot d(\hat{S})$ one easily sees as we often did that $\{V_{\rho'}, \iota\} \subseteq F^\circ(C)$. So $V_{\rho'} \leq F^\circ(C)$ and $V_{\rho'} \leq U_{\rho'}(F^\circ(C))$. Conjugating Sylow $\rho'$-subgroups in $C$ this means that $U_{\rho'}(F^\circ(C))$ is actually the only Sylow $\rho'$-subgroup of $C$. But by \cite[4.52(8)]{DJGroups} any Sylow $\rho'$-subgroup of $G$ containing $U_{\rho'}(H)$ is contained in $C$. This means that $U_{\rho'}(F^\circ(C))$ is the only Sylow $\rho'$-subgroup of $G$ containing $U_{\rho'}(H) = J_\kappa$.

As a conclusion, any Sylow $\rho'$-subgroup of $N_G^\circ(J_\kappa)$ lies in $U_{\rho'}(F^\circ(C))$. Hence, paying attention to the fact that $\iota$ normalises the nilpotent $\rho'$-group $U_{\rho'}(F^\circ(C))$:
\[C_{U_\pi\left(N_G^\circ(J_\kappa)\right)}^\circ(\iota) \leq C_{U_{\rho'}(F^\circ(C))}^\circ(\iota) \leq U_{\rho'}(H) = J_\kappa \leq C_G^\circ(\T)\qedhere\]
\end{proofclaim}

We shall use the Bender method no more.

\subsection{The Reaction}

\begin{notationinproof}\
\begin{itemize}
\item
We now write $\T_\kappa$ for $\T_B(\kappa)$, as the involution $\kappa$ will vary in $K_B$.
\item
Let $Y = \{B, \iota_0\}$.
\end{itemize}
\end{notationinproof}

\begin{step}[conjugacy]\label{p:maximality:st:conjugacy}
\begin{enumerate}[(i)]
\item\label{p:maximality:st:conjugacy:normal}
$Y$ is a normal subgroup of $B$.
\item\label{p:maximality:st:conjugacy:rk}
$\rk B = \rk C_G(\iota_0) + \rk Y$.
\item\label{p:maximality:st:conjugacy:TI}
Any element of $Y \setminus\{1\}$ lies in finitely many $G$-conjugates of $Y$.
\item\label{p:maximality:st:conjugacy:conjugate}
$\T_\kappa$ and $Y$ are $G$-conjugate.
\end{enumerate}
\end{step}
\begin{proofclaim}
As a matter of fact we let $Y_\iota = \{B, \iota\}$ for any $\iota \in I_B$. Since $I_B = \iota_0^{N_G(B)}$, such sets are $N_G(B)$-conjugate to $Y_{\iota_0} = Y$.

Let $\iota \in \hat{S} \cap I_B$; do not forget that there is such an involution all right. Under the action of $\iota$ we may write $J_\kappa = J_\kappa^+ \qoplus [J_\kappa, \iota]$. By Step~\ref{p:maximality:st:abelianity} \ref{p:maximality:st:abelianity:i}, $\T_\kappa \cap J_\kappa^+ = 1$. So using the very definition of $\kappa \in K_B$ this yields the rank estimate:
\[\rk [J_\kappa, \iota] = \rk J_\kappa - \rk J_\kappa^+ \geq \rk \T_\kappa \geq \rk B - \rk C_G^\circ(\iota_0) = \rk B - \rk C_B^\circ(\iota) = \rk \iota^B \geq \rk \iota^{J_\kappa} = \rk [J_\kappa, \iota]\]
Equality follows. In particular $[J_\kappa, \iota] \subseteq Y_\iota$ is generic in $Y_\iota$. Since a definable set of degree $1$ contains at most one definable, generic subgroup, one has $C_B(\iota) \leq N_B(Y_\iota) \leq N_B([J_\kappa, \iota])$. On the other hand since $\hat{G}$ is $W_2^\perp$, $[J_\kappa, \iota]$ has no involutions; it is disjoint from $C_B(\iota)$. Hence $[J_\kappa, \iota] \cdot C_B(\iota) = [J_\kappa, \iota] \rtimes C_B(\iota)$ is a generic subgroup of $B$. It follows $B = [J_\kappa, \iota]\rtimes C_B(\iota)$. At this stage it is clear that $Y_\iota = [J_\kappa, \iota]$ is a normal subgroup of $B$ contained in $F^\circ(B)$, and the same holds of $Y$ by $N_G(B)$-conjugacy. Moreover $\rk Y_\iota = \rk \T_\kappa$; we are not done.

Consider the definable function $f: \T_\kappa \to Y_\iota$ which maps $t$ to $[t, \iota]$; as $J_\kappa$ is abelian, it is a group homomorphism. Bearing in mind that $\T_\kappa \cap C_{J_\kappa}(\iota) = 1$ by Step~\ref{p:maximality:st:abelianity} \ref{p:maximality:st:abelianity:i} and in view of the equality of ranks, $f$ is actually a group isomorphism; we are not done.

Let us show that any non-trivial element of $Y = Y_{\iota_0}$ lies in finitely many $G$-conjugates. For if $a \in Y\setminus\{1\}$ then by the isomorphism $\T_\kappa \simeq Y$ and Inductive Torsion Control, Proposition~\ref{p:Yanartas}, $a$ has infinite order: $C = C_G^\circ(a) \geq \langle  U_\rho(Z(F^\circ(B))), Y\rangle$ is therefore soluble, and $\iota_0$-invariant. If $\rho_C \succ \rho_B$ then $\iota_0$ inverts both $U_{\rho_C}(C)$ and $Y$, and commutation principles yield $[U_{\rho_C}(C), Y] = 1$ whence $U_{\rho_C}(C) \leq N_G^\circ(Y) = B$, a contradiction. Hence $\rho_C \preccurlyeq \rho_B$ and equality follows. Now uniqueness principles show that $U_\rho(B)$ is the only Sylow $\rho$-subgroup of $G$ containing $U_\rho(C)$. If $a \in Y^g$ with $g \in G$ then $U_\rho(B^g)$ is the only Sylow $\rho$-subgroup of $G$ containing $U_\rho(C)$ likewise, so $g \in N_G(B)$. Since $B \leq N_G(Y) \leq N_G(B)$, this can happen only for a finite number of conjugates of $Y$; we are not done.

It remains to conjugate $\T_\kappa$ to $Y$.
We claim that $J_\kappa \leq C_G^\circ(\T_\kappa)$ is a Carter $\pi$-subgroup of $C_G^\circ(\T_\kappa)$, where $\pi$ is as in Notation~\ref{p:maximality:n:J}. For let $N = U_\pi(N_G^\circ(J_\kappa))$ and $N_1 = U_\pi(N\cap C_G^\circ(\T_\kappa))$. We wish to decompose under the action of $\iota$, the involution we fixed at the beginning of the proof. Be very careful however that $\iota$ need not normalise $N_1$. But since $\hat{S}$ normalises $J_\kappa$ it also normalises $N$. Then $\hat{N} = N\cdot d(\hat{S})$ is yet another definable, connected, soluble group, so $\{N, \iota\} \subseteq (\hat{N}'\cap N)^\circ \leq F^\circ(N)$, and Lemma \ref{l:involutiveaction:soluble} applies to $N$. Now take $n_1 \in N_1$ and write its decomposition $n_1 = p n $ \emph{inside} $N$, with $p \in (N^+)^\circ$ and $n \in \{N, \iota\}$. Then $p \in C_{U_\pi(N_G^\circ(J_\kappa))}^\circ(\iota) \leq C_G^\circ(\T_\kappa)$ by Step~\ref{p:maximality:st:Jkappa}. So $n \in C_G^\circ(\T_\kappa)$.
On the other hand, for any $t \in \T_\kappa$ one has using a famous identity:
\begin{align*}
1 & = [[\iota, n^{-1}], t]^n \cdot [[n, t^{-1}], \iota]^t \cdot [[t, \iota], n]^\iota\\
& = [n^{-2}, t]^n \cdot [[t, \iota], n]^\iota\\
& = [[t, \iota], n]^\iota
\end{align*}
Hence $n$ commutes with $[\T_\kappa, \iota] = Y_\iota$ and $n \in N_G(N_G^\circ(Y_\iota)) = N_G(B)$. Since $p \in C_G^\circ(\iota) \leq B$, one has $n_1 = pn \in N_G(B)$, meaning $N_1 \leq N_G^\circ(B) = B$. Now $N_1 \leq U_\pi(N_B^\circ(J_\kappa))$ and since $J_\kappa$ is a Carter $\pi$-subgroup of $B$, $N_1 \leq J_\kappa$. Therefore $J_\kappa$ is a Carter $\pi$-subgroup of $C_G^\circ(\T_\kappa)$.

Stretto. This extra rigidity has devastating consequences. By normalisation principles, $\kappa$ has a $C_G^\circ(\T_\kappa)$-conjugate $\lambda$ normalising $J_\kappa$. If $\lambda$ normalises $B$ then $\T_\kappa \leq [J_\kappa, \lambda] \leq F^\circ(B)$ by Step~\ref{p:maximality:st:uniqueness} \ref{p:maximality:st:uniqueness:iii}, which contradicts $[U_\rho(Z(F^\circ(B))), \T_\kappa] \neq 1$ from Step~\ref{p:maximality:st:action} \ref{p:maximality:st:action:ii}. So $\lambda$ does not normalise $B$. On the other hand $T_\lambda(B)$ contains $\T_\kappa$ so $\lambda \in K_B$. In particular, everything we said so far of $\kappa$ holds of $\lambda$: by rank equality, $\T_\lambda = \T_\kappa$.

By conjugacy of Sylow $2$-subgroups, $\lambda$ has an $N_{\hat{G}}(J_\kappa)$-conjugate $\mu$ in $\hat{S}$. Remember that we took $\hat{G} = G\cdot d(\hat{S}^\circ)$, so $N_{\hat{G}}(J_\kappa) = N_G(J_\kappa) \cdot d(\hat{S})$ and $\mu = \lambda^n$ for some $n \in N_G(J_\kappa)$. Moreover $\mu \in \hat{S}$ commutes with the involution $\iota$ we fixed earlier in the proof. Let $X = C_{Y_\iota}^\circ(\mu) \leq F^\circ(B)$.
\begin{itemize}
\item
Suppose $X = 1$. Then $\mu$ inverts $Y_\iota$, so:
\[Y_\iota \leq [J_\kappa, \mu] = [J_\kappa, \lambda^n] = [J_\kappa, \lambda]^n \leq \T_\lambda^n = \T_\kappa^n\]
and equality follows from the equality of ranks.
\item
Suppose $X \neq 1$. We apply the Devil's Ladder, Proposition~\ref{p:DevilsLadder}, to the action of $\langle  \mu, \iota\rangle$ on $X$ inside $B_\mu$, the only Borel subgroup of $G$ containing $C_G^\circ(\mu)$ by Uniqueness Step~\ref{p:maximality:st:uniqueness} \ref{p:maximality:st:uniqueness:i}. We find $B_\mu \geq C_G^\circ(X) \geq U_\rho(Z(F^\circ(B)))$. Uniqueness principles force $U_\rho(B_\mu) = U_\rho(B)$, which means $\mu \in I_B \cap \hat{S}$. In particular, everything we said in this proof of $\iota$ holds of $\mu$, so:
\[Y_\mu = [J_\kappa, \mu] = [J_\kappa, \lambda^n] = [J_\kappa, \lambda]^n \leq \T_\lambda^n = \T_\kappa^n\]
and equality follows from the equality of ranks.
\end{itemize}
In any case, $\T_\kappa$ is $G$-conjugate to $Y$: we are done.
\end{proofclaim}

Notations and Steps from \ref{p:maximality:n:IB} to \ref{p:maximality:st:Jkappa} may be forgotten.

\subsection{Critical Mass}

\begin{step}[the collapse]\label{p:maximality:st:concentration}
\end{step}

We first estimate $\rk \{\T_\kappa: \kappa \in K_B\}$. The set under consideration is definable all right as a subset of $\{Y^g: g \in G\} = G/N_G(Y)$ by Step~\ref{p:maximality:st:conjugacy} \ref{p:maximality:st:conjugacy:conjugate}. If $\T_\kappa = \T_\lambda$ then there is $g \in G$ with $\T_\kappa = Y^g$. In particular, $\kappa$ and $\lambda$ lie in $N_{\hat{G}}(N_G^\circ(Y^g)) = N_{\hat{G}}(B^g)$ by Step~\ref{p:maximality:st:conjugacy} \ref{p:maximality:st:conjugacy:normal}. Since $\kappa$ and $\lambda$ are $G$-conjugate, $\kappa \lambda \in N_G(B^g)$. Now $\kappa$ inverts $\kappa \lambda$ so by Step~\ref{p:maximality:st:uniqueness} \ref{p:maximality:st:uniqueness:iv}, $\kappa \lambda \in B^g$, and $\lambda \in \kappa T_{B^g}(\kappa)$. The latter has the same rank as $Y$ by Proposition~\ref{p:Yanartas} and Step~\ref{p:maximality:st:conjugacy} \ref{p:maximality:st:conjugacy:conjugate}. It follows that $\rk \{\T_\kappa : \kappa \in K_B\} \geq \rk K_B - \rk Y = \rk G - \rk C_G(\iota) - \rk Y$.

We move to something else. Let $\cF$ be a definable family of conjugates of $Y$. Since an element in $Y$ lies in only finitely many conjugates by Step~\ref{p:maximality:st:conjugacy} \ref{p:maximality:st:conjugacy:TI}, $\rk \bigcup_\cF = \rk \cF + \rk Y$. We first apply this to $\cF_1 = \{\T_\kappa: \kappa \in K_B\}$, finding:
\[\rk \bigcup_{\cF_1} = \rk \bigcup_{\kappa\in K_B} \T_\kappa \geq \rk G - \rk C_G(\iota_0) - \rk Y + \rk Y = \rk G - \rk C_G(\iota_0)\]
We now apply it to $\cF_2 = \{Y^g: g \in G/N_G(Y)\}$, finding:
\[\rk \bigcup_{\cF_2} = \rk Y^G = \rk G - \rk N_G(B) + \rk Y = \rk G - \rk B + \rk Y\]
Both agree by Step~\ref{p:maximality:st:conjugacy} \ref{p:maximality:st:conjugacy:rk}, so $\bigcup_{\cF_1}$ is generic in $\bigcup_{\cF_2}$. However $\bigcup_{\cF_1} \subseteq \left(\bigcup_{\cF_2} \cap B\right)$, which contradicts \cite[Lemma 3.33]{DJGroups}.

This concludes the proof of Proposition~\ref{p:maximality}.\end{proof}

\section{The Proof --- After the Maximality Proposition}

\subsection{The Dihedral Case}\label{s:dihedral}

The following is a combination of two different lines of thought: the study of a pathological ``$W = 2$'' configuration in \cite[Chapitre 4]{DGroupes} (published as \cite[\S 3]{DGroupes2}) and the final argument in \cite{BCDAutomorphisms}. Since we can quickly focus on the $2^\perp$ case only a few details need be adapted in order to move from minimal connected simple to $N_\circ^\circ$-groups, so we feel that the resulting proposition owes much to Burdges and Cherlin. The final contradiction is by constructing two disjoint generic subsets of some definable subset of $G$.

\begin{proposition}[Dihedral Case]\label{p:dihedral}
Let $\hat{G}$ be a connected, $U_2^\perp$ group of finite Morley rank and $G \trianglelefteq \hat{G}$ be a definable, connected, non-soluble, $N_\circ^\circ$-subgroup. Suppose that for all $\iota \in I(\hat{G})$, $C_G^\circ(\iota)$ is soluble.

Suppose that the Sylow $2$-subgroup of $\hat{G}$ is isomorphic to that of $\PSL_2(\C)$. Suppose in addition that for $\iota \in I(\hat{G})$, the group $C_G^\circ(\iota)$ is contained in a unique Borel subgroup of $G$.

Then $\hat{G}/G$ is $2^\perp$ and $B_\iota = C_G^\circ(\iota)$ is a Borel subgroup of $G$ inverted by any involution $\omega \in C_G(\iota)\setminus\{\iota\}$.
\end{proposition}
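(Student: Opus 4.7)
The plan is to exploit the Klein four-group sitting inside the Sylow $2$-subgroup of $\hat{G}$, playing the toral involution against the inverting involution. Fix a Sylow $2$-subgroup $\hat{S} = \hat{T} \rtimes \langle \hat{w} \rangle$ of $\hat{G}$, write $\hat{\iota}$ for the involution of $\hat{T}$, and set $V = \langle \hat{\iota}, \hat{w}\rangle$; then $\hat{w}$ centralises $\hat{\iota}$ and the entire coset $\hat{T}\hat{w}$ consists of involutions, all $\hat{T}$-conjugate under $t \mapsto t^2$-divisibility. By torality every involution of $\hat{G}$ is $\hat{G}$-conjugate either to $\hat{\iota}$ or to some element of $\hat{T}\hat{w}$.

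I would first establish that $\hat{G}/G$ is $2^\perp$, which by the Sylow trace remarks of \S\ref{s:Sylow} is equivalent to the Sylow $2$-subgroup of $G$ coinciding with that of $\hat{G}$. By the $2$-Structure Proposition~\ref{p:2structure} applied to $G$, the only cases to rule out are those in which the Sylow $2$-subgroup of $G$ is trivial or a $2$-torus (the $\SL_2(\C)$-type case is excluded by solubility of centralisers). In either case $G$ is $W_2^\perp$, so the Maximality Proposition~\ref{p:maximality} applies and yields $C_G^\circ(\iota) = B_\iota$ as a Borel for every $\iota \in I(\hat{G})$; then the outer involution $\hat{w}$ acts on $G$ normalising $B_\iota$, and applying the Genericity Proposition~\ref{p:genericity} to the $\hat{w}$-action on $B_\iota$ --- in the spirit of the final collapse of Maximality --- yields two disjoint generic subsets of a common definable $G$-invariant set, a contradiction. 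Hence the Sylow $2$-subgroup of $G$ is of $\PSL_2(\C)$-type, so $\hat{T} \leq G$ and $\hat{\iota} \in G$.

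Next comes the central technical claim: $\hat{w}$ inverts $B_\iota$, the unique Borel containing $C_G^\circ(\hat{\iota})$. Since $\hat{w}$ commutes with $\hat{\iota}$ it normalises $B_\iota$, and Lemma~\ref{l:involutiveaction:soluble} gives $B_\iota = (B_\iota^{+_{\hat{w}}})^\circ \cdot \{B_\iota, \hat{w}\}$ with $\{B_\iota, \hat{w}\} \subseteq F^\circ(B_\iota)$. Suppose $X = C_{B_\iota}^\circ(\hat{w}) \neq 1$; then $X \leq C_G^\circ(\hat{w}) \leq B_w$, the unique Borel containing $C_G^\circ(\hat{w})$. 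The four-group $V$ acts on $X$, and depending on how its three non-trivial elements distribute into centralisers and inverters of $X$, I would extract a non-trivial definable connected subgroup of $F^\circ(B_\iota)$ centralised by one involution of $V$ and inverted by another; feeding this into the Devil's Ladder (Proposition~\ref{p:DevilsLadder}) forces a Borel containing $C_G^\circ(\hat{\iota})$ distinct from $B_\iota$, contradicting the uniqueness hypothesis. Once $\hat{w}$-inversion of $B_\iota$ is secured, $B_\iota$ is abelian and $2$-divisible; since $\hat{T} \leq B_\iota$, the involution $\hat{\iota} \in \hat{T}$ is centralised by all of $B_\iota$, so $B_\iota \leq C_G^\circ(\hat{\iota})$ and the Borel equality $B_\iota = C_G^\circ(\hat{\iota})$ follows.

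For any $\omega \in C_G(\hat{\iota}) \setminus \{\hat{\iota}\}$: Sylow $2$-conjugacy inside $C_G(\hat{\iota})$ combined with the fact that $\hat{T}\hat{w}$ is a single $\hat{T}$-conjugacy class of involutions in $C_{\hat{S}}(\hat{\iota}) = \hat{S}$ reduces $\omega$ to a $C_G(\hat{\iota})$-conjugate of $\hat{w}$, so $\omega$ inverts $B_\iota$ just as $\hat{w}$ does; the statement for $\iota$ in the $\hat{w}$-conjugacy class then follows by running the argument symmetrically. The hard part will be the Devil's Ladder step: because intersections of distinct Borels need not be abelian, controlling parameters through the $V$-action may force recourse to the Bender method (\S\ref{s:Borel}), echoing the painful case analysis of Step~\ref{p:maximality:st:Jkappa} in Maximality.
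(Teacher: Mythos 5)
There are two genuine gaps, each at a load-bearing point. First, your central technical claim --- that $X=C_{B_\iota}^\circ(\hat w)\neq 1$ leads, via the Devil's Ladder, to a second Borel subgroup containing $C_G^\circ(\hat\iota)$ --- cannot work as stated. Proposition~\ref{p:DevilsLadder} requires $\hat G$ to be $W_2^\perp$, and here the Sylow $2$-subgroup of $\hat G$ is by hypothesis that of $\PSL_2(\C)$, hence non-connected, so the Ladder simply does not apply (its very first line uses abelianity of the Sylow $2$-subgroup of $\hat G$). Worse, its conclusion is that both involutions \emph{normalise} the unique Borel subgroup of its statement; even if it applied, it would reinforce rather than contradict the uniqueness hypothesis. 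The paper instead works with $H=(B_\iota\cap B_\omega)^\circ$ directly: after proving $B_\iota\neq B_\omega$ and that $H$ is abelian, $2^\perp$, and centralised by the four-group, it shows (when $G$ has involutions) that $H\neq 1$ would make $H$ a Carter subgroup of $B_\omega$, hence force involutions into $H$; so $H=1$ and $\omega$ inverts $B_\iota=C_G^\circ(\iota)$. Incidentally, the case ``Sylow $2$-subgroup of $G$ a non-trivial $2$-torus'' needs no genericity argument at all: in a $\PSL_2(\C)$-type Sylow all involutions of $\hat G$ are conjugate, so either all lie in the normal subgroup $G$ or none do, and $\hat T\le G$ already forces $\hat w\in G$.

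Second, the elimination of the case where $G$ is $2^\perp$ --- which is the bulk of the proposition --- is reduced in your proposal to ``two disjoint generic subsets, a contradiction,'' with no indication of what the sets are; the Genericity Proposition~\ref{p:genericity} by itself produces nothing of the sort. The actual argument has the opposite shape: one first proves that any involution $\lambda\in C_{\hat G}(\iota)\setminus\{\iota\}$ inverts exactly $F^\circ(B_\iota)$ inside $B_\iota$ (via a dichotomy $C_{F^\circ(B_\iota)}^\circ(\omega)=1$ or $C_{F^\circ(B_\omega)}^\circ(\iota)=1$), then bounds $\rk G^{-_\iota}\le 2\rk F^\circ(B_\iota)$ by matching pairs of involutions against their product's definable hull, then shows that $(F^\circ(B_\omega))^{F^\circ(B_\iota)}$ and $(F^\circ(B_{\iota\omega}))^{F^\circ(B_\iota)}$ are both generic in $G^{-_\iota}$. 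The contradiction is that these two sets must therefore \emph{intersect}, and a common point $t$ is inverted by $\iota\omega$ and centralised by $(\iota\omega)^f=\iota\omega f^2$, so $f^2$ inverts $t$ and $d(f^2)$ yields an involution in the supposedly $2^\perp$ group $G$. None of this intermediate structure is present in your sketch, so the case you most need to kill is not addressed.
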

\begin{proof}
First observe that by torality principles, all involutions in $\hat{G}$ are conjugate. If one is in $\hat{G}\setminus G$ then all are, and $G$ is $2^\perp$. If one is in $G$ then $\Pr_2(G) = 1$ and $\Pr_2(\hat{G}/G) = 0$; $\hat{G}/G$ is $2^\perp$ by the degenerate type analysis \cite{BBCInvolutions} and connectedness.

\begin{notationinproof}\
\begin{itemize}
\item
Let $V = \{1, \iota, \omega, \iota\omega\} \leq \hat{G}$ be a four-group.
\item
Let $\hat{T}_\iota$ be a $2$-torus containing $\iota$ and inverted by $\omega$, and $\hat{T}_\omega$ likewise.
\item
Let $B_\iota$ be the only Borel subgroup of $G$ containing $C_G^\circ(\iota)$, and $B_\omega$ likewise (observe that by uniqueness of $B_\iota$ over $C_G^\circ(\iota)$, $V$ normalises $B_\iota$ and $B_\omega$).
\item
Let $\rho = \rho_{B_\iota}$.
\end{itemize}
\end{notationinproof}

Here is a small unipotence principle we shall use with no reference: if $L \leq G$ is a definable, connected, soluble, $V$-invariant subgroup, then $\rho_L \preccurlyeq \rho$. This is obvious as otherwise all involutions in $V$ invert $U_{\rho_L}(L)$. Bigeneration, Fact \ref{f:bigeneration}, will also play a growing role in the subsequent pages.

\begin{step}\label{p:dihedral:st:distinct}
$B_\iota \neq B_\omega$.
\end{step}
\begin{proofclaim}
Suppose not. If $G$ is $2^\perp$, then it is $W_2^\perp$: by the Maximality Proposition~\ref{p:maximality}, $B_\iota$ is a Borel subgroup of $G$. Hence $C_G^\circ(\iota) = B_\iota = B_\omega = C_G^\circ(\omega)$, and therefore $B_\iota = C_G^\circ(\iota\omega)$ as well. Yet bigeneration, Fact \ref{f:bigeneration}, applies to the action of $V$ on the $2^\perp$ group $G$: a contradiction.

If $G$ is not $2^\perp$ then bigeneration might fail. But now all involutions are in $G$; by torality principles $\iota \in C_G^\circ(\iota) \leq B_\iota = B_\omega$ so $B_\omega$ contains $\hat{T}_\omega \rtimes\langle  \iota\rangle$, which contradicts the structure of torsion in connected, soluble groups.
\end{proofclaim}

\begin{notationinproof}
Let $H = (B_\iota\cap B_\omega)^\circ$.
\end{notationinproof}

Since $\omega$ normalises $B_\iota$ and vice-versa, $H$ is $V$-invariant.

\begin{step}\label{p:dihedral:st:H}
$H$ is abelian and $2^\perp$. Moreover $\iota$ centralises $U_\rho(B_\iota)$ and $\omega$ inverts it; $V$ centralises $H$ and $N_G^\circ(H) = C_G^\circ(H)$.
\end{step}
\begin{proofclaim}
If $H = 1$ then $C_{B_\iota}^\circ(\omega) = 1$ and $\omega$ inverts $B_\iota$; since $\omega$ inverts $\hat{T}_\iota$ which normalises $B_\iota$, commutation principles yield $[\hat{T}_\iota, B_\iota] = 1$ and $B_\iota \leq C_G^\circ(\iota)$. So $B_\iota = C_G^\circ(\iota)$ is an abelian Borel subgroup inverted by $\omega$ and by $\iota\omega$. Hence all our claims hold if $H = 1$. We now suppose $H \neq 1$.

Suppose that $H$ is not abelian and let $L = N_G^\circ(H')$, a definable, connected, soluble, $V$-invariant group. Then $\rho_L \preccurlyeq \rho$ but since $L$ contains $U_\rho(Z(F^\circ(B_\iota)))$ and $U_\rho(Z(F^\circ(B_\omega)))$, equality holds. Hence $U_\rho(Z(F^\circ(B_\iota))) \leq U_\rho(L)$; by uniqueness principles $U_\rho(B_\iota)$ is the only Sylow $\rho$-subgroup of $G$ containing $U_\rho(L)$. The same holds of $U_\rho(B_\omega)$, proving equality and $B_\iota = B_\omega$, against Step~\ref{p:dihedral:st:distinct}. So $H$ is abelian.

Now suppose that $U_\rho(H) \neq 1$ and let $L = N_G^\circ(U_\rho(H))$. Same causes having the same effects, we reach a contradiction again. Hence $U_\rho(H) = 1$, and it follows that $\omega$ inverts $U_\rho(B_\iota)$. The same argument works for $\iota\omega$, so $\iota$ centralises $U_\rho(B_\iota)$.

We now claim that $V$ centralises $H$. For let $K = [H, \iota]$; since $H$ is abelian, using Zilber's indecomposibility theorem we see that $K$ is a definable, connected, abelian group inverted by $\iota$; in particular it is $2$-divisible. Since $\iota$ centralises $U_\rho(B_\iota)$ and inverts $U_\rho(B_\omega)$, commutation principles yield $\langle  U_\rho(B_\iota), U_\rho(B_\omega)\rangle \leq C_G^\circ(K)$ and the latter is $V$-invariant. Uniqueness principles and Step~\ref{p:dihedral:st:distinct} forbid solubility of $C_G^\circ(K)$: this means $K = 1$, and $\iota$ centralises $H$. The same holds of $\omega$.

Suppose that $H$ has involutions: since it is $V$-invariant, so is its Sylow $2$-subgroup $T$ (no need for normalisation principles here). If $\iota \in T$, then $\iota \in H \leq B_\iota$ and $\omega \in B_\omega$ by conjugacy; hence $B_\omega$ contains $\hat{T}_\omega \rtimes \langle  \iota\rangle$, against the structure of torsion in connected, soluble groups. So $\iota \notin T$, and by assumption on the structure of the Sylow $2$-subgroup of $\hat{G}$, $\iota$ inverts $T$; the same holds of $\omega$ and $\iota\omega$, a contradiction.

It remains to show that $N_G^\circ(H) = C_G^\circ(H)$. Let $N = N_G^\circ(H)$. First assume that $G$ is $2^\perp$. Then using Lemma \ref{l:involutiveaction:central2torus} under the action of $\iota$ we write $N = (N^{+_\iota})^\circ \cdot \{N, \iota\}$ where $\{N, \iota\}$ is $2$-divisible. Since $\iota$ centralises $H$, commutation principles applied pointwise force $\{N, \iota\} \subseteq C_G(H)$. We turn to the action of $\omega$ on $N_1 = (N^{+_\iota})^\circ$; with Lemma \ref{l:involutiveaction:central2torus} again $N_1 = (N_1^{+_\omega})^\circ \cdot \{N_1, \omega\}$, and here again $\{N_1, \omega\} \subseteq C_G(H)$. Finally $(N_1^{+_\omega})^\circ \leq C_G^\circ(\iota, \omega) \leq H \leq C_G(H)$ by abelianity, so $N \leq C_G(H)$ and we conclude by connectedness of $N$.

Now suppose that $\hat{G}/G$ is $2^\perp$: as a consequence $V \leq G$. It is not quite clear whether $N$ has involutions and whether $\{N, \iota\}$ is $2$-divisible, so we argue as follows. By normalisation principles, there is a $V$-invariant Carter subgroup $Q$ of $N$. The previous argument applies to $Q$, so $Q \leq C_G^\circ(H)$; it also applies to $F^\circ(N)$, so $F^\circ(N) \leq C_G^\circ(H)$, and $N = F^\circ(N) \cdot Q \leq C_G^\circ(N)$.
\end{proofclaim}

\begin{step}\label{p:dihedral:st:G2perp}
We may suppose that $G$ is $2^\perp$.
\end{step}
\begin{proofclaim}
Suppose that $G$ contains involutions, i.e. $V \leq G$. We shall prove that $H = 1$. So suppose in addition that $H \neq 1$. For the consistency of notations, let $i = \iota \in G$, $w = \omega \in G$, and $T_i = \hat{T}_i$, $T_w = \hat{T}_w$.

We claim that $w$ does \emph{not} invert $F^\circ(B_i)$. For if it does, then $w$ inverts $T_i \leq B_i$ and $F^\circ(B_i)$, so by commutation principles $[T_i, F^\circ(B_i)] = 1$. Let $Q \leq B_i$ be a Carter subgroup of $B_i$ containing $T_i$; then $B_i = F^\circ(B_i) \cdot Q$ centralises $T_i$, and $T_w \leq Z(B_w)$ by conjugacy. Hence $T_i \rtimes \langle  w\rangle \leq \langle  T_i, T_w\rangle \leq C_G^\circ(H)$, against the structure of torsion in connected, soluble groups and $G$ being $N_\circ^\circ$.

Hence $Y_i = C_{F^\circ(B_i)}^\circ(w) \neq 1$. Since $U_\rho(B_i)$ is abelian by Step~\ref{p:dihedral:st:H}, $U_\rho(B_i) \leq C_G^\circ(Y_i)$; since $Y_i$ is $V$-invariant, our small unipotence principle and general uniqueness principles force $C_G^\circ(Y_i) \leq B_i$. Hence by Step~\ref{p:dihedral:st:H}:
\[N_{B_w}^\circ(H) = C_{B_w}^\circ(H) \leq C_{B_w}^\circ(Y_i) \leq H\]
which proves that $H$ is a Carter subgroup of $B_w$. It therefore contains involutions, against Step~\ref{p:dihedral:st:H}.

This contradiction shows that if $G$ has involutions then $H = 1$. Hence, as in the beginning of Step~\ref{p:dihedral:st:H}, $w$ inverts $B_i = C_G^\circ(i)$ and so does any other involution in $C_G(i)\setminus\{i\}$: if $G$ has involutions, Proposition~\ref{p:dihedral} is proved.
\end{proofclaim}

From now on, we suppose that $G$ is $2^\perp$; we are after a contradiction.
Since $G$ is $W_2^\perp$, Maximality Proposition~\ref{p:maximality} applies and $C_G^\circ(\iota) = B_\iota$ is a Borel subgroup of $G$.
Moreover since $G$ is $2^\perp$, it admits a decomposition $G = G^{+_\iota} \cdot G^{-_\iota}$ by Lemma \ref{l:involutiveaction:central2torus}, and the fibers are trivial. From the connectedness of $G$ we deduce that $C_G(\iota) = G^+$ is connected. Finally, since the $2$-torus $\hat{T}_\iota$ normalises $B_\iota$, it centralises the finite quotient $N_G(B_\iota)/B_\iota$, and so does $\iota$. Now $N = N_G(B_\iota)$ admits a decomposition $N = N^+\cdot \{N, \iota\}$ as well; we just proved $N^+ \leq B$ and $\{N, \iota\}\subseteq B$.
Hence $B_\iota = C_G(\iota)$ is a self-normalising Borel subgroup of $G$, which will be used with no reference.

\begin{step}\label{p:dihedral:st:Bi-k}
For any involution $\lambda \in C_{\hat{G}}(\iota)\setminus\{\iota\}$, 
$B_\iota^{-_\lambda} = F^\circ(B_\iota)$.
\end{step}
\begin{proofclaim}
The claim is actually obvious if $H = 1$, an extreme case in which the below argument remains however valid.
Let $X_\iota = C_{F^\circ(B_\iota)}^\circ(\omega)$ and $X_\omega = C_{F^\circ(B_\omega)}^\circ(\iota)$.

Suppose that $X_\iota \neq 1$ \emph{and} $X_\omega \neq 1$. By abelianity of $U_\rho(B_\iota)$ from Step~\ref{p:dihedral:st:H}, $U_\rho(B_\iota) \leq C_G^\circ(X_\iota)$. As the latter is $V$-invariant, it has parameter exactly $\rho$, so $C_G^\circ(X_\iota) \leq N_G^\circ(U_\rho(B_\iota)) = B_\iota$; by uniqueness principles $B_\iota$ is the only Borel subgroup of $G$ with parameter $\rho$ containing $C_G^\circ(X_\iota)$, and likewise for $B_\omega$ over $C_G^\circ(X_\omega)$. It follows that $C_{B_\omega}^\circ(H) \leq (B_\iota\cap B_\omega)^\circ = H$ and $H$ is a Carter subgroup of $B_\omega$. The latter is $\hat{T}_\omega \rtimes \langle  \iota\rangle$-invariant, so by normalisation principles $N_{\hat{G}}(H)$ contains a Sylow $2$-subgroup $\hat{S}$ of $\hat{G}$. Since $V \leq C_{\hat{G}}(H)$ by Step~\ref{p:dihedral:st:H}, we may assume $V \leq \hat{S}$.

Still assuming that $X_\iota \neq 1$ and $X_\omega \neq 1$, we denote by $\mu$ the involution of $V$ which lies in $\hat{S}^\circ = \hat{T}_\mu$ and fix $\nu \in V\setminus\langle  \mu\rangle$. Then by assumption on the structure of the Sylow $2$-subgroup of $\hat{G}$, $\nu$ inverts $\hat{T}_\mu$; it also centralises $H$, so by commutation principles $\hat{T}_\mu \rtimes \langle  \nu\rangle = \hat{S}$ centralises $H \geq \langle  X_\iota, X_\omega\rangle$. Since $B_\iota$ is the only Borel subgroup of $G$ with parameter $\rho$ containing $C_G^\circ(X_\iota)$ (and likewise for $\omega$), $\hat{S}$ normalises both $B_\iota$ and $B_\omega$. Remember that $V = \langle  \iota, \omega\rangle = \langle  \mu, \nu\rangle$; so up to taking $\nu\mu$ instead of $\nu$, we may suppose that $\hat{S}$ normalises $B_\nu$. Now $\nu$ inverts $\hat{T}_\mu$ and centralises $B_\nu$, so by commutation principles $[\hat{T}_\mu, B_\nu] = 1$ and $B_\nu \leq C_G^\circ(\mu) = B_\mu$: a contradiction to Step~\ref{p:dihedral:st:distinct}.

All this shows that $X_\iota = 1$ \emph{or} $X_\omega = 1$; we suppose the first. Then $\omega$ inverts $F^\circ(B_\iota)$. % (keep in mind for the paragraph after that $\iota\omega$ inverts it as well).
Using Lemma \ref{l:involutiveaction:central2torus} we write $B_\iota = B_\iota^{+_\omega} \cdot \{B_\iota, \omega\}$. Notice that since $B_\iota$ is $2^\perp$, $B_\iota^- = \{B_\iota, \omega\}$ (the sign $-$ refers to the action of $\omega$ throughout the present paragraph).
Since $\omega$ inverts the $2$-divisible subgroup $F^\circ(B_\iota)$, one has $F^\circ(B_\iota) \subseteq B_\iota^-$.
Since the set $B_\iota^-$ is $2$-divisible, commutation principles applied pointwise show $F^\circ(B_\iota) \subseteq B_\iota^- \subseteq C_{B_\iota}(F^\circ(B_\iota))$. Hence $B_\iota^-$ turns out to be a union of translates of $F^\circ(B_\iota)$. Now $C_{B_\iota}(F^\circ(B_\iota))$ is normal in $B_\iota$ and nilpotent, so by definition of the Fitting subgroup %\cite[Theorem 7.3]{BNGroups}
$C_{B_\iota}(F^\circ(B_\iota)) \leq F(B_\iota)$. As a consequence $B_\iota^- \subseteq F(B_\iota)$ is a union of \emph{finitely many} translates of $F^\circ(B_\iota)$. But $\deg B_\iota^- = \deg \{B_\iota, \omega\} = \deg \omega^{B_\iota} = 1$, so $F^\circ(B_\iota) = B_\iota^-$.

The previous paragraph shows that if $X_\iota = 1$, then our desired conclusion holds of $\lambda = \omega$; it then also holds of $\lambda = \iota\omega$. Now any involution $\lambda \in C_{\hat{G}}(\iota)\setminus\{\iota\}$ is a $C_{\hat{G}}(\iota)$-conjugate of $\omega$ or $\iota\omega$, say $\lambda = \omega^n$ with $n \in C_{\hat{G}}(\iota) \leq N_{\hat{G}}(B_\iota) \leq N_{\hat{G}}(F^\circ(B_\iota))$, so:
\[B_\iota^{-_\lambda} = B_\iota^{-_{\omega^n}} = \left(B_\iota^{-_\omega}\right)^n = (F^\circ(B_\iota))^n = F^\circ(B_\iota)\]

Similarly, if $X_\omega = 1$, then for any $\lambda \in C_{\hat{G}}(\omega)\setminus\{\omega\}$, $B_\omega^{-_\lambda} = F^\circ(B_\omega)$. We conjugate $\omega$ to $\iota$ and see that in this case we are done as well.
\end{proofclaim}

\begin{step}\label{p:dihedral:st:rkG-}
$\rk G^{-_\iota} \leq 2\rk F^\circ(B_\iota)$.
\end{step}
\begin{proofclaim}
Let $\kappa = \iota \omega$ and $\check{G} = G \rtimes V$.
Observe that in $\check{G}$ the involutions $\iota$, $\omega$, $\kappa$ are \emph{not} conjugate; one has exactly three conjugacy classes, which also are $G$-classes. So for $(\omega_1, \kappa_1)\in \omega^G \times \kappa^G$, the definable closure $d(\omega_1 \kappa_1)$ contains a unique involution which must be a conjugate $\iota_1$ of $\iota$.

Now consider the definable function from $\omega^G \times \kappa^G$ to $\iota^G$ which maps $(\omega_1, \kappa_1)$ to $\iota_1$; we shall compute its fibers.
If $(\omega_2, \kappa_2)$ also maps to $\iota_1$ then $\omega_1 \omega_2 \in C_G(\iota_1) = B_{\iota_1}$. Hence $\omega_1\omega_2 \in B_{\iota_1}^{-_{\omega_1}} = F^\circ(B_{\iota_1})$ by Step \ref{p:dihedral:st:Bi-k}, and fibers have rank at most $2 \rk F^\circ(B_\iota)$.
As the map is obviously onto, one has $2 \rk F^\circ(B_\iota) \geq \rk \check{G} - \rk B = \rk G^{-_\iota}$.
\end{proofclaim}

\begin{step}\label{p:dihedral:st:F^F}
$(F^\circ(B_\omega))^{F^\circ(B_\iota)}$ and $(F^\circ(B_{\iota\omega}))^{F^\circ(B_\iota)}$ are generic subsets of $G^{-_\iota}$.
\end{step}
\begin{proofclaim}
Recall from Step~\ref{p:dihedral:st:Bi-k} that $\iota$ inverts $F^\circ(B_\omega)$ and centralises $B_\iota$. In particular since $G$ is $2^\perp$, one has $F^\circ(B_\omega) \cap B_\iota = 1$; moreover $(F^\circ(B_\omega))^{F^\circ(B_\iota)} \subseteq G^{-_\iota}$. We now compute the rank.
Consider the definable function from $F^\circ(B_\iota) \times F^\circ(B_\omega)$ to $G$ which maps $(a, x)$ to $x^a$. Let us prove that it has finite fibers.

Suppose $x^a = y^b$ with $b \in F^\circ(B_\iota)$ and $y \in F^\circ(B_\omega)$; then $x^{ab^{-1}} = y$, and applying $\omega$ one finds:
\[y = y^\omega = x^{ab^{-1}\omega} = x^{\omega a^{-1}b} = x^{a^{-1}b} = y^{ba^{-2}b}\]
Since $F^\circ(B_\iota)$ is abelian and $G$ is $2^\perp$, this results in $a^{-1}b \in C_G(y)$ and $x = y$.
We now estimate the size $C_{F^\circ(B_\iota)}(x)$. Suppose $Y = C_{F^\circ(B_\iota)}^\circ(x)$ is infinite. Since $Y$ is $V$-invariant, so is $C_G^\circ(Y)$, a definable, connected, soluble group containing $F^\circ(B_\iota)$. As we know, $C_G^\circ(Y)$ has unipotence parameter at most $\rho$, so $C_G^\circ(Y)$ normalises $U_\rho(B_\iota)$ and $C_G^\circ(Y) \leq B_\iota$; as a matter of fact, by uniqueness principles $B_\iota$ is the only Borel subgroup of $G$ with parameter $\rho$ containing $C_G^\circ(Y)$. It follows $x \in N_G(B_\iota)$.
Hence $x \in N_G(B_\iota) \cap F^\circ(B_\omega) = C_G(\iota) \cap F^\circ(B_\omega) = 1$.

As a result, fibers are finite; it follows $\rk (F^\circ(B_\omega))^{F^\circ(B_\iota)} = 2 \rk F^\circ(B_\iota) \geq \rk G^{-_\iota}$ by Step~\ref{p:dihedral:st:rkG-}; inclusion forces equality. The same holds of $(F^\circ(B_{\iota\omega}))^{F^\circ(B_\iota)}$.
\end{proofclaim}

We now finish the proof of Proposition~\ref{p:dihedral}. By Step~\ref{p:dihedral:st:F^F}, both the sets $(F^\circ(B_\omega))^{F^\circ(B_\iota)}$ and $(F^\circ(B_{\iota\omega}))^{F^\circ(B_\iota)}$ are generic in $G^{-_\iota}$. So there is $t \in F^\circ(B_\omega) \cap F^\circ(B_{\iota\omega})^f \setminus\{1\}$ for some $f \in F^\circ(B_\iota)$.
Then the involution $(\iota\omega)^f = f^{-1}\iota\omega f = f^\omega \iota \omega f = \iota \omega f^2$ centralises $t$, whereas $\iota\omega$ inverts it. So $f^2 \in G$ inverts $t$. This creates an involution in $G$: against Step~\ref{p:dihedral:st:G2perp}.
\end{proof}

\subsection{Strong Embedding}\label{s:strongembedding}

Strong embedding is a classical topic in finite group theory \cite{BTransitive}. Recall that a proper subgroup $M$ of a group $G$ is said to be strongly embedded if $M$ contains an involution but $M\cap M^g$ does not for any $g \notin M$.
The reader should also keep in mind a few basic facts about strongly embedded configurations \cite[Theorem 10.19]{BNGroups} (checking the apparently missing assumptions would be almost immediate here):
\begin{itemize}
\item
involutions in $M$ are $M$-conjugate;
\item
a Sylow $2$-subgroup of $M$ is a Sylow $2$-subgroup of $G$;
\item
$M$ contains the centraliser of any of its involutions.
\end{itemize}
We need no more.
The study of a minimal connected simple group with a strongly embedded subgroup was carried in \cite[Theorem 1]{BCJMinimal}.

\begin{proposition}[Strong Embedding]\label{p:strongembedding}
Let $G$ be a connected, $U_2^\perp$, non-soluble, $N_\circ^\circ$-group of finite Morley rank. If $G$ has a definable, soluble, strongly embedded subgroup, then $\Pr_2(G) \leq 1$.
\end{proposition}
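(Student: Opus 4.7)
The plan is to argue by contradiction, assuming $\Pr_2(G)\geq 2$. Proposition~\ref{p:2structure} leaves only one possible $2$-structure, namely that the Sylow $2$-subgroup $S$ of $G$ is a $2$-torus (the $\PSL_2(\C)$ and $\SL_2(\C)$ patterns both give Prüfer rank $1$); so $S=S^\circ$ and $G$ is $W_2^\perp$. The strong-embedding bullet points listed just before the statement, together with torality principles, give at once that $S\leq M$ is a Sylow $2$-subgroup of both $M$ and $G$, that all involutions of $G$ are $G$-conjugate (any involution is toral into~$S\subseteq M$, and involutions of $M$ are $M$-conjugate), and that $C_G(\iota)$ is soluble for every $\iota\in I(G)$ since it lies in some conjugate of $M$.

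Solubility of centralizers makes the Maximality Proposition~\ref{p:maximality} available for $\hat{G}=G$: for every $\iota\in I(G)$, $C_G^\circ(\iota)$ is a Borel subgroup of $G$. Choosing $\iota\in I(S)\subseteq I(M)$, the inclusion $C_G^\circ(\iota)\leq M^\circ$ together with solubility and connectedness of $M^\circ$ force $C_G^\circ(\iota)=M^\circ$ by maximality of $C_G^\circ(\iota)$ among definable connected soluble subgroups; call this common value $B$. Since this holds for every $\iota\in V^*$, where $V:=\Omega_2(S)$, one has $V\leq Z(B)$ and $B=C_G^\circ(V)$. Any element $g\in G$ conjugating one involution of $V$ to another then satisfies $B^g=C_G^\circ(\iota^g)=B$, so $g\in N_G(B)\leq N_G(V)\leq M$ (the last inclusion by strong embedding, since any non-trivial element of $V$ witnesses $V\leq M\cap M^{g^{-1}}$). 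Thus the finite group $W:=N_G(B)/B$ is a soluble transitive subgroup of $\operatorname{Aut}(V)\simeq\operatorname{GL}_{\Pr_2(S)}(\F_2)$, and torsion lifting yields an element $w\in N_M(B)$ of odd prime order inducing a non-trivial permutation of $V^*$; in particular $w$ normalises $B$ without centralising $V$.

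Converting this Weyl-type datum into a contradiction is the main obstacle. Following the strategy of \cite[Theorem~1]{BCJMinimal} in the minimal-connected-simple setting, the plan is to examine the $\langle w\rangle$-conjugates $B,B^w,B^{w^2},\ldots$ via the unipotence theory of \S\ref{s:Borel} and the Bender apparatus of Facts~\ref{f:Bender} and \ref{f:Bender2}: their pairwise intersections are controlled by strong embedding (which imposes that $B\cap B^g$ meets $I(G)$ only for $g\in M$) and by Inductive Torsion Control (Proposition~\ref{p:Yanartas}, which keeps the relevant $\T$-sets torsion-free), and the expected outcome is two disjoint generic subsets of a common definable set, in the spirit of Step~\ref{p:dihedral:st:F^F} of Proposition~\ref{p:dihedral} and Step~\ref{p:maximality:st:concentration} of Proposition~\ref{p:maximality}. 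The genuine difficulty lies in showing that the $N_\circ^\circ$ hypothesis (rather than the stronger minimal connected simplicity used in \cite{BCJMinimal}) still provides enough rigidity on the non-abelian intersections of Borel subgroups for this counting to close; this is where the real work of the proof would be.
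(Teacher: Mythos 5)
Your opening two paragraphs are correct and coincide with the paper's own reduction: Proposition~\ref{p:2structure} forces the Sylow $2$-subgroup to be a $2$-torus, strong embedding makes every $C_G^\circ(\iota)$ soluble, Maximality (Proposition~\ref{p:maximality}) then forces $C_G^\circ(\iota)=M^\circ=:B$ for $\iota\in I(S)$, whence $A=\Omega_2(S)\leq Z(B)$, $N_G(B)=M$, and $M/B$ is non-trivial of odd order. This is essentially Steps~\ref{p:strongembedding:st:centralisers} and the header of Step~\ref{p:strongembedding:st:B} of the paper.

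The endgame, however, is genuinely missing, and the route you sketch for it would not work. Your element $w$ lies in $M=N_G(M^\circ)$ and therefore \emph{normalises} $B$: the family $B,B^w,B^{w^2},\dots$ is a single Borel subgroup, so there are no intersections for the Bender apparatus to analyse. More to the point, no Bender analysis is needed at all: since $A$ has $2$-rank $\geq 2$ and centralises $B$, bigeneration (Fact~\ref{f:bigeneration}) applied to $N_G^\circ(Q)$ for a Carter subgroup $Q$ of $(B\cap B^g)^\circ$ shows that $(B\cap B^g)^\circ=1$ for every $g\notin N_G(B)$ --- strong embedding annihilates the intersections outright, so the ``rigidity of non-abelian intersections'' you worry about never arises, and neither Proposition~\ref{p:Yanartas} nor Facts~\ref{f:Bender}--\ref{f:Bender2} enter the argument. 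What actually closes the proof is a count of involutions against cosets of $B$: (a) any strongly real element of $G$ lying in $B$ has order at most $2$ (an involution inverting $b\in B$ normalises $C_G(b)\geq A$, hence centralises $b$ by connectedness of the Sylow $2$-subgroup), so the product map $B\times I(G)\to G$ has finite fibres and $BgI(G)$ is generic for every $g$; (b) intersecting $BI(G)$ with $BwI(G)$ lets one take $w\in M\setminus B$ strongly real, and then $C_G(w)$ has no involutions, $C_B^\circ(w)=1$, and any involution inverting $w$ inverts $C_G^\circ(w)$ (bigeneration again); (c) a fibre computation shows $B\,(C_G^\circ(w)\setminus N_G(B))\,B$ is generic; (d) intersecting this set with $BI(G)$ yields an involution $\ell=cb$ with $c\in C_G^\circ(w)\setminus N_G(B)$ and $b\in B$, and $\ell^w=\ell\,[b,w]$ makes $[b,w]\in B$ strongly real, which contradicts (a) if $[b,w]\neq 1$ and (b) if $[b,w]=1$. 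This counting, not a study of Borel intersections, is where the work lies.
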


Our proof will be considerably shorter than \cite{BCJMinimal}: thanks to the Maximality Proposition~\ref{p:maximality} we need only handle the case of central involutions \cite[\S 4]{BCJMinimal}. Apart from this, our argument is a subset of the one in \cite[\S 4]{BCJMinimal}: we construct two disjoint generic sets. We only hope to have helped clarify matters in Step~\ref{p:strongembedding:st:BCB} below.

(Incidently, an alternative proof of the non-central case of \cite[Theorem 1]{BCJMinimal} was suggested using state-of-the-art genericity arguments in minimal connected simple groups \cite[Theorem 6.1]{ABFWeyl}. Yet this new proof reproduces the central case \cite[\S 4]{BCJMinimal} and affects only the configuration we need not consider by Maximality.)

\begin{proof}
We let $G$ be a minimal counterexample, i.e. $G$ satisfies the assumptions but $\Pr_2(G) \geq 2$. By the $2$-structure Proposition~\ref{p:2structure}, the Sylow $2$-subgroups of $G$ are connected.

\begin{notationinproof}
Let $M < G$ be a definable, soluble, strongly embedded subgroup. Let $S \leq M$ be a Sylow $2$-subgroup of $G$ and $A = \Omega_2(S^\circ)$ be the group generated by the involutions of $S^\circ$.
\end{notationinproof}

\begin{step}\label{p:strongembedding:st:centralisers}
For all $i \in I(G)$, $C_G^\circ(i)$ is soluble.
\end{step}
\begin{proofclaim}
First observe that $Z(G)$ has no involutions by strong embedding, as they would lie in $S\leq M$ and in any conjugate.

Suppose that there is $i \in A\setminus\{1\}$ with non-soluble $C_G^\circ(i)$. Fix some $2$-torus $\tau_i \leq S$ of Prüfer rank~$1$ containing $i$; since $C^\circ_G(\tau_i)$ is soluble because $G$ is an $N_\circ^\circ$-group, there exists by the descending chain condition some $\alpha \in \tau_i$ with $C_G^\circ(\alpha)$ soluble. We take $\alpha$ with minimal order; then $C_G^\circ(\alpha^2)$ is not soluble, and $\alpha^2 \neq 1$ since $\alpha \neq i$.

Let $H = C_G^\circ(\alpha^2)$ and $N = M \cap H$. Since $\alpha^2 \neq 1$ and $Z(G)$ has no $2$-elements, $H < G$. Observe how $\alpha \in \tau_i \leq S \leq N$. Let $\overline{H} = H/\langle  \alpha^2\rangle$ and $\overline{N} = N/\langle  \alpha^2\rangle$. Then $\overline{N}$ is definable, soluble, and strongly embedded in $\overline{H}$ which still has Prüfer rank $\geq 2$: against minimality of $G$ as a counter-example.
\end{proofclaim}

\begin{notationinproof}
Let $B = M^\circ$.
\end{notationinproof}

\begin{step}\label{p:strongembedding:st:B}
$B$ is a Borel subgroup of $G$ and $A \leq Z(B)$; the group $M/B$ is non-trivial and has odd order. Moreover the following hold.
\begin{enumerate}[(i)]
\item\label{p:strongembedding:st:B:iii}
Strongly real elements of $G$ which lie in $B$ actually lie in $A$.
\item\label{p:strongembedding:st:B:iv}
If $i \in I(B)$ inverts $n \in N_G(B)$ then $n \in B$.
\item\label{p:strongembedding:st:B:v}
For any $g \in G$, $B g I(G)$ is generic in $G$.
\item\label{p:strongembedding:st:B:vi}
$(B\cap B^g)^\circ = 1$ for $g \notin N_G(B)$.
\end{enumerate}
\end{step}
\begin{proofclaim}
By Step~\ref{p:strongembedding:st:centralisers}, connectedness of the Sylow $2$-subgroup, and the maximality Proposition~\ref{p:maximality}, $C_G^\circ(i)$ is a Borel subgroup of $G$ for any $i \in I(G)$. But for $i \in A\setminus\{1\}$, $C_G(i) \leq M$ by strong embedding of the latter, so $C_G^\circ(i) \leq B$ and equality follows. In particular, $A \leq Z(B)$.

By structure of the Sylow $2$-subgroup, $N_G(B)/B$ has odd order, and so has its subgroup $M/B$. But $M$ being strongly embedded conjugates its (more than one) involutions, which are central in $B$: this shows $B < M$.

If $b \in B$ is inverted by some $k \in I(G)$ then $k$ normalises $C_G(b) \geq A$; by normalisation principles and structure of the Sylow $2$-subgroup, one has $k \in C_G(b)$, so $b$ has order at most $2$; this is claim~\ref{p:strongembedding:st:B:iii}. If $i \in I(B)$ inverts $n \in N_G(B)$ then computing modulo $B$: $n^{-1}B = n^i B = nB$, and $n^2 \in B$. Since $N_G(B)/B$ has odd order, $n\in B$, proving \ref{p:strongembedding:st:B:iv}.

We move to \ref{p:strongembedding:st:B:v}. Consider the definable function $B \times I(G)$ which maps $(b, k)$ to $bk$. If $b_1 k_1 = b_2 k_2$ with obvious notations, then $b_2^{-1} b_1$ is a strongly real element of $G$ lying in $B$, hence has order at most $2$ by claim \ref{p:strongembedding:st:B:iii}: this happens only finitely many times, so fibers are finite and $\rk (B\cdot I(G)) = \rk B + \rk I(G) = \rk B + \rk G - \rk B = \rk G$. Then for any $g \in G$:
\[\rk \left(BgI(G)\right) = \rk \left(g B^g I(G)^g\right) = \rk \left(g \left(BI(G)\right)^g\right) = \rk \left(BI(G)\right) = \rk G\]

It remains to control intersections of conjugates of $B$, claim \ref{p:strongembedding:st:B:vi}. Suppose that $H = (B\cap B^g)^\circ$ is infinite. Let $Q \leq H$ be a Carter subgroup of $H$; since $A^g$ centralises $B^g \geq H \geq Q$, it normalises the definable, connected, soluble group $N_G^\circ(Q)$. By bigeneration, Fact \ref{f:bigeneration}, $N_G^\circ(Q) \leq \langle  C_G^\circ(a^g): a \in A\setminus\{1\}\rangle = B^g$, so $N_B^\circ(Q) \leq N_H^\circ(Q) = Q$ and $Q$ is actually a Carter subgroup of $B$. Hence $Q$ contains a Sylow $2$-subgroup of $B$: hence $A \leq Q \leq B^g$, and strong embedding guarantees $g \in N_G(B)$.
\end{proofclaim}

\begin{notationinproof}
Let $w \in M\setminus B$ (denoted $\sigma$ in \cite[Notation 4.1(2)]{BCJMinimal}).
\end{notationinproof}

\begin{step}\label{p:strongembedding:st:w}
We may assume that $w$ is strongly real, in which case the following hold.
\begin{enumerate}[(i)]
\item\label{p:strongembedding:st:w:i}
$C_G(w)$ has no involutions.
\item\label{p:strongembedding:st:w:ii}
If some involution $k \in I(G)$ inverts $w$, then $k$ inverts $C_G^\circ(w)$.
\item\label{p:strongembedding:st:w:iii}
$C_B^\circ(w) = 1$.
\end{enumerate}
\end{step}
\begin{proofclaim}
By Step~\ref{p:strongembedding:st:B} \ref{p:strongembedding:st:B:v}, both $BI(G)$ and $BwI(G)$ are generic in $G$, so they intersect. Hence up to translating by an element of $B$, we may suppose that $w$ is a strongly real element.

Suppose that there is an involution $\ell \in C_G(w)$. Then $w \in C_G(\ell) = C_G^\circ(\ell)$ by Steinberg's torsion theorem and connectedness of the Sylow $2$-subgroup; $C_G^\circ(\ell)$ is a conjugate of $B$ (Sylow theory suffices here; no need to invoke strong embedding). But $w$ is strongly real, so by Step~\ref{p:strongembedding:st:B} \ref{p:strongembedding:st:B:iii} it is an involution, against the fact that $M/B$ has odd order.

We prove \ref{p:strongembedding:st:w:ii}: let $k$ be an involution inverting $w$. Then $C_G^\circ(k)$ is a conjugate $B_k$ of $B$, and $k \in B_k$. Observe how $w \notin N_G(B_k)$ by Step~\ref{p:strongembedding:st:B} \ref{p:strongembedding:st:B:iv}. So $C_G^\circ(k, w) \leq (B_k \cap B_k^w)^\circ$ is trivial by Step~\ref{p:strongembedding:st:B}~\ref{p:strongembedding:st:B:vi}, and $k$ inverts $C_G^\circ(w)$.

Finally let $H = C_B^\circ(w)$ and suppose $H \neq 1$. Bear in mind that $A$ centralises $H$, so it normalises the definable, soluble group $N_G^\circ(H)$. By bigeneration, Fact \ref{f:bigeneration}, $N_G^\circ(H) \leq B$. But $k$ inverts $H$, so it normalises $N_G^\circ(H)$ as well. Hence $N_G^\circ(H) \leq B\cap B^k$, and Step~\ref{p:strongembedding:st:B} \ref{p:strongembedding:st:B:vi} forces $k \in N_G(B)$. Now $k \in B$ inverts $w \in N_G(B) \setminus B$, a contradiction to Step~\ref{p:strongembedding:st:B} \ref{p:strongembedding:st:B:iv}. This shows that $C_B^\circ(w) = 1$.
\end{proofclaim}

\begin{notationinproof}
Let $\check{C} = C_G^\circ(w) \setminus N_G(B)$.
\end{notationinproof}

$\check{C}$ is obviously generic in $C_G^\circ(w)$, as $C_{N_G(B)}^\circ(w) \leq C_B^\circ(w) = 1$ by Step~\ref{p:strongembedding:st:w} \ref{p:strongembedding:st:w:iii}.

\begin{step}\label{p:strongembedding:st:BCB}
$B \check{C} B$ is generic in $G$.
\end{step}
\begin{proofclaim}
This is the only part where we slightly rewrite the argument given in \cite{BCJMinimal}. Let $\cF = \{(m, \ell) \in Bw \times I(G): m^\ell = m^{-1}\}$.

Let $m \in Bw$. If $m$ is inverted by some involution in $G$, then by Step~\ref{p:strongembedding:st:w} \ref{p:strongembedding:st:w:iii} $C_B^\circ(m) = 1$ and $m^B \subseteq Bm$ is generic in $Bm$. So is $w^B$, and $m$ is therefore $B$-conjugate with $w$. So let us count involutions inverting $w$. First, there is such an involution $k$ by Step~\ref{p:strongembedding:st:w}. If $\ell$ is another such, then $k\ell \in C_G(w)$ and $\ell \in k C_G(w)$. Conversely, since $k$ inverts $C_G^\circ(w)$ by Step~\ref{p:strongembedding:st:w} \ref{p:strongembedding:st:w:ii}, any element in $k C_G^\circ(w)$ is an involution inverting $w$.
This together shows:
\[\rk \cF = \rk w^B + \rk C_G^\circ(w) = \rk B + \rk C_G^\circ(w)\]

On the other hand, since $BwI(G)$ and $BI(G)$ are generic in $G$ by Step~\ref{p:strongembedding:st:B} \ref{p:strongembedding:st:B:v}, a generic $\ell \in I(G)$ inverts some element in $Bw$. Hence $\rk \cF \geq \rk I(G) = \rk G - \rk B$.

Finally consider the definable function which maps $(b_1, c, b_2) \in B\times \check{C} \times B$ to $b_1 c b_2$. We claim that all fibers are finite. Since the fiber over $b_1 c_0 b_2$ has the same rank as the fiber over $c_0$, we compute the latter. Suppose $b_1 c b_2 = c_0$ with obvious notations. Then applying $w$:
\[c_0 = c_0^w = b_1^w c b_2^w = [w, b_1^{-1}] b_1 c b_2 [b_2, w] = [w, b_1^{-1}] c_0 [b_2, w]\]
In particular, $[w, b_1^{-1}]^{c_0} = [b_2, w]^{-1} \in B \cap B^{c_0}$ which is finite by Step~\ref{p:strongembedding:st:B} \ref{p:strongembedding:st:B:vi}. Since $C_B^\circ(w) = 1$ by Step~\ref{p:strongembedding:st:w} \ref{p:strongembedding:st:w:iii}, there are finitely many possibilities for $b_1$ and $b_2$, and $c$ is then determined. So the function has finite fibers, and therefore:
\[\rk \left(B \check{C} B\right) = 2 \rk B + \rk C_G^\circ(w) = \rk \cF + \rk B \geq \rk G\qedhere\]
\end{proofclaim}

We now finish the proof of Proposition~\ref{p:strongembedding}. By Steps \ref{p:strongembedding:st:B} \ref{p:strongembedding:st:B:v} and \ref{p:strongembedding:st:BCB}, both $BI(G)$ and $B\check{C} B$ are generic in $G$. So they intersect; there is an involution $k = b_1 c b_2 \in B \check{C} B$. Conjugating by $b_1$, there is an involution $\ell = c b \in \check{C} B$. Now applying $w$ one finds:
\[\ell^w = c b^w = cb [b, w] = \ell [b, w]\]
which means that $[b, w] \in B$ is a strongly real element. There are two possibilities. If $[b, w] \neq 1$ then by Step~\ref{p:strongembedding:st:B} \ref{p:strongembedding:st:B:iii}, $[b, w] \in A\setminus\{1\}$ and $\ell \in C_G([b, w])$, so $\ell$ and $c$ lie in $B$: a contradiction. If $[b, w] = 1$ then $w$ centralises $b$ and $cb = \ell$: against Step~\ref{p:strongembedding:st:w} \ref{p:strongembedding:st:w:i}.
\end{proof}

\subsection{November}\label{s:theorem}

\begin{theorem*}
Let $\hat{G}$ be a connected, $U_2^\perp$ group of finite Morley rank and $G \trianglelefteq \hat{G}$ be a definable, connected, non-soluble, $N_\circ^\circ$-subgroup.

Then the Sylow $2$-subgroup of $G$ has one of the following structures: isomorphic to that of $\PSL_2(\C)$, isomorphic to that of $\SL_2(\C)$, a $2$-torus of Prüfer $2$-rank at most $2$.

Suppose in addition that for all involutions $\iota \in I(\hat{G})$, the group $C_G^\circ(\iota)$ is soluble.

Then $m_2(\hat{G}) \leq 2$, one of $G$ or $\hat{G}/G$ is $2^\perp$, and involutions are conjugate in $\hat{G}$. Moreover one of the following cases occurs:
\begin{description}
\item[$\bullet$ PSL$_2$:]
$G \simeq \PSL_2(\K)$ in characteristic not $2$; $\hat{G}/G$ is $2^\perp$;
\item[$\bullet$ CiBo$_\emptyset$:]
$G$ is $2^\perp$; $m_2(\hat{G}) \leq 1$; for $\iota \in I(\hat{G})$, $C_G(\iota) = C_G^\circ(\iota)$ is a self-normalising Borel subgroup of $G$;
\item[$\bullet$ CiBo$_1$:]
$m_2(G) = m_2(\hat{G}) = 1$; $\hat{G}/G$ is $2^\perp$; for $i \in I(\hat{G}) = I(G)$, $C_G(i) = C_G^\circ(i)$ is a self-normalising Borel subgroup of $G$;
\item[$\bullet$ CiBo$_2$:]
$\Pr_2(G) = 1$ and $m_2(G) = m_2(\hat{G}) = 2$; $\hat{G}/G$ is $2^\perp$; for $i \in I(\hat{G}) = I(G)$, $C_G^\circ(i)$ is an abelian Borel subgroup of $G$ inverted by any involution in $C_G(i)\setminus\{i\}$ and satisfies $\rk G = 3 \rk C_G^\circ(i)$;
\item[$\bullet$ CiBo$_3$:]
$\Pr_2(G) = m_2(G) = m_2(\hat{G}) = 2$; $\hat{G}/G$ is $2^\perp$; for $i \in I(\hat{G}) = I(G)$, $C_G(i) = C_G^\circ(i)$ is a self-normalising Borel subgroup of $G$; if $i \neq j$ are two involutions of $G$ then $C_G(i) \neq C_G(j)$.
\end{description}
\end{theorem*}

\begin{proof}
\setcounter{step}{0}

\begin{step}\label{t:st:Cisoluble}
We may suppose that for all $\iota \in I(\hat{G})$, $C_G^\circ(\iota)$ is soluble.
\end{step}
\begin{proofclaim}
By the $2$-structure Proposition~\ref{p:2structure}, the Sylow $2$-subgroup of $G$ is isomorphic to that of $\PSL_2(\C)$, to that of $\SL_2(\C)$, or is connected. Our dividing line is based on the Prüfer $2$-rank.

If $\Pr_2(G) \leq 2$ then we are done with the first part of the theorem; since the second and longer part is precisely under the assumption that for all $\iota \in I(\hat{G})$, $C_G^\circ(\iota)$ is soluble, we may proceed if $\Pr_2(G) \leq 2$.

So suppose not: we shall prove a contradiction in Step~\ref{t:st:Prüfer} below. We may assume that $G$ is minimal with $\Pr_2(G) \geq 3$, and that $\hat{G} = G$.
First note that $G/Z(G)$ has Prüfer rank at least $3$ but is centreless. So we may suppose $Z(G) = 1$.
In this setting we actually \emph{prove} that for all $\iota \in I(\hat{G})$, $C_G^\circ(\iota)$ is soluble.
% All we must do for the moment is determine the structure of the Sylow $2$-subgroup of $G$ so we may take $\hat{G} = G$; by the $2$-Structure Proposition~\ref{p:2structure} it suffices to bound the Prüfer $2$-rank of $G$. Suppose that $G$ has Prüfer rank at least $3$, and has minimal rank among such counterexamples.

Suppose that there is some involution $i \in G = \hat{G}$ with $C_G^\circ(i)$ non-soluble. Then as in Step~\ref{p:strongembedding:st:centralisers} of Proposition~\ref{p:strongembedding} we take a $2$-torus of rank $1$ $\tau_i$ containing $i$ and $\alpha \in \tau_i$ of minimal order with $C_G^\circ(\alpha)$ soluble; $\alpha^2 \neq 1$. Let $H = C_G^\circ(\alpha^2)$: by torality principles, it has the same Prüfer $2$-rank as $G$, hence by minimality of $G$ as a counterexample $H = G$ and $\alpha^2 \in Z(G)$, a contradiction.

So if $G$ is minimal with $\Pr_2(G) \geq 3$, then for all $\iota \in I(\hat{G}) = I(G)$, $C_G^\circ(\iota)$ is soluble. We proceed under the assumption.
\end{proofclaim}

\begin{step}\label{t:st:W2perp}
We may suppose that $G$ is $W_2^\perp$.
\end{step}
\begin{proofclaim}
Suppose $G$ is not. By the $2$-Structure Proposition~\ref{p:2structure} and since centralisers$^\circ$ in $G$ of involutions are soluble, the Sylow $2$-subgroup of $G$ is isomorphic to that of $\PSL_2(\C)$, that is $\Pr_2(G) = 1$ and $m_2(G) = 2$. Fix $i \in I(G)$.

If $C_G^\circ(i)$ is contained in at least two Borel subgroups of $G$, then by the Algebraicity Proposition~\ref{p:algebraicity}, $G \simeq \PSL_2(\K)$ for some algebraically closed field of characteristic not $2$. The latter has no outer automorphisms \cite[Theorem 8.4]{BNGroups}; by the assumption on centralisers of involutions, $\hat{G}/G$ is $2^\perp$ and we are in case \textbf{PSL$_2$}.

So we may assume that $C_G^\circ(i)$ is contained in a unique Borel subgroup of $G$. We then apply the Dihedral Proposition~\ref{p:dihedral} inside $\check{G} = G$ to find that $C_G^\circ(i)$ is an abelian Borel subgroup of $G$ inverted by any involution in $C_G(i)\setminus\{i\}$. By torality principles in $G$ there exist Sylow $2$-subgroups of $G$, say $S_i = S_i^\circ \rtimes\langle  w\rangle$ with $i \in S_i^\circ$, and $S_w = S_w^\circ \rtimes \langle  i\rangle$ likewise. In order to reach case \textbf{CiBo$_2$} one first shows that $\hat{G}/G$ is $2^\perp$; only the rank estimate will remain to prove.

If $\hat{G}/G$ is not $2^\perp$ then $S_i$ is no Sylow $2$-subgroup of $\hat{G}$. Let $\hat{S} \leq \hat{G}$ be a Sylow $2$-subgroup containing $S_i$ properly; it is folklore that $\Pr_2(\hat{S}) \geq 2$. Since $\hat{S}^\circ$ is $2$-divisible and invariant under $\omega \in \hat{S}$, we may apply Maschke's Theorem (see for instance \cite[Fact 2]{DpRank}) to find a quasi-complement, i.e. a $w$-invariant $2$-torus $\hat{T}$ with $\hat{S}^\circ = S_i^\circ \qoplus \hat{T}$. Then using Zilber's indecomposibility theorem, $[\hat{T}, w] \leq (\hat{T}\cap G)^\circ =1$, that is, $w$ centralises $\hat{T}$. It follows that $\hat{T}$ normalises both $C_G^\circ(i)$ and $C_G^\circ(w)$; by the rigidity of tori, it centralises therefore both $S_i^\circ$ and $S_w^\circ$. Hence $S_i^\circ\rtimes\langle  w\rangle \leq \langle  S_i^\circ, S_w^\circ\rangle \leq C_G^\circ(\hat{T})$, so by the structure of torsion in connected, soluble groups, $C_G^\circ(\hat{T})$ may not be soluble. As $\hat{T} \not \leq G$ this does not contradict $G$ being $N_\circ^\circ$, but this is against the fact that $\hat{T} \neq 1$ contains an involution of $\hat{G}$, which has soluble centraliser$^\circ$ by assumption.

Hence $\hat{G}/G$ is $2^\perp$; we finally show $\rk G = 3 \rk C_G^\circ(i)$. This exactly follows \cite[Proposition 4.1.30 and Corollaire 4.1.31]{DGroupes} or \cite[Proposition 3.26 and Corollaire 3.27]{DGroupes2}: since $C_G(i)$ is not connected for $i \in I(G)$, using the map from \cite[\S5]{BBCInvolutions} (some day we shall return to this) one sees that generic, independent $j, k \in I(G)$ are such that $d(jk)$ is not $2$-divisible, and we let $\ell$ be the only involution in $d(jk)$. Then $(j, k) \mapsto\ell$ is a (generically) well-defined, definable function; obvious rank computations yield $\rk G = 3 \rk C_G^\circ(i)$.
\end{proofclaim}

\begin{notationinproof}
For $\iota \in I(\hat{G})$ let $B_\iota = C_G^\circ(\iota)$.
\end{notationinproof}

By Steps \ref{t:st:Cisoluble} and \ref{t:st:W2perp} and the Maximality Proposition~\ref{p:maximality}, $B_\iota$ is a Borel subgroup of $G$.

\begin{step}\label{t:st:Prüfer}
$\Pr_2(\hat{G}) \leq 2$.
\end{step}
\begin{proofclaim}
Suppose $\Pr_2(\hat{G}) \geq 3$. We may assume that $\hat{G} = G\cdot d(\hat{S})$ for some maximal $2$-torus $\hat{S}$ of $\hat{G}$. In particular $\hat{G}/G$ is $W_2^\perp$. But so is $G$ by Step~\ref{t:st:W2perp}; by Lemma \ref{l:W2perp:factor}, so is $\hat{G}$, i.e. $\hat{S}$ is actually a Sylow $2$-subgroup of $\hat{G}$. Let $A = \Omega_2(\hat{S})$ be the group generated by the involutions of $\hat{S}$; then $A \leq \hat{G}$ is an elementary abelian $2$-group with $2$-rank $\Pr_2(\hat{G}) \geq 3$. Let $\rho = \max\{\rho_{B_\iota}: \iota \in A\setminus\{1\}\}$ and $\iota\in A\setminus\{1\}$ be such that $\rho_{B_\iota} = \rho$.

We show that for any involution $\lambda \in A\setminus\{1\}$, $B_\lambda = B_\iota$.
Let $\kappa \in A\setminus\langle  \iota\rangle$ be such that $X = C_{U_\rho(Z(F^\circ(B_\iota)))}^\circ(\kappa) \neq 1$; this exists as $A$ has rank at least $3$. Then $X \leq C_G^\circ(\kappa) = B_\kappa$, so $\rho_\kappa = \rho$ and $X \leq U_\rho(B_\kappa)$. Let as always $\hat{B}_\iota = B_\iota \cdot d(\hat{S})$; one has $\{B_\iota, \kappa\}\subseteq (\hat{B}_\iota' \cap B)^\circ \leq F^\circ(B_\iota)$ so we may apply Lemma \ref{l:involutiveaction:soluble} and write $B_\iota = B_\iota^{+_\kappa} \cdot \{B_\iota, \kappa\} \subseteq B_\iota^+ \cdot F^\circ(B_\iota)$.
Now both $B_\iota^+$ and $F^\circ(B_\iota)$ normalise $X$, hence $X$ is normal in $B_\iota$. Uniqueness principles imply that $U_\rho(B_\iota)$ is the only Sylow $\rho$-subgroup of $G$ containing $X$. In particular $U_\rho(B_\iota) = U_\rho(B_\kappa)$.
Hence $C_G^\circ(\iota) = B_\iota = B_\kappa = C_G^\circ(\kappa) = C_G^\circ(\iota\kappa)$. Turning to an arbitrary $\lambda \in A\setminus\{1\}$ we apply bigeneration, Fact \ref{f:bigeneration}, to the action of $V = \langle  \iota, \kappa\rangle$ on the soluble group $B_\lambda$, and find $B_\lambda \leq \langle  C_{B_\lambda}^\circ(\mu): \mu \in V\setminus\{1\}\rangle \leq B_\iota$.
So $B_\lambda = B_\iota$ for any $\lambda \in A\setminus\{1\}$.

We claim that $\Pr_2(G) = 1$. First, if $G$ is $2^\perp$ then bigeneration applies and we find $G = \langle  C_G^\circ(\mu): \mu \in V\setminus\{1\}\rangle = B_\iota$, a contradiction. Therefore $G$ has involutions. In order to bound its Prüfer $2$-rank we shall use the Strong Embedding Proposition~\ref{p:strongembedding}.
We argue that $M = N_G(B_\iota)$ is strongly embedded in $G$. For let $j$ be an involution in $S = \hat{S}\cap G$, which is a Sylow $2$-subgroup of $G$; then $j \in N_G(B_\iota)$. But $G$ is $W_2^\perp$ and $B_\iota$ contains a maximal $2$-torus of $G$, so $j \in B_\iota$. Let $V = \langle  \iota, \kappa\rangle$; recall that $V$ centralises $B_\iota$. In particular $V$ centralises $j$, and normalises $B_j$. As the latter is soluble we apply bigeneration, Fact \ref{f:bigeneration}, and find $B_j = \langle  C_{B_j}^\circ(\lambda): \lambda \in V\setminus\{1\}\rangle \leq B_\iota$. Now if $j \in M^x$ with $x \in G$, then we argue likewise: $j \in B_\iota^x$, so $V^x$ centralises $j$, hence $V^x$ normalises $B_j$, and $B_j = B_\iota^x$. Therefore $x \in N_G(B_\iota)$, and $M = N_G(B_\iota)$ is strongly embedded in $G$.
By the Strong Embedding Proposition~\ref{p:strongembedding}, $\Pr_2(G) = 1$, as desired.

Observe that any two commuting involutions of $\hat{G}$ centralise the same Borel subgroup of $G$: for if $\langle  \mu, \nu\rangle$ is a four-subgroup of $\hat{G}$ then up to conjugacy $\langle  \mu, \nu\rangle \leq A$, so $B_\mu = B_\nu$. Now any two non-conjugate involutions of $\hat{G}$ commute to a third involution, so they centralise the same Borel subgroup of $G$. But there are at least two conjugacy classes of involutions in $\hat{G}$, since $\Pr_2(G) = 1$ and $\Pr_2(\hat{G}) \geq 3$. So actually any two involutions of $\hat{G}$ centralise the same Borel subgroup of $G$. This is to mean: for any $g \in G$, $B_\iota^g = B_\iota$; $B_\iota$ is normal in $G$, which contradicts $G$ being $N_\circ^\circ$.
\end{proofclaim}

\begin{step}\label{t:st:NBi=Bi}
Let $\iota \in I(\hat{G})$. If $\iota \in G$ or $G$ is $2^\perp$, then $B_\iota$ is self-normalising in $G$.
\end{step}
\begin{proofclaim}
First suppose $i = \iota \in I(G)$. We claim that $i$ is the only involution in $Z(B_i)$. If $\Pr_2(G) = 1$ this is clear by the structure of torsion in connected, soluble groups. If $\Pr_2(G) \geq 2$ (and one has equality by Step~\ref{t:st:Prüfer}), then let $k \in I(B_i)\setminus\{i\}$: if $k \in Z(B_i)$ then $B_k = B_i = B_{ik}$ is clearly strongly embedded, against Proposition~\ref{p:strongembedding}.
In particular, $N_G(B_i) \leq B_i \cdot C_G(i) \leq C_G(i) = C_G^\circ(i) = B_i$ by Steinberg's torsion theorem and connectedness of the Sylow $2$-subgroup of $G$ (Step~\ref{t:st:W2perp}).

Now suppose that $G$ is $2^\perp$ (this case was already covered in Proposition~\ref{p:dihedral}, between Steps \ref{p:dihedral:st:G2perp} and \ref{p:dihedral:st:Bi-k}).
Since $N = N_G(B_\iota) \leq G$ is $2^\perp$ it admits a decomposition $N = N^{+_\iota}\cdot N^{-_\iota}$ under the action of $\iota$. But on the one hand so does $G$: hence $G = C_G(\iota) \cdot G^{-_\iota}$ with trivial fibers and by a degree argument $C_G(\iota)$ is connected, so $N^+ \leq B_\iota$. And on the other hand, by torality principles there exists a $2$-torus $\hat{S}^\circ$ of $\hat{G}$ containing $\iota$; $\hat{S}^\circ$ normalises $B_\iota$ and $N_G(B_\iota)$. By connectedness, $\hat{S}^\circ$ centralises the finite group $N_G(B_\iota)/B_\iota$, and so does $\iota$. So $N^- \subseteq B_\iota$ and therefore $N = B_\iota$.
\end{proofclaim}

\begin{notationinproof}
For $\kappa, \lambda \in I(\hat{G})$ let $T_\kappa(\lambda) = T_{B_\kappa}(\lambda)$.
\end{notationinproof}

Before reading the following be very careful that Inductive Torsion Control, Proposition~\ref{p:Yanartas}, requires $\hat{G}$ to be $W_2^\perp$; for the moment only $G$ need be by Step~\ref{t:st:W2perp}.

\begin{step}[Antalya]\label{t:st:conjugacy}
If $\hat{G}$ is $W_2^\perp$ and $\lambda \notin N_{\hat{G}}(B_\kappa)$ then $T_\kappa(\lambda)$ is finite.

If in addition $\hat{G} = G \cdot d(\hat{S}^\circ)$ for some maximal $2$-torus $\hat{S}^\circ \leq \hat{G}$, then $\rk C_{\hat{G}}^\circ(\kappa) = \rk C_{\hat{G}}^\circ(\lambda)$ and the generic left translate $\hat{g} C_{\hat{G}}^\circ(\lambda)$ contains a conjugate of $\kappa$.
\end{step}
\begin{proofclaim}
Suppose that $\hat{G}$ is $W_2^\perp$ and $T_\kappa(\lambda)$ is infinite. Then by Inductive Torsion Control, Proposition~\ref{p:Yanartas}, $\T_\kappa(\lambda)$ is infinite and contains no torsion elements. Then $\lambda$ inverts $\T_\kappa(\lambda)$ pointwise, and normalises $C_{\hat{G}}(\T_\kappa(\lambda))$; the latter contains $\kappa$. By the structure of the Sylow $2$-subgroup of $\hat{G}$ and normalisation principles, $\lambda$ has a $C_{\hat{G}}(\T_\kappa(\lambda))$-conjugate $\mu$ commuting with $\kappa$. Now $\mu$ inverts $\T_\kappa(\lambda)$ and normalises $B_\kappa$. Since $N_{\hat{G}}(B_\kappa)$ already contains a Sylow $2$-subgroup of $\hat{G}$ which is a $2$-torus, $\mu$ is toral in $N_{\hat{G}}(B_\kappa)$ by torality principles. Hence $\T_\kappa(\lambda) \subseteq \{B, \mu\} \subseteq F^\circ(B)$. We now take any $t \in \T_\kappa (\lambda)\setminus\{1\}$ and $X = d(t)$, and we climb the Devil's Ladder, Proposition~\ref{p:DevilsLadder}: $B_\kappa$ is the only Borel subgroup of $G$ containing $C_G^\circ(X)$. In particular, $\lambda$ normalises $B_\kappa$, a contradiction.

For the rest of the argument we assume in addition that $\hat{G} = G \cdot d(\hat{S}^\circ)$ for some maximal $2$-torus $\hat{S}^\circ \leq \hat{G}$; in particular $\hat{G}$ is $W_2^\perp$ by Step~\ref{t:st:W2perp} and Lemma \ref{l:W2perp:factor}, but also $\hat{G}/G$ is abelian.

Let us introduce the following definable maps:
\[\begin{array}{cccc}
\pi_{\kappa, \lambda}: & \kappa^{\hat{G}}\setminus N_{\hat{G}}(B_\lambda) & \to & \hat{G}/C_{\hat{G}}^\circ(\lambda)\\
& \kappa_1 & \mapsto & \kappa_1 C_{\hat{G}}^\circ(\lambda)
\end{array}\]
We shall compute fibers.

Suppose that $\pi_{\kappa, \lambda}(\kappa_1) = \pi_{\kappa, \lambda}(\kappa_2)$. Then by the assumption that $\hat{G} = G \cdot d(\hat{S}^\circ)$, $G$ controls $\hat{G}$-conjugacy of involutions. Hence $\kappa_1\kappa_2 \in C_{\hat{G}}^\circ(\lambda) \cap G \leq C_G(\lambda)$. Be very careful that we do not a priori have connectedness of the latter, insofar as there is no outer version of Steinberg's torsion theorem; as a matter of fact connectedness is immediate only when $G$ is $2^\perp$ or $\lambda \in G$, not in general.

But if $c \in C_G(\lambda)$ is inverted by $\kappa$, then $\kappa$ normalises $C_{\hat{G}}(c)$ which contains $\lambda$; since $\hat{G}$ is $W_2^\perp$ and by normalisation principles, $\kappa$ has a $C_{\hat{G}}(c)$-conjugate $\mu$ commuting with $\lambda$. Now $\mu \in N_{\hat{G}}(C_G(\lambda))$ which contains a maximal $2$-torus by torality principles; torality principles again provide some maximal $2$-torus $T_\mu \leq N_{\hat{G}}(C_G(\lambda))$ containing $\mu$. Then by Zilber's indecomposibility theorem, $[c, \mu] \in [c, T_\mu] \leq C_G^\circ(\lambda)$, that is, $c^2 \in C_G^\circ(\lambda)$.
If $G$ is $2^\perp$ the conclusion comes easily; if $G$ contains involutions, then by torality principles $C_G^\circ(\lambda)$ contains a maximal $2$-torus of $G$ which is a Sylow $2$-subgroup of $G$ by Step~\ref{t:st:W2perp}, so $c \in C_G^\circ(\lambda)$.

Turning back to our fiber computation, we do have $\kappa_1 \kappa_2 \in C_G^\circ(\lambda)$, and $\kappa_1 \kappa_2 \in T_\lambda(\kappa)$. The latter is finite as first proved. Hence $\pi_{\kappa, \lambda}$ has finite fibers; it follows, keeping the Genericity Proposition~\ref{p:genericity} in mind:
\[\rk \kappa^{\hat{G}} \leq \rk \hat{G} - \rk C_{\hat{G}}^\circ(\lambda)\]
that is, $\rk C_{\hat{G}}^\circ(\lambda) \leq \rk C_{\hat{G}}^\circ(\kappa)$, and vice-versa. So equality holds.
By a degree argument, $\pi_{\kappa, \lambda}$ is now generically onto.
\end{proofclaim}

\begin{step}\label{t:st:PrhatG=1}
We may suppose that $\Pr_2(\hat{G}) = 1$.
\end{step}
\begin{proofclaim}
Suppose that $\Pr_2(\hat{G}) \geq 2$; equality follows from Step~\ref{t:st:Prüfer} and we aim at finding case \textbf{CiBo$_3$}. There seem to be three cases depending on the values of $\Pr_2(G)$ and $\Pr_2(\hat{G}/G) = 2 - \Pr_2(G)$. We give a common argument. Notice that we however rely on Step~\ref{t:st:Prüfer}, to the author's great aesthetic discontentment.

Let $\hat{S}^\circ \leq \hat{G}$ be a maximal $2$-torus of $\hat{G}$ and $\check{G} = G \cdot d(\hat{S}^\circ)$. Bear in mind that $\check{G}$ is $W_2^\perp$ by Step~\ref{t:st:W2perp} and Lemma \ref{l:W2perp:factor}. In particular, $\hat{S}^\circ$ is a Sylow $2$-subgroup of $\hat{G}$.
Let $\kappa, \lambda, \mu$ be the three involutions in $\hat{S}^\circ$.

If $\kappa$, $\lambda$ and $\mu$ are not pairwise $G$-conjugate, then they are not $\check{G}$-conjugate either. So $\check{G}$ has at least (hence exactly) three conjugacy classes of involutions by Lemma \ref{l:Goreme}: $\kappa$, $\lambda$ and $\mu$ are pairwise not $G$-conjugate.
We apply Step~\ref{t:st:conjugacy} in $\check{G}$. The generic left-translate $\check{g} C_{\check{G}}^\circ(\lambda)$ contains both a conjugate $\kappa_1$ of $\kappa$ and a conjugate $\mu_1$ of $\mu$. Now $\kappa_1$ and $\mu_1$ are not $\check{G}$-conjugate so $d(\kappa_1\mu_1)$ contains an involution $\nu$.
By the structure of the Sylow $2$-subgroup of $\check{G}$, $\nu$ must be a conjugate $\lambda_1$ of $\lambda$. Of course $\lambda_1 \in d(\kappa_1\mu_1) \leq C_{\check{G}}^\circ(\lambda)$. By the structure of the Sylow $2$-subgroup of $\check{G}$ again, $\lambda$ is the only conjugate of $\lambda$ in its centraliser. Hence $\lambda_1 = \lambda$. It follows that $\kappa_1, \mu_1 \in C_{\check{G}}(\lambda)$, and $\check{g} \in C_{\check{G}}(\lambda)$: a contradiction to genericity of $\check{g}C_{\check{G}}^\circ(\lambda)$ in $\check{G}/C_{\check{G}}^\circ(\lambda)$.

So involutions in $\check{G}$ are $G$-conjugate. This certainly rules out the case where $\Pr_2(G) = 1 = \Pr_2(\check{G}/G)$. Actually this also eliminates the case where $\Pr_2(G) = 0$ and $\Pr_2(\check{G}/G) = 2$. For in that case, $\kappa, \lambda, \mu$ remain distinct in the quotient $\check{G}/G$: so $G$ cannot conjugate them in $\check{G}$.

Hence $\Pr_2(G) = 2$ and by Step~\ref{t:st:Prüfer}, $\hat{G}/G$ is $2^\perp$. We have proved that $G$ conjugates its involutions; by Step~\ref{t:st:NBi=Bi} their centralisers$^\circ$ in $G$ are self-normalising Borel subgroups. Notice that if $i\neq j$ are two involutions of $G$ with $B_i = B_j$ then $i \in C_G^\circ(j)$ so $i$ and $j$ commute; now $B_i = B_j = B_{ij}$ is strongly embedded in $G$, against Proposition~\ref{p:strongembedding}. We recognize case \textbf{CiBo$_3$}.
\end{proofclaim}

This is the end.
If $G$ has involutions then by Steps \ref{t:st:W2perp} and \ref{t:st:PrhatG=1}, $m_2(G) = \Pr_2(G) = 1$ and $\Pr_2(\hat{G}/G) = 0 = m_2(\hat{G}/G)$: with a look at Step~\ref{t:st:NBi=Bi} this is case \textbf{CiBo$_1$}.
So we may suppose that $G$ is $2^\perp$. Since $\Pr_2 (\hat{G}) = 1$, Proposition~\ref{p:dihedral} yields $m_2(\hat{G}) = 1$. With a look at Step~\ref{t:st:NBi=Bi} this is case \textbf{CiBo$_\emptyset$}.
\renewcommand{\qedsymbol}{\textsc{In Memoriam}}
\end{proof}

%A la main (solutions automatisées en fin de source)

\section*{References}
References are divided into three categories:
\begin{itemize}
\item
Finite Groups, with keys of the form \cite{BUniqueness} (Author, Year);
\item
Groups of finite Morley rank, with keys of the form \cite{ABAnalogies} (Author-Year, short);
\item
Stages of Development, with keys of the form \cite{JFT} (chronological order).
\end{itemize}

\renewcommand{\refname}{Finite Groups}

\renewcommand{\refname}{Groups of Finite Morley Rank}

\renewcommand{\refname}{Stages of Development (chronological in spirit)}

\end{document}